    \definecolor{urlcolor}{rgb}{0,0,0}
    \definecolor{linkcolor}{rgb}{.7,0.10,0.2}
    \definecolor{citecolor}{rgb}{.12,.54,.11}
\numberwithin{equation}{section}
\newtheorem{theorem}{Theorem}[section]
\newtheorem{corollary}[theorem]{Corollary}
\newtheorem{proposition}[theorem]{Proposition}
\newtheorem{proposition-definition}[theorem]{Proposition-Definition}
\newtheorem{lemma}[theorem]{Lemma}
\newtheorem{conjecture}[theorem]{Conjecture}
\theoremstyle{definition} 
\newtheorem{definition}[theorem]{Definition}
\newtheorem{theorem-definition}[theorem]{Theorem-Definition}
\newtheorem{question}[theorem]{Question}
\theoremstyle{remark} 
\newtheorem{remark}[theorem]{Remark}
\newtheorem{example}[theorem]{Example}
\renewcommand{\emptyset}{\varnothing}
\renewcommand{\geq}{\geqslant}
\renewcommand{\leq}{\leqslant}
\renewcommand{\subset}{\subseteq}
\renewcommand{\setminus}{\smallsetminus}
\renewcommand{\tilde}{\widetilde}
\newcommand{\BD}{\mathbb{D}}
\newcommand{\BoA}{\mathbf{A}}
\newcommand{\BoC}{\mathbf{C}}
\newcommand{\BoG}{\mathbf{G}}
\newcommand{\bfH}{\mathbf{\mathsf{H}}}
\newcommand{\BoQ}{\mathbf{Q}}
\newcommand{\BoR}{\mathbf{R}}
\newcommand{\BoZ}{\mathbf{Z}}
\newcommand{\Bok}{\mathbf{k}}
\newcommand{\CC}{\mathcal{C}}
\newcommand{\CD}{\mathcal{D}}
\newcommand{\CH}{\mathcal{H}}
\newcommand{\CM}{\mathcal{M}}
\newcommand{\CO}{\mathcal{O}}
\newcommand{\CV}{\mathcal{V}}
\newcommand{\CW}{\mathcal{W}}
\newcommand{\FC}{\mathfrak{C}}
\newcommand{\FF}{\mathfrak{F}}
\newcommand{\FH}{\mathfrak{H}}
\newcommand{\FM}{\mathfrak{M}}
\newcommand{\FS}{\mathfrak{S}}
\newcommand{\FX}{\mathfrak{X}}
\newcommand{\FY}{\mathfrak{Y}}
\newcommand{\FZ}{\mathfrak{Z}}
\newcommand{\SL}{\mathscr{L}}
\newcommand{\rmBPS}{\mathrm{BPS}}
\newcommand{\rmc}{\mathrm{c}}
\newcommand{\crit}{\mathrm{crit}}
\newcommand{\cms}{/\!\!/}
\newcommand{\Eu}{\mathfrak{Eu}}
\newcommand{\Fg}{\mathfrak{g}}
\newcommand{\GL}{\mathrm{GL}}
\newcommand{\Fh}{\mathfrak{h}}
\newcommand{\Frac}{\mathrm{Frac}}
\newcommand{\HO}{\mathrm{H}}
\newcommand{\id}{\mathrm{id}}
\newcommand{\IH}{\mathrm{IH}}
\newcommand{\Ind}{\mathrm{Ind}}
\newcommand{\Lie}{\mathrm{Lie}}
\newcommand{\Fl}{\mathfrak{l}}
\newcommand{\MHM}{\mathrm{MHM}}
\newcommand{\MMHM}{\mathrm{MMHM}}
\newcommand{\nil}{\mathrm{nil}}
\newcommand{\loc}{\mathrm{loc}}
\newcommand{\rmm}{\mathrm{m}}
\newcommand{\Fp}{\mathfrak{p}}
\newcommand{\Perv}{\mathrm{Perv}}
\newcommand{\pt}{\mathrm{pt}}
\newcommand{\rank}{\mathrm{rank}}
\newcommand{\Res}{\mathrm{Res}}
\newcommand{\rmBM}{\mathrm{BM}}
\newcommand{\rat}{\mathbf{rat}}
\newcommand{\Stab}{\mathrm{Stab}}
\newcommand{\Sym}{\mathrm{Sym}}
\newcommand{\Tan}{\mathrm{T}}
\newcommand{\Ft}{\mathfrak{t}}
\newcommand{\vir}{\mathrm{vir}}
\newcommand{\sfW}{\mathsf{W}}
\newcommand{\rmX}{\mathrm{X}}
\newcommand{\dd}{\mathbf{d}}
\title{Cohomological Mackey formula for quotient stacks}
\date{\today}
\author{Lucien Hennecart}
\address{Laboratoire Ami\'enois de Math\'ematique Fondamentale et Appliqu\'ee, CNRS UMR 7352, Universit\'e de Picardie Jules Verne, 33 rue Saint Leu, 80000 Amiens, France}
\email{lucien.hennecart@u-picardie.fr}
\begin{document}
\begin{abstract}
In this paper, we construct a restriction morphism on the critical cohomology of an equivariant Landau--Ginzburg model associated to a representation of a reductive group equipped with an invariant function. We show a compatibility formula between the restriction and induction maps in the form of a Mackey-type formula, thereby giving the critical cohomology the structure of a localized induction-restriction system.
\end{abstract}

\maketitle

\setcounter{tocdepth}{1}
\tableofcontents

\section{Introduction}

\subsection{Donaldson--Thomas theory of equivariant Landau--Ginzburg models}
In \cite{hennecart2024cohomological}, we started the study of the enumerative geometry of the quotient $V/G$ of a representation $V$ of a reductive group $G$. We established a \emph{cohomological integrality isomorphism} for the cohomology vector space $\HO^*_G(V)$ when $V$ is weakly symmetric. This space is a polynomial ring (and hence infinite-dimensional), and the cohomological integrality gives a decomposition involving finitely many finite-dimensional subspaces, in terms of parabolic induction. One of the finite-dimensional pieces recovers in many cases the intersection cohomology of the affine GIT quotient $\IH^*(V\cms G)$ (\cite{meinhardt2019donaldson, bu2025cohomology}, \cite[Appendix~B]{hennecart2024cohomological2}), which is usually challenging to compute. Building on this, we extended this study to equivariant Landau--Ginzburg models in \cite{hennecart2024cohomological2}. The input is a triple $(G,V,f)$ of a connected reductive group $G$, a finite-dimensional representation $V$ of $G$ and a $G$-invariant function $f$ on $V$. The goal is to understand the critical cohomology $\HO^*_{\crit}(V/G,f)$ of $V/G$ with respect to $f$. By definition, this is the $G$-equivariant cohomology $\HO^*_{G}(V,\varphi_f)$ of $V$ with coefficients in the $G$-equivariant perverse sheaf of vanishing cycles $\varphi_f$.  The cohomological integrality extends to this situation, providing finite-dimensional subspaces $\rmBPS_{V,G,f,\lambda}$ for any cocharacter $\lambda\in\rmX_*(T)$ of a maximal torus $T$ of $G$. This can be seen as an extension of cohomological Donaldson--Thomas theory from the study of moduli stacks of objects in $3$-Calabi--Yau categories to $(-1)$-shifted symplectic stacks in the local case. The goal is to understand the associated enumerative invariants and the structure of the finite-dimensional spaces $\rmBPS_{V,G,f,\lambda}$.

In \cite{hennecart2024cohomological}, we constructed induction morphisms for $\HO^*(V/G)$. The construction also gives induction morphisms on $\HO^*_{\crit}(V/G,f)$. In this paper, we recall the definition of the induction morphisms for critical cohomology and we explain how to construct \emph{restriction morphisms}  on $\HO^*_{\crit}(V/G,f)$. The definition of the restriction morphisms requires in many cases to \emph{localize} the cohomology with respect to some Euler classes. We show that the induction and restriction morphisms satisfy a compatibility that we name \emph{Mackey formula} by analogy with the restriction formula valid for representations of finite groups, \cite[Theorem~5.2.1]{webb2016course}, (see \S\ref{subsubsection:mackeyformula} for a short account). The situation of a $G$-equivariant LG-model is referred to as the \emph{$3d$-situation}. We also study \emph{$2d$ situations}. A $2d$-situation is given by a pair $V,W$ of representations of a reductive group $G$ with a $G$-equivariant function $\mu\colon V\rightarrow W^*$ to the dual representation. We may relate the $2d$ situation to a $3d$-situation via cohomological dimensional reduction following \cite{davison2017critical,kinjo2022dimensional}.

\subsection{Induction and restriction morphisms and functors}
\label{subsection:indresfunctors}
We present a selected choice of occurrences of induction and restriction morphisms in the literature, in order for the reader to get some familiarity with the induction-restriction formalism and appreciate the similarities and differences with the formalism presented here (in particular, Theorem~\ref{theorem:mainintro}).

\subsubsection{Mackey formula for representations of finite groups}
\label{subsubsection:mackeyformula}
The original Mackey formula appears in the context of representations of finite groups (see for example \cite[Theorem~5.2.1]{webb2016course}). Let $G$ be a finite group and $H\subset G$ a subgroup. Then, a representation $V$ of $G$ may be \emph{restricted} to a representation of $H$. We denote the restriction by $\Res_{G}^HV$. A representation $V$ of $H$ may be \emph{induced} to a representation of $G$ via $\Ind_H^GV=\BoC[G]\otimes_{\BoC[H]} V$, where $\BoC[G], \BoC[H]$ are the corresponding group algebras. Mackey formula expresses how these two operations commute with each other. Let $H,K\subset G$ be two subgroups and $V$ a representation of $H$. Then,
\[
 \Res_{G}^K\Ind_H^GV\cong \bigoplus_{x\in K\backslash G/H}\Ind_{xHx^{-1}\cap K}^K\Res^{xHx^{-1}\cap K}_{xHx^{-1}}x\cdot V
\]
where $x\cdot V$ is the representation of $xHx^{-1}\subset G$ with underlying vector space $V$ given by $xhx^{-1}\cdot v\coloneqq h\cdot v$ for $v\in V$.

\subsubsection{Restriction morphism and cohomological integrality}
In \cite{hennecart2024cohomological}, in order to prove the cohomological integrality isomorphism for weakly symmetric representations of reductive groups, we used an avatar of the restriction morphism we define in this paper for arbitrary representations of reductive groups. The restriction defined in \cite{hennecart2024cohomological} is crucial to prove the injectivity of the cohomological integrality morphism. It is defined as follows (we refer to \cite{hennecart2024cohomological} for the precise notations and definitions). Let $G$ be a reductive group and $V$ a representation of $G$. For every cocharacter $\lambda\in \rmX_*(T)$, there is a Levi subgroup $G^{\lambda}$ and a representation $V^{\lambda}$ of $G^{\lambda}$ defined as fixed loci for the $\BoG_{\rmm}$-action induced by $\lambda$. There are natural isomorphisms $\HO^*(V/G)\cong \HO^*_{T}(\pt)^{\sfW}$ and $\HO^*(V^{\lambda}/G^{\lambda})\cong \HO^*_T(\pt)^{\sfW^{\lambda}}$ where $\sfW$ (resp. $\sfW^{\lambda}$) is the Weyl group of $G$ (resp. $G^{\lambda}$). In particular, we have a natural inclusion, which we may see as a restriction morphism
\[
 \Res_{\lambda}\colon \HO^*(V/G)\rightarrow\HO^*(V^{\lambda}/G^{\lambda})\,.
\]
This restriction is sufficient to show the injectivity of the cohomological integrality map in \cite{hennecart2024cohomological}, but it is \emph{not} compatible in a nice way with the induction morphisms of \emph{loc. cit.}. The restriction morphism are defined in  the current paper using a slight variation (i.e. division by the Euler class of some vector bundle, which is where we require \emph{localization}) of the restriction morphisms $\Res_{\lambda}$ and correct this incompatibility: the satisfy the Mackey formula, Theorem~\ref{theorem:mainintro}.

\subsubsection{Localized coproduct on cohomological Hall algebras}
In \cite{davison2017critical}, Davison defined a \emph{localized} coproduct on the cohomological Hall algebra of a quiver with potential. His motivation comes from the fact that cohomological Hall algebras are expected to give geometric constructions of quantum groups, and should therefore carry a natural coproduct. This expectation is realized in various works \cite{schiffmann2017cohomological,davison2023bps,botta2023okounkov,schiffmann2023cohomological}. The localized coproduct is an important structure, featured in the proof of Okounkov's conjecture, in particular for the comparison the the cohomological Hall algebra and the Maulik--Okounkov Yangian \cite{botta2023okounkov} - see also \cite{schiffmann2023cohomological} for a different point of view on this conjecture.

\subsubsection{Eisenstein series and constant term functors}
In the Langlands programme, the Eisenstein series can be understood as an induction while the constant term is analogous to a restriction. The Langlands constant term formula \cite[Chapter 8]{fleig2018eisenstein} gives an expression for the restriction of the the Eisenstein series in terms of a sum over a Weyl group. This is a kind of Mackey formula.

\subsubsection{Springer theory}
Generalized Springer theory for the Lie algebra $\Fg$ of a reductive algebraic group can be understood in terms of the induction/restriction diagrams of quotient stacks
\[
 \Fl/L\leftarrow\Fp/P\rightarrow \Fg/G
\]
obtained for each choice of a parabolic subgroup $P\subset G$ with Lie algebra $\Fp$, with Levi subgroup $L$ and Levi subalgebra $\Fl$ \cite{gunningham2018generalized}. Reading the diagram from left to right gives the induction functor $\Ind_L^G$ while reading the diagram from right to left gives the restriction functor $\Res_G^L$ on the categories of perverse sheaves or $D$-modules on the stacks involved. In this case, these morphisms do not depend on the parabolic subgroup with given Levi $L$, which is a sort of commutativity property. The induction and restriction functors satisfy a Mackey formula
\[
 \Res^{M}_G\Ind_L^G\cong\bigoplus_{w\in \sfW_M\backslash \sfW/\sfW_L}\Ind_{M\cap (\dot{w}\cdot L)}^M\Res^{M\cap (\dot{w}\cdot L)}_{\dot{w}\cdot L}\dot{w},
\]
where $\dot w\in G$ is any lift of $w$ \cite[Theorem B]{gunningham2018generalized}.

In work in progress, we show that the above Mackey formula admits a generalization for constructible complexes and complexes of $D$-modules on a representation $V$ of a reductive group $G$. Via the characteristic cycle map, this category of perverse sheaves is important to understand the Borel--Moore homology $\HO^{\rmBM}_*(\Tan^*(V/G))$ of the cotangent stack of $V/G$ following \cite{hennecart2024geometric} in the case of quivers. In the case of quivers, this was the cornerstone to determine the structure of the whole BPS Lie algebra in \cite{davison2023bps} and we expect the study of constructible complexes on $V/G$ to be crucial to understand the BPS sheaf of $0$-shifted cotangent stacks of smooth stacks or also more general $0$-shifted symplectic stacks.

\subsubsection{Perverse sheaves on hyperplane arrangements}
Induction and restriction systems appear in the description of the category of perverse sheaves on the quotient $\Fh/\sfW$ for the a Cartan subalgebra $\Fh$ of a reductive Lie algebra $\Fg$, for some natural stratification \cite{kapranov2024langlands}. The description is in terms of the \emph{perverse Coxeter category}, which is build out of a formalization of induction and restriction morphisms. Then, the Abelian category of perverse sheaves on $\Fh/\sfW$ with respect to a natural stratification admits a description in terms of the Abelian category of functors from the perverse Coxeter category to the category of vector spaces. One of the motivation of the authors of \emph{op. cit.} is to categorify perverse sheaves in terms of perverse schobers \cite{kapranov2014perverse} on hyperplane arrangements. The induction and restriction satisfy a Mackey formula \cite[Proposition~4.6.1]{kapranov2024langlands}(called ``Langlands formula'').

\subsection{Main results}
In this paper, we prove a cohomological version of Mackey's formula associated to any triple $(G,V,f)$ of a reductive group $G$, a representation $V$ of $G$ and a $G$-invariant regular function $f\colon V\rightarrow \BoC$ (Theorem~\ref{theorem:mainintro}). This is the commutation rule between the induction and restriction functors on the critical cohomology of the stack $V/G$ with respect to the function $f$. To express the induction and restriction functors conveniently, we first introduce the \emph{Coxeter complex} associated to $(G,V)$.

\subsubsection{Coxeter complex}
The Coxeter complex is defined in more details in \S\ref{section:Coxetercomplex}. Let $G$ be a reductive group and $V$ a finite-dimensional representation of $G$. We let $T\subset G$ be a maximal torus and $\Fh\coloneqq\Lie(T)$. We let $\CW(V)\subset\rmX^*(T)$ be the collection of weights of $V$ (with multiplicities taken into account) and $\CW(\Fg)$ the set of weights of $\Fg\coloneqq\Lie(G)$ which is the adjoint representation of $G$ (the multiplicities of weights of the adjoint representation are all $1$). We let $\Fh_{\BoR}\coloneqq\rmX_*(T)\otimes_{\BoZ}\BoR$ be the real form of the Cartan subalgebra $\Fh\subset\Fg$. We may identify the dual $\Fh_{\BoR}^*$ and $\rmX^*(T)\otimes_{\BoZ}\BoR$ (where $\rmX^*(T)$ denotes the lattice of characters of $T$) so that the natural pairings $\rmX^*(T)\times\rmX_*(T)\rightarrow\BoZ$ and $\Fh_{\BoR}^*\times\Fh_{\BoR}\rightarrow\BoR$ are compatible

 The \emph{Coxeter complex} associated to the pair $(G,V)$ of a representation $V$ of a reductive group $G$ is the pair $(\Fh_{\BoR},\bfH)$ of the real form $\Fh_{\BoR}$ of the dual of a Cartan subalgebra of $\Fg$ and the hyperplane arrangement $\bfH\coloneqq \bigcup_{\alpha\in\CW(V)\cup\CW(\Fg)}\{\alpha=0\}\subset \Fh_{\BoR}$. We let $\FH\coloneqq\{H_{\alpha}\colon \alpha\in\CW(V)\cup\CW(\Fg)\}$ be the set of hyperplanes of $\bfH$. The \emph{cells} of the Coxeter complex are the connected components of the subsets
 \[
  \bigcap_{H\in\FH_1}H\setminus\bigcup_{H\in\FH_2}H,
 \]
for some decomposition $\FH=\FH_1\sqcup\FH_2$. We denote by $\FC$ the set of cells of the Coxeter complex. Cells give a partition of $\Fh_{\BoR}$ by locally closed convex subsets (for the analytic topology): $\Fh_{\BoR}=\bigsqcup_{C\in\FC}C$.

The \emph{flats} of the Coxeter complex are intersections of hyperplanes: $\bigcap_{\alpha\in\CW}H_{\alpha}$ for $\CW\subset\CW(V)\cup\CW(\Fg)$. If $C\in\FC$, we denote by $\langle C\rangle$ the linear span $C$. It is a flat of the Coxeter complex. We denote by $\FF$ the set of flats.

The set $\FC$ of cells of the Coxeter complex may be ordered by reverse inclusion of the cell closures \S\ref{subsection:partialorder}: $C\preceq C'\iff C'\subset\overline{C}$. We order similarly the set of flats. We may compare cells and flats as follows. For any $C\in \FC$ and $F\in\FF$, we write $C\preceq F$ if $F\subset\langle C\rangle$ and $F\preceq C$ if $C\subset F$.

Given two cells $C,C'\in\FC$, the \emph{Tits product} of $C$ and $C'$ is a third cell $C\circ C'$ such that $C\circ C'\preceq C$ \S\ref{subsection:Titsproduct}. It is non-commutative: $C\circ C'\neq C'\circ C$ in general, but it is associative. Since the Weyl group $\sfW$ of $G$ acts on the set of weights of any representation of $G$, it acts on the hyperplane arrangement $H$ and on the set of cells $\FC$. The Tits product is $\sfW$-equivariant: $w\cdot(C\circ C')=(w\cdot C)\circ(w\cdot C')$ for any $C,C'\in\FC$ and $w\in \sfW$ (Proposition~\ref{proposition:equivarianceTits}).

One may generalize the definition of the Coxeter complex to an arbitrary number of representations $V_1,\hdots,V_r$ of $G$ by considering $V=\bigoplus_{i=1}^rV_i$. We will use this in the case of two representations $V,W$ of $G$.

\subsubsection{The critical cohomological system}
Let $(G,V,f)$ be a triple of a reductive group $G$, a representation $V$ of $G$ and a $G$-invariant function $f\colon V\rightarrow\BoC$. We let $T'$ be an auxiliary torus acting on $V$, and whose action commutes with that of $G$. The group $T'$ encodes the deformation parameters. Let $F\in\FF$ be a flat of the Coxeter complex of $(G,V)$. Let $C\in\FC$ be an open cell in $F$ and $\lambda\in\rmX_*(T)$ be a cocharacter of the maximal torus of $G$ in $C$ (considering the natural inclusion $\rmX_*(T)\subset \Fh_{\BoR}$). We let $f_{\lambda}\colon V^{\lambda}\rightarrow\BoC$ be the $G^{\lambda}\times T'$-invariant function induced by $f$. We define $\CH^{T'}_{G,V,f,F}\coloneqq \HO^*_{G^{\lambda}\times T'}(V^{\lambda},\varphi_{f^{\lambda}})$. It does not depend on the choice of $C$ and $\lambda$. The data $(\CH_{G,V,f,F}^{T'})_{F\in\FF}$ is called \emph{the critical cohomological system}. When $f$ is the identically vanishing function, we drop $f$ from the notation and write $\CH_{G,V,F}^{T'}$. If in addition $T'=1$ is the trivial torus, we recover the cohomological induction cohomological system studied in \cite{hennecart2024cohomological} when the equivariant parameters are turned off.

\subsubsection{Critical induction morphisms}

Let $C\in\FC$ and $F\in\FF$ be such that $C\preceq F$. There is an induction morphism
\[
 \Ind_{C}^{F}\colon\CH_{G,V,f,\langle C\rangle}^{T'}\rightarrow\CH_{V,G,f,F}^{T'}\,,
\]
\S\ref{subsection:parabolic_induction}. The following proposition is the associativity in this context.
\begin{proposition}[=Proposition~\ref{proposition:associativity3d}]
 For any cells $C,C'\in\FC$ and flat $F\in\FF$ such that $C\preceq C'\preceq F$, we have
 \[
  \Ind_{C}^{F}=\Ind_{C'}^{F}\circ\Ind_{C}^{\langle C'\rangle}\,.
 \]
\end{proposition}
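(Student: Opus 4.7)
The plan is to realise both sides of the identity as pull-push operations along explicit correspondences of quotient stacks, and to identify these correspondences using the two-step structure of parabolic induction.

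First, I unfold the three relevant inductions via the definition of \S\ref{subsection:parabolic_induction}. Choose cocharacters $\lambda_C\in C$ and $\lambda_{C'}\in C'$ with associated parabolic subgroups $P(C,F),P(C',F)\subseteq G^F$ and $P(C,\langle C'\rangle)\subseteq G^{\langle C'\rangle}$. The induction $\Ind_C^F$ is the operation on critical cohomology induced by the correspondence
\[
V^{\langle C\rangle}/G^{\langle C\rangle}\xleftarrow{\sim}V^{\langle C\rangle}/P(C,F)\xleftarrow{p}(V^F)^{+,\lambda_C}/P(C,F)\xhookrightarrow{\iota}V^F/P(C,F)\xrightarrow{\pi}V^F/G^F,
\]
where $p$ is the affine-bundle retraction onto the $\lambda_C$-fixed locus, $\iota$ is a closed immersion and $\pi$ is proper with fibre $G^F/P(C,F)$. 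Analogous correspondences represent $\Ind_{C'}^F$ (with $P(C',F)$ and $(V^F)^{+,\lambda_{C'}}$) and $\Ind_C^{\langle C'\rangle}$ (with $P(C,\langle C'\rangle)$ and $(V^{\langle C'\rangle})^{+,\lambda_C}$).

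Next, the hypothesis $C\preceq C'\preceq F$ supplies two compatibilities. Since $C'\subseteq\overline{C}$ one has $\lambda_C,\lambda_{C'}\in\overline{C}$, and the sign of each root or weight being constant on $C$ forces the sign of $\alpha(\lambda_C)$ to agree with that of $\alpha(\lambda_{C'})$ whenever the latter is nonzero. In particular $U(C',F)\subseteq P(C,F)$ and $P(C,F)\cap G^{\langle C'\rangle}=P(C,\langle C'\rangle)$, yielding the semidirect decomposition $P(C,F)=P(C,\langle C'\rangle)\ltimes U(C',F)$. The same analysis applied to the weights of $V^F$ produces a cartesian square
\[
\begin{tikzcd}
(V^F)^{+,\lambda_C}\ar[r,hook]\ar[d] & (V^F)^{+,\lambda_{C'}}\ar[d] \\
(V^{\langle C'\rangle})^{+,\lambda_C}\ar[r,hook] & V^{\langle C'\rangle},
\end{tikzcd}
\]
which identifies $(V^F)^{+,\lambda_C}$ with the pre-image, under the $\lambda_{C'}$-retraction, of $(V^{\langle C'\rangle})^{+,\lambda_C}$.

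Forming the fibre product over $V^{\langle C'\rangle}/G^{\langle C'\rangle}$ of the correspondences for $\Ind_{C'}^F$ and $\Ind_C^{\langle C'\rangle}$ yields a single correspondence representing their composition. Combining the cartesian square of attracting loci with the semidirect decomposition of $P(C,F)$, the intermediate stack of this fibre product is canonically identified with $(V^F)^{+,\lambda_C}/P(C,F)$, the middle of the correspondence defining $\Ind_C^F$. Under this identification, the map to $V^{\langle C\rangle}/G^{\langle C\rangle}$ is the composition of the two affine-bundle retractions, which coincides with the single retraction $(V^F)^{+,\lambda_C}\to V^{\langle C\rangle}$, while the map to $V^F/G^F$ factors as $V^F/P(C,F)\to V^F/P(C',F)\to V^F/G^F$ and recovers $\pi$ by transitivity of partial-flag bundles. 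Standard compatibilities of the vanishing-cycle functor with smooth pullback (for the affine retractions) and proper pushforward (for the flag bundles), available since the $G$-invariance of $f$ makes its restrictions $f^F,f^{\langle C'\rangle},f^{\langle C\rangle}$ compatible, then give the claimed identity on critical cohomology.

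The main technical obstacle is the identification of the fibred intermediate stack with $(V^F)^{+,\lambda_C}/P(C,F)$: one must match the $P(C,\langle C'\rangle)\ltimes U(C',F)$-action induced from the composed correspondence with the intrinsic $P(C,F)$-action, while tracking the compatibilities between closed immersions of attracting loci, affine-bundle retractions, and partial flag bundles that implement each step of the correspondence.
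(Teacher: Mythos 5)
Your proof is correct and is essentially the paper's argument: both sides are realised by composing the two induction correspondences over $V^{\langle C'\rangle}/(G^{\langle C'\rangle}\times T')$, the composite is identified with the correspondence defining $\Ind_{C}^{F}$ via the Cartesian square $V^{C\geq 0,F}\cong V^{C\geq 0,\langle C'\rangle}\times_{V^{\langle C'\rangle}}V^{C'\geq 0,F}$, and one concludes by base change together with the commutation of $\varphi_f$ with smooth pullback and proper pushforward. The only difference is presentational: the paper works directly with the stacks $V^{C\geq 0,F}/(G^{C\geq 0,F}\times T')$ in a single base-change diagram, so the group-theoretic bookkeeping you spell out (the decomposition $G^{C\geq 0,F}=G^{C\geq 0,\langle C'\rangle}\ltimes U(C',F)$ and the factorisation of $p_{C,F}$ through a partial flag bundle) is absorbed rather than made explicit.
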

When the function $f$ is identically $0$, one recovers the associativity of \cite[Proposition~2.9]{hennecart2024cohomological}.

\subsubsection{$2d$ induction morphisms}
We let $G$ be a reductive group and $V,W$ two representations of $G$. We let $\mu\colon V\rightarrow W^*$ be a $G$-equivariant regular map. We let $T'$ be an auxiliary torus acting on $V,W$ such that the actions of $T'$ and $G$ commute and $\mu$ is $T'$-equivariant. We let $\bfH$ be the hyperplane arrangement associated to the set of weights $\CW(\Fg)\cup\CW(V)\cup\CW(W)$. We let $\FC$ be the corresponding set of cells and $\FF$ the set of flats. For $F\in \FF$, we define
\[
 V^F\coloneqq V^{\lambda},\quad W^{F}\coloneqq W^{\lambda},\quad G^{F}\coloneqq G^{\lambda}\,.
\]
where $\lambda\in\rmX_*(T)$ is a cocharacter which belongs to an open cell in $F$ for the inclusion $\rmX_*(T)\subset\Fh_{\BoR}$ and $\mu_{\lambda}\colon V^{\lambda}\rightarrow W^{*,\lambda}$ is the $G^{\lambda}$-equivariant function induced by $\mu$. It does not depend on the choice of $\lambda$ and commutes with the action of $T'$. We also let $\mu_F\colon V^{F}\rightarrow W^{*,F}$ be the $G^{F}$-equivariant function induced by $\mu$.

For $C\in\FC$ and $F\in\FF$ such that $C\preceq F$, we choose $\lambda\in\rmX_*(T)\cap C$ and $\mu\in\rmX_*(T)$ a cocharacter in an open cell of $F$ and we define
\[
 V^{C\geq 0,F}\coloneqq V^{\lambda\geq 0}\cap V^{\mu}, \quad W^{C\geq 0,F}\coloneqq W^{\lambda\geq 0}\cap W^{\mu},\quad \mu_{C\geq 0,F}\coloneqq \mu_{\lambda\geq 0,\mu}\colon V^{C\geq 0,F}\rightarrow W^{*,C\geq 0,F}\,.
\]
We define similarly the variants $V^{C\leq 0,F}, V^{C>0,F}$, and $V^{C<0,F}$ and the ones for $V$ and $W$ (resp. $W^*$) exchanged.

We define the cohomological system $(\CH_{G,V,W,\mu,F}^{T'})_{F\in\FF}$ by
\[
 \CH_{G,V,W,\mu,F}^{T'}\coloneqq \HO^{\rmBM}_*(\mu_{F}^{-1}(0)/(G^{F}\times T'),\BoQ)\,.
\]
This is the $G^F\times T'$-equivariant Borel--Moore homology of the zero-locus $\mu_F^{-1}(0)$.
\begin{theorem}[=\S\ref{subsection:parabolicinduction2d}+Proposition~\ref{proposition:associativity2d}]
We let $\mu\colon V\rightarrow W^*$ be a $G\times T'$-equivariant function. For any $C\in\FC$ and $F\in\FF$ such that $C\preceq F$, pullback and pushforward along the equivariant diagram
\[\begin{tikzcd}
	{\mu_{\langle C\rangle}^{-1}(0)/(G^{\langle C\rangle}\times T')} & {\mu_{C\geq 0,F}^{-1}(0)/(G^{C\geq 0,F}\times T')} & {\mu_{F}^{-1}(0)/(G^F\times T')} \\
	{V^{\langle C\rangle}\times W^{C>0}/(G^{\langle C\rangle}\times T')} & {V^{C\geq 0,F}/(G^{C\geq 0,F}\times T')} & {V^F/(G^F\times T')}
	\arrow[from=1-1, to=2-1]
	\arrow["q_{C,F}"',from=1-2, to=1-1]
 	\arrow["p_{C,F}",from=1-2, to=1-3]
	\arrow["\lrcorner"{anchor=center, pos=0.125, rotate=-90}, draw=none, from=1-2, to=2-1]
	\arrow[from=1-2, to=2-2]
	\arrow[from=1-3, to=2-3]
	\arrow["q'_{C,F}",from=2-2, to=2-1]
	\arrow["p'_{C,F}"',from=2-2, to=2-3]
\end{tikzcd}\]
in which $q'_{C,F}$ is l.c.i. (as a morphism between smooth stacks) defines induction morphisms
\[
 \Ind_{C}^F\colon\CH_{G,V,W,\mu,\langle C\rangle}^{T'}\rightarrow\CH_{G,V,W,\mu,F}^{T'}
\]
which satisfy the associativity constraint: for any cells $C,C'\in\FC$ and flat $F\in\FF$ such that $C\preceq C'\preceq F$, one has
\[
 \Ind_{C}^F=\Ind_{C'}^F\circ\Ind_{C}^{\langle C'\rangle}\,.
\]

\end{theorem}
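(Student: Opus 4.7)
I would define $\Ind_{C}^{F}\coloneqq (p_{C,F})_{*}\circ q_{C,F}^{!}$, and verify each operation separately. The ambient morphism $p'_{C,F}$ is the natural projection from the quotient of the attracting variety to the quotient of the Levi fixed-point variety and is proper because the fibre $G^{F}/G^{C\geq 0,F}$ is a projective homogeneous space (a partial flag variety for $G^{F}$). Its restriction $p_{C,F}$ to the closed substack $\mu_{C\geq 0,F}^{-1}(0)/(G^{C\geq 0,F}\times T')$ is therefore proper, which suffices for the proper pushforward on Borel--Moore homology. For the pullback, the stated l.c.i.\ property of $q'_{C,F}$ between smooth quotient stacks provides a refined Gysin pullback $(q'_{C,F})^{!}$, and the Cartesianness of the square then yields the restricted pullback $q_{C,F}^{!}$ on the top row.

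\textbf{Strategy for associativity.} For $C\preceq C'\preceq F$, I would paste the induction diagrams attached to $(C,\langle C'\rangle)$ and $(C',F)$ along their common vertex $\mu_{\langle C'\rangle}^{-1}(0)/(G^{\langle C'\rangle}\times T')$ and form the fibre product
\[
\CM\coloneqq \mu_{C\geq 0,\langle C'\rangle}^{-1}(0)/(G^{C\geq 0,\langle C'\rangle}\times T')\,\times_{\mu_{\langle C'\rangle}^{-1}(0)/(G^{\langle C'\rangle}\times T')}\,\mu_{C'\geq 0,F}^{-1}(0)/(G^{C'\geq 0,F}\times T').
\]
Base change---proper pushforward commutes with refined Gysin pullback across a Cartesian square---then rewrites $\Ind_{C'}^{F}\circ\Ind_{C}^{\langle C'\rangle}$ as a single pull-push through $\CM$. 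The associativity thus reduces to identifying $\CM$ with $\mu_{C\geq 0,F}^{-1}(0)/(G^{C\geq 0,F}\times T')$ in a way compatible with the outer legs $q_{C,F}$ and $p_{C,F}$.

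\textbf{Key combinatorial identification.} The needed stackwise fact is
\[
V^{C\geq 0,F}\cong V^{C\geq 0,\langle C'\rangle}\times_{V^{\langle C'\rangle}}V^{C'\geq 0,F},\qquad G^{C\geq 0,F}\cong G^{C\geq 0,\langle C'\rangle}\times_{G^{\langle C'\rangle}}G^{C'\geq 0,F},
\]
together with the analogous identities for $W$ and $W^{*}$. These are weight-by-weight statements about nested attracting and fixed loci for one-parameter subgroups; they are the stackwise shadow of the Tits-product relation $C\circ F=C$ for $C\preceq F$, and in particular mirror the combinatorial core of Proposition~\ref{proposition:associativity3d}. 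Preservation of the condition $\mu_{\bullet}=0$ under these fibre products is automatic from the $G$-equivariance of $\mu$. Once the identification is in place, the matching of the outer legs follows from the constructions, and associativity is established.

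\textbf{Main obstacle.} The principal technical subtlety concerns the factor $W^{C>0}$ in the bottom-left corner of the diagram, which records attracting directions in $W$ rather than fixed ones and has no direct analogue at the level of the pure Levi-quotient stacks. I would handle this by writing $q'_{C,F}$ explicitly as a smooth affine-bundle-type projection (forgetting $W^{C>0}$ and collapsing $G^{C\geq 0,F}\to G^{\langle C\rangle}$) followed by the regular closed immersion cut out by $\mu_{C\geq 0,F}=0$; this factorisation makes the l.c.i.\ structure transparent and reduces the base-change verification to standard operations. As an independent cross-check, the 2d associativity should also be derivable from the 3d statement of Proposition~\ref{proposition:associativity3d} via cohomological dimensional reduction in the sense of \cite{davison2017critical,kinjo2022dimensional}.
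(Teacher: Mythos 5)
Your construction of $\Ind_{C}^{F}$ and your overall strategy for associativity coincide with the paper's: pushforward along the proper, representable map $p_{C,F}$, a virtual (refined Gysin) pullback for $q_{C,F}$ defined through the ambient smooth quotient stacks, and a base-change argument whose combinatorial core is the Cartesian identification $V^{C\geq 0,F}\cong V^{C\geq 0,\langle C'\rangle}\times_{V^{\langle C'\rangle}}V^{C'\geq 0,F}$ together with its analogue for the groups, exactly as in the proof of Proposition~\ref{proposition:associativity3d}.

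There is, however, a genuine gap at the one point where the $2d$ case differs from the $3d$ case, and your ``main obstacle'' paragraph does not repair it. The pullback $q_{C,F}^{!}$ is not an operation intrinsic to the top row of the diagram: it is obtained by applying $\imath_{\langle C\rangle}^{!}$ and base change to the adjunction unit for the ambient morphism $q'_{C,F}$, whose target carries the extra factor $W^{*,C>0,F}$ and whose component in that factor is $\mu_{C>0,F}$. Your proposed factorization of $q'_{C,F}$ (a smooth projection ``forgetting $W^{C>0}$'' followed by a regular closed immersion ``cut out by $\mu_{C\geq 0,F}=0$'') is wrong as stated: any such composite has constant $W^{*}$-component, whereas $q'_{C,F}$ has component $\mu_{C>0,F}$; the correct factorization is the graph $x\mapsto(x,\mu_{C>0,F}(x))$, a vector-bundle section and hence a regular immersion, followed by the smooth projection $V^{C\geq 0,F}\times W^{*,C>0,F}\rightarrow V^{\langle C\rangle}\times W^{*,C>0,F}$. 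More seriously, after you commute $(p_{C,\langle C'\rangle})_{*}$ past $q_{C',F}^{!}$ by base change, what remains is a composition of two virtual pullbacks taken relative to two \emph{different} ambient squares (involving $W^{*,C'>0,F}$ and $W^{*,C>0,\langle C'\rangle}$ respectively), and you must identify this composition with the single virtual pullback defined by $q'_{C,F}$ (involving $W^{*,C>0,F}$). This is not a ``standard operation'' that follows once $\CM$ is identified with $\mu_F^{-1}(0)\cap V^{C\geq 0,F}/(G^{C\geq 0,F}\times T')$: it is exactly where the paper's proof inserts the auxiliary spaces $V^{C\geq 0,F}\times W^{*,C'>0,F}$ and $V^{C\geq 0,\langle C'\rangle}\times W^{*,C'>0,F}$ and invokes the comparison of virtual pullbacks for a Cartesian square with no excess intersection bundle (the square marked $(*)$ in the proof of Proposition~\ref{proposition:associativity2d}); the no-excess condition amounts to the weight identity $\CW^{C>0,F}(U)=\CW^{C>0,\langle C'\rangle}(U)\sqcup\CW^{C'>0,F}(U)$ for $U=V,W^{*},\Fg$, i.e.\ $d'_{C,\langle C'\rangle}+d'_{C',F}=d'_{C,F}$. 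Your ``key combinatorial identification'' contains this raw weight data but is never connected to the pullback comparison, which is the actual content of the $2d$ associativity. Finally, your dimensional-reduction cross-check is consistent (the signs $(-1)^{\dim W^{*,C>0,\langle C'\rangle}}(-1)^{\dim W^{*,C'>0,F}}=(-1)^{\dim W^{*,C>0,F}}$ match by the same weight identity), but as a proof it would presuppose Proposition~\ref{proposition:comparison2d3dmultiplications} rather than replace the argument above.
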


\begin{theorem}[Cohomological dimensional reduction and induction = Proposition~\ref{proposition:comparison2d3dmultiplications}]
We let $f\colon V\oplus W\rightarrow \BoC$ be the regular function obtained by contracting $\mu$: $f(v,w)\coloneqq \mu(v)(w)$. It is $G\times T'$-invariant. Then, for $F\in\FF$, the dimensional reduction isomorphisms (see Corollary~\ref{corollary:cohdimrediso})
\[
\CH_{G,V,W,\mu,F}^{T'}\rightarrow\CH_{G,V\oplus W,f,F}^{T'}
\]
commute with the induction morphism $\Ind_{C}^F$ ($C\in\FC$, $F\in\FF$ such that $C\preceq F$) up to the sign $(-1)^{\dim W^{*,C>0,F}}$.
\end{theorem}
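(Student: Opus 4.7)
The plan is to identify both induction morphisms as pullback--pushforward along the respective correspondences and to invoke naturality of the cohomological dimensional reduction isomorphism of Corollary~\ref{corollary:cohdimrediso} at each vertex, with the sign arising from a Thom-class comparison.

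First, I would spell out the $3d$ induction $\Ind_{C}^{F}\colon\CH_{G,V\oplus W,f,\langle C\rangle}^{T'}\rightarrow\CH_{G,V\oplus W,f,F}^{T'}$ as pullback--pushforward along the correspondence
\[
(V\oplus W)^{\langle C\rangle}/(G^{\langle C\rangle}\times T')\longleftarrow(V\oplus W)^{C\geq 0,F}/(G^{C\geq 0,F}\times T')\longrightarrow(V\oplus W)^{F}/(G^{F}\times T')
\]
equipped with the vanishing cycle sheaves of the restrictions of $f$, and compare it with the $2d$ correspondence of the statement stack-by-stack. The crucial input is that, by $G$-equivariance of $\mu$, the pairing $\mu(v)(w)$ vanishes on $(V\oplus W)^{C\geq 0,F}$ unless both $v$ and $w$ have $C$-weight zero (weights $k,k'\geq 0$ of $v,w$ cannot satisfy $k+k'=0$ unless $k=k'=0$). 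Consequently, on the middle $3d$ stack the critical cohomology splits off the complementary weight-positive directions $V^{C>0,F}\oplus W^{C>0,F}$ as a Thom contribution, and after dimensional reduction corresponds precisely to the BM homology of $\mu_{C\geq 0,F}^{-1}(0)$ appearing in the $2d$ setup.

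Second, I would use naturality of the dimensional reduction isomorphism under smooth pullback and proper pushforward (as in \cite{davison2017critical,kinjo2022dimensional}) to propagate the identification along both legs of the correspondence, reducing the compatibility to a Thom-class bookkeeping at each vertex. The smooth leg on the right produces a discrepancy equal to the Euler class of $W^{C>0,F}$ (the extra $W$-directions eliminated by dimensional reduction on the $3d$ side but absent on the $2d$ side), and the closed-immersion leg on the left, via the zero-section embedding $\mu_{\langle C\rangle}^{-1}(0)\hookrightarrow V^{\langle C\rangle}\times W^{C>0}$, yields a matching Thom contribution.

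Third, I would convert these Thom twists into the claimed sign. Since the $2d$ moment map is $\mu\colon V\rightarrow W^*$, identifying the Euler class of $W^{C>0,F}$ with that of $W^{*,C>0,F}$ via the duality $W\cong W^{**}$ reverses orientation by a factor of $(-1)^{\dim W^{*,C>0,F}}$; no other sign enters, giving the total discrepancy $(-1)^{\dim W^{*,C>0,F}}$. The main obstacle is the sign bookkeeping: orientation conventions in dimensional reduction are subtle and must be pinned down consistently across all three stacks of the correspondence to extract the correct sign, but once the convention on each vertex is fixed the computation is mechanical.
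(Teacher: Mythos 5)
Your overall strategy (decompose both inductions as pullback--pushforward along their correspondences and propagate the dimensional reduction isomorphism through each leg using its compatibility with smooth pullback and proper pushforward) is the same skeleton as the paper's proof, which relies on Propositions~\ref{proposition:dimredpropermorphism} and \ref{proposition:dimredpullback}. But the two places where the actual work happens are missing or wrong. First, your identification at the middle vertex is false as stated: on $(V\oplus W)^{C\geq 0,F}$ the potential restricts to $\mu_{\langle C\rangle}(q_{C,F}(v))(w_{\langle C\rangle})$, so the naive dimensional reduction of the middle $3d$ stack produces the Borel--Moore homology of $\{v\in V^{C\geq 0,F}\colon \mu_{\langle C\rangle}(q_{C,F}(v))=0\}$, \emph{not} of $\mu_F^{-1}(0)\cap V^{C\geq 0,F}$, which additionally imposes $\mu_{C>0,F}(v)=0$. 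Relatedly, the left leg of the $2d$ correspondence is an l.c.i.\ (virtual) pullback over the base $V^{\langle C\rangle}\times (W^*)^{C>0,F}$, a space with an extra factor $(W^*)^{C>0,F}$ that has no counterpart in the $3d$ correspondence; your proposal never reconciles this. The paper handles exactly this point by factoring the $3d$ induction through the auxiliary spaces $V^{\langle C\rangle}\times W^{*,C>0,F}\times W^{C\geq 0,F}$ and $V^{\langle C\rangle}\times W^{*,C>0,F}\times W^F$, using the map built from $\mu_{C>0,F}$ and a Cartesian square with no excess intersection bundle (diagram \eqref{equation:comparisoninductions}).

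Second, your sign mechanism is not correct. The discrepancy is not an ``orientation reversal'' between the Euler classes of $W^{C>0,F}$ and $W^{*,C>0,F}$ (and if genuine Euler-class factors survived on the two legs they would not cancel to a mere sign without localizing). In the paper the sign $(-1)^{\dim W^{*,C>0,F}}$ arises because on $V^{\langle C\rangle}\times W^{*,C>0,F}\times W^F$ the potential splits as the base potential plus the canonical nondegenerate pairing $W^{*,C>0,F}\times W^{C<0,F}\to\BoC$, and one must compare the two dimensional reduction isomorphisms obtained by reducing along $W^{*,C>0,F}$ versus along $W^{C<0,F}$; by Proposition~\ref{proposition:vanishingcyclesquadraticfunction} (Davison's lemma on vanishing cycles of quadratic functions) these differ by exactly $(-1)^{\rank}$. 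Without invoking some version of this two-reductions comparison, your argument does not produce the sign, and the asserted cancellation of ``matching Thom contributions'' on the two legs remains unsubstantiated. So as it stands the proposal has a genuine gap at the heart of the proof, even though its outer framework is the right one.
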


The typical case of $2d$-situation is when $V=\Tan^*V'$ is the cotangent representation of a representation $V'$ of $G$, or more generally a symplectic representation of $G$, and $\mu\colon V\rightarrow\Fg^*$ is the moment map for the $G$-action.

\subsubsection{Restriction morphism}
We define the \emph{localized cohomological system} as $(\tilde{\CH}_{G,V,f,C,F}^{T'})_{C\preceq F\in\FC\times\FF}$, where $\tilde{\CH}_{G,V,f,C,F}^{T'}\coloneqq \CH_{G,V,f,\langle C\rangle}^{T'}[\Eu_{V,C,F}^{T',-1}]$ and the Euler class $\Eu_{V,C,F}^{T'}$ is defined by $\Eu_{V,C,F}^{T'}\coloneqq\prod_{\substack{\alpha\in\rmX^*(T\times T')\\\langle\alpha,\lambda\rangle<0\\\langle\alpha,\mu\rangle=0}}\alpha^{\dim V_{\alpha}}$ for any $\lambda\in C$, $\mu\in F\setminus\bigcup_{\substack{F'\in\FF\\F\not\subset F'}}F'$, where $V=\bigoplus_{\alpha\in\rmX_*(T\times T')}V_{\alpha}$ is the weight space decomposition of $V$ for the $T\times T'$-action. For any $C\in\FC$ and $F\in\FF$ such that $C\preceq F$, there is a restriction morphism
\[
 \Res_{F}^C\colon\CH_{G,V,f,F}^{T'}\rightarrow\tilde{\CH}_{G,V,f,C,F}^{T'}\,.
\]
For any $C\preceq C'\preceq F$, the morphism $\Res_{\langle C'\rangle}^C$ commutes with the multiplication by $\Eu_{V,C',F}$ and therefore induces a restriction morphism
\[
 \Res_{\langle C'\rangle}^C\colon \tilde{\CH}_{G,V,f,C',F}^{T'}\rightarrow\tilde{\CH}_{G,V,f,C,F}^{T'}\,.
\]
This extension is important for the following proposition, which is the coassociativity for the restriction morphisms.

\begin{proposition}[=Proposition~\ref{proposition:coassociativityrestriction}]
 Let $C,C'\in\FC$ and $F\in\FF$ be such that $C\preceq C'\preceq F$. Then, one has
 \[
  \Res_{F}^C=\Res_{\langle C'\rangle}^{C}\circ\Res_{F}^{C'}\,.
 \]
\end{proposition}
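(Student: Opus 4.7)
The plan is to decompose each restriction morphism into an unlocalized pullback followed by multiplication by the inverse of the relevant Euler class, and then prove the coassociativity in two independent pieces: functoriality of the pullback along the chain of fixed-locus inclusions, and a multiplicativity identity for the Euler classes. Concretely, I would first rewrite $\Res_F^{C}$ as the variant of pullback along the closed immersion $V^{\langle C\rangle}/(G^{\langle C\rangle}\times T')\hookrightarrow V^{F}/(G^{F}\times T')$ — compatible with vanishing cycles because $f^{\langle C\rangle}$ is the further restriction of $f^{F}$ — normalized by $\Eu_{V,C,F}^{T',-1}$. Given the two-step factorization
\[
V^{\langle C\rangle}/(G^{\langle C\rangle}\times T')\hookrightarrow V^{\langle C'\rangle}/(G^{\langle C'\rangle}\times T')\hookrightarrow V^{F}/(G^{F}\times T'),
\]
the usual functoriality of pullback in critical cohomology immediately yields the coassociativity of the unlocalized part.

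The core of the proof is then the Euler-class identity
\[
\Eu_{V,C,F}^{T'}=\Eu_{V,C,\langle C'\rangle}^{T'}\cdot\Eu_{V,C',F}^{T'}\,.
\]
Picking $\lambda\in\rmX_*(T)\cap C$, $\lambda'\in\rmX_*(T)\cap C'$, and $\nu\in\rmX_*(T)$ in the open cell of $F$, the set of weights indexing the left-hand side is $S=\{\alpha\colon\langle\alpha,\lambda\rangle<0,\ \langle\alpha,\nu\rangle=0\}$, and I would partition it according to whether $\langle\alpha,\lambda'\rangle=0$ or not. If $\langle\alpha,\lambda'\rangle=0$, then since $\lambda'$ lies in the open cell of $\langle C'\rangle$ the hyperplane $H_\alpha$ must contain $\langle C'\rangle\supset F$, so $\langle\alpha,\nu\rangle=0$ is automatic and one recovers exactly the set indexing $\Eu_{V,C,\langle C'\rangle}^{T'}$. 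Otherwise, since $C'\subset\overline{C}$ the linear form $\alpha$ is non-positive on $\overline{C}\supset C'$, so $\langle\alpha,\lambda'\rangle\neq 0$ gives $\langle\alpha,\lambda'\rangle<0$; and conversely any $\alpha$ with $\langle\alpha,\lambda'\rangle<0$ and $\langle\alpha,\nu\rangle=0$ lies in $S$ because $\alpha$ strictly negative on $C'$ propagates to $\alpha<0$ on $C$ (if $C$ were on the non-negative side of $H_\alpha$, its closure could not contain $C'$). Hence the second subset indexes $\Eu_{V,C',F}^{T'}$ and the identity follows.

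Combining the two pieces, $\Res_{\langle C'\rangle}^{C}\circ\Res_F^{C'}$ equals the two-step pullback divided by $\Eu_{V,C,\langle C'\rangle}^{T'}\cdot\Eu_{V,C',F}^{T'}=\Eu_{V,C,F}^{T'}$, which is exactly $\Res_F^{C}$. The main obstacle I anticipate is the face-order combinatorics underlying the Euler class identity (propagating strict sign information along $C'\subset\overline{C}$ in both directions without double-counting, and carefully matching multiplicities $\dim V_\alpha$); the extension of $\Res_{\langle C'\rangle}^{C}$ to the localization $\tilde{\CH}_{G,V,f,C',F}^{T'}$ required for the composition to even make sense is already secured by the remark immediately preceding the statement and will be invoked without further ado.
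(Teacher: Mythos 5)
Your strategy is essentially the paper's: factor each restriction as a pullback along the closed immersion of fixed loci times a kernel, use $i_{\langle C'\rangle,F}\circ i_{\langle C\rangle,\langle C'\rangle}=i_{\langle C\rangle,F}$ (compatibly with vanishing cycles) for the pullback part, and reduce to a multiplicativity identity for the kernel; your cell-sign argument for $\Eu_{V,C,F}^{T'}=\Eu_{V,C,\langle C'\rangle}^{T'}\cdot\Eu_{V,C',F}^{T'}$ is correct and in fact more detailed than the paper, which merely asserts the corresponding identity. There is, however, a definitional slip that leaves a genuine (if small) gap: by \S\ref{subsubsection:restviapullbackpushforward}--\ref{subsubsection:restviapullback} the restriction is $\Res_F^C=\frac{\Eu_{\Fg,C,F}^{T'}}{\Eu_{V,C,F}^{T'}}\, i_{\langle C\rangle,F}^*$, not $\frac{1}{\Eu_{V,C,F}^{T'}}\, i_{\langle C\rangle,F}^*$; the numerator $\Eu_{\Fg,C,F}^{T'}$ arises because the pushforward along $q_{-C,F}$ is along a vector bundle \emph{stack} whose Euler class is $\Eu_{V,C,F}^{T'}/\Eu_{\Fg,C,F}^{T'}$ (the normalization you use is the torus-level $\overline{\Res}'$, not the $G$-equivariant $\Res$). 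As written, your computation therefore proves coassociativity for a differently normalized map; to conclude for the paper's $\Res$ you also need $\Eu_{\Fg,C,F}^{T'}=\Eu_{\Fg,C,\langle C'\rangle}^{T'}\cdot\Eu_{\Fg,C',F}^{T'}$, so that the full kernel $\Eu_{\Fg,\cdot,\cdot}^{T'}/\Eu_{V,\cdot,\cdot}^{T'}$ is multiplicative, which is exactly what the paper's proof uses.

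The repair is immediate: the weights of $\Fg$ are among the defining hyperplanes of the Coxeter complex, so each has constant sign on every cell, and your partition argument (split the weights negative on $C$ and vanishing on $F$ according to whether they vanish on $C'$) applies verbatim with $V$ replaced by $\Fg$, yielding the missing identity. With that addition — and noting that your $V$-identity also gives the divisibility $\Eu_{V,C',F}^{T'}\mid\Eu_{V,C,F}^{T'}$, which is what makes the composite land in $\tilde{\CH}_{G,V,f,C,F}^{T'}$ and justifies the extension you invoke from the remark before the statement — your argument coincides with the paper's proof.
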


\subsubsection{Mackey formula}
Mackey formula expresses the commutation rule between the induction and restriction functors. In order to write down the formula, we define the braiding morphisms.

Let $C\in\FC$ and $F\in\FF$. We define the kernel
\[
 k_{C,F}\coloneqq \frac{\prod_{\substack{\alpha\in\rmX^*(T\times T')\\\langle\alpha,\lambda\rangle<0\\\langle\alpha,\mu\rangle=0}}\alpha^{\dim V_{\alpha}}}{\prod_{\substack{\alpha\in\rmX^*(T\times T')\\\langle\alpha,\lambda\rangle<0\\\langle\alpha,\mu\rangle=0}}\alpha^{\dim \Fg_{\alpha}}}\in\Frac(\HO^*_{T}(\pt))\otimes_{\BoQ}\HO^*_{T'}(\pt)
\]
for $\lambda\in C$ and $\mu\in F\setminus \bigcup_{\substack{F'\in\FF\\F\not\subset F'}}F$. When we drop $F$ from the notation, this means that we take $F=\cap_{H\in\FH}H$. Note that the denominator belongs to $\HO^*_T(\pt)$ since $T'$ and $T$ commute.

\begin{definition}[=Definition~\ref{definition:braidingoperators}]
 Let $C,C'\in\FC$ be two cells generating the same flat: $\langle C\rangle=\langle C'\rangle$. Then, the braiding operator is defined as
 \[
 \begin{matrix}
  \tau_{C}^{C'}&\colon& \tilde{\CH}_{G,V,f,C}^{T'}&\rightarrow&\tilde{\CH}_{G,V,f,C'}^{T'}\\
  &&g&\mapsto& \frac{k_{C}}{k_{C'}}g\,.
 \end{matrix}
 \]
It is an isomorphism.

For a flat $F\in\FF$ such that $C,C'\preceq F$, we also define
\[
 \begin{matrix}
  \tau_{C,F}^{C',F}&\colon& \tilde{\CH}_{G,V,f,C,F}^{T'}&\rightarrow&\tilde{\CH}_{G,V,f,C',F}^{T'}\\
  &&g&\mapsto& \frac{k_{C,F}}{k_{C',F}}g\,.
 \end{matrix}
\]
It is an isomorphism.
\end{definition}

We fix a set of representatives $\{\dot{w}\colon w\in \sfW^{\langle C'\rangle}\backslash \sfW^{\langle C''\rangle}/W^{\langle C\rangle}\}\subset \sfW^{\langle C''\rangle}$ of the double cosets $\sfW^{\langle C'\rangle}\backslash \sfW^{\langle C''\rangle}/\sfW^{\langle C'\rangle}$. We can now state our main theorem.

\begin{theorem}[Mackey formula=Theorem~\ref{theorem:mackeyformula} and Corollary~\ref{corollary:Mackey3cells}]
\label{theorem:mainintro}
Let $(G,V,f)$ be a $G$-equivariant LG-model. Let $C,C'\preceq C''$ be three cells of the Coxeter complex of $(G,V)$. Then, we have
 \[
  \Res_{\langle C''\rangle}^{C'}\circ \Ind_{C}^{\langle C''\rangle}=\sum_{w\in \sfW^{\langle C'\rangle} \backslash \sfW^{\langle C''\rangle}/\sfW^{\langle C\rangle}}\Ind_{C'\circ(\dot{w}\cdot C)}^{\langle C'\rangle}\circ\tau_{(\dot{w}\cdot C)\circ C'}^{C'\circ(\dot{w}\cdot C)}\circ\Res_{\langle \dot{w}\cdot C\rangle}^{(\dot{w}\cdot C)\circ C'}\circ(\dot{w}\cdot-)\,.
 \]
\end{theorem}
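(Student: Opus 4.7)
The plan is to realise the composition $\Res_{\langle C''\rangle}^{C'}\circ\Ind_{C}^{\langle C''\rangle}$ as a single pull-push along the fibre product of the two geometric correspondences that define induction and restriction, and then to stratify this fibre product by a Bruhat-type decomposition of $G^{\langle C''\rangle}$ indexed by $\sfW^{\langle C'\rangle}\backslash\sfW^{\langle C''\rangle}/\sfW^{\langle C\rangle}$. The first reduction is to the case $f=0$: since $f$ is $G$-invariant, it is constant along the fibres of every smooth morphism between the Levi and parabolic quotient stacks involved in the two correspondences; vanishing cycles commute with smooth pullback (up to shift) and with proper pushforward along such morphisms, and the braiding operators $\tau$ act by multiplication by classes in $\Frac(\HO^*_T(\pt))\otimes\HO^*_{T'}(\pt)$ which also commute with $\varphi_f$. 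Hence any identity established for $f=0$ promotes automatically to critical cohomology with arbitrary $f$.

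For $f=0$, the induction $\Ind_{C}^{\langle C''\rangle}$ is realised by pull-push along the parabolic correspondence $V^{\langle C\rangle}/G^{\langle C\rangle}\leftarrow V^{C\geq 0,\langle C''\rangle}/G^{C\geq 0,\langle C''\rangle}\rightarrow V^{\langle C''\rangle}/G^{\langle C''\rangle}$, while $\Res_{\langle C''\rangle}^{C'}$ is the pullback along the closed immersion of quotient stacks $V^{\langle C'\rangle}/G^{\langle C'\rangle}\hookrightarrow V^{\langle C''\rangle}/G^{\langle C''\rangle}$ followed by division by $\Eu_{V,C',\langle C''\rangle}^{T'}$. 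Their composition is therefore pull-push along the fibre product of these two spans over $V^{\langle C''\rangle}/G^{\langle C''\rangle}$, twisted by $\Eu_{V,C',\langle C''\rangle}^{T',-1}$. At the group-theoretic level this fibre product is the double-quotient stack $P_{C'}\backslash G^{\langle C''\rangle}/P_{C}$ for the parabolics of $G^{\langle C''\rangle}$ with Levis $G^{\langle C'\rangle}$ and $G^{\langle C\rangle}$; the Bruhat decomposition stratifies it over $\sfW^{\langle C'\rangle}\backslash\sfW^{\langle C''\rangle}/\sfW^{\langle C\rangle}$. For a representative $\dot w$ of such a double coset, a direct computation identifies the corresponding stratum, $G^{\langle C'\rangle}$-equivariantly, with the parabolic stack $V^{C'\circ(\dot w\cdot C)\geq 0,\langle C'\rangle}/G^{C'\circ(\dot w\cdot C)\geq 0,\langle C'\rangle}$ which defines $\Ind_{C'\circ(\dot w\cdot C)}^{\langle C'\rangle}$; the $\dot w$-translation simultaneously accounts for the operator $(\dot w\cdot -)$ and for the exchange of $C$ with $\dot w\cdot C$ on the inner cell.

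The last and most delicate step is the bookkeeping of Euler classes. Two sources contribute: the normal bundle of each Bruhat stratum inside the ambient fibre product, which enters via equivariant localisation with respect to $T'$, and the shift of weight spaces when passing from $V^{C\geq 0}$ to $V^{(\dot w\cdot C)\geq 0}$. A weight-by-weight computation should show that the product of these two Euler factors, combined with the factor $\Eu_{V,C',\langle C''\rangle}^{T',-1}$ built into the restriction, collapses at the $\dot w$-stratum to $k_{(\dot w\cdot C)\circ C',\langle C'\rangle}/k_{C'\circ(\dot w\cdot C),\langle C'\rangle}$, which is exactly the braiding $\tau_{(\dot w\cdot C)\circ C'}^{C'\circ(\dot w\cdot C)}$; summing over double cosets then yields the right-hand side. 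The main obstacle is combinatorial rather than conceptual: one must verify, cone by cone in the Coxeter complex, that every weight of $V$ and of $\Fg$ appears in this Euler accounting exactly once and with the sign dictated by the half-space it sits in. This accounting is what forces the precise localising factor $\Eu_{V,C',\langle C''\rangle}^{T'}$ in the definition of the restriction, and is exactly what the braiding operators $\tau$ are engineered to absorb, in contrast with the naive restriction used in \cite{hennecart2024cohomological} without Euler twist, which did not satisfy a clean Mackey identity.
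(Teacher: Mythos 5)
Your overall strategy (compose the two correspondences, stratify the resulting double-coset space $P_{C'}\backslash G^{\langle C''\rangle}/P_{C}$ by Bruhat cells indexed by $\sfW^{\langle C'\rangle}\backslash \sfW^{\langle C''\rangle}/\sfW^{\langle C\rangle}$, and match each stratum with one term of the right-hand side) is a reasonable geometric alternative to the paper's argument, but as written it has two genuine gaps. First, the opening reduction to $f=0$ is not justified: the statement is an identity of operators on cohomology, and an identity verified on $\HO^*_{G\times T'}(V)$ (a polynomial ring) does not ``promote automatically'' to the critical cohomology $\HO^*_{G\times T'}(V,\varphi_f)$ just because pullback, pushforward and multiplication by Euler classes each commute with $\varphi_f$; to transfer the identity you would need it at the sheaf level (before taking global sections), whereas your stratification/localisation argument lives at the level of (localised) cohomology. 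The paper avoids this entirely: it proves the formula for arbitrary $f$ by passing to torus-equivariant critical cohomology via $\sfW$-invariants (Lemma~\ref{lemma:Winvariantpart}), using the shuffle description of the critical induction (Proposition~\ref{proposition:toruscomparison3d}) and the kernel description of the restriction, so that both sides become explicit sums of Weyl translates with $f$-independent rational coefficients; the identity then reduces to the double-coset bijection of Lemma~\ref{lemma:doublequotientdescription} together with the base-change identity \eqref{equation:mackeyreduction}, all of which hold for general $f$. If you want to keep your reduction, you must first establish these $f$-independent kernel descriptions for general $f$ and then argue (e.g.\ by faithfulness/linear independence of distinct Weyl translates on the fraction field) that the coefficient identity extracted from the $f=0$ case forces the operator identity in general -- none of which is in your proposal.

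Second, the mechanism by which the Bruhat stratification is supposed to produce an honest sum of contributions is not in place. The strata are only locally closed, so a priori you get a filtration with possible extension problems, and you cannot separate the strata by ``equivariant localisation with respect to $T'$'': the auxiliary torus $T'$ commutes with $G$ and acts on $V$, not on the flag directions, so it does not distinguish Bruhat cells. The Euler classes of the normal bundles to the strata are products of roots (weights of $\Fg$), and these are \emph{not} inverted by the localisation $[\Eu_{V,C',\langle C''\rangle}^{T',-1}]$ that the restriction introduces -- the only classes inverted there are weights of $V$. In the paper this problem never arises because the decomposition into double cosets is purely combinatorial (Lemma~\ref{lemma:doublequotientdescription}) applied to the shuffle formulas, and the root denominators occur only inside the induction kernels $k_{C,F}$, where they cancel after Weyl averaging. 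Finally, the ``weight-by-weight computation'' that each stratum's Euler factor collapses to the braiding kernel $k_{(\dot w\cdot C)\circ C',\langle C'\rangle}/k_{C'\circ(\dot w\cdot C),\langle C'\rangle}$ is precisely the heart of the theorem (it is the content of \eqref{equation:mackeyreduction} and the displayed kernel manipulations in the paper's proof), and you leave it as ``should show''; without it, and without a replacement for the stratification step, the proposal does not yet constitute a proof.
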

Implicit in this theorem is that the composition
\[
 \Ind_{C'\circ(\dot{w}\cdot C)}^{C'}\circ\tau_{(\dot{w}\cdot C)\circ C'}^{C'\circ(\dot{w}\cdot C)}\circ\Res_{\dot{w}\cdot C}^{(\dot{w}\cdot C)\circ C'}\circ(\dot{w}\cdot-)
\]
does not depend on the representative $\dot{w}\in \sfW^{C''}$ of $w\in  \sfW^{C'} \backslash \sfW^{C''}/\sfW^{C}$. It is also implicit that the induction $\Ind_{C'\circ (\dot{w}\cdot C)}^{\langle C'\rangle}$ extends canonically to the image of $\tau_{(\dot{w}\cdot C)\circ C'}^{C'\circ(\dot{w}\cdot C)}\circ\Res_{\dot{w}\cdot C}^{(\dot{w}\cdot C)\circ C'}\circ(\dot{w}\cdot-)$ for Mackey formula to be well-defined.

When the potential $f$ on $V$ vanishes, the critical cohomology appearing is just the singular cohomology of the stack $V/G$. The cohomology of $V/G$ can be identified with the Weyl group invariant polynomial functions on $\Fh=\Lie(T)$, where $T$ is a maximal torus of $G$. This case is very favorable to computations in terms of shuffle formulas. We provide several examples in \S\ref{section:examples}. Computations show that most of the theory of cohomological integrality for $V/G$ with trivial potential depends only on the Weyl group $\sfW$ of $G$ and the set of weights of the representation $V$. Indeed, as we show in \cite{hennecart2025integrality}, one can show a cohomological integrality isomorphism and a Mackey formula in a purely combinatorial situation given by a reflection group $\sfW$ with a reflection representation $\Fh$ and a finite collection of ``weights'' $\CW\subset \Fh^*$ (i.e. a finite $\sfW$-invariant collection of elements of $\Fh^*$).

\subsection{Connection to other works}
The localized coproduct for cohomological Hall algebras of quivers has been defined in \cite{davison2017critical}. When $Q$ is a quiver, $V=X_{Q,\dd}$ is the representation space of $\dd$-dimensional representations of $Q$ which is a representation of the product of general linear group $G_{\dd}$, the restriction morphisms we define in the present paper coincide with the coproduct of Davison. More recently, the localised coproduct has been defined by Samuel Dehority and Alexei Latyntsev \cite{dehority2025orthosymplectic} for orthosymplectic cohomological Hall algebras, which in our language would correspond to a class of representations of products of orthogonal and symplectic groups.

\subsection{Further directions}
Recent work of Bu, Davison, Ib{\'a}{\~n}ez N{\'u}{\~n}ez, Kinjo, P{\u{a}}durariu \cite{bu2025cohomology} introduce a global way to think of the induction morphisms for the cohomology of smooth stacks, the critical cohomology of $(-1)$-shifted symplectic stacks and the Borel--Moore homology of $0$-shifted symplectic stacks. It would be a very interesting direction to define restriction morphisms for this class of stacks and write down explicitly the Mackey formula ruling the commutation between the induction and restriction morphisms. The present paper, by considering the local case of quotient stacks of the form $V/G$ for $V$ a representation of a reductive group $G$, should give insight on how to generalize Mackey formula to global stacks, using the component lattice of stacks as being defined by Bu, Halpern-Leistner, Ib{\'a}{\~n}ez N{\'u}{\~n}ez and Kinjo in \cite{bu2025intrinsic}, see also \cite{bu2025cohomology} for the definitions.

The localized coproduct for cohomological Hall algebras is used in a crucial way in the proof of Okounkov's conjecture \cite{botta2023okounkov}, also proved in \cite{schiffmann2023cohomological}, to identify the BPS cohomology. We expect the same kind of phenomenon for more general stacks of the form $\Tan^*V/G$ where $V$ is a representation of a reductive group $G$. Moreover, Mackey formula should play a role to show that a certain induction procedure preserves the BPS cohomology and more generally, to study the BPS cohomology of stack defined in \cite{bu2025cohomology}.

\subsection{Organization of the paper}
In \S\ref{section:equcoh-mhm}, we recall the basics regarding monodromic mixed Hodge modules and vanishing cycle sheaf functors. In \S\ref{section:Coxetercomplex}, we present the Coxeter complex relevant to our study and the operations on it that we will use (Weyl group action and Tits product). In \S\ref{section:inductionsystem}, we define the induction morphisms in critical equivariant cohomology in two different ways, directly and via the torus equivariant cohomology. In \S\ref{section:2dinductionsystem}, we define an alternative induction system associated to a $G$-equivariant map between two representations of $G$. This alternative situation is called the ``$2d$-situation''. We relate it to the former situation of critical induction systems via dimensional reduction. \S\ref{section:restrictionmorphism} is devoted to the definition of the localized restriction morphisms on the critical cohomological system. There are again various approaches to this definition, one using the torus equivariant cohomology and one direct definition. The heart of this paper lies in \S\ref{section:cohomologicalMackeyformula}, where a proof of the cohomological Mackey formula is given. This formula is proven using the torus equivariant definitions of the induction and restriction morphisms. In \S\ref{section:comparisoninductionsystems}, we compare the induction systems one obtains on a representation $V$ for various $G$-invariant functions on $V$. We also compare the $2d$-induction systems associated to representations $V,W$ of $G$ when the function $\mu\colon V\rightarrow W$ varies. Finally, \S\ref{section:examples} illustrates Mackey's formula for several choices of reductive groups with representations, when the function $f$ is identically $0$ (when calculations are very explicit in terms of symmetric polynomials).

\subsection{Acknowledgements}
I would like to thank Andrés Ib{\'a}{\~n}ez N{\'u}{\~n}ez for discussions related to extension to general stacks of the Mackey formula presented here, and also for pointing out the reference \cite{dehority2025orthosymplectic} in which the localized restriction for orthosymplectic cohomological Hall algebras is defined. I would also like to thank IPMU for the excellent working conditions during a visit in March 2025. I would like to thank Olivier Schiffmann for discussions related to Mackey formula in Lyon in November 2024. It will be clear to the reader that this paper draws much of its inspiration in \cite{davison2017critical} and I would like to thank Ben Davison for his support for many years.

\subsection{Conventions and Notations}
\label{subsection:conventions}
\begin{enumerate}
 \item We denote by $\HO^*_{\BoC^*}=\HO^*_{\BoC^*}(\pt,\BoQ)$ the $\BoC^*$-equivariant cohomology of the point.
 \item When $X$ is a $G$-variety and $f\colon X\rightarrow\BoC$, we only use the perverse $t$-exact shifted/twisted vanishing cycle functor $\varphi_f$.
 \item The letters $V,W$ will usually denote representations of a reductive group $G$.
 \item Given a reductive group $G$ and a maximal torus $T\subset G$, we denote by $\sfW$ the Weyl group.
 \item All cohomology spaces considered come with natural mixed Hodge structures. When taking derived global sections, we make the abuse of hiding the Tate twists that appear, for example in the induction morphisms, to lighten the notation. When working at the level of constructible sheaves, this does not cause any harm. To understand the morphisms at the level of mixed Hodge structures, there are shifts to incorporate. We illustrate this on the induction morphisms, hoping this will allow the reader to work out the various Tate twists, in \S\ref{subsection:inductionmhs}.
\end{enumerate}

\section{Equivariant cohomology and monodromic mixed Hodge modules}
\label{section:equcoh-mhm}

\subsection{Cohomology of flag varieties}
\label{subsection:cohflagvar}
We recall the following classical proposition regarding the cohomology of flag varieties.
\begin{proposition}
 \label{proposition:cohflagvarieties}
 Let $G$ be a reductive algebraic group, $B$ a Borel subgroup and $T\subset B$ a maximal torus. We let $\sfW\coloneqq N_G(T)/T$ be the Weyl group of $G$. Then, we have actions of $\sfW$ on $\HO^*(G/B,\BoQ)$ and $\HO^*(G/T,\BoQ)$ compatible with the isomorphism $\HO^*(G/B,\BoQ)\rightarrow\HO^*(G/T,\BoQ)$ given by the pullback map and
 \[
  \HO^*(G/T,\BoQ)^{\sfW}\cong \BoQ\cong \HO^*(G/B,\BoQ)^{\sfW}\,.
 \]
\end{proposition}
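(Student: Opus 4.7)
The plan is to verify this proposition by assembling several classical facts about flag varieties, in three stages: the pullback isomorphism, the compatibility of the $\sfW$-actions, and the computation of the invariants.

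First I would address the pullback isomorphism. The natural projection $\pi\colon G/T\to G/B$ is a Zariski-locally trivial fibration whose fiber is $B/T\cong U$, the unipotent radical of $B$, which is isomorphic as a variety to affine space. Hence $\pi$ is a rational homology equivalence, giving $\pi^*\colon\HO^*(G/B,\BoQ)\isoto\HO^*(G/T,\BoQ)$.

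Next I would define the geometric $\sfW$-action on $G/T$ by right multiplication: for $n\in N_G(T)$ and $g\in G$, set $[g]\cdot[n]=[gn]$. This is well-defined on $G/T$ because $n^{-1}Tn=T$, and it descends to $\sfW=N_G(T)/T$ since $T$ itself acts trivially. This induces the $\sfW$-action on $\HO^*(G/T,\BoQ)$, and I transport it to $\HO^*(G/B,\BoQ)$ via the pullback isomorphism, which by construction gives the stated compatibility.

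For the computation of $\sfW$-invariants, I would use Borel's description of the cohomology of flag varieties. The fibration $G/T\to \rmB T\to \rmB G$, together with Borel's theorem $\HO^*(\rmB G,\BoQ)\cong\HO^*(\rmB T,\BoQ)^{\sfW}$, identifies $\HO^*(G/T,\BoQ)$ with the coinvariant algebra $\HO^*(\rmB T,\BoQ)\otimes_{\HO^*(\rmB T,\BoQ)^{\sfW}}\BoQ$. By the Chevalley--Shephard--Todd theorem, this coinvariant algebra is isomorphic as a graded $\sfW$-module to the regular representation, so its space of invariants is one-dimensional, concentrated in degree $0$, and canonically equal to $\BoQ$.

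The main (mild) obstacle is to check that the algebraic $\sfW$-action on the coinvariant algebra (coming from the $\sfW$-action on $\HO^*(\rmB T,\BoQ)$) matches the geometric $\sfW$-action on $\HO^*(G/T,\BoQ)$ coming from right multiplication by $N_G(T)$. This compatibility is classical but requires unwinding the Leray--Hirsch argument; alternatively, one checks that both actions agree on the $\HO^*(\rmB T,\BoQ)$-module structure and on the Chern classes of the line bundles $G\times_T\BoC_{\chi}$ for $\chi\in\rmX^*(T)$, which generate $\HO^*(G/T,\BoQ)$.
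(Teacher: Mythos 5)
Your proposal is correct, and it follows essentially the route the paper has in mind: the paper states this as a classical recollection without a formal proof, noting only that $G/T\rightarrow G/B$ is an affine bundle (so pullback is an isomorphism), that $\sfW$ acts on $G/T$ and hence on both cohomologies, and that $\HO^*(G/B,\BoQ)$ is in fact the regular representation of $\sfW$. Your argument simply fleshes out this classical picture — affine fibration, right $N_G(T)$-action, Borel's theorem identifying $\HO^*(G/T,\BoQ)$ with the coinvariant algebra (the regular representation, whence one-dimensional invariants), plus the standard check via Chern classes of the line bundles $G\times_T\BoC_{\chi}$ that the geometric and algebraic $\sfW$-actions agree.
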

We will not use this in this paper, but the cohomology $\HO^*(G/B,\BoQ)$ with the action of $\sfW$ is the regular representation of $\sfW$, which generalizes Proposition~\ref{proposition:cohflagvarieties}. Note that the morphism $G/T\rightarrow G/B$ is an affine bundle, and so the pullback $\HO^*(G/B)\rightarrow\HO^*(G/T)$ is an isomorphism. Moreover, $\sfW$ acts on $G/T$ and so on $\HO^*(G/T)$ and also on $\HO^*(G/B)$ but it does not act directly on $\HO^*(G/B)$.

\subsection{Equivariant cohomology and homology}
\label{subsection:equcoh}
We recall \cite[Proposition~6]{edidin1998equivariant}.
\begin{proposition}
 \label{proposition:weylgroupBorelMoore}
Let $G$ be a reductive algebraic group acting on an complex algebraic variety $X$. Then, $\HO^{\rmBM}_{*,G}(X,\BoQ)\cong\HO^{\rmBM}_{*,T}(X,\BoQ)^{\sfW}$, and this is an isomorphism of mixed Hodge structures (when appropriate Tate twists are applied).
\end{proposition}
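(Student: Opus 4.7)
The plan is to exploit the natural $G/T$-bundle $\pi\colon[X/T]\to[X/G]$ together with the cohomological facts about $G/T$ recalled in Proposition~\ref{proposition:cohflagvarieties}. First, I would pick an algebraic approximation of the Borel construction: a representation $V$ of $G$ together with a $G$-invariant open $U\subset V$ on which $G$ acts freely and whose complement has arbitrarily high codimension. Setting $X_G\coloneqq(X\times U)/G$ and $X_T\coloneqq(X\times U)/T$, the quotient morphism $\pi\colon X_T\to X_G$ is a Zariski-locally trivial fibration with fiber $G/T$, smooth and proper of relative complex dimension $d=\dim_{\BoC}G/T$. The right action of $N_G(T)$ on $G/T$ induces an action of $\sfW=N_G(T)/T$ on $X_T$ over $X_G$, and up to degrees controlled by the codimension of $V\setminus U$, the spaces $X_G$ and $X_T$ compute $\HO^{\rmBM}_{*,G}(X,\BoQ)$ and $\HO^{\rmBM}_{*,T}(X,\BoQ)$ respectively.

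Next, I would construct the comparison map. Since $\pi$ is smooth of constant relative dimension, smooth pullback yields a shifted Gysin map $\pi^!\colon\HO^{\rmBM}_{*,G}(X,\BoQ)\to\HO^{\rmBM}_{*+2d,T}(X,\BoQ)(d)$, which is a morphism of mixed Hodge structures; the $\sfW$-equivariance of $\pi$ forces its image to land in $\sfW$-invariants. To see that this map is an isomorphism onto its invariants, I would apply the decomposition theorem for the smooth proper map $\pi$: one gets
\[
R\pi_*\BoQ_{X_T}\cong\bigoplus_{q\geq 0}(R^q\pi_*\BoQ_{X_T})[-q].
\]
Because $G/T$ is simply connected (it is homotopy-equivalent to $G/B$ via the affine bundle $G/T\to G/B$, and $G/B$ has a cell decomposition by Bruhat cells of even real dimension), each local system $R^q\pi_*\BoQ$ is constant with fiber $\HO^q(G/T,\BoQ)$ as a $\sfW$-representation. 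Taking $\sfW$-invariants of the induced Leray spectral sequence
\[
E_2^{p,q}=\HO^p_G\!\bigl(X,\HO^q(G/T,\BoQ)\bigr)\Longrightarrow\HO^{p+q}_T(X,\BoQ),
\]
and using exactness of $(-)^{\sfW}$ on rational representations together with Proposition~\ref{proposition:cohflagvarieties} which gives $\HO^q(G/T,\BoQ)^{\sfW}=0$ for $q>0$ and $\BoQ$ for $q=0$, the invariant spectral sequence collapses onto the $q=0$ row and yields $\HO^*_G(X,\BoQ)\cong\HO^*_T(X,\BoQ)^{\sfW}$.

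Finally, I would transfer the statement from equivariant cohomology to equivariant Borel--Moore homology. The simplest way is to replay the entire argument with the dualizing complex: apply $\pi^!$ to $\omega_{X_G}$, use that $\pi^!\omega_{X_G}=\omega_{X_T}[-2d](-d)$ since $\pi$ is smooth, and observe that the decomposition theorem for $R\pi_*\omega_{X_T}$ is governed by the same local systems (Verdier dual to the previous case, and $\HO^*(G/T)$ is self-dual as a graded $\sfW$-module up to a shift by $2d$). The $\sfW$-invariant Leray spectral sequence again collapses onto a single row, producing the desired isomorphism of mixed Hodge structures once the Tate twists coming from $\pi^!$ and Verdier duality on $G/T$ are accounted for.

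The main obstacle is the compatibility bookkeeping: ensuring that the geometric $\sfW$-action on the local system $R^q\pi_*\BoQ$ coincides with the intrinsic $\sfW$-action on $\HO^q(G/T,\BoQ)$ used in Proposition~\ref{proposition:cohflagvarieties}, and that all of the morphisms (Gysin pullback, decomposition theorem splitting, $\sfW$-averaging) are morphisms of mixed Hodge structures with the correct Tate twists. Both points are standard once phrased in the language of equivariant derived categories or $G$-equivariant mixed Hodge modules on $X$, so the content of the argument really reduces to the structural statement $\HO^*(G/T,\BoQ)^{\sfW}=\BoQ$ already recorded above.
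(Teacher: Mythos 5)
Your proposal follows essentially the same route as the paper's own (sketched) proof: exploit the $G/T$-fibration $X/T\to X/G$ (here via Borel approximations), the constancy of the relevant (dualizing) sheaf along the fibers, and the fact from Proposition~\ref{proposition:cohflagvarieties} that $\HO^*(G/T,\BoQ)^{\sfW}\cong\BoQ$, with the paper deferring the remaining details to Edidin--Graham. One small repair: the constancy of the local systems $R^q\pi_*\BoQ$ does not follow from simple connectivity of the fiber $G/T$ but from connectedness of the structure group $G$, which acts homotopically trivially on $G/T$ and hence trivially on $\HO^q(G/T,\BoQ)$; this is standard and does not affect the argument.
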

\begin{proof}[Sketch of proof]
The morphism of stacks $p\colon X/T\rightarrow X/G$ is a $G/T$ fibration and in particular is smooth. The dualizing sheaf $\BD\BoQ_{X/G}$ is constant along the fibers of the projection, whose cohomology carry the regular representation of the Weyl group $\sfW$ by Proposition~\ref{proposition:cohflagvarieties}.
\end{proof}
The ``appropriate Tate twists'' of Proposition~\ref{proposition:weylgroupBorelMoore} can be deduced from the isomorphism $p^*\BD\BoQ_{X/G}\cong \BD\BoQ_{X/T}\otimes\SL^{\dim G/T}$ where the Tate twist is recalled in \S\ref{subsection:Tatetwist}.

\subsection{Monodromic mixed Hodge modules}
\label{subsection:monmhm}
In this paper, we work with (complexes of) monodromic mixed Hodge modules, which one might understand in first approximation as an enhancement of constructible sheaves on a complex algebraic variety. We refer to \cite{davison2020cohomological} for more details regarding the formalism of (equivariant) monodromic mixed Hodge modules that we use.

Let $X$ be a complex algebraic variety and $G$ a linear algebraic group acting on $X$. We denote by $\MHM_G(X)$ the Abelian category of $G$-equivariant mixed Hodge modules on $X$. We denote by $\MMHM_G(X)$ the Abelian category of $G$-equivariant monodromic mixed Hodge modules on $X$. We let $\CD_G(\MHM(X))$ be the derived category of $G$-equivariant mixed Hodge modules and $\CD_G(\MMHM(X))$ the derived category of $G$-equivariant monodromic mixed Hodge modules, which may be constructed as in \cite{achar2013equivariant}. We denote by $\CD_{\rmc,G}(X,\BoQ)$ the category of $G$-equivariant constructible complexes of $\BoQ$-vector spaces on $X$ and $\Perv_G(X)$ the Abelian category of $G$-equivariant perverse sheaves. There is a triangulated faithful functor
\[
 \rat\colon \CD_G(\MMHM(X))\rightarrow\CD_{\rmc,G}(X,\BoQ)
\]
which restricts to a functor $\rat\colon\MMHM_G(X)\rightarrow\Perv_G(X)$ between Abelian categories.

\subsection{Vanishing cycles functors}
\label{subsection:vanishingcycle}
Given a $G$-invariant regular function $f\colon X\rightarrow \BoC$ on an algebraic variety $X$, there is an exact \emph{vanishing cycle} functor $\varphi_f\colon\CD_G(\MHM(X))\rightarrow\CD_G(\MMHM(X))$. We let $\imath\colon f^{-1}(0)\rightarrow X$ be the inclusion of the zero locus of $f$. There is a distinguished triangle
\begin{equation}
\label{equation:distinguishedtrianglevanishingnearby}
 \varphi_f\rightarrow \imath_*\imath^*\rightarrow\psi_f\rightarrow
\end{equation}
where $\psi_f$ is the \emph{nearby cycle functor}. We emphasize the fact that the vanishing cycle functor is considered with the right shift by $1$, so that it sends mixed Hodge modules to monodromic mixed mixed Hodge modules, and perverse sheaves to perverse sheaves.

\subsection{Functoriality of vanishing cycles}
\label{subsection:vanishingcyclesfunctoriality}
Let $h\colon\FX\rightarrow\FY$ be a morphism between algebraic stacks and $f\colon\FX\rightarrow\BoA^1$ a regular function. For the purposes of this paper, one may assume that $h$ is a morphism between quotient stacks $h\colon X/G\rightarrow Y/H$ coming from a $G-H$-equivariant morphism $X\rightarrow Y$. Then, there are natural transformations $\varphi_f\circ h_*\rightarrow h_*\varphi_{f\circ h}$, $h_!\varphi_{f\circ h}\rightarrow \varphi_{f}h_!$ and their duals involving pullbacks instead pushforwards. The composition $\varphi_{f}\rightarrow \varphi_{f}h_*h^*\rightarrow h_*\varphi_{f\circ h} h^*$ gives, when applied to the constant sheaf and after taking derived global sections, the pullback morphism in vanishing cycle cohomology
\begin{equation}
\label{equation:pbvanishingcyclecoh}
 \HO^*(\FY,\varphi_f\BoQ_{\FY})\rightarrow\HO^*(\FX,\varphi_{f\circ h}\BoQ_{\FX})\,.
\end{equation}
The proper pushforward in vanishing cycle cohomology is defined for $h$ proper and representable by taking the derived global sections of the morphism of complexes
\[
 h_!\varphi_{f\circ h}h^!\BD\BoQ_{\FY}\rightarrow\varphi_fh_!h^!\BD\BoQ_{\FY}\rightarrow\varphi_f\BD\BoQ_{\FY}
\]
which gives a morphism
\[
 h_*\varphi_{f\circ h}\BD\BoQ_{\FX}\rightarrow \varphi_f\BD\BoQ_\FY
\]
by properness of $h$, and so $\HO^*(\FX,\varphi_{f\circ h}\BD\BoQ_{\FX})\rightarrow\HO^*(\FY,\varphi_f\BD\BoQ_{\FY})$, the pushforward in vanishing cycle cohomology.

\subsection{Tate twist}
\label{subsection:Tatetwist}
We let $\SL\in\MHM(\pt)$ be the Tate twist. It is given by the mixed Hodge structure $\HO^*_{\rmc}(\BoA^1,\BoQ)$. It is a pure weight zero complex of mixed Hodge structures. It is concentrated in cohomological degree $2$. Working with monodromic mixed Hodge modules gives a square-root of $\SL$, given by $\SL^{1/2}\coloneqq\HO^*_{\rmc}(\BoA^1,\varphi_{x^2}\BoQ_{\BoA^1})$, where $x^2\colon\BoA^1\rightarrow\BoA^1$ is the square function. It is pure, concentrated in cohomological degree $1$. We refer to \cite{davison2020cohomological} for more details.

\subsection{Virtual pullback in Borel--Moore homology}
\label{subsection:virtualpullbackBM}
We recall a particular instance of virtual pullback in Borel--Moore homology that we will use later to define the $2d$-induction system \S\ref{section:2dinductionsystem}.

Let $g\colon X\rightarrow Y$ be a morphism between smooth algebraic varieties, and $f\colon T\rightarrow Y$ a morphism. We form the pullback square
\[
 \begin{tikzcd}
  {U}&{T}\\
  {X}&{Y}
  \arrow["g'",from=1-1, to=1-2]
  \arrow[from=1-1, to=2-1]
  \arrow["g",from=2-1, to=2-2]
  \arrow["f",from=1-2, to=2-2]
  \arrow["\ulcorner"{anchor=center, pos=0.125, rotate=180}, draw=none,from=1-1, to=2-2]
 \end{tikzcd}
\]
The pullback in cohomology $\HO^*(Y,\BoQ)\rightarrow\HO^*(X,\BoQ)$ is induced by the adjunction map
\[
 \BoQ_Y\rightarrow g_*\BoQ_X\,.
\]
By applying $f^!$ and base-change, we obtain the morphism of complexes
\begin{equation}
\label{equation:virtualpullback}
 \BD\BoQ_{T}\otimes\SL^{\dim Y}\rightarrow (g')_*\BD\BoQ_U\otimes\SL^{\dim X}
\end{equation}

\begin{definition}
\label{definition:virtualpullbackBMhomology}
The virtual pullback $g^!\colon\HO^{\rmBM}(T,\BoQ)\rightarrow\HO^{\rmBM}(U,\BoQ)$ is defined as the derived global section functor applied to the morphism of complexes \eqref{equation:virtualpullback}. It shifts cohomological degree by twice the codimension of $g$.
\end{definition}
The virtual pullback depends on the ambient smooth schemes $X,Y$. It does not respect the homological degrees due to the presence of Tate twists.

\begin{proposition}[Comparison of virtual pullbacks -- no excess intersection bundle]
\begin{equation}
\label{equation:comparisonvirtualpullbacks}
  \begin{tikzcd}
  {U}&{T}\\
  {X}&{Y}\\
  {\tilde{X}}&{\tilde{Y}}
  \arrow["g'",from=1-1, to=1-2]
  \arrow[from=1-1, to=2-1]
  \arrow["g",from=2-1, to=2-2]
  \arrow["f",from=1-2, to=2-2]
  \arrow["\ulcorner"{anchor=center, pos=0.125, rotate=180}, draw=none,from=1-1, to=2-2]
  \arrow["\tilde{g}",from = 3-1, to = 3-2]
  \arrow[from = 2-1, to=3-1]
  \arrow["f'",from = 2-2, to = 3-2]
 \end{tikzcd}
\end{equation}
be a commutative diagram with Cartesian squares such that $g$ and $\tilde{g}$ have the same codimension, i.e. $\dim Y-\dim X=\dim \tilde{Y}-\dim \tilde{X}$ (we say that the bottom square of \eqref{equation:comparisonvirtualpullbacks} \emph {has no excess intersection bundle}). Then, the virtual pullback morphisms \eqref{equation:virtualpullback}
\[
 \BD\BoQ_T\rightarrow (g')_*\BD\BoQ_U\otimes\SL^{\dim X-\dim Y}
\]
obtained from $g$ and $\tilde{g}$ coincide.
\end{proposition}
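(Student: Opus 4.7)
The plan is to unpack both virtual pullback maps and identify them via base change along the bottom Cartesian square of~\eqref{equation:comparisonvirtualpullbacks}. Since both inner squares of~\eqref{equation:comparisonvirtualpullbacks} are Cartesian, the outer rectangle with vertices $U,T,\tilde{X},\tilde{Y}$ is also Cartesian, and the virtual pullback~\eqref{equation:virtualpullback} associated to $\tilde{g}$ is constructed from the composite $f'\circ f\colon T\to\tilde{Y}$, exactly as the one for $g$ is constructed from $f\colon T\to Y$. By construction, each is obtained by applying the upper-shriek functor, either $f^!$ or $(f'\circ f)^!=f^!\circ(f')^!$, to the adjunction unit $u_g\colon\BoQ_Y\to g_*\BoQ_X$, respectively $u_{\tilde g}\colon\BoQ_{\tilde Y}\to\tilde g_*\BoQ_{\tilde X}$, combined with base change for the relevant Cartesian square and the Poincar\'e duality identifications $\BoQ_A\cong\BD\BoQ_A\otimes\SL^{-\dim A}$ available on each smooth $A\in\{X,Y,\tilde X,\tilde Y\}$. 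The task therefore reduces to comparing $f^!(u_g)$ with $f^!\bigl((f')^!(u_{\tilde g})\bigr)$ as morphisms of complexes on $T$.

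The key compatibility is that $(f')^!(u_{\tilde g})$ coincides with $u_g$ up to a uniform Tate twist. On the source, smoothness of $Y$ and $\tilde Y$ gives $(f')^!\BoQ_{\tilde Y}\cong\BoQ_Y\otimes\SL^{\dim Y-\dim\tilde Y}$. On the target, Verdier duality combined with the base change $(f')^!\tilde g_*\cong g_*(f'')^!$ for the bottom Cartesian square (which is automatic for Cartesian squares in the constructible derived category, being the Verdier dual of proper base change) and with the smoothness of $X$ and $\tilde X$ yields $(f')^!\tilde g_*\BoQ_{\tilde X}\cong g_*\BoQ_X\otimes\SL^{\dim X-\dim\tilde X}$. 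The no-excess-intersection hypothesis $\dim Y-\dim X=\dim\tilde Y-\dim\tilde X$ amounts exactly to $\SL^{\dim Y-\dim\tilde Y}=\SL^{\dim X-\dim\tilde X}$, so that both the source and the target of $(f')^!(u_{\tilde g})$ receive the same Tate shift; by compatibility of the adjunction unit with base change for the bottom Cartesian square, $(f')^!(u_{\tilde g})$ is then identified with $u_g\otimes\SL^{\dim Y-\dim\tilde Y}$.

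Applying $f^!$ to this identity and invoking base change $f^!g_*\cong(g')_*(f')^!$ for the top Cartesian square converts the equality of adjunction units into the equality of the two maps~\eqref{equation:virtualpullback}; the residual common Tate twist $\SL^{\dim Y-\dim\tilde Y}=\SL^{\dim X-\dim\tilde X}$ is precisely what is absorbed by the normalization in~\eqref{equation:virtualpullback}, which for $\tilde g$ uses $\SL^{\dim\tilde Y}$ and $\SL^{\dim\tilde X}$ in place of $\SL^{\dim Y}$ and $\SL^{\dim X}$. The main obstacle is the careful bookkeeping of Tate twists in the passage from $*$-pullback to $!$-pullback on the pushforward sheaf $\tilde g_*\BoQ_{\tilde X}$: it is precisely the no-excess-intersection hypothesis that makes this passage produce a single uniform Tate shift with no Euler class correction coming from an excess normal bundle.
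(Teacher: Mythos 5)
Your proposal is correct and follows essentially the same route as the paper: reduce to showing that $(f')^!$ applied to the adjunction unit $\BoQ_{\tilde Y}\to\tilde g_*\BoQ_{\tilde X}$ recovers the unit $\BoQ_Y\to g_*\BoQ_X$ up to a Tate twist, using the base-change isomorphism for the bottom Cartesian square and the smoothness of $X,Y,\tilde X,\tilde Y$ (so all dualizing complexes are Tate twists of constant sheaves), with the no-excess hypothesis ensuring the twists on source and target agree. Your write-up just spells out the Tate-twist bookkeeping slightly more explicitly than the paper does.
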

\begin{proof}
 It suffices to show that $f'^!$ applied to the pullback morphism $\BoQ_{\tilde{Y}}\rightarrow \tilde{g}_*\BoQ_{\tilde{X}}$ gives the pullback morphism $\BoQ_{Y}\rightarrow g_*\BoQ_X$ up to a Tate twist. This follows by base-change and the fact that $X,Y,\tilde{X},\tilde{Y}$ are smooth and so their dualizing complexes are Tate twists of the constant sheaves.
\end{proof}

\subsection{Cohomological dimensional reduction isomorphism}
\label{subsection:dimrediso}

\subsubsection{Dimensional reduction}
We recall the dimensional reduction isomorphism in its classical version, which relates the vanishing cycles sheaf for a given function to the dualizing sheaf of an other algebraic variety. The refer to \cite[Appendix~A]{davison2017critical} for the cohomological dimensional reduction isomorphism we recall here. A deformed version of dimensional reduction has been given in \cite{davison2022deformed} while a global version of dimensional reduction for shifted cotangent bundles is proven in \cite{kinjo2022dimensional}. Dimensional reduction for rapid decay cohomology is introduced in \cite{kontsevich2011cohomological}.

\begin{theorem}[{\cite[Theorem~A.1]{davison2017critical}}]
\label{theorem:cohdimrediso}
Let $X$ be an algebraic variety, $\pi\colon V\rightarrow X$ (the total space of) a vector bundle on $X$, $s\colon X\rightarrow V^{\vee}$ a section of the dual vector bundle $\pi^{\vee}$ and $f\colon V\rightarrow\BoC$ the regular function on $V$ obtained by contraction with the section $s^{\vee}$. We let $Z\coloneqq s^{-1}(0)\subset X$,  $\overline{Z}\coloneqq \pi^{-1}(Z)\subset V$. We let $V_0\coloneqq f^{-1}(0)\subset V$ and $\imath\colon V_0\rightarrow V$ be the natural closed immersion. We have $\overline{Z}\subset V_0$ and we denote by $\imath'\colon \overline{Z}\rightarrow V_0$ the closed immersion. We let $\overline{\imath}\coloneqq \imath\circ\imath'$. There is a morphism of functors
\[
 \id\rightarrow \overline{\imath}_*\overline{\imath}^*\,.
\]
Then, the morphism of functors
\[
 \pi_!\varphi_f(\id\rightarrow\overline{\imath}_*\overline{\imath^*})\pi^*
\]
is an isomorphism.
\end{theorem}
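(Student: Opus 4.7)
The plan is to recast the statement as the vanishing of a cone. The open--closed decomposition $V=\overline{Z}\sqcup\pi^{-1}(U)$, with $U\coloneqq X\setminus Z$, yields a distinguished triangle $\overline{\jmath}_!\overline{\jmath}^*\to\id\to\overline{\imath}_*\overline{\imath}^*\xrightarrow{+1}$, where $\overline{\jmath}\colon\pi^{-1}(U)\into V$ is the open immersion of the complement of $\overline{Z}$. Applying $\pi_!\varphi_f(-)\pi^*\mathcal{F}$ reduces the theorem to the vanishing
\[
 \pi_!\varphi_f \overline{\jmath}_!\overline{\jmath}^*\pi^*\mathcal{F}=0.
\]
Since $\pi\circ\overline{\jmath}=j_U\circ\pi_U$, where $\pi_U\colon\pi^{-1}(U)\to U$ is the restriction of $\pi$ and $j_U\colon U\into X$ is the open immersion, the functoriality of $!$-pushforward and $*$-pullback, combined with the compatibility $\varphi_f\overline{\jmath}_!\cong\overline{\jmath}_!\varphi_{f_U}$ of vanishing cycles with open immersions (where $f_U\coloneqq f|_{\pi^{-1}(U)}$), rewrite the left-hand side as $(j_U)_!(\pi_U)_!\varphi_{f_U}\pi_U^*(j_U)^*\mathcal{F}$, so it suffices to show $\varphi_{f_U}\pi_U^*=0$ as a functor.

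The main geometric input is local on $U$. Since $s$ is nowhere zero on $U$, the section $s|_U$ defines an inclusion of a trivial line sub-bundle of $V^{\vee}|_U$, and dually a surjection $V|_U\onto U\times\BA^1$ whose composition with the projection to $\BA^1$ is precisely $f_U$. Zariski-locally on $U$ we may split $V|_U\cong V'|_U\oplus(U\times\BA^1)$; in these coordinates $\pi_U$ is the first projection and $f_U$ is the coordinate on the $\BA^1$-factor. Consequently $\pi_U^*\mathcal{G}$ is locally of the form $p^*\mathcal{H}$ with $p\colon V'|_U\times\BA^1\to V'|_U$ the projection, i.e.\ constant in the $\BA^1$-direction. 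The standard vanishing $\varphi_q\BoQ_{\BA^1}=0$ for the identity function $q$, together with the Künneth-type computation $\psi_{f_U}(p^*\mathcal{H})\cong\mathcal{H}$ with trivial monodromy, then yields $\varphi_{f_U}(p^*\mathcal{H})=0$ locally, and hence globally on $\pi^{-1}(U)$.

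The main technical obstacle is making this last step precise: one must verify that, under the product decomposition, the natural comparison map $i^*(p^*\mathcal{H})=\mathcal{H}\to\psi_{f_U}(p^*\mathcal{H})=\mathcal{H}$ really is the identity, so that the vanishing cycles, being its cone, vanish. This is a standard application of proper/smooth base change for nearby cycles, or can be read off from the Künneth formula for $\psi$ on a product. The entire argument lifts without change to the setting of (equivariant) monodromic mixed Hodge modules thanks to the six-functor formalism recalled in \S\ref{subsection:monmhm}--\S\ref{subsection:vanishingcyclesfunctoriality}.
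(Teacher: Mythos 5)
The key step of your argument fails. Your reduction to showing $\pi_!\varphi_f\overline{\jmath}_!\overline{\jmath}^*\pi^*\mathcal{F}=0$ is correct, but the claimed ``compatibility $\varphi_f\overline{\jmath}_!\cong\overline{\jmath}_!\varphi_{f_U}$ of vanishing cycles with open immersions'' is not a valid general principle: vanishing cycles commute with open \emph{restriction} $\overline{\jmath}^*$ and with \emph{proper} pushforward, but for an open immersion there is only a natural map $\overline{\jmath}_!\varphi_{f_U}\to\varphi_f\overline{\jmath}_!$, whose cone measures precisely the vanishing cycles created along the boundary $\overline{Z}$. In the present situation this map is $0\to\varphi_f\overline{\jmath}_!\overline{\jmath}^*\pi^*\mathcal{F}$, so your claim amounts to asserting that $\varphi_f\overline{\jmath}_!\overline{\jmath}^*\pi^*\mathcal{F}=0$ \emph{before} applying $\pi_!$, and this is false. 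Take $X=\BA^1_x$, $V=X\times\BA^1_y$ the trivial line bundle, $s=x$, so $f=xy$, $Z=\{x=0\}$, and $\mathcal{F}=\BoQ_X$. At a point $(0,y_0)$ with $y_0\neq 0$ one may take $u=xy$ as a local coordinate; then the sheaf is $j_!\BoQ_{\{u\neq0\}}$ and $f=u$, so $\imath^*j_!\BoQ_{\{u\neq0\}}=0$ while $\psi_u(j_!\BoQ_{\{u\neq0\}})\neq0$, whence $\varphi_f(\overline{\jmath}_!\overline{\jmath}^*\pi^*\BoQ)\neq 0$ along $\overline{Z}\setminus\{0\}$. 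Your (correct) local product computation on $U$ only shows that $\varphi_f(\overline{\jmath}_!\overline{\jmath}^*\pi^*\mathcal{F})$ is \emph{supported} on $\overline{Z}$, not that it vanishes.

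Consequently the actual content of the theorem --- that the fiberwise compactly supported pushforward $\pi_!$ annihilates this complex supported on $\overline{Z}$ --- is never addressed: your argument does not use $\pi_!$ at all, whereas the statement is genuinely false without it. Closing this gap is exactly where the known proofs do real work: Davison's proof of \cite[Theorem~A.1]{davison2017critical} (which this paper cites rather than reproves) exploits the $\BoC^*$-action scaling the fibers of $\pi$, for which $f$ has weight one, to compare $\psi_f$ and the restriction to $f^{-1}(0)$ after pushing forward along the (equivariantly contractible) fibers; Kinjo's global version proceeds via hyperbolic localization. To repair your proof you would need such an argument in place of the asserted commutation of $\varphi_f$ with $\overline{\jmath}_!$.
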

Applying Theorem~\ref{theorem:cohdimrediso} to the constant sheaf and taking Verdier duals, one gets the following.
\begin{corollary}
\label{corollary:cohdimrediso}
There is an isomorphism of complexes of monodromic mixed Hodge modules on $X$
\[
 \BD\BoQ_{Z}^{\vir}\xrightarrow{\mathsf{dr}}\pi_*\varphi_f\BoQ_{V}^{\vir}\,,
\]
where $\BoQ_{V}^{\vir}\coloneqq \BoQ\otimes\SL^{-\dim V/2}$ and $\BoQ_{Z}^{\vir}\coloneqq\BoQ_{Z}\otimes\SL^{-\frac{\dim X-\rank V}{2}}$ are the constant sheaves shifted by the virtual dimension. In particular, by taking derived global sections, we obtain an isomorphism
\[
 \mathsf{dr}\colon\HO^{\rmBM}_*(Z,\BoQ^{\vir})\rightarrow\HO^*(V,\varphi_f\BoQ^{\vir})
\]
preserving cohomological degrees. This is the \emph{dimensional reduction isomorphism}.
\end{corollary}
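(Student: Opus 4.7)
The plan is to instantiate Theorem~\ref{theorem:cohdimrediso} on the constant sheaf $\BoQ_X$ and then Verdier-dualise, tracking Tate twists via the virtual normalisation. Evaluating the isomorphism of functors at $\BoQ_X$ first yields
\[
\pi_!\varphi_f\BoQ_V \isoto \pi_!\varphi_f\overline{\imath}_*\BoQ_{\overline{Z}}\,.
\]
To simplify the right-hand side I would use that $\overline{Z}\subset V_0=f^{-1}(0)$, so $\overline{\imath}_*\BoQ_{\overline{Z}}$ is supported in the zero fibre of $f$ and the nearby-cycle functor $\psi_f$ vanishes on it. The distinguished triangle \eqref{equation:distinguishedtrianglevanishingnearby} then degenerates to $\varphi_f\overline{\imath}_*\BoQ_{\overline{Z}}\cong\overline{\imath}_*\BoQ_{\overline{Z}}$. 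Proper base change along the Cartesian square between $\overline{Z}\to V\to X$ and $\overline{Z}\to Z\hookrightarrow X$, combined with the elementary fact that for the rank-$\rank V$ vector bundle $\pi'\colon\overline{Z}\to Z$ one has $\pi'_!\BoQ_{\overline{Z}}\cong\BoQ_Z\otimes\SL^{\rank V}$, rewrites the right-hand side as $j_*\BoQ_Z\otimes\SL^{\rank V}$, where $j\colon Z\hookrightarrow X$ is the closed immersion.

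Next I would apply Verdier duality. Using that $\BD$ interchanges $\pi_!$ with $\pi_*$, commutes with $\varphi_f$ inside $\CD(\MMHM(V))$ (see \S\ref{subsection:monmhm}--\S\ref{subsection:vanishingcycle}), and sends $\BoQ_V$ to $\BoQ_V\otimes\SL^{-\dim V}$ by smoothness of $V$, one obtains
\[
\pi_*\varphi_f\BoQ_V\otimes\SL^{-\dim V}\;\cong\;j_*\BD\BoQ_Z\otimes\SL^{-\rank V}\,.
\]
Because $\dim V=\dim X+\rank V$, rearranging the Tate factors and inserting the virtual shifts $\BoQ_V^{\vir}=\BoQ_V\otimes\SL^{-\dim V/2}$ and $\BoQ_Z^{\vir}=\BoQ_Z\otimes\SL^{-(\dim X-\rank V)/2}$ converts this into $j_*\BD\BoQ_Z^{\vir}\cong \pi_*\varphi_f\BoQ_V^{\vir}$, which is the first displayed isomorphism of the corollary. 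Taking derived global sections and using $\HO^{\rmBM}_*(Z,\BoQ^{\vir}_Z)=\HO^*(X,j_*\BD\BoQ_Z^{\vir})$ then gives the dimensional-reduction isomorphism $\mathsf{dr}$, and the symmetric virtual normalisation guarantees that it preserves cohomological degree.

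The main obstacle I expect is the careful bookkeeping in the dualisation step: verifying that $\BD$ genuinely commutes with the perverse shifted vanishing-cycle functor $\varphi_f$ in the \emph{monodromic} setting (which may only hold up to a twist by $\SL^{1/2}$), and checking that every half-integer Tate shift lands where it should. This is routine given the formalism recalled in \S\ref{subsection:monmhm}--\S\ref{subsection:Tatetwist}, but is precisely the reason the virtual normalisation was introduced, and so the payoff of doing it correctly is exactly the clean degree-preserving statement of the corollary.
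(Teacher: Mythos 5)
Your proposal is correct and follows exactly the paper's (one-line) proof: apply Theorem~\ref{theorem:cohdimrediso} to the constant sheaf, identify $\varphi_f$ with the identity on complexes supported in $f^{-1}(0)$ via the triangle \eqref{equation:distinguishedtrianglevanishingnearby}, push forward along the bundle $\overline{Z}\to Z$, and Verdier-dualise; your Tate-twist bookkeeping ($\BD\BoQ_V\cong\BoQ_V\otimes\SL^{-\dim V}$, $\dim V=\dim X+\rank V$) indeed lands on the stated virtual normalisation. The only point worth noting is that the constant-sheaf form of the statement (as opposed to one phrased with $\pi^!$ or dualizing complexes) implicitly uses smoothness of $X$, exactly as your step ``by smoothness of $V$'' does, which is harmless since all applications in the paper have $X$ smooth.
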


\subsubsection{Setup for the functoriality of dimensional reduction}
\label{subsubsection:setupfunctorialitydimred}
We will explain in \S\S\ref{subsubsection:dimredproperpushforward}, \ref{subsubsection:dimensionalreduction-virtualpullback} how the dimensional reduction interacts with pullback/proper pushforward in vanishing cycle cohomology and virtual pullback/proper pushforward in Borel--Moore homology. We could not find these functorialities in the existing literature. First, we introduce the setup. Let $X,Y$ be smooth algebraic varieties and $g\colon X\rightarrow Y$ a morphism of algebraic varieties, $V$ a vector bundle over $Y$ and $g^*V$ the pullback to $X$. We let $\overline{Y}$ be the total space of $V$ and $\pi\colon \overline{Y}\rightarrow Y$ the projection, $\overline{X}$ the total space of $g^*V$ with $\pi\colon\overline{X}\rightarrow X$ the projection, and $\overline{g}\colon\overline{X}\rightarrow\overline{Y}$ the induced morphism. We let $V^*$ be the dual vector bundle on $Y$, $g^*V^*\cong (g^*V)^*$, and $\overline{X}^*$, $\overline{Y}^*$ the corresponding total spaces. We let $s\colon Y\rightarrow \overline{Y}^*$ be a section of $V^*$, $g^*s\colon X\rightarrow\overline{X}^*$ the corresponding section of $g^*V^*$. We let $f\colon \overline{Y}\rightarrow \BoC$ be the regular function on $\overline{Y}$ obtained by contraction with $s$. Then, $f\circ \overline{g}$ is the regular function on $\overline{X}$ obtained by contraction with $g^*s$. We define $Z_s\coloneqq s^{-1}(0)\subset Y$, $Z_{g^*s}\coloneqq (g^*s)^{-1}(0)\subset X$, $\overline{X}_0\coloneqq (f\circ g)^{-1}(0)$, $\overline{Y}_0\coloneqq f^{-1}(0)$. We let $\overline{Z}_s\coloneqq \pi^{-1}(Z_s)$ and $\overline{Z}_{g^*s}\coloneqq \pi^*(Z_{g^*s})$. We may organize these algebraic varieties and maps in the following diagram with Cartesian squares:
\[\begin{tikzcd}
	X & Y \\
	{\overline{X}} & {\overline{Y}} \\
	{\overline{X}_0} & {\overline{Y}_0} \\
	{\overline{Z}_{g^*s}} & {\overline{Z}_s}
	\arrow["g", from=1-1, to=1-2]
	\arrow["{g^*s}", from=1-1, to=2-1]
	\arrow["s"', from=1-2, to=2-2]
	\arrow["{\overline{g}}", from=2-1, to=2-2]
	\arrow["\pi"', bend right=30, from=2-2, to=1-2]
	\arrow["\imath", from=3-1, to=2-1]
	\arrow["\overline{g}_0",from=3-1, to=3-2]
	\arrow["\imath"', from=3-2, to=2-2]
	\arrow["{\overline{\imath}}", bend left=40, from=4-1, to=2-1]
	\arrow["{\imath'}", from=4-1, to=3-1]
	\arrow["\overline{g}_{s}",from=4-1, to=4-2]
	\arrow["{\overline{\imath}}"', bend right=40, from=4-2, to=2-2]
	\arrow["{\imath'}"', from=4-2, to=3-2]
	\arrow["\pi",bend left=30, from=2-1, to=1-1]
\end{tikzcd}\]
where $\imath, \imath', \overline{\imath}$ are the natural closed immersions. Note that several distinct maps are denoted using the same letters and the context determines which one to actually consider.

\subsubsection{Dimensional reduction and proper pushforward}
\label{subsubsection:dimredproperpushforward}
\begin{proposition}
\label{proposition:dimredpropermorphism}
 We assume that $g$ is proper. Then, $\overline{g}$ and $\overline{g}_s$ are proper. Moreover, the pushforward morphism
 \begin{equation}
 \label{equation:pfproperfunctorialitydr}
  \overline{g}_*\BD\BoQ_{\overline{X}}\rightarrow\BD\BoQ_{\overline{Y}}
 \end{equation}
induces by functoriality the commutative diagram
\begin{equation}
\label{equation:functorialitydrproper}
\begin{tikzcd}
	{\varphi_f\overline{g}_*\BD\BoQ_{\overline{X}}\cong \overline{g}_*\varphi_{f\circ g}\BD\BoQ_{\overline{X}}} & {\varphi_f\BD\BoQ_{\overline{Y}}} \\
	{\imath_!\imath^!\overline{g}_*\BD\BoQ_{\overline{X}}} & {\imath_!\imath^!\BD\BoQ_{\overline{Y}}} \\
	{\overline{\imath}_!\overline{\imath}^!\overline{g}_*\BD\BoQ_{\overline{X}}\cong \overline{\imath}_*(\overline{g}_s)_*\BD\BoQ_{\overline{Z}_{g^*s}}\cong \overline{g}_*\overline{\imath}_!\overline{\imath}^!\BD\BoQ_{\overline{X}}} & {\overline{\imath}_!\overline{\imath}^!\BD\BoQ_{\overline{Y}}\cong \overline{\imath}_*\BD\BoQ_{\overline{Z}_s}}
	\arrow[from=1-1, to=1-2]
	\arrow[from=2-1, to=1-1]
	\arrow[from=2-1, to=2-2]
	\arrow[from=2-2, to=1-2]
	\arrow[from=3-1, to=2-1]
	\arrow[from=3-1, to=3-2]
	\arrow[from=3-2, to=2-2]
\end{tikzcd}
\end{equation}
Then, by taking derived global sections, we obtain the commutative diagram where the vertical maps are the dimensional reduction isomorphisms of Corollary~\ref{corollary:cohdimrediso}:
\[
\begin{tikzcd}
	{\HO^*(\overline{X},\varphi_{g\circ f}\BoQ_{\overline{X}})} & {\HO^*(\overline{Y},\varphi_f)} \\
	{\HO^{\rmBM}_*(\overline{Z}_{g^*s},\BoQ)} & {\HO^{\rmBM}_*(\overline{Z}_{s},\BoQ)}
	\arrow["{\overline{g}_*}", from=1-1, to=1-2]
	\arrow["{\mathsf{dr}}", from=2-1, to=1-1]
	\arrow["{(\overline{g}_s)_*}"', from=2-1, to=2-2]
	\arrow["{\mathsf{dr}}"', from=2-2, to=1-2]
\end{tikzcd}
\]

\end{proposition}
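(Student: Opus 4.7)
I plan to deduce the commutative diagram \eqref{equation:functorialitydrproper} from the naturality of the functors $\varphi_f$, $\imath_!\imath^!$, and $\overline{\imath}_!\overline{\imath}^!$ applied to the single pushforward morphism \eqref{equation:pfproperfunctorialitydr}, and then obtain the final cohomological square by taking derived global sections and identifying the vertical composites as the dimensional reduction map of Corollary~\ref{corollary:cohdimrediso}. Properness of $\overline{g}$ is immediate, since $\overline{g}$ is the base change of the proper morphism $g$ along $\pi\colon \overline{Y}\to Y$. Because $g^*s$ vanishes exactly where $s$ does after composing with $g$, we have $Z_{g^*s}=g^{-1}(Z_s)$, hence $\overline{Z}_{g^*s}=\overline{g}^{-1}(\overline{Z}_s)$; so $\overline{g}_s$ is the base change of $\overline{g}$ along a closed immersion and is likewise proper.

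Next, the three horizontal arrows of \eqref{equation:functorialitydrproper} are obtained by applying $\varphi_f$, $\imath_!\imath^!$, and $\overline{\imath}_!\overline{\imath}^!$ respectively to \eqref{equation:pfproperfunctorialitydr}; the left-hand sides are then rewritten using the base-change isomorphisms
\[
 \varphi_f\overline{g}_*\cong \overline{g}_*\varphi_{f\circ \overline{g}},\qquad \imath^!\overline{g}_*\cong (\overline{g}_0)_*\imath^!,\qquad \overline{\imath}^!\overline{g}_*\cong (\overline{g}_s)_*\overline{\imath}^!,
\]
valid by proper base-change (the vanishing cycle instance being recorded in \S\ref{subsection:vanishingcyclesfunctoriality}), together with the push-pull identities $\imath_!(\overline{g}_0)_*\cong \overline{g}_*\imath_!$ and $\overline{\imath}_!(\overline{g}_s)_*\cong \overline{g}_*\overline{\imath}_!$, which follow from $\overline{g}\circ\imath=\imath\circ\overline{g}_0$ and $\overline{g}\circ\overline{\imath}=\overline{\imath}\circ\overline{g}_s$ combined with $\imath_!=\imath_*$, $\overline{\imath}_!=\overline{\imath}_*$. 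The two vertical squares then commute by the naturality of the transformations $\overline{\imath}_!\overline{\imath}^!\to \imath_!\imath^!$ (induced by the factorization $\overline{\imath}=\imath\circ \imath'$) and $\imath_!\imath^!\to \varphi_f$ (the latter, applied to $\BD\BoQ_{\overline{Y}}$, being the canonical map built from the triangle \eqref{equation:distinguishedtrianglevanishingnearby} combined with the fact that $\varphi_f$ is supported on $\overline{Y}_0$) in the sheaf to which they are applied.

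Finally, applying derived global sections $R\Gamma(\overline{Y},-)$ to the right-hand column of the resulting diagram produces, by construction of Corollary~\ref{corollary:cohdimrediso}, the dimensional reduction isomorphism $\mathsf{dr}\colon \HO^{\rmBM}_*(\overline{Z}_s,\BoQ)\xrightarrow{\sim} \HO^*(\overline{Y},\varphi_f\BD\BoQ_{\overline{Y}})$; the analogous analysis of the left-hand column, using $R\Gamma(\overline{X},-)=R\Gamma(\overline{Y},\overline{g}_*(-))$, yields $\mathsf{dr}$ on the source side. The induced top and bottom arrows of the derived-global-sections square coincide with the proper pushforward $\overline{g}_*$ in vanishing-cycle cohomology of \S\ref{subsection:vanishingcyclesfunctoriality} and with the proper pushforward $(\overline{g}_s)_*$ in Borel--Moore homology, respectively, yielding the commutative square claimed. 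The main technical point, where I would have to be careful, is verifying that the natural transformation $\imath_!\imath^!\to \varphi_f$ used to assemble the right column of \eqref{equation:functorialitydrproper} is literally the one implicit in the proof of Theorem~\ref{theorem:cohdimrediso}, so that the vertical composites really are $\mathsf{dr}$ and not some other iso differing by a sign or Tate twist; once this compatibility is checked, the remainder is a formal diagram chase on proper base-change and naturality.
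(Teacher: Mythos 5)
Your proposal is correct and follows essentially the same route as the paper: both deduce the commutativity of \eqref{equation:functorialitydrproper} from the naturality of the chain $\overline{\imath}_!\overline{\imath}^!\rightarrow\imath_!\imath^!\rightarrow\varphi_f$ (the latter arrow coming from the dual of the triangle \eqref{equation:distinguishedtrianglevanishingnearby}) applied to the pushforward morphism \eqref{equation:pfproperfunctorialitydr}, and then take derived global sections to identify the vertical composites with the dimensional reduction isomorphisms of Corollary~\ref{corollary:cohdimrediso}. Your additional remarks (properness of $\overline{g}$ and $\overline{g}_s$ by base change, the base-change and push-pull identifications on the left column, and the caveat that the transformation $\imath_!\imath^!\to\varphi_f$ must be the one underlying Theorem~\ref{theorem:cohdimrediso}) only make explicit points the paper leaves implicit.
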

\begin{proof}
 The first commutative diagram is obtained by applying the morphisms of functors $\overline{\imath}_!\overline{\imath}^!\rightarrow\imath_!\imath^!\rightarrow\varphi_f$ to the morphism \eqref{equation:pfproperfunctorialitydr}. The dual of the second of these morphisms of functors appears in the functorial triangle \eqref{equation:distinguishedtrianglevanishingnearby} The second commutative square is the perimeter of \eqref{equation:functorialitydrproper} after taking derived global sections.
\end{proof}

\subsubsection{Dimensional reduction and virtual pullbacks}
\label{subsubsection:dimensionalreduction-virtualpullback}
The following proposition is key in the comparison between the critical induction pullback in critical cohomology and the virtual pullback in Borel--Moore homology, via the dimensional reduction (Proposition~\ref{proposition:comparison2d3dmultiplications}). Its goal is to expand the part of \cite[Appendix]{ren2017cohomological} regarding the comparison of the CoHA products for preprojective algebras of quivers and quivers with potential. Indeed, it is not clear how the two diagrams after $(8)$ in \emph{op. cit.} are defined, since there is in general no canonical morphism $\varphi\rightarrow\id$. Instead, there is a zigzag of morphisms, with the restriction to the zero locus of the function $f$ in the middle ($\varphi_f\rightarrow \imath_*\imath^*\leftarrow \id$).
\begin{proposition}
\label{proposition:dimredpullback}
We are in the situation described in \S\ref{subsubsection:setupfunctorialitydimred} and we assume furthermore that $X,Y$ are smooth. The natural pullback morphism
\[
 \BoQ_{\overline{Y}}\rightarrow g_*\BoQ_{\overline{X}}
\]
may be rewritten
\begin{equation}
\label{equation:pullbackBMhom}
 (\BD\BoQ_{\overline{Y}})\otimes\SL^{\dim \overline{Y}}\rightarrow g_*(\BD\BoQ_{\overline{X}})\otimes\SL^{\dim \overline{X}}\,.
\end{equation}
Then, we have a commutative diagram
\begin{equation}
\label{equation:commutativediagrampullback}
\begin{tikzcd}
	{\varphi_{f}\BD\BoQ_{\overline{Y}}\otimes\SL^{\dim \overline{Y}}} & {\varphi_f\overline{g}_*\BD\BoQ_{\overline{X}}\otimes\SL^{\dim \overline{X}}} & {\overline{g}_*\varphi_{f\circ\overline{g}}\BD\BoQ_{\overline{X}}\otimes\SL^{\dim \overline{X}}} \\
	{\imath_!\imath^!\BD\BoQ_{\overline{Y}}\otimes \otimes\SL^{\dim \overline{Y}}} & {\imath_!\imath^!\overline{g}_*\BD\BoQ_{\overline{X}}\otimes\SL^{\dim \overline{X}}} & {} \\
	{\BD\BoQ_{\overline{Y}}\otimes\SL^{\dim \overline{Y}}} & {\overline{g}_*\BD\BoQ_{\overline{X}} \otimes\SL^{\dim \overline{X}}} \\
	{(\overline{\imath}_*\BD\BoQ_{\overline{Z}_s}\cong\overline{\imath}_!\overline{\imath}^!\BD\BoQ_{\overline{Y}})\otimes\SL^{\dim \overline{Y}}} & {(\overline{\imath}_!\overline{\imath}^!\overline{g}_*\BD\BoQ_{\overline{X}}\cong\overline{\imath}_*(\overline{g}_s)_*\BD\BoQ_{\overline{Z}_{g^*s}})\otimes\SL^{\dim \overline{X}}} & {\overline{g}_*\overline{\imath}_!\overline{\imath}^!\BD\BoQ_{\overline{X}}\otimes\SL^{\dim \overline{X}}}
	\arrow[from=1-1, to=1-2]
	\arrow["a",from=1-2, to=1-3]
	\arrow[from=2-1, to=1-1]
	\arrow[from=2-1, to=2-2]
	\arrow[from=2-1, to=3-1]
	\arrow[from=2-2, to=1-2]
	\arrow[from=2-2, to=3-2]
	\arrow[from=3-1, to=3-2]
	\arrow["{\mathsf{dr}}", bend left=80, from=4-1, to=1-1]
	\arrow[bend left=80, from=4-1, to=2-1]
	\arrow[from=4-1, to=3-1]
	\arrow[from=4-1, to=4-2]
	\arrow[bend right=80, from=4-2, to=2-2]
	\arrow[from=4-2, to=3-2]
	\arrow["\sim","b"', from=4-2, to=4-3]
	\arrow["{\overline{g}_*\mathsf{dr}}"', from=4-3, to=1-3]
\end{tikzcd}
\end{equation}
of adjunction and dimensional reduction morphisms. After taking derived global sections, one obtains a commutative diagram in which the vertical arrows are the dimensional reduction isomorphisms:
\begin{equation}
\label{equation:compatibilitypullbackdr}
\begin{tikzcd}
	{\HO^*(\overline{Y},\varphi_f\BoQ)} & {\HO^*(\overline{X},\varphi_{f\circ g}\BoQ)} \\
	{\HO^{\rmBM}_*(\overline{Z}_s,\BoQ)\cong \HO^{\rmBM}_*(Z_s,\BoQ)} & {\HO^{\rmBM}_*(\overline{Z}_{g^*s},\BoQ)\cong \HO^{\rmBM}_*(Z_{g^*s},\BoQ)}
	\arrow["{\overline{g}^*}", from=1-1, to=1-2]
	\arrow["{\mathsf{dr}}", from=2-1, to=1-1]
	\arrow["{\overline{g}^!}", from=2-1, to=2-2]
	\arrow["{\mathsf{dr}}"', from=2-2, to=1-2]
\end{tikzcd}
\end{equation}
where $\overline{g}^*$ is the natural pullback in vanishing cycle cohomology \eqref{equation:pbvanishingcyclecoh} while $\overline{g}^!$ is the virtual pullback in Borel--Moore homology (Definition~\ref{definition:virtualpullbackBMhomology}).
\end{proposition}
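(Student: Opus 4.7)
The strategy is to verify the commutativity of the sheaf-level diagram \eqref{equation:commutativediagrampullback} cell by cell, using the naturality of the six functor formalism together with base change, and then to deduce \eqref{equation:compatibilitypullbackdr} by applying the derived global sections functor $\HO^*(\overline{Y},-)$. A preliminary observation is that the outer vertical composites of \eqref{equation:commutativediagrampullback} realise the sheaf-level dimensional reduction morphism of Corollary~\ref{corollary:cohdimrediso} (on the left) and its pushforward $\overline{g}_*\mathsf{dr}$ (on the right). Indeed, $\mathsf{dr}$ is constructed precisely as the zigzag $\overline{\imath}_*\BD\BoQ_{\overline{Z}_s}\cong\overline{\imath}_!\overline{\imath}^!\BD\BoQ_{\overline{Y}}\to\imath_!\imath^!\BD\BoQ_{\overline{Y}}\to\varphi_f\BD\BoQ_{\overline{Y}}$, where the last morphism is the Verdier dual of the first morphism in the distinguished triangle \eqref{equation:distinguishedtrianglevanishingnearby}.

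The cell-by-cell verification then goes as follows. The topmost cell, involving the natural transformation $a\colon \varphi_f\overline{g}_*\to\overline{g}_*\varphi_{f\circ\overline{g}}$ from \S\ref{subsection:vanishingcyclesfunctoriality}, commutes by naturality of $a$ applied to the pullback morphism $\BD\BoQ_{\overline{Y}}\otimes\SL^{\dim\overline{Y}}\to\overline{g}_*\BD\BoQ_{\overline{X}}\otimes\SL^{\dim\overline{X}}$. The middle cells, connecting the rows labelled $\varphi_f$, $\imath_!\imath^!$ and $\id$, commute by naturality of the counit $\imath_!\imath^!\to\id$ and of the dualised triangle morphism $\imath_!\imath^!\to\varphi_f$. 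The cells connecting the rows labelled $\imath_!\imath^!$ and $\overline{\imath}_!\overline{\imath}^!$ commute via the factorisation $\overline{\imath}=\imath\circ\imath'$ and the induced transformation $\overline{\imath}_!\overline{\imath}^!\to\imath_!\imath^!$. Finally, the bottom cell containing $b$ is proper base change: since $\overline{\imath}$ is a closed immersion (hence proper) and the relevant square in \S\ref{subsubsection:setupfunctorialitydimred} is Cartesian, one has $\overline{\imath}_!\overline{\imath}^!\overline{g}_*\cong\overline{g}_*\overline{\imath}'_!\overline{\imath}'^!$; the further identification as $\overline{\imath}_*(\overline{g}_s)_*\BD\BoQ_{\overline{Z}_{g^*s}}$ follows from $\overline{\imath}'^!\BD\BoQ_{\overline{X}}\cong\BD\BoQ_{\overline{Z}_{g^*s}}$, which holds up to an appropriate Tate twist by smoothness of $\overline{X}$.

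Applying $\HO^*(\overline{Y},-)$ to the verified sheaf-level diagram now yields \eqref{equation:compatibilitypullbackdr}. The top row becomes the pullback $\overline{g}^*$ in vanishing cycle cohomology of \eqref{equation:pbvanishingcyclecoh}, and the outer vertical arrows become the two copies of the dimensional reduction isomorphism $\mathsf{dr}$. Crucially, the bottom row, after global sections, becomes exactly the virtual pullback $\overline{g}^!$ of Definition~\ref{definition:virtualpullbackBMhomology}: indeed, that definition is itself the derived global sections of the Tate-twisted pullback morphism $\BD\BoQ_{\overline{Y}}\otimes\SL^{\dim\overline{Y}}\to\overline{g}_*\BD\BoQ_{\overline{X}}\otimes\SL^{\dim\overline{X}}$, which is precisely the bottom row of \eqref{equation:commutativediagrampullback} once the base change isomorphism $b$ is unwound.

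The principal obstacle I anticipate is the careful bookkeeping of the Tate twists, especially in the cells adjacent to $b$: one must verify that the twist imposed by the base change $\overline{\imath}'^!\BD\BoQ\cong\BD\BoQ_{\overline{Z}_{g^*s}}$ and the smoothness-induced identification $\BD\BoQ_{\overline{Y}}\cong\BoQ_{\overline{Y}}\otimes\SL^{-\dim\overline{Y}}$ combine to give precisely the twists $\SL^{\dim\overline{Y}}$ and $\SL^{\dim\overline{X}}$ displayed on the two sides of the diagram, matching the shift of $2(\dim\overline{X}-\dim\overline{Y})$ on cohomological degrees produced by $\overline{g}^!$. Once these twists are sorted out, every cell is verified by a formal naturality or base change argument.
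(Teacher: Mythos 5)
Your proposal is correct and follows essentially the same route as the paper: the cells of \eqref{equation:commutativediagrampullback} commute by naturality of the counits $\imath_!\imath^!\to\id$, $\overline{\imath}_!\overline{\imath}^!\to\imath_!\imath^!$, the dualised triangle morphism $\imath_!\imath^!\to\varphi_f$ and the transformation $a$ of \S\ref{subsection:vanishingcyclesfunctoriality}, the morphism $b$ is base change, and \eqref{equation:compatibilitypullbackdr} is the perimeter after derived global sections, with the bottom row identified with the virtual pullback of Definition~\ref{definition:virtualpullbackBMhomology}. Your additional remarks on the zigzag description of $\mathsf{dr}$ and the Tate-twist bookkeeping are consistent with the paper's conventions and simply make explicit what the paper leaves implicit.
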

\begin{proof}
 The vertical morphisms in the left column of \eqref{equation:commutativediagrampullback}  obtained by respectively applying the morphism of functors $\imath_!\imath^!\rightarrow\varphi_f$, $\imath_!\imath^!\rightarrow\id$ and $\overline{\imath}_!\overline{\imath}^!\rightarrow\id$. The morphism $a$ comes from the natural transformation $\varphi_f\overline{g}_*\rightarrow\overline{g}_*\varphi_{f\circ \overline{g}}$ (\S\ref{subsection:vanishingcyclesfunctoriality}). The morphism $b$ is a base-change isomorphism. The commutative square \eqref{equation:compatibilitypullbackdr} is the perimeter of \eqref{equation:commutativediagrampullback} after taking derived global sections.
\end{proof}

\subsection{Vanishing cycles for quadratic functions}

\begin{proposition}
\label{proposition:vanishingcyclesquadraticfunction}
Let $X$ be an algebraic variety, $V$ a vector bundle on $X$. We let $f\colon V\times_X V^*\rightarrow\BoC$ be the canonical regular function. If $V\times_XV^*$ is seen as a vector bundle on $V$, it is induced by the diagonal section of the dual vector bundle $V\rightarrow V\times_XV$. If $V\times_XV^*$ is seen as a vector bundle over $V^*$, then it is induced by the diagonal section $V^*\rightarrow V^*\times_XV^*$. We consider the morphisms of vector bundles
\[\begin{tikzcd}
	{V\times_X V^*} & V && {} \\
	{V^*} & X
	\arrow["{\tilde{\pi}^{\vee}}", from=1-1, to=1-2]
	\arrow["{\tilde{\pi}}", from=1-1, to=2-1]
	\arrow["{\imath^{\vee}}"', bend right = 30, from=1-2, to=1-1]
	\arrow["\pi"', from=1-2, to=2-2]
	\arrow["\imath", bend left=30, from=2-1, to=1-1]
	\arrow["{\pi^{\vee}}", from=2-1, to=2-2]
\end{tikzcd}\]

Then, the dimensional reduction isomorphisms induce morphisms
\[
 (\pi\circ\tilde{\pi}^{\vee})_!\varphi_f(\pi\circ\tilde{\pi}^{\vee})^*\rightarrow \pi_!\pi^*\cong\id\otimes\SL^{\rank V}
\]
and
\[
  (\pi^{\vee}\circ\tilde{\pi})_!\varphi_f(\pi^{\vee}\circ\tilde{\pi})^*\rightarrow \pi^{\vee}_!(\pi^{\vee})^*\cong\id\otimes\SL^{\rank V}
\]
whose sources may be canonically identifies due to the equality of morphisms $\pi^{\vee}\circ \tilde{\pi}=\pi\circ\tilde{\pi}^{\vee}$, and they differ by the sign $(-1)^{\rank(V)}$.
\end{proposition}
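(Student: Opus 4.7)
The plan is to reduce the sign comparison to a local rank-one model. First, by the compatibility of the dimensional reduction isomorphism (Corollary~\ref{corollary:cohdimrediso}) with smooth base change---established in the same spirit as Proposition~\ref{proposition:dimredpullback}---both morphisms in the statement behave well under pulling back along smooth maps to $X$. The comparison is therefore smooth-local on $X$, so we may assume $X=\pt$ and $V=\BoC^n$. In this case $f=\sum_{i=1}^n x_iy_i$ is the external sum of the rank-one functions $x_iy_i$, and the Thom--Sebastiani isomorphism for vanishing cycles (in the monodromic mixed Hodge module setting) is compatible with the construction of the dimensional reduction morphism: both bundle structures on $\BoC^{2n}$ split as external products of the corresponding rank-one bundle structures on $\BoC^2_{(x_i,y_i)}$, and each dimensional reduction morphism decomposes accordingly as an external tensor product. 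Writing $\epsilon$ for the sign in the rank-one case, the sign in the general case is $\epsilon^n$, so it suffices to show $\epsilon=-1$.

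For $V=\BoC$ and $f(x,y)=xy$ the critical locus of $f$ is the origin, and the stalk $\varphi_f\BoQ_{\BoC^2}|_0$ is one-dimensional (up to Tate twists). Both morphisms of the proposition are non-zero maps between one-dimensional spaces and therefore differ by a scalar. To identify this scalar I would exploit the linear involution $\sigma\colon(x,y)\mapsto(y,x)$: it preserves $f$ and swaps the two bundle structures $\tilde\pi^\vee$ and $\tilde\pi$, so pulling back by $\sigma$ intertwines the two dimensional reduction morphisms. On the other hand, since $\sigma$ is an $f$-preserving linear automorphism of $\BoC^2$ with $\det\sigma=-1$, its induced action on $\varphi_f\BoQ_{\BoC^2}|_0$ is multiplication by $-1$: for any nondegenerate quadratic form $q$ on a vector space $W$, the stalk $\varphi_q\BoQ_W|_0$ is canonically (up to a Tate twist) the orientation line of $W$, on which the orthogonal group of $q$ acts through its determinant.

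The main obstacle will be in the rank-one step: carefully tracking all Tate twists in the construction of the dimensional reduction morphism and verifying the claimed equivariance of the identification of the vanishing cycle stalk with the orientation line. An alternative route that avoids this identification is to argue directly that the two dimensional reductions are conjugate under the Fourier--Sato transform on the vector bundle $V\oplus V^*\to X$, with the symmetry of the situation introducing the sign $(-1)^{\rank V}$ on vanishing cycles supported at the zero section. The Thom--Sebastiani compatibility invoked in the reduction step is essentially formal, but also requires a careful unwinding of both constructions to produce the required external product decomposition of the dimensional reduction morphism of Theorem~\ref{theorem:cohdimrediso}.
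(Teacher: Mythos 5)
Your proposal is correct, but it takes a genuinely different route from the paper. The paper's proof is short: it reduces to $X=\pt$ by base change and transversality along the fibers of $X$, and then simply invokes Lemma~4.1 of Davison's appendix to Ren--Soibelman (\cite{ren2017cohomological}) for the sign $(-1)^{\rank V}$. You also reduce to $X=\pt$, but then supply a self-contained argument: Thom--Sebastiani to get down to rank one, and in rank one the swap $\sigma(x,y)=(y,x)$, which preserves $f=xy$, exchanges the two bundle structures and hence intertwines the two dimensional reduction morphisms, while acting on the one-dimensional space $\HO^*_{\rmc}(\BoC^2,\varphi_f)$ by $\det\sigma=-1$ (the Milnor fibre of a nondegenerate quadratic form retracts onto a real sphere, and $O(q)$ acts on its top cohomology through the determinant). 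This buys a conceptual explanation of the sign that the paper outsources to a citation. Note, however, that your Thom--Sebastiani step is unnecessary and is the heaviest compatibility you would have to verify: the same involution argument works in any rank. The block swap $\sigma(v,\xi)=(\xi,v)$ on $\BoC^n\oplus\BoC^n$ preserves $\sum_i x_iy_i$, exchanges the two projections, and has determinant $(-1)^{n^2}=(-1)^n$, so from $A=B\circ\sigma^*$ and $\sigma^*=(-1)^n\cdot\id$ on the one-dimensional $\HO^*_{\rmc}(\BoC^{2n},\varphi_f)$ you get the sign in all ranks at once.

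Two refinements you should make explicit (they are not gaps, but they carry the sign bookkeeping). First, the intertwining $A=B\circ\sigma^*$ requires identifying the two targets $\pi_!\pi^*$ and $\pi^\vee_!(\pi^\vee)^*$ via $\sigma$; this identification is sign-free because a linear automorphism of $\BoC^n$ acts trivially on $\HO^{2n}_{\rmc}(\BoC^n,\BoQ)$ (connectedness of $\GL_n$), so the entire discrepancy is concentrated in $\sigma^*$ on the vanishing-cycle side. Second, the phrase ``orientation line of $W$'' is loose (a complex vector space is canonically oriented); what you actually use, and what is true, is that $O(q)$ acts on the stalk $\varphi_q\BoQ_W|_0$ through its determinant character. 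With these points spelled out, your argument is a valid replacement for the citation used in the paper.
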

\begin{proof}
We can perform dimensional reduction along the fibers of $\tilde{\pi}^{\vee}$ or $\tilde{\pi}$. The dimensional reduction along the fibers of $\tilde{\pi}$ gives that the natural morphism
\[
 \tilde{\pi}_!\varphi_f\tilde{\pi}^*\rightarrow \tilde{\pi}_!\imath^{\vee}_*(\imath^{\vee})^*\tilde{\pi}^*
\]
is an isomorphism. By precomposing with $(\pi^{\vee})^*$ and postcomposing with $\pi^{\vee}_!$, we obtain the first dimensional reduction isomorphism of the proposition. The second isomorphism is obtained by exchanging $\tilde{\pi}$ and $\tilde{\pi}^{\vee}$.

By base change and transversality of $f$ along the fibers of $X$, one can assume that $X$ is a point $X=\pt$. We are then reduced to \cite[Appendix by Ben Davison, Lemma~4.1]{ren2017cohomological}. We obtain that the two morphisms indeed differ by the sign $(-1)^{\rank V}$.
\end{proof}

\subsection{Pushforward for affine fibrations}
We explain how the pushforward for vector bundle is defined in critical cohomology. We let $\pi\colon X\rightarrow Y$ be a $G$-equivariant vector bundle and $f\colon Y\rightarrow\BoC$ a $G$-invariant function. Then, the natural adjunction morphism
\[
 \BoQ_{Y}\rightarrow\pi_*\BoQ_X
\]
is an isomorphism, and the same remains true after applying $\varphi_f$. This gives the pullback in vanishing cycle cohomology $\pi^*\colon \HO_G^*(Y,\varphi_f)\rightarrow\HO_G^*(X,\varphi_{f\circ \pi})$ for the vector bundle $\pi$. We let $\Eu_{\pi}\in\HO^*_G(X)$ be the $G$-equivariant Euler class of $\pi$. The pushforward by $\pi$ is defined as
\[
 \Eu_{\pi}^{-1}(\pi^*)^{-1}\colon\HO_G^*(X,\varphi_{\pi\circ f})\rightarrow\HO_G^*(Y,\varphi_f)[\Eu_{\pi}^{-1}]\,,
\]
that is we divide by the Euler class $\Eu_{\pi}$ the inverse of the pullback map $(\pi^*)^{-1}$.

\section{The Coxeter complex}
\label{section:Coxetercomplex}

\subsection{Weights}
\label{subsection:weights}

Let $G$ be a reductive algebraic group and $V$ a representation of $G$. We let $T\subset G$ be a maximal torus. The action of $T$ on $V$ is diagonalizable and so we may write $V=\bigoplus_{\alpha\in\rmX^*(T)}V_{\alpha}$ where $V_{\alpha}\coloneqq \{v\in V\mid t\cdot v=\alpha(t)v\text{ for any $t\in T$}\}$ is the \emph{weight space of weight $\alpha$}. A character $\alpha\in\rmX^*(T)$ is called a \emph{weight} of $V$ if $V_{\alpha}\neq 0$. We let $\CW(V)\subset\rmX^*(T)$ be the collection of weights of $V$, counted with multiplicities. We let $\CW'(V)$ be the set of weights of $V$, i.e. the set obtained from $\CW(V)$ by removing the multiplicities.

Two elements $\alpha,\beta\in\rmX^*(T)$ are called \emph{equivalent} if there exists a positive rational number $r\in\BoQ_+^*$ such that $r\alpha=\beta$. Given a collection $\CW\subset \rmX^*(T)$ a collection of elements, we let $\overline{\CW}$ be the collection of equivalence classes of cocharacters (with multiplicities taken into account).

\begin{definition}
 Let $V$ be a representation of a reductive group $G$. We say that $V$ is \emph{weakly symmetric} if $\overline{\CW}(V)=\overline{\CW}(V^*)$. We say that $V$ is \emph{symmetric} if $\CW(V)=\CW(V^*)$.
\end{definition}
From the representation theory of reductive group, two representations are isomorphic if and only if they have the same character. Therefore, $V$ is symmetric if and only if it is self-dual, that is $V\cong V^*$ as representations of $G$. We do not know any similar interpretation for the notion of weak symmetricity.

\begin{example}
 The adjoint representation $\Fg$ of a reductive algebraic group $G$ is a symmetric representation of $G$. We refer to \cite[Example~2.4]{hennecart2024cohomological} for more example of symmetric representations.
\end{example}

\subsection{Walls and chambers}
\label{subsection:wallchamber}
We let $\Fh\coloneqq\Lie(T)$ be the Lie algebra of $T$ and define $\Fh_{\BoR}\coloneqq \rmX_*(T)\otimes_{\BoZ}\BoR$ the real form of $\Fh$. For each character $\alpha\in \rmX^*(T)$, there is a hyperplane $H_{\alpha}\coloneqq \{\alpha=0\}\subset \Fh_{\BoR}$. We let $\bfH\coloneqq \cup_{\alpha\in\CW(V)\cup\CW(\Fg)}H_{\alpha}$ be the hyperplane arrangement associated to $\CW(V)\cup\CW(\Fg)$. We let $\FH\coloneqq\{H_{\alpha}\colon\alpha\in \CW(V)\cup\CW(\Fg)\}$ be the set of hyperplanes. An intersection of hyperplanes in $\FH$ is called a \emph{flat}. The set of flats is denotes by $\FF$. A connected component of a subset of $\Fh_{\BoR}$ of the form
\[
 \bigcap_{H\in \FH_1}H\setminus\bigcup_{H\in\FH_2}H
\]
for some decomposition $\FH=\FH_1\sqcup\FH_2$ is called a \emph{cell}. The set of cells is denoted by $\FC$. The linear span of a cell $C\in\FC$ is a flat of $\bfH$, which is denoted by $\langle C\rangle$.

\subsection{Positive weights}
\label{subsection:positiveweights}
Let $F\in\FF$. We define $\CW^F(V)\subset \CW(V)$ as the subset of weights of $V$ which vanish on $F$. For $C\in\FC$, we define $\CW^{C\geq 0}(V)$ has the subset of weights of $V$ which are nonnegative on $C$, $\CW^{C>0}(V)$ the subset of weights which are positive on $C$, $\CW^{C\geq 0,F}(V)$ the weights which are nonnegative on $C$ and vanishing on $F$, etc. We define the analogous sets $\CW^{\lambda}(V)$, $\CW^{\lambda\geq 0}(V)$, etc for a cocharacter $\lambda\in\rmX_*(T)$.

\subsection{Partial order}
\label{subsection:partialorder}
We order the set of flats by reverse inclusion: for any $F,F'\in\FF$, $F\preceq F'$ if $F'\subset F$. We order the set of cells by reverse inclusion of the closure: $C\preceq C'$ if $C'\subset\overline{C}$. We may also compare pairs $(C,F)\in\FC\times\FF$ of a cell and a flat via $C\preceq F$ if $F\subset\langle C\rangle$ and $F\preceq C$ if $C\subset F$.

\subsection{Tits product of cells}
\label{subsection:Titsproduct}
The Tits product is an operation on the set of cells of the hyperplane arrangement which associated a third cell to two given cells. We recall its definition following \cite{kapranov2024langlands}. It is defined as follows. Let $C,C'\in\FC$. Then, we let $\lambda\in C$ and $\mu\in C'$ and we draw the line segment $[\lambda,\mu]\subset\Fh_{\BoR}$. The \emph{Tits product} $C\circ C'$ is defined to be the first cell of $\bfH$ met by the line segment $[\lambda,\mu]$. It is well-defined, and $C\circ C'\neq C'\circ C$ in general.

\begin{lemma}
 For any $C,C'\in\FC$, We have $C\circ C'\preceq C$.
\end{lemma}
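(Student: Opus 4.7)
The plan is to read off the sign vector of $C\circ C'$ from its description as the cell the segment $[\lambda,\mu]$ enters immediately after leaving $\lambda$, and then check that this sign vector is compatible with the closure condition defining $\preceq$.

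First I would fix $\lambda\in C$ and $\mu\in C'$, and for $\varepsilon>0$ set $\nu_\varepsilon\coloneqq (1-\varepsilon)\lambda+\varepsilon\mu=\lambda+\varepsilon(\mu-\lambda)$. By the definition of the Tits product, $\nu_\varepsilon\in C\circ C'$ for all sufficiently small $\varepsilon>0$. For each hyperplane $H_\alpha\in\FH$ the evaluation $\alpha(\nu_\varepsilon)=\alpha(\lambda)+\varepsilon(\alpha(\mu)-\alpha(\lambda))$ is an affine function of $\varepsilon$, so for $\varepsilon\to 0^+$ its sign stabilizes as follows:
\[
 \mathrm{sgn}\,\alpha(\nu_\varepsilon)=
 \begin{cases}
  \mathrm{sgn}\,\alpha(\lambda)&\text{if }\alpha(\lambda)\neq 0,\\
  \mathrm{sgn}\,\alpha(\mu)&\text{if }\alpha(\lambda)=0.
 \end{cases}
\]
This determines the sign vector $\sigma(C\circ C')=(\mathrm{sgn}\,\alpha|_{C\circ C'})_{\alpha}$ in terms of $\sigma(C)$ and $\sigma(C')$: on any hyperplane that does not contain $C$, the cell $C\circ C'$ has the same sign as $C$, and on any hyperplane that does contain $C$, it has the same sign as $C'$.

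Now I would unwind $\preceq$. The condition $C\circ C'\preceq C$ is by definition $C\subset\overline{C\circ C'}$, and $\overline{C\circ C'}$ is characterized as the set of $\nu\in\Fh_\BoR$ such that for every $\alpha\in\CW(V)\cup\CW(\Fg)$ either $\alpha(\nu)=0$ or $\mathrm{sgn}\,\alpha(\nu)=\mathrm{sgn}\,\alpha|_{C\circ C'}$. Take any $\lambda\in C$ and any $\alpha$. If $\alpha$ does not vanish on $C$, then $\mathrm{sgn}\,\alpha(\lambda)=\mathrm{sgn}\,\alpha|_C=\mathrm{sgn}\,\alpha|_{C\circ C'}$ by the first case above. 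If $\alpha$ vanishes on $C$, then $\alpha(\lambda)=0$ and the closure condition is automatic. Hence $\lambda\in\overline{C\circ C'}$, and since $\lambda\in C$ was arbitrary, $C\subset\overline{C\circ C'}$, which is precisely $C\circ C'\preceq C$.

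There is no real obstacle here: the only point where one must be careful is the passage from "the first cell met by $[\lambda,\mu]$" to the concrete sign vector, which is why I prefer the explicit formula $\nu_\varepsilon=\lambda+\varepsilon(\mu-\lambda)$ and an elementary sign analysis. Independence of the choice of $\lambda\in C$ and $\mu\in C'$ is implicit in the definition of the Tits product, but the argument above only uses one choice, so no further work is needed.
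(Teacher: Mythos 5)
Your proof is correct. It takes a slightly different, more combinatorial route than the paper: you first compute the full sign vector of $C\circ C'$ from the parametrization $\nu_\varepsilon=\lambda+\varepsilon(\mu-\lambda)$ (which in effect re-proves the paper's reformulation of the Tits product, Proposition~\ref{proposition:reformulationTits}) and then invoke the sign-vector description of cell closures to check that every point of $C$ lies in $\overline{C\circ C'}$. The paper instead argues topologically in one line: for any $\lambda\in C$ the segment toward $\mu$ immediately enters $C\circ C'$, so a small ball around $\lambda$ meets $C\circ C'$, and together with the $\BoR_+^*$-invariance of all cells this gives $C\subset\overline{C\circ C'}$. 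Your version is more explicit and handles arbitrary points of $C$ without appealing to the well-definedness of the product for each choice of $\lambda$, at the cost of using (without proof) the fact that the closure of a cell is the corresponding weak-sign set; that is a standard convexity fact for central arrangements, so it is a fair step to take for granted, but you could add the one-line segment argument ($tx+(1-t)y$ with $x$ in the cell) if you want the note fully self-contained.
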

\begin{proof}
 From the definition of the Tits product, for any $\lambda\in C$, a small ball around $\lambda$ for the Euclidean topology intersects $C\circ C'$. Since all subsets of $\Fh_{\BoR}$ we consider are $\BoR_+^*$-invariant, this implies that $C\subset \overline{C\circ C'}$ and so $C\circ C'\preceq C$.
\end{proof}

\subsection{Equivariance of the Tits product}
\label{subsection:equivarianceTitsproduct}
The Weyl group $\sfW$ of the pair $(G,T)$ acts on the sets of weights $\CW(V)$, $\CW(\Fg)$. Therefore, it also acts on $\bfH$, the set of flats $\FF$ and the set of cells. It acts on $\Fh_{\BoR}$ and since this action is linear, it preserves lines. We immediately obtain the equivariance of the Tits product.

\begin{proposition}
 \label{proposition:equivarianceTits}
 For any $C,C'\in\FC$ and $w\in W$, we have $(w\cdot C)\circ (w\cdot C')=w\cdot(C\circ C')$.\qed
\end{proposition}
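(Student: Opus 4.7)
The plan is to unwind the definition of the Tits product and exploit the fact that the Weyl group $\sfW$ acts on $\Fh_{\BoR}$ by linear automorphisms that permute the hyperplane arrangement $\bfH$.

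First I would pick representatives $\lambda \in C$ and $\mu \in C'$. By definition, $C \circ C'$ is the unique cell of $\bfH$ containing all points $(1-t)\lambda + t\mu$ for $t>0$ sufficiently small. Next, since $w \in \sfW$ acts on $\Fh_{\BoR}$ by a linear isomorphism, we have $w\cdot \lambda \in w\cdot C$ and $w\cdot \mu \in w\cdot C'$, and the image of the line segment $[\lambda,\mu]$ under $w$ is exactly the line segment $[w\cdot\lambda, w\cdot\mu]$; more precisely, $w\cdot ((1-t)\lambda + t\mu) = (1-t)(w\cdot\lambda) + t(w\cdot\mu)$ for every $t \in [0,1]$.

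The key point is that $\sfW$ permutes the set of hyperplanes $\FH$ (because it permutes both $\CW(V)$ and $\CW(\Fg)$), hence it permutes the set of cells $\FC$: the image $w \cdot D$ of a cell $D \in \FC$ is again a cell, and $D \subset\overline{D'}$ is equivalent to $w\cdot D \subset \overline{w\cdot D'}$. Applying this to $D = C \circ C'$, the points $(1-t)(w\cdot\lambda)+t(w\cdot\mu) = w\cdot((1-t)\lambda+t\mu)$ lie in $w\cdot(C\circ C')$ for all sufficiently small $t>0$. By the defining characterization of the Tits product recalled in \S\ref{subsection:Titsproduct}, this forces
\[
 (w\cdot C)\circ (w\cdot C') = w\cdot(C\circ C')\,,
\]
which is the desired equality.

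There is no serious obstacle here: the entire argument is the naturality of the Tits product under linear automorphisms of $\Fh_{\BoR}$ that preserve $\bfH$. The only small point worth articulating cleanly is the independence of $C \circ C'$ on the choice of $\lambda \in C$ and $\mu \in C'$ (which follows because cells are convex and the first cell met on a short segment emanating into $C'$ from an interior point of $C$ is locally constant in the choice of endpoints), so that we may indeed identify the first cell met along $[w\cdot\lambda, w\cdot\mu]$ with $w\cdot(C\circ C')$.
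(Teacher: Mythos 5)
Your proof is correct and follows essentially the same route as the paper, which deduces the equivariance immediately from the fact that $\sfW$ acts linearly on $\Fh_{\BoR}$, permutes the hyperplanes in $\FH$ (hence the cells), and sends the segment $[\lambda,\mu]$ to $[w\cdot\lambda,w\cdot\mu]$. Your extra remark about independence of the choice of $\lambda\in C$, $\mu\in C'$ is a reasonable point of care but does not change the argument.
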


\subsection{Reformulation of the Tits product}
\label{subsection:reformulationTitsproduct}
We have defined the Tits product in an abstract way. We may give here a more concrete description. For a character $\alpha\in\rmX_*(T)\subset\Fh_{\BoR}$ and $C\in\FC$ a cell, we denote by $\alpha_C$ the restriction of $\alpha$ to $C$. If $\alpha\in\CW(V)\cup\CW(\Fg)$ and $C\in\FC$, $\alpha$ has a constant sign on $C$ and we may write $\alpha_C>0$ (resp. $\alpha_C<0$, $\alpha_C=0$) if this sign is $+$ (resp. $-$, $0$).

\begin{proposition}
 \label{proposition:reformulationTits}
 Let $\alpha\in \CW(V)\cup\CW(\Fg)$. Then,
 \[
  \alpha_{C\circ C'}>0\iff (\alpha_C>0 \text{ or }(\alpha_C=0\text{ and }\alpha_{C'}>0))\,.
 \]
\end{proposition}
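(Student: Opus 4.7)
My plan is to unwind the definition of the Tits product through an explicit parametrization of the line segment and then analyze signs of affine-linear functions.

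First I would parametrize the segment from $\lambda \in C$ to $\mu \in C'$ as $\lambda_t \coloneqq (1-t)\lambda + t\mu$ for $t \in [0,1]$, so that $\lambda_0 = \lambda \in C$. By the definition recalled in \S\ref{subsection:Titsproduct}, the cell $C \circ C'$ is the first cell of $\bfH$ met by this segment as $t$ increases from $0$. Since each hyperplane $H_\beta$ (for $\beta \in \CW(V) \cup \CW(\Fg)$) is defined by the vanishing of an affine-linear functional along $[\lambda,\mu]$, the segment either lies entirely on $H_\beta$ or meets it in at most one point. Consequently there exists $\varepsilon > 0$ such that $\lambda_t$ belongs to the same cell of $\bfH$ for all $t \in (0,\varepsilon)$, and this common cell is precisely $C \circ C'$.

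Next I would evaluate the weight $\alpha$ on $\lambda_t$:
\[
\alpha(\lambda_t) = (1-t)\alpha(\lambda) + t\alpha(\mu).
\]
Since $\alpha$ has a constant sign on $C$ and on $C'$, this sign equals that of $\alpha_C$ and $\alpha_{C'}$ respectively. For $t \in (0,\varepsilon)$ small enough, the sign of $\alpha(\lambda_t)$ is:
\begin{itemize}
\item the sign of $\alpha_C$, whenever $\alpha_C \neq 0$ (the first term dominates);
\item the sign of $\alpha_{C'}$, whenever $\alpha_C = 0$ (because then $\alpha(\lambda_t) = t\alpha(\mu)$);
\item zero, whenever both $\alpha_C = 0$ and $\alpha_{C'} = 0$.
\end{itemize}
Since the sign of $\alpha(\lambda_t)$ for $t \in (0,\varepsilon)$ coincides by construction with the sign of $\alpha_{C\circ C'}$, one immediately reads off the equivalence $\alpha_{C\circ C'} > 0 \iff (\alpha_C > 0) \text{ or } (\alpha_C = 0 \text{ and } \alpha_{C'} > 0)$, which is the claim.

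There is essentially no obstacle here; the only delicate point is the well-definedness step, namely checking that a single cell is traversed for all sufficiently small $t > 0$. This follows from the finiteness of $\FH$ together with the observation that each $\alpha \in \CW(V) \cup \CW(\Fg)$ restricts to an affine function on $[\lambda,\mu]$ which vanishes on at most finitely many points unless it vanishes identically, in which case $\alpha(\lambda_t) = 0$ for all $t$ and the analysis is consistent.
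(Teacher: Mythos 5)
Your proof is correct and follows essentially the same route as the paper: both arguments evaluate the sign of $\alpha$ at points of the segment $[\lambda,\mu]$ lying in $C\circ C'$ arbitrarily close to $\lambda$, using that $\alpha$ has constant sign on each cell. Your explicit parametrization $\lambda_t=(1-t)\lambda+t\mu$ just makes the paper's case analysis (sign of $\alpha(\lambda)$ nonzero versus zero) into a one-line computation, and your well-definedness remark is the same finiteness observation the paper relies on implicitly.
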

\begin{proof}
 We let $\lambda\in C$ and $\mu\in C'$. We let $[\lambda,\mu]\subset\Fh_{\BoR}$ be the line segment joining $\lambda$ to $\mu$. We let $\nu\in[\lambda,\mu]\cap C\circ C'$. We have to show the equivalence
 \[
  \langle \alpha,\nu\rangle>0\iff \langle \alpha,\lambda\rangle>0 \text{ or } (\langle \alpha,\lambda\rangle=0 \text{ and } \langle \alpha,\mu\rangle>0)\,.
 \]
If $\langle\alpha,\nu\rangle>0$, then $\langle\alpha,\lambda\rangle\geq 0$, since $C\subset\overline{C\circ C'}$. If $\langle\alpha,\lambda\rangle=0$, then the line segment meets the hyperplane $\{\alpha=0\}$ only in $\lambda$. The sign of $\alpha$ on the semi-open segment $(\lambda,\mu]$ is constant and so $\langle\alpha,\mu\rangle>0$ since $\langle \alpha,\nu\rangle>0$. This proves the direct implication.

For the reverse implication, we first assume that $\alpha_C>0$. Then, $C\subset \overline{C\circ C'}$ implies that $\alpha>0$ in a neighbourhood of $C$ which intersects $C\circ C'$. Therefore, $\langle\alpha,\nu\rangle>0$. If $\alpha_C=0$, then the line segment $(\lambda,\mu]$ is entirely contained in a half-plane determined by $\alpha=0$ and so $\langle \alpha,\nu\rangle$ and $\langle \alpha,\mu\rangle$ have the same sign. Since $\langle\alpha,\mu\rangle>0$, we have $\langle\alpha,\nu\rangle>0$ and this concludes.
\end{proof}

\section{The critical induction system}
\label{section:inductionsystem}

\subsection{Induction diagram}
\label{subsection:inductiondiagram}
We let $V$ be a representation of a reductive group $G$, $\FC$ and $\FF$ be respectively the sets of cells and flats of the Coxeter system associated to $(G,V)$. We let $T'$ be an auxiliary torus acting on $V$ such that the actions of $G$ and $T'$ on $V$ commute.

For $\lambda\in\rmX_*(T)$, we define $V^{\lambda}, V^{\lambda\geq 0}, G^{\lambda}, G^{\lambda\geq 0}$ as in \cite[\S2.1]{hennecart2024cohomological}. More precisely, $V^{\lambda}$ is the fixed locus for the action of $\BoG_{\rmm}$ on $V$ induced by $\lambda$, $G^{\lambda}$ is the Levi subgroup of $G$ which is the fixed locus for the $\BoG_{\rmm}$-action induced by the adjoint action of $G$, $V^{\lambda\geq 0}$ and $G^{\lambda\geq 0}$ are the attracting loci. We may define $V^{\lambda\leq 0}, V^{\lambda>0},\dots$ analogously.

For any $F\in \FF$ and $C\in\FC$, we define
\begin{enumerate}
 \item $V^F\coloneqq V^{\lambda}$ for $\lambda\in F$ a general cocharacter (more precisely, $\lambda$ is in an open cell contained in $F$ or equivalently, $\lambda\in F\setminus\bigcup_{\substack{H\in \FH\\F\not\subset H}}H)$,
 \item $V^{C\geq 0,F}\coloneqq V^{\lambda\geq 0}\cap V^F$ for some cocharacter $\lambda\in C$. We let $V^{C\geq 0}=V^{\lambda\geq 0}$,
 \item $G^F\coloneqq G^{\lambda}$ for $\lambda\in F$ a general cocharacter,
 \item $G^{C\geq0,F}\coloneqq G^{\lambda\geq 0}\cap G^F$ for $\lambda\in C$ a cocharacter. We also let $G^{C\geq 0}\coloneqq G^{\lambda\geq 0}$.
\end{enumerate}
We may also define $G^{\leq C,F}, V^{\leq C,F},\dots$ in a similar fashion.

There is a $(G^{C\geq 0,F}\times T',G^F\times T')$-equivariant closed immersion $p_{C,F}\colon V^{C\geq 0,F}\rightarrow V^F$ and a $(G^{C\geq 0,F}\times T',G^{\langle C\rangle}\times T')$-equivariant vector bundle $q_{C,F}\colon V^{C\geq 0,F}\rightarrow V^{\langle C\rangle}$.

We obtain the induction diagram
\begin{equation}
\label{equation:inductiondiagramcritical}
 V^{\langle C\rangle}/(G^{\langle C\rangle}\times T')\xleftarrow{q_{C,F}}V^{C\geq0,F}/(G^{C\geq 0,F}\times T')\xrightarrow{p_{C,F}} V^F/(G^F\times T')\,.
\end{equation}

For any $C\in\FC$ and $F\in\FF$ such that $C\preceq F$, we define $d_{C,F}$ as the codimension of $p_{C,F}$, that is $d_{C,F}\coloneqq \dim V^{C<0,F}-\dim G^{C<0,F}$.

\subsection{Potential}
\label{subsection:potential}
Let $V$ be a representation of a reductive group $G$. Let $f$ be a $G$-invariant function on $V$. For any $F\in \FF$, we let $f_F$ be the restriction of $f$ to $V^F\subset V$. It is a $G^F$-invariant function
\begin{proposition}
\label{proposition:potential}
 Let $f$ be a $G$-invariant function on $V$. Then, for any $C\in \FC$ and $F\in\FF$ such that $C\preceq F$, $f_F\circ p_{C,F}=f_{\langle C\rangle}\circ q_{C,F}$.
\end{proposition}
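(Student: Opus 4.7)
The statement is pointwise, so the plan is to evaluate both compositions at an arbitrary point of $V^{C \geq 0, F}$ and read off the equality from the definition of $q_{C,F}$ together with the $G$-invariance of $f$.

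First I would fix a cocharacter $\lambda \in \rmX_*(T) \cap C$. By the definitions recalled in \S\ref{subsection:inductiondiagram}, $p_{C,F}$ is simply the closed immersion $V^{C \geq 0, F} \hookrightarrow V^F$, so for any $v \in V^{C \geq 0, F}$ one has $f_F(p_{C,F}(v)) = f(v)$. Next, $q_{C,F}$ is the restriction to $V^{C \geq 0, F}$ of the Bia{\l}ynicki-Birula retraction $V^{\lambda \geq 0} \to V^\lambda = V^{\langle C \rangle}$, which is given on points by $v \mapsto \lim_{t \to 0} \lambda(t) \cdot v$; the limit exists precisely because $v$ lies in the attracting locus $V^{\lambda \geq 0}$. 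Hence $f_{\langle C \rangle}(q_{C,F}(v)) = f(\lim_{t \to 0} \lambda(t) \cdot v)$.

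Now I would use $G$-invariance of $f$: since $\lambda$ takes values in $G$, one has $f(\lambda(t) \cdot v) = f(v)$ for every $t \in \BoG_\rmm$. Passing to the limit $t \to 0$ and using that $f$ is continuous (in fact regular), we deduce
\[
 f_{\langle C \rangle}(q_{C,F}(v)) = f\bigl(\lim_{t \to 0} \lambda(t) \cdot v\bigr) = \lim_{t \to 0} f(\lambda(t) \cdot v) = f(v) = f_F(p_{C,F}(v))\,.
\]
Since this holds for every $v \in V^{C \geq 0, F}$, the equality $f_F \circ p_{C,F} = f_{\langle C \rangle} \circ q_{C,F}$ of regular functions follows.

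There is no real obstacle here; the only point worth stressing is that the chosen $\lambda$ is independent of the choice of cocharacter inside $C$, because the limit $\lim_{t \to 0} \lambda(t) \cdot v$ depends only on the cell $C$ (two cocharacters in $C$ define the same attracting/fixed-point decomposition of $V$, hence the same retraction). This justifies that the identity $f_F \circ p_{C,F} = f_{\langle C \rangle} \circ q_{C,F}$ is intrinsic to the pair $(C,F)$.
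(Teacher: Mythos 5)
Your proof is correct and follows essentially the same route as the paper: fix $\lambda\in C$, observe that $t\mapsto f(\lambda(t)\cdot v)$ is constant by $G$-invariance, and conclude by passing to the limit $t\to 0$, which exists since $v$ lies in the attracting locus and lands in $V^{\langle C\rangle}$. The closing remark that the retraction depends only on the cell $C$, not on the chosen cocharacter, is a harmless addition that the paper leaves implicit.
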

\begin{proof}
We let $\lambda\in C$ be a cocharacter of $T$. Then, for any $x\in V^{C\geq 0,F}$ the function $t\in\BoC^*\mapsto f_F(\lambda(t)\cdot x)$ is constant by $G$-invariance of $f$. Therefore, $f_F(x)=f_F(\lim_{t\rightarrow 0}\lambda(t)\cdot x)=f_{\langle C\rangle}(p_{C,F}(x))$.
\end{proof}

\subsection{Critical cohomology}
\label{subsection:criticalcohomology}
For any $F\in\FF$, we define $\CH_{G,V,f,F}^{T'}\coloneqq \HO^*_{G^F\times T'}(V^F,\varphi_{f_F}\BoQ)$. We also define $\CH_{T,V,f,F}^{T'}\coloneqq \HO^*_{T\times T'}(V,\varphi_{f_F}\BoQ)$ by restricting the action of $G$ to the maximal torus $T\subset G$. It admits an action of the Weyl group $\sfW$ of $G$ (see Lemma~\ref{lemma:Winvariantpart} for the relation between these critical cohomologies for different groups).

\subsection{Parabolic induction}
\label{subsection:parabolic_induction}
We consider the induction diagram \eqref{equation:inductiondiagramcritical} which we enrich with good moduli spaces $\CV_{F}^{T'}\coloneqq (V^F\cms G^F)/T'$ (the quotient stack of the scheme $V^F\cms G^F$ by the torus $T'$) as follows.

\[\begin{tikzcd}
	& {V^{C\geq 0,F}/(G^{C\geq 0,F}\times T')} \\
	{V^{\langle C\rangle}/(G^{\langle C\rangle}\times T')} && {V^F/(G^F\times T')} \\
	{\CV_{\langle C\rangle}^{T'}} && {\CV_{F}^{T'}}
	\arrow["{q_{C,F}}"', from=1-2, to=2-1]
	\arrow["{p_{C,F}}", from=1-2, to=2-3]
	\arrow["{\pi_{\langle C\rangle}}"', from=2-1, to=3-1]
	\arrow["{\pi_F}", from=2-3, to=3-3]
	\arrow["{\imath_{\langle C\rangle,F}}", from=3-1, to=3-3]
\end{tikzcd}\]
The map $\imath_{\langle C\rangle,F}$ is finite by \cite[Lemma~2.2]{hennecart2024cohomological} since it is induced by the $T'$-equivariant finite map $V^{\langle C\rangle}\cms G^{\langle C\rangle}\rightarrow V^F\cms G^F$.

The pullback by $q_{C,F}$ is induced by a morphism of constructible sheaves or mixed Hodge modules
\[
 \BoQ_{V^{\langle C\rangle}/(G^{\langle C\rangle}\times T')}\rightarrow(q_{C,F})_*\BoQ_{V^{C\geq 0,F}/(G^{C\geq 0,F}\times T')}\,.
\]
By applying the vanishing cycle sheaf functor $\varphi_{f_{\langle C\rangle}}$, we obtain the morphism
\begin{equation}
\label{equation:pbcritical}
 \varphi_{f_{\langle C\rangle}}\BoQ_{V^{\langle C\rangle}/(G^{\langle C\rangle}\times T')}\rightarrow\varphi_{f_{\langle C\rangle}}(q_{C,F})_*\BoQ_{V^{C\geq 0,F}/(G^{C\geq 0,F}\times T')}\,.
\end{equation}
The pushforward by the proper and representable morphism $p_{C,F}$ is induced by a morphism of constructible sheaves
\[
 (p_{C,F})_*\BoQ_{V^{C\geq 0,F}/(G^{C\geq 0,F}\times T')}\rightarrow\BoQ_{V^F/(G^F\times T')}\otimes\SL^{-d_{C,F}}\,.
\]
By applying the vanishing cycle sheaf functor $\varphi_{f_F}$, we obtain the morphism
\begin{equation}
\label{equation:pfcritical}
  \varphi_{f_F}(p_{C,F})_*\BoQ_{V^{C\geq 0,F}/(G^{C\geq 0,F}\times T')}\rightarrow\varphi_{f_F}\BoQ_{V^F/(G^F\times T')}\otimes\SL^{-d_{C,F}}
\end{equation}
By smoothness of $q_{C,F}$, properness of $p_{C,F}$, finiteness of $\imath_{\langle C\rangle,F}$, the fact that $\pi_F$ is APM (approachable by proper maps) \cite[Proposition~5.12]{hennecart2024cohomological2}, \cite{kinjo2024decomposition} and Proposition~\ref{proposition:potential}, one can identify canonically
\[
 (\imath_{\langle C\rangle,F})_*(\pi_{\langle C\rangle})_*{\varphi}_{f_{\langle C\rangle}}(q_{C,F})_*\BoQ_{V^{C\geq 0,F}/(G^{C\geq 0,F}\times T')}\quad\text{and}\quad (\pi_F)_*\varphi_{f_F}(p_{C,F})_*\BoQ_{V^{C\geq 0,F}/(G^{C\geq 0,F}\times T')}\,.
\]
Therefore, by composing $(\imath_{\langle C\rangle,F})_*(\pi_{\langle C\rangle})_*$ applied to \eqref{equation:pbcritical} and $(\pi_F)_*$ applied to \eqref{equation:pfcritical}, we obtain a morphism of constructible complexes
\begin{equation}
\label{equation:untwistedcriticalinduction}
(\imath_{\langle C\rangle,F})_*(\pi_{\langle C\rangle})_*{\varphi}_{f_{\langle C\rangle}}\BoQ_{V^{\langle C\rangle}/(G^{\langle C\rangle}\times T')}\rightarrow(\pi_F)_*\varphi_{f_F}\BoQ_{V^F/(G^F\times T')}\otimes\SL^{-d_{C,F}}\,.
\end{equation}
It is customary to give $\BoQ_{V^{\langle C\rangle}/(G^{\langle C\rangle}\times T')}$ and $\BoQ_{V^F/(G^F\times T')}$ the perverse twist
\[
 \BoQ_{V^F/(G^F\times T')}^{\vir}\coloneqq\BoQ_{V^F/(G^F\times T')}\otimes \SL^{-d_F/2}
\]
where $d_F\coloneqq \dim V^F-\dim G^F$.

We can then rewrite \eqref{equation:untwistedcriticalinduction} as
\begin{equation}
\label{equation:twistedcriticalinduction}
 (\imath_{\langle C\rangle,F})_*(\pi_{\langle C\rangle})_*{\varphi}_{f_{\langle C\rangle}}\BoQ_{V^{\langle C\rangle}/(G^{\langle C\rangle}\times T')}^{\vir}\rightarrow(\pi_F)_*{\varphi}_{f_F}\BoQ_{V^F/(G^F\times T')}^{\vir}\otimes\SL^{\frac{d_F}{2}-\frac{d_{\langle C\rangle}}{2}-d_{C,F}}\,.
\end{equation}

This is the sheafified version of the induction morphism. By taking derived global sections, we obtain the induction morphism
\begin{equation}
\label{equation:inductionabsolute}
 \Ind_{C}^F\colon \CH_{G,V,f,\langle C\rangle}^{T'}\rightarrow \CH_{G,V,f,F}^{T'}\,.
\end{equation}

\begin{proposition}
 If $V$ is a weakly symmetric representation of $G$, then the induction morphisms $\Ind_{C}^F$ respect the virtual cohomological grading.
\end{proposition}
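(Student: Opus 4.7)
The plan is to show that the Tate twist appearing in the sheaf-level induction morphism \eqref{equation:twistedcriticalinduction}, namely
\[
 \frac{d_F}{2}-\frac{d_{\langle C\rangle}}{2}-d_{C,F},
\]
vanishes whenever $V$ is weakly symmetric. Since taking derived global sections commutes with the Tate twist $\SL^{\bullet}$, this will imply that $\Ind_C^F$ preserves the virtual cohomological grading on $\CH_{G,V,f,\langle C\rangle}^{T'}$ and $\CH_{G,V,f,F}^{T'}$.

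The first step is to translate everything into weight-space language. Pick $\lambda\in C$ and $\mu$ in an open cell of $F$. Writing $V=\bigoplus_{\alpha\in\rmX^*(T)}V_{\alpha}$ and similarly for $\Fg$, we have $V^F=\bigoplus_{\alpha|_{F}=0}V_{\alpha}$, $V^{\langle C\rangle}=\bigoplus_{\alpha|_{\langle C\rangle}=0}V_{\alpha}$, and $V^{C<0,F}=\bigoplus_{\alpha|_F=0,\,\langle\alpha,\lambda\rangle<0}V_{\alpha}$ (and similarly for $V^{C>0,F}$ and for $\Fg$). Because $\lambda$ lies in the open cell $C$ of the flat $\langle C\rangle$, the condition $\alpha|_F=0$ together with $\langle\alpha,\lambda\rangle=0$ is equivalent to $\alpha|_{\langle C\rangle}=0$, so we get the orthogonal decompositions
\[
 V^F=V^{\langle C\rangle}\oplus V^{C<0,F}\oplus V^{C>0,F},\qquad \Fg^F=\Fg^{\langle C\rangle}\oplus\Fg^{C<0,F}\oplus\Fg^{C>0,F}.
\]
A direct computation using these, together with $d_{C,F}=\dim V^{C<0,F}-\dim\Fg^{C<0,F}$ and $d_{F}=\dim V^F-\dim\Fg^F$, then yields
\[
 \frac{d_F}{2}-\frac{d_{\langle C\rangle}}{2}-d_{C,F}=\tfrac{1}{2}(\dim V^{C>0,F}-\dim V^{C<0,F})-\tfrac{1}{2}(\dim \Fg^{C>0,F}-\dim \Fg^{C<0,F}).
\]

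The second step is to show each of the two parenthesized differences above is zero. The subset of weights $\alpha$ satisfying $\alpha|_F=0$ is $\sfW$-invariant, and, crucially, is a union of positive-rational equivalence classes (i.e.\ unchanged by replacing $\alpha$ by $r\alpha$ for $r\in\BoQ_{>0}^*$). Weak symmetry $\overline{\CW}(V)=\overline{\CW}(V^*)$ says that the multiset of equivalence classes of weights of $V$ coincides with that of $V^*=\{-\alpha:\alpha\in\CW(V)\}$, which is equivalent to the multiplicity of the class $[\alpha]$ in $V$ being equal to the multiplicity of the class $[-\alpha]$ in $V$. Since the conditions ``$\alpha|_F=0$, $\langle\alpha,\lambda\rangle>0$'' and ``$\alpha|_F=0$, $\langle\alpha,\lambda\rangle<0$'' are exchanged under $\alpha\leftrightarrow-\alpha$ and are each stable under positive rational scaling, grouping the weight-space dimensions by equivalence class gives $\dim V^{C>0,F}=\dim V^{C<0,F}$. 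Exactly the same argument, applied to the adjoint representation $\Fg$ (which is symmetric for any reductive $G$), gives $\dim\Fg^{C>0,F}=\dim \Fg^{C<0,F}$. Substituting both equalities into the boxed expression above makes it vanish, which completes the proof.

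I do not expect any serious obstacle here: once one writes the decompositions along $T$-weights, the statement is really a bookkeeping identity powered by the fact that weak symmetry (respectively symmetry of $\Fg$) balances ``positive'' and ``negative'' contributions cell by cell. The only point that requires a small verification is the identification $\{\alpha|_F=0\}\cap\{\langle\alpha,\lambda\rangle=0\}=\{\alpha|_{\langle C\rangle}=0\}$, which comes from the genericity of $\lambda$ within the open cell $C$ of $\langle C\rangle$.
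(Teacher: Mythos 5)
Your proof is correct and follows essentially the same route as the paper: the paper's argument is precisely that weak symmetry forces $d_F-d_{\langle C\rangle}-2d_{C,F}=0$, so the Tate twist in \eqref{equation:twistedcriticalinduction} disappears. You simply supply the weight-space bookkeeping (grouping weights by positive-rational rays and using $\overline{\CW}(V)=\overline{\CW}(V^*)$, plus self-duality of $\Fg$) that the paper leaves implicit, and that verification is accurate.
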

\begin{proof}
If $V$ is weakly symmetric, them $d_F-d_{\langle C\rangle}-2d_{C,F}=0$, and so the Tate twist in the r.h.s. of \eqref{equation:twistedcriticalinduction} disappears. Therefore, the induction morphisms respect the virtual cohomological grading.
\end{proof}

\subsection{Induction morphisms and mixed Hodge structures}
\label{subsection:inductionmhs}
As explained in \S\ref{subsection:conventions}, our convention is to hide the Tate twists when we take derived global sections. All cohomology spaces come with natural mixed Hodge structures. We explain in this section how to write the induction morphisms as morphisms of mixed Hodge structures. The morphism \eqref{equation:twistedcriticalinduction} is a morphism of monodromic mixed Hodge modules. By taking derived global sections, we obtain the morphism
\[
 \HO^*(V^{\langle C\rangle/(G^{\langle C\rangle\times T'})},\varphi_{f_{\langle C\rangle}}\BoQ^{\vir})\rightarrow\HO^*(V^F/(G^F\times T'),\varphi_{f_F}\BoQ^{\vir})\otimes\SL^{\frac{d_F}{2}-\frac{d_{\langle C\rangle}}{2}-d_{C,F}}\,.
\]
Unraveling the respective formulas for $\BoQ^{\vir}$, we obtain
\[
 \HO^*(V^{\langle C\rangle/(G^{\langle C\rangle\times T'})},\varphi_{f_{\langle C\rangle}}\BoQ)\rightarrow\HO^*(V^F/(G^F\times T'),\varphi_{f_F}\BoQ)\otimes\SL^{-d_{C,F}}\,.
\]
From the definition of $\CH_{G,V,f,F}^{T'}$ at the beginning of \S\ref{subsection:criticalcohomology}, this is a morphism
\[
 \CH_{G,V,f,\langle C\rangle}^{T'}\rightarrow\CH_{G,V,f,F}^{T'}\otimes\SL^{-d_{C,F}}\,.
\]
Written in this way, with the Tate twist, the induction morphism respects the cohomological grading on both sides (recall that $\SL$ is in cohomological degree $1$), and respects the mixed Hodge structures.

\subsection{Associativity}
\label{subsection:associativity}

\begin{proposition}
\label{proposition:associativity3d}
For any cells $C,C'\in\FC$ and $F\in\FF$ such that $C\preceq C'\preceq F$, we have $\Ind_{C}^F=\Ind_{C'}^{F}\circ\Ind_{C}^{\langle C'\rangle}$.
\end{proposition}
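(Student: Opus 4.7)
The plan is to exhibit the composition $\Ind_{C'}^F\circ\Ind_C^{\langle C'\rangle}$ as a pullback--pushforward along a single two-step correspondence, and then apply proper--smooth base change in the category of equivariant monodromic mixed Hodge modules to collapse it to the one-step correspondence defining $\Ind_C^F$. Fix cocharacters $\lambda\in C$ and $\lambda'\in C'$. Since $C'\subseteq\overline{C}$, every weight $\alpha\in\CW(V)\cup\CW(\Fg)$ has constant sign on $\overline{C}$; in particular $\alpha(\lambda)<0\implies\alpha(\lambda')\leq 0$ and $\alpha(\lambda')>0\implies\alpha(\lambda)\geq 0$. This yields the weight-space decomposition
\[
V^{C<0,F}=V^{C<0,\langle C'\rangle}\oplus V^{C'<0,F},
\]
together with the analogous identity for $\Fg$; in particular $d_{C,F}=d_{C,\langle C'\rangle}+d_{C',F}$, so that the Tate twists on the two sides of the asserted equality already match.

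The same sign analysis shows that the square of $G^{C\geq 0,F}\times T'$-equivariant morphisms
\[\begin{tikzcd}
	{V^{C\geq 0,F}} & {V^{C\geq 0,\langle C'\rangle}} \\
	{V^{C'\geq 0,F}} & {V^{\langle C'\rangle}}
	\arrow["a", from=1-1, to=1-2]
	\arrow[hook, "b"', from=1-1, to=2-1]
	\arrow[hook, "p_{C,\langle C'\rangle}", from=1-2, to=2-2]
	\arrow["q_{C',F}"', from=2-1, to=2-2]
\end{tikzcd}\]
is Cartesian, where $a$ is the $\lambda'$-attractor and $b$ is the closed immersion. Indeed, the fibre product condition forces the $\langle C'\rangle$-fixed part of a vector in $V^{C'\geq 0,F}$ to have weights satisfying $\alpha(\lambda)\geq 0$, and sign-constancy on $\overline{C}$ then recovers precisely the weights of $V^{C\geq 0,F}$. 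The analogous Cartesian square for the attractor subgroups descends to a Cartesian square of quotient stacks in which the horizontal maps are (vector-bundle-stack) smooth morphisms and the vertical maps are closed immersions. Moreover $q_{C,F}=q_{C,\langle C'\rangle}\circ a$ and $p_{C,F}=p_{C',F}\circ b$, and Proposition~\ref{proposition:potential} ensures that $f_{\langle C\rangle}$, $f_{\langle C'\rangle}$ and $f_F$ pull back compatibly along every attractor and immersion in the enlarged diagram.

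Proper base change for $p_{C,\langle C'\rangle}$ combined with smooth base change for $q_{C',F}$, applied to the sheaf $\varphi_{f_{\langle C'\rangle}}\BoQ$ on $V^{\langle C'\rangle}/(G^{\langle C'\rangle}\times T')$, produces an isomorphism of functors $(q_{C',F})^*(p_{C,\langle C'\rangle})_*\isoto b_*a^*$. This is compatible with the natural transformations (pullback via $q$, functoriality of $\varphi$, proper pushforward via $p$) used to build the induction morphisms at the sheaf level in \S\ref{subsection:parabolic_induction}. Splicing this base-change isomorphism between the sheafy incarnations of $\Ind_C^{\langle C'\rangle}$ and $\Ind_{C'}^F$ realises the composition as pullback--pushforward along $V^{\langle C\rangle}\xleftarrow{q_{C,\langle C'\rangle}\circ a}V^{C\geq 0,F}\xrightarrow{p_{C',F}\circ b}V^F$, which is precisely the correspondence defining $\Ind_C^F$; taking derived global sections yields the equality.

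The main obstacle lies in verifying Cartesianness at the level of quotient stacks: the attractor subgroups $G^{C'\geq 0,F}$ and $G^{C\geq 0,\langle C'\rangle}$ have non-trivial unipotent radicals, and the matching Cartesian square requires that their Lie algebras decompose in parallel with $V$, using the weight decomposition of $\Fg$ exhibited above. Once the stacky Cartesianness is established, the remaining bookkeeping---Tate twist accounting (already controlled by $d_{C,F}=d_{C,\langle C'\rangle}+d_{C',F}$), functoriality of $\varphi$ under proper--smooth base change, and compatibility with the good moduli space morphisms $\imath_{\langle C\rangle,F}$ via the APM property---reduces to standard applications of the formalism recalled in \S\ref{section:equcoh-mhm}.
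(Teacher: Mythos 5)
Your argument is correct and is essentially the paper's own proof: base change in the same diamond of attractor correspondences through $V^{C\geq 0,F}$, using the Cartesian middle square over $V^{\langle C'\rangle}$, properness of the maps $p_{\dots}$, smoothness of the maps $q_{\dots}$, and the compatibility of vanishing cycle functors with smooth pullback and proper pushforward (you additionally spell out the weight-space decomposition $V^{C<0,F}=V^{C<0,\langle C'\rangle}\oplus V^{C'<0,F}$ and the Tate-twist identity $d_{C,F}=d_{C,\langle C'\rangle}+d_{C',F}$, which the paper leaves implicit). One minor imprecision: at the level of quotient stacks the vertical maps $b$ and $p_{C,\langle C'\rangle}$ are proper and representable but not closed immersions (the groups change along them), though only properness enters the base-change argument, so this does not affect the proof.
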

\begin{proof}
The proof is rather classical, and follows from base-change in the diagram of equivariant morphisms
\[\begin{tikzcd}
	&& {V^{C\geq 0,F}} \\
	{} & {V^{C\geq0,\langle C'\rangle}} && {V^{C'\geq0,F}} \\
	{V^{\langle C\rangle}} && {V^{\langle C'\rangle}} && {V^F}
	\arrow[from=1-3, to=2-2]
	\arrow[from=1-3, to=2-4]
	\arrow["{q_{C,F}}"{description}, from=1-3, to=3-1,bend right=55]
	\arrow["\lrcorner"{anchor=center, pos=0.125, rotate=-45}, draw=none, from=1-3, to=3-3]
	\arrow["{p_{C,F}}"{description}, from=1-3, to=3-5,bend left=55]
	\arrow["{q_{C,\langle C'\rangle}}"', from=2-2, to=3-1]
	\arrow["{p_{C,\langle C'\rangle}}", from=2-2, to=3-3]
	\arrow["{q_{C',F}}"', from=2-4, to=3-3]
	\arrow["{p_{C',F}}", from=2-4, to=3-5]
\end{tikzcd}\]
using the properness of the maps $p_{\dots}$, the smoothness of the maps $q_{\dots}$ and the fact that vanishing cycle functors commute with smooth pullback and proper pushforward.
\end{proof}

\subsection{Torus equivariant vanishing cycles}
Let $V$ be a representation of a reductive group $G$ and $f\colon V\rightarrow \BoC$ a $G$-invariant function on $V$. We fix a maximal torus $T\subset G$ and denote by $\sfW$ the corresponding Weyl group.

\begin{lemma}
\label{lemma:Winvariantpart}
 We have $\HO_{G\times T'}^*(V,\varphi_f\BoQ)\cong\HO^*_{T\times T'}(V,\varphi_f\BoQ)^{\sfW}$.
\end{lemma}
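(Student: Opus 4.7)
The plan is to adapt the sketch given for Proposition~\ref{proposition:weylgroupBorelMoore}, replacing the dualizing sheaf there by the vanishing cycle sheaf $\varphi_f\BoQ$. Consider the smooth and proper morphism of quotient stacks
\[
 p\colon V/(T\times T') \longrightarrow V/(G\times T'),
\]
which is a $G/T$-bundle; it becomes trivial after pulling back along the smooth atlas $V\to V/(G\times T')$. Since $f$ is $G$-invariant, it descends to a function on $V/(G\times T')$, and its pullback along $p$ is the $T\times T'$-invariant function on $V/(T\times T')$ induced by $f$, so $\varphi_f$ is defined on both sides.

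First, because $p$ is smooth, the natural base-change map $p^*\varphi_f\BoQ\to\varphi_{f\circ p}\,p^*\BoQ\cong\varphi_{f\circ p}\BoQ$ is an isomorphism (the cohomological shifts are absorbed in the perverse normalization, \S\ref{subsection:vanishingcycle}). Because $p$ is proper, $p_*\varphi_{f\circ p}\BoQ\cong \varphi_f\,p_*\BoQ$. Combining these with the projection formula and the fact that $p_*\BoQ$ is the constant complex on $V/(G\times T')$ with stalk $\HO^*(G/T,\BoQ)$ (the bundle trivializes Zariski-locally on the atlas and the cohomology is concentrated in even degrees so there is no monodromy), we obtain
\[
 p_*p^*\varphi_f\BoQ \;\cong\; \varphi_f\bigl(p_*\BoQ\bigr) \;\cong\; \varphi_f\BoQ\otimes_{\BoQ}\HO^*(G/T,\BoQ).
\]
Taking derived global sections yields
\[
 \HO^*_{T\times T'}(V,\varphi_f\BoQ) \;\cong\; \HO^*_{G\times T'}(V,\varphi_f\BoQ)\otimes_{\BoQ}\HO^*(G/T,\BoQ).
\]

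Next, I would record that this isomorphism is $\sfW$-equivariant, where $\sfW=N_G(T)/T$ acts on $V/(T\times T')$ via its natural action on $V/T$ (coming from $N_G(T)\subset G$), acts trivially on $V/(G\times T')$, and acts on $\HO^*(G/T,\BoQ)$ as in Proposition~\ref{proposition:cohflagvarieties}. Both sides of the displayed isomorphism then carry $\sfW$-actions, with $\sfW$ acting trivially on the factor $\HO^*_{G\times T'}(V,\varphi_f\BoQ)$ on the right. Taking $\sfW$-invariants and invoking $\HO^*(G/T,\BoQ)^{\sfW}\cong\BoQ$ from Proposition~\ref{proposition:cohflagvarieties} gives the claim.

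The main obstacle is verifying that the tensor product decomposition obtained from the projection formula is truly $\sfW$-equivariant with the expected action on $p_*\BoQ$; concretely, one must check that the $\sfW$-action on $p_*\BoQ\cong p_*p^*\BoQ_{V/(G\times T')}$ arising from the $\sfW$-action on $V/(T\times T')$ coincides, after identifying the stalks with $\HO^*(G/T,\BoQ)$, with the action described in Proposition~\ref{proposition:cohflagvarieties}. Once this bookkeeping is in place, the argument reduces cleanly to the classical flag-variety computation.
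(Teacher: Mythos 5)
Your route is genuinely different from the paper's. You work directly with the $G/T$-fibration $p\colon V/(T\times T')\to V/(G\times T')$, split $p_*\BoQ$ as $\BoQ\otimes\HO^*(G/T,\BoQ)$, apply the projection formula together with smooth base change and proper pushforward for $\varphi_f$, and then take $\sfW$-invariants using $\HO^*(G/T,\BoQ)^{\sfW}\cong\BoQ$. The paper instead factors $p$ through the intermediate stack $V/(N_G(T)\times T')$: the map $u\colon V/(T\times T')\to V/(N_G(T)\times T')$ is a $\sfW$-torsor, which gives $\HO^*_{T\times T'}(V,\varphi_f\BoQ)^{\sfW}\cong\HO^*_{N_G(T)\times T'}(V,\varphi_f\BoQ)$ with no work, and the second map $v$ has fibers $G/N_G(T)$ with trivial rational cohomology, so the adjunction $\id\to v_*v^*$ is an isomorphism after applying it to $\varphi_f\BoQ$. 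That factorization is designed exactly to avoid the point you flag at the end: one never has to check that a chosen splitting of $p_*\BoQ$ is $\sfW$-equivariant, because the Weyl group is absorbed into the torsor step and the remaining fibration only needs $\HO^*(G/N_G(T),\BoQ)\cong\BoQ$.

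Two points in your write-up need repair before the argument is complete. First, the justification that $p_*\BoQ$ is constant with stalk $\HO^*(G/T,\BoQ)$ is not valid as stated: evenness of the fiber cohomology does not by itself exclude monodromy. The correct reason is that the $G/T$-bundle is associated to a torsor under the connected group $G\times T'$, so the local systems $R^ip_*\BoQ$ are trivial, and the (equivariant) decomposition theorem, or Leray--Hirsch with the Chern classes of the line bundles attached to characters of $T$, then yields the splitting. Second, and more substantially, the $\sfW$-equivariance of that splitting is genuine work, not mere bookkeeping: an arbitrary decomposition-theorem splitting need not be compatible with the $\sfW$-action, so you must exhibit one that is, e.g.\ by running Leray--Hirsch with a $\sfW$-stable space of classes (using that $\HO^*_{T\times T'}(\pt)$ is free over $\HO^*_{G\times T'}(\pt)$ on a $\sfW$-stable space of generators, such as harmonic polynomials) and then identifying the fiberwise $\sfW$-action with that of Proposition~\ref{proposition:cohflagvarieties}. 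Alternatively, inserting $V/(N_G(T)\times T')$ as in the paper removes the need for any equivariant splitting and shortens the proof considerably.
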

\begin{proof}
We consider the diagram
\[\begin{tikzcd}
	{V/(T\times T')} & {V/(N_G(T)\times T')} & {V/(G\times T')}
	\arrow["u", from=1-1, to=1-2]
	\arrow["w", from=1-1, to=1-3,bend left=30]
	\arrow["v", from=1-2, to=1-3]\,.
\end{tikzcd}\]
The morphism $u$ is a $\sfW$-torsor. Therefore, we have an isomorphism
\[
 \HO^*_{T\times T'}(V,\varphi_f\BoQ)^{\sfW}\cong\HO^*_{N_G(T)\times T'}(V,\varphi_f\BoQ)\,.
\]
The fibers of $v$ are $G/N_G(T)\equiv (G/T)/\sfW$ and so, the cohomology of the fibers of $v$ is $\BoQ\cong \HO^*(G/T,\BoQ)^{\sfW}$ by Proposition~\ref{proposition:cohflagvarieties}. Therefore, the adjunction morphism $\id\rightarrow v_*v^*$ is an isomorphism. When applied to $\varphi_f\BoQ_{V/(G\times T')}$, this gives $\HO^*_{N_G(T)\times T'}(V,\varphi_f\BoQ)\cong\HO^*_{G\times T'}(V,\varphi_f\BoQ)$. This concludes.
\end{proof}

Similarly, we have the following lemma whose proof if almost identitical.
\begin{lemma}
 \label{lemma:Winvariantparabolic}
We let $\lambda\colon\BoG_{\rmm}\rightarrow G$ be a one-parameter subgroup and $\sfW^{\lambda}$ be the Weyl group of $G^{\lambda}$. Then, $\HO^*_{G^{\lambda\geq 0}\times T'}(V^{\lambda\geq 0},\varphi_f\BoQ)\cong\HO^*_{T\times T'}(V^{\lambda\geq 0},\varphi_f\BoQ)^{\sfW^{\lambda}}$.
\end{lemma}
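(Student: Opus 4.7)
The plan is to imitate the proof of Lemma~\ref{lemma:Winvariantpart} verbatim, replacing $G$ by the parabolic subgroup $G^{\lambda\geq 0}$. The key structural fact that makes this work is that $T$ is still a maximal torus of $G^{\lambda\geq 0}$, and $N_{G^{\lambda\geq 0}}(T)/T = \sfW^{\lambda}$, the Weyl group of the Levi $G^{\lambda}$. Indeed, $G^{\lambda\geq 0}$ admits a Levi decomposition $G^{\lambda\geq 0} = U^{\lambda>0}\rtimes G^{\lambda}$ with unipotent radical $U^{\lambda>0}$, and since $T\subset G^{\lambda}$ and no nontrivial element of the unipotent radical normalizes $T$, one has $N_{G^{\lambda\geq 0}}(T)=N_{G^{\lambda}}(T)$, whose quotient by $T$ is $\sfW^{\lambda}$.

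First, consider the tower of quotient stacks
\[
V^{\lambda\geq 0}/(T\times T')\xrightarrow{u} V^{\lambda\geq 0}/(N_{G^{\lambda\geq 0}}(T)\times T')\xrightarrow{v} V^{\lambda\geq 0}/(G^{\lambda\geq 0}\times T').
\]
The map $u$ is an étale $\sfW^{\lambda}$-torsor, so pullback along $u$ identifies
\[
\HO^*_{N_{G^{\lambda\geq 0}}(T)\times T'}(V^{\lambda\geq 0},\varphi_f\BoQ)\cong \HO^*_{T\times T'}(V^{\lambda\geq 0},\varphi_f\BoQ)^{\sfW^{\lambda}},
\]
exactly as in the proof of Lemma~\ref{lemma:Winvariantpart}.

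Next, the fibers of $v$ are isomorphic to $G^{\lambda\geq 0}/N_{G^{\lambda\geq 0}}(T)\cong (G^{\lambda\geq 0}/T)/\sfW^{\lambda}$. The Levi decomposition writes $G^{\lambda\geq 0}/T$ as an $U^{\lambda>0}$-bundle over $G^{\lambda}/T$, which is an affine bundle, hence a cohomological equivalence. Since $\HO^*(G^{\lambda}/T,\BoQ)$ is the regular representation of $\sfW^{\lambda}$ (Proposition~\ref{proposition:cohflagvarieties} applied to $G^{\lambda}$), we get $\HO^*(G^{\lambda\geq 0}/N_{G^{\lambda\geq 0}}(T),\BoQ)\cong\HO^*(G^{\lambda}/T,\BoQ)^{\sfW^{\lambda}}\cong\BoQ$. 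As a consequence, the adjunction morphism $\id\rightarrow v_*v^*$ is an isomorphism when evaluated on $\varphi_f\BoQ_{V^{\lambda\geq 0}/(G^{\lambda\geq 0}\times T')}$, yielding
\[
\HO^*_{G^{\lambda\geq 0}\times T'}(V^{\lambda\geq 0},\varphi_f\BoQ)\cong \HO^*_{N_{G^{\lambda\geq 0}}(T)\times T'}(V^{\lambda\geq 0},\varphi_f\BoQ).
\]
Composing the two displayed isomorphisms gives the desired identity.

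The only point that is not a literal transcription of the previous proof is the computation of $\HO^*(G^{\lambda\geq 0}/T,\BoQ)$, and thus the main (very mild) obstacle: one has to check that the unipotent radical $U^{\lambda>0}$ can be trivialized away from the cohomology either via the affine bundle argument above, or equivalently by noting that $G^{\lambda\geq 0}\to G^{\lambda}$ is a Zariski-locally trivial affine bundle whose fibers have trivial cohomology. Once this is recorded, every other ingredient—the compatibility of vanishing cycles with smooth pullback, the $\sfW^{\lambda}$-equivariance, and the identification of invariants—is identical to Lemma~\ref{lemma:Winvariantpart}.
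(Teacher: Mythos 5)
Your proof is correct and follows essentially the same route as the paper: both arguments reduce to the proof of Lemma~\ref{lemma:Winvariantpart} together with the cohomological triviality of the unipotent radical $U^{\lambda>0}$ (an affine-bundle/contractible-fiber argument). The only difference is ordering — the paper first passes from $G^{\lambda\geq 0}$- to $G^{\lambda}$-equivariant cohomology via $G^{\lambda\geq 0}\to G^{\lambda}$ and then applies Lemma~\ref{lemma:Winvariantpart} to the reductive Levi, whereas you run the torsor argument directly for the parabolic, using $N_{G^{\lambda\geq 0}}(T)=N_{G^{\lambda}}(T)$ and the affine bundle $G^{\lambda\geq 0}/T\to G^{\lambda}/T$, which amounts to the same ingredients.
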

\begin{proof}
 This can be proven as Lemma~\ref{lemma:Winvariantpart}, using the fact that the morphism of groups $G^{\lambda\geq 0}\rightarrow G^{\lambda}$ is an affine bundle and so has contractible fibers, and therefore the morphism $\HO^*_{G^{\lambda}}(V^{\lambda\geq 0},\varphi_f\BoQ)\rightarrow\HO^*_{G^{\lambda\geq 0}}(V^{\lambda\geq 0},\varphi_f\BoQ)$ is an isomorphism.
\end{proof}

\subsection{Torus equivariant induction}
\label{subsection:torusequinduction}

\subsubsection{Multiplication by Euler classes}
\label{subsection:multEulerclasses}
We let $G$ be a reductive group and $T'$ an auxiliary torus. Let $W$ be a representation of $G\times T'$. For $C\in\FC$ and $F\in\FF$ a cell and flat in the Coxeter complex of $(G,W)$, we define the $T'$-equivariant Euler class
\[
 \Eu_{W,C,F}^{T'}\coloneqq \prod_{\alpha\in \CW^{F}(V)\cap\CW^{C<0}(V)}\alpha\in\HO^*_{T\times T'}(\pt)\,,
\]
where the weights of $W$ are taken with respect to the $T\times T'$ action.
\begin{proposition}
\label{proposition:multEulerinjective}
 For any $C\in\FC$ and $F\in\FF$ such that $C\preceq F$, the multiplications by $\Eu_{V,C,F}^{T'}$ and $\Eu_{\Fg,C,F}^{T'}$ are injective on $\CH_{G,V,f,\langle C\rangle}^{T'}$.
\end{proposition}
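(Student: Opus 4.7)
The plan is to reduce the injectivity claim to the $T\times T'$-equivariant setting, exploit the trivial action of a subtorus of $T$ on $V^{\langle C\rangle}$ to obtain a free-module structure on the cohomology, and conclude by a leading-term argument.

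Applying Lemma~\ref{lemma:Winvariantpart} to $(G^{\langle C\rangle},V^{\langle C\rangle},f_{\langle C\rangle})$ identifies $\CH^{T'}_{G,V,f,\langle C\rangle}$ with the $\sfW^{\langle C\rangle}$-invariants of $M\coloneqq\HO^*_{T\times T'}(V^{\langle C\rangle},\varphi_{f_{\langle C\rangle}})$. Both Euler classes are products of weights $\alpha$ vanishing on $F$ and negative on $C$, and $\sfW^{\langle C\rangle}$ fixes $\langle C\rangle\supseteq F\cup C$ pointwise, so these weight sets---and hence the Euler classes themselves---are $\sfW^{\langle C\rangle}$-invariant. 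It therefore suffices to prove injectivity on $M$. Let $T_{\langle C\rangle}\subset T$ be the subtorus corresponding to the sublattice $\rmX_*(T)\cap\langle C\rangle$ of $\rmX_*(T)$. Since every weight of $V^{\langle C\rangle}$ vanishes on $\langle C\rangle$, the torus $T_{\langle C\rangle}$ acts trivially on $V^{\langle C\rangle}$ and on $\varphi_{f_{\langle C\rangle}}$. Fixing a splitting $T=T_{\langle C\rangle}\times T_1$ together with the induced decomposition $\Fh=\langle C\rangle\oplus\Ft_1$, the Leray spectral sequence for the fibration $V^{\langle C\rangle}/(T\times T')\to BT_{\langle C\rangle}$ degenerates (since $\HO^*(BT_{\langle C\rangle})=\Sym(\langle C\rangle^*)$ is concentrated in even degrees), yielding a Kunneth decomposition $M\cong\Sym(\langle C\rangle^*)\otimes_{\BoQ}\tilde M$ with $\tilde M\coloneqq\HO^*_{T_1\times T'}(V^{\langle C\rangle},\varphi_{f_{\langle C\rangle}})$. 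This decomposition is compatible with the action of $\Sym(\Fh^*\oplus\Lie(T')^*)=\Sym(\langle C\rangle^*)\otimes\Sym(\Ft_1^*\oplus\Lie(T')^*)$, making $M$ free over $\Sym(\langle C\rangle^*)$.

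For each weight $\alpha$ entering either Euler class, $\alpha|_C<0$ forces $\alpha|_{\langle C\rangle}\neq 0$; decompose $\alpha=\alpha'+\alpha''$ with $\alpha'\in\langle C\rangle^*\setminus\{0\}$ and $\alpha''\in\Ft_1^*\oplus\Lie(T')^*$. Viewed as an element of $\Sym(\Ft_1^*\oplus\Lie(T')^*)[x_1,\ldots,x_r]$ with $x_\bullet$ coordinates on $\langle C\rangle$, the Euler class $\prod_\alpha(\alpha'+\alpha'')$ has top total $x$-degree $n$ (the number of factors) and leading term $\prod_\alpha\alpha'\in\Sym(\langle C\rangle^*)\setminus\{0\}$. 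Filtering $M\cong\tilde M[x_1,\ldots,x_r]$ by total $x$-degree, multiplication by the Euler class shifts the filtration by $n$ and reduces, on the associated graded, to multiplication by $\prod_\alpha\alpha'$. The latter is injective because $M$ is free over the integral domain $\Sym(\langle C\rangle^*)$, and the standard leading-term argument (gr-injectivity plus separatedness of the filtration) promotes this to injectivity of multiplication by the Euler class on $M$ itself.

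The main obstacle is in the middle step: one must check that the Kunneth decomposition arising from the trivial $T_{\langle C\rangle}$-action is compatible with the full $\Sym(\Fh^*\oplus\Lie(T')^*)$-module structure on $M$, not merely with its $\Sym(\langle C\rangle^*)$-factor. This compatibility is what enables the leading-term analysis to detect the Euler class via its nontrivial $\langle C\rangle^*$-part, and it ultimately rests on the centrality of $T_{\langle C\rangle}$ in $G^{\langle C\rangle}$, which itself follows from the vanishing of all roots of $G^{\langle C\rangle}$ on $\langle C\rangle$.
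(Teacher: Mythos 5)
Your argument is correct, and it is the same Atiyah--Bott-type mechanism as the paper's proof (which follows \cite[Proposition~4.1]{davison2017critical}): find a torus acting trivially on $V^{\langle C\rangle}$ on which every weight occurring in $\Eu_{V,C,F}^{T'}$ or $\Eu_{\Fg,C,F}^{T'}$ restricts nontrivially, use freeness of (a graded piece of) the cohomology over its classifying-space cohomology, and conclude by a leading-term argument. The implementations differ, though. The paper stays with $G^{\langle C\rangle}\times T'$-equivariant cohomology, takes a single cocharacter $\lambda\in C$, whose image $\lambda(\BoC^*)$ is central in $G^{\langle C\rangle}$ and acts trivially on $V^{\langle C\rangle}$, and uses the filtration coming from the fibration $V^{\langle C\rangle}/(G^{\langle C\rangle}\times T')\to V^{\langle C\rangle}/((G^{\langle C\rangle}/\lambda(\BoC^*))\times T')$ with fiber $\pt/\BoC^*$: the associated graded is free over $\HO^*_{\BoC^*}=\BoQ[x]$, and the leading term of $\Eu\cdot\tilde\mu$ is the nonzero $\BoC^*$-equivariant Euler class $\zeta x^t$ times $\mu$. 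You instead first pass to $T\times T'$-equivariant cohomology via Lemma~\ref{lemma:Winvariantpart} (legitimate, since the Euler classes are $\sfW^{\langle C\rangle}$-invariant and the identification is compatible with the module structures), and then split off the full trivially-acting subtorus $T_{\langle C\rangle}$, obtaining an honest K\"unneth decomposition $M\cong\tilde M\otimes\Sym(\langle C\rangle^*)$ and grading by $\Sym(\langle C\rangle^*)$-degree. What your route buys is that the ``filtration whose associated graded is free'' step is replaced by a genuine product decomposition of stacks, so no spectral-sequence bookkeeping is needed beyond the (trivial) degeneration for a product; what the paper's route buys is economy -- it never leaves the $G^{\langle C\rangle}$-equivariant world and only needs the rank-one torus $\lambda(\BoC^*)$, since a single nonvanishing restriction of each weight already suffices for the leading-term argument. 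Two small remarks: the compatibility you flag as the ``main obstacle'' is unproblematic and, after your reduction to $T$, does not actually rest on centrality of $T_{\langle C\rangle}$ in $G^{\langle C\rangle}$ -- it only needs the splitting $T=T_{\langle C\rangle}\times T_1$, which exists because $\rmX_*(T)\cap\langle C\rangle$ is a saturated sublattice (centrality is what the paper needs, since it does not reduce to the torus); and in your leading-term step it is worth saying explicitly that every element of $\tilde M\otimes\Sym(\langle C\rangle^*)$ has bounded $x$-degree, so the top-degree component is well defined.
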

\begin{proof}
 This proof is inspired by the proof of \cite[Proposition~4.1]{davison2017critical}, itself taking its inspiration in the proof of the Atiyah--Bott lemma. For any $F\in\FF$, we define $G_F\coloneqq (Z(G^F)\cap \ker(G^F\rightarrow\GL(V^F)))^{\circ}$ to be the neutral component of intersection of the center $Z(G^F)$ of $G^F$ and the kernel of the action of $G^F$ on $V^F$. It is a reductive group, and in fact a torus. We let $\lambda\in C$ be a cocharacter of $T$. Then, the image of $\lambda$ is contained in $G_{\langle C\rangle}$. We have the $G^{\langle C\rangle}\times T'$-equivariant vector bundle $V^{C\leq 0,F}=V^{\langle C\rangle}\times V^{C < 0,F} \rightarrow V^{\langle C\rangle}$. Its Euler class is given by $\Eu_{V,C,F}^{T'}\in\HO^*_{G^{\langle C\rangle}\times T'}(V^{\langle C\rangle})$. We also have the $G^{\langle C\rangle}\times T'$-equivariant vector bundle $V^{\langle C\rangle}\times\Fg^{C<0,F}\rightarrow\Fg^{\langle C\rangle}$. Its Euler class is given by $\Eu_{V,C,F}^{T'}\in\HO^*_{G^{\langle C\rangle}\times T'}(V^{\langle C\rangle})$. In the sequel, we let $W$ be $V$ or $\Fg$. The action of $\lambda$ on $V^{\langle C\rangle}\times W^{C>0,F}$ has fixed points $V^{\langle C\rangle}$ since the weights of $\lambda$ on $W^{C>0,F}$ are positive. We let $G'\coloneqq G^{\langle C\rangle}/\lambda(\BoC^*)$. The group $G^{\langle C\rangle}$ acts on $V^{\langle C\rangle}$ via the quotient $G^{\langle C\rangle}\rightarrow G^{\langle C\rangle}/\lambda(\BoC^*)$. Then, $\HO^*_{G^{\langle C\rangle}\times T'}(V^{\langle C\rangle},\varphi_f\BoQ)$ is filtered by
 \[
  \HO^{\leq p}_{G'\times T'}(V^{\langle C\rangle},\varphi_f\BoQ)\otimes\HO^*_{\BoC^*}(\pt,\BoQ)\,.
 \]
This filtration comes from the spectral sequence for the fibration $V^{\langle C\rangle}/(G^{\langle C\rangle}\times T')\rightarrow V^{\langle C\rangle}/(G'\times T')$ with fiber $\pt/\BoC^*$. Moreover, the associated graded $N$ for this filtration is isomorphic to $\HO^*_{G'\times T'}(V^{\langle C\rangle},\varphi_f\BoQ)\otimes\HO_{\BoC^*}^*(\pt,\BoQ)$ and so is acted on freely by $\HO^*_{\BoC^*}$. We let $\tilde{\mu}\in\CH_{G,V,f,\langle C\rangle}^{T'}$ and $\mu\in N$ be the corresponding homogeneous element. We let $s$ be the degree of $\mu$ with respect to the grading induced by the filtration. The projection of $\Eu_{W,C,F}^{T'}\tilde{\mu}$ on the degree $s$ part is $\Eu_{\BoC^*,W,C,F}\mu$, where $\Eu_{\BoC^*,W,C,F}$ is the $\BoC^*$-equivariant Euler characteristic of $V^{\langle C\rangle}\times W^{C<0,F}\rightarrow V^{\langle C\rangle}$, which does not vanish since the action of $\BoC^*$ on $W^{C<0,F}$ via $\lambda$ has only nonzero weights, and so $\Eu_{\BoC^*,W,C,F}=\zeta x^t$ for some $\zeta\in\BoQ^*$ (the product of the $\BoC^*$-weights of $W^{C<0,F}$), $t=\dim W^{C<0,F}$, when we identify $\HO^*_{\BoC^*}\cong\BoQ[x]$. Since $\BoQ[x]$ acts freely on $N$, $\Eu_{\BoC^*,W,C,F}\mu\neq 0$ and so $\Eu_{V,C,F}^{T'}\tilde{\mu}\neq 0$.
\end{proof}


\subsubsection{Torus induction}
In this section, we define the induction using the action of the maximal torus $T\subset G$ and averaging with respect to the Weyl group $\sfW$. This allows a \emph{shuffle description} of the induction \eqref{equation:inductionabsolute}. For any $C\in\FC$ and $F\in\FF$, we have an induction diagram
\begin{equation}
\label{equation:torusdiagram}
 V^{\langle C\rangle}/(T\times T')\xleftarrow{q_{C,F}} V^{C\geq 0,F}/(T\times T')\xrightarrow{p_{C,F}} V^F/(T\times T').
\end{equation}
By the same arguments as in \S\ref{subsection:parabolic_induction}, we obtain the induction morphism
\[
 \overline{\Ind}_{C}^F\colon \CH_{T,V,f,\langle C\rangle}^{T'}\rightarrow \CH_{T,V,f,F}^{T'}\,.
\]
We define the torus induction morphism
\begin{equation}
\label{equation:torusinductionmorphism}
 \tilde{\Ind}_{C}^{F}\colon \CH_{T,V,f,\langle C\rangle}^{T'}\rightarrow \CH_{T,V,f,F}^{T'}[\Eu_{\Fg,C,F}^{T',-1}]
\end{equation}
as $\frac{1}{\Eu_{\Fg,C,F}^{T'}}\overline{\Ind}_{C}^F$, using the $\HO^*_{T\times T'}(\pt)$-module structure on $\CH_{T,V,f,F}^{T'}$. Last, we define
\[
 \Ind_{C}'^F\colon \CH_{G,V,f,\langle C\rangle}^{T'}\rightarrow\CH_{G,V,f,F}^{T',\loc}
\]
as the average $\Ind_C'^F\coloneqq\left(\sum_{w\in W^F/W^{\langle C\rangle}}w\right)(\tilde{\Ind}_{C}^F)_{|\CH_{G,V,f,\langle C\rangle}^{T'}}$, where $(\tilde{\Ind}_{C}^F)_{|\CH_{G,V,f,\langle C\rangle}^{T'}}$ denotes the restriction of $\tilde{\Ind}_{C}^F$ to the $W^{\langle C\rangle}$-invariant part in $\CH_{T,V,f,\langle C\rangle}^{T'}$, which coincides with $\CH_{G,V,f,\langle C\rangle}^{T'}$ using Lemma~\ref{lemma:Winvariantpart}, and $\CH_{G,V,f,F}^{T',\loc}$ denotes the localization of $\CH_{G,V,f,F}^{T'}$ with respect to the set $\{w\cdot\Eu_{\Fg,C,F}^{T'}\colon w\in W^F\}$. This localization is required by the denominators appearing in the torus induction morphism \eqref{equation:torusinductionmorphism}. We let $\xi_4'\colon \CH_{G,V,f,F}^{T'}\rightarrow\CH_{G,V,f,F}^{T',\loc}$ be the localization morphism.

\begin{proposition}
\label{proposition:toruscomparison3d}
 For any $C\in\FC$ and $F\in\FF$ such that $C\preceq F$, the morphisms $\xi_4'\circ \Ind_{C}^F,\Ind_{C}'^{F}\colon \CH_{G,V,f,\langle C\rangle}^{T'}\rightarrow\CH_{G,V,f,F}^{T',\loc}$ coincide.
\end{proposition}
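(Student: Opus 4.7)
The plan is to check the equality after pulling back along the smooth proper flag variety fibration
\[c\colon V^F/(T\times T')\rightarrow V^F/(G^F\times T')\]
whose fibre is $G^F/T$. By Lemma~\ref{lemma:Winvariantpart}, $c^*$ is injective with image the $\sfW^F$-invariants, and combined with Proposition~\ref{proposition:multEulerinjective} it extends to an injection $\CH_{G,V,f,F}^{T',\loc}\hookrightarrow\CH_{T,V,f,F}^{T',\loc}$ after inverting the Weyl-orbit of $\Eu_{\Fg,C,F}^{T'}$.

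Unpacking the definition of $\Ind_C'^F$, and using Lemma~\ref{lemma:Winvariantpart} to identify $\alpha\in\CH_{G,V,f,\langle C\rangle}^{T'}$ with its image in $(\CH_{T,V,f,\langle C\rangle}^{T'})^{\sfW^{\langle C\rangle}}$, one has
\[c^*\bigl(\Ind_C'^F(\alpha)\bigr)=\sum_{w\in\sfW^F/\sfW^{\langle C\rangle}}w\cdot\frac{\overline{\Ind}_C^F(\alpha)}{\Eu_{\Fg,C,F}^{T'}}.\]
The proposition therefore reduces to the shuffle identity
\[c^*\bigl(\Ind_C^F(\alpha)\bigr)=\sum_{w\in\sfW^F/\sfW^{\langle C\rangle}}w\cdot\frac{\overline{\Ind}_C^F(\alpha)}{\Eu_{\Fg,C,F}^{T'}}\qquad(\ast)\]
inside $\CH_{T,V,f,F}^{T',\loc}$.

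To prove $(\ast)$, I would compare the $G$- and $T$-equivariant induction diagrams through the commutative diagram
\[\begin{tikzcd}
V^{\langle C\rangle}/(T\times T')\ar[d,"a"'] & V^{C\geq 0,F}/(T\times T')\ar[l,"q_1"']\ar[r,"p_1"]\ar[d,"b"'] & V^F/(T\times T')\ar[d,"c"]\\
V^{\langle C\rangle}/(G^{\langle C\rangle}\times T') & V^{C\geq 0,F}/(G^{C\geq 0,F}\times T')\ar[l,"q_2"']\ar[r,"p_2"'] & V^F/(G^F\times T')
\end{tikzcd}\]
whose vertical maps are representable smooth proper flag variety fibrations. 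The pullback step is immediate: $b^*\circ q_2^*=q_1^*\circ a^*$ tautologically, since both sides amount to $T$-equivariant pullback along the affine bundle $V^{C\geq 0,F}\rightarrow V^{\langle C\rangle}$. The crux is the pushforward comparison
\[c^*\circ p_{2,*}\;=\;\left(\sum_{w\in\sfW^F/\sfW^{\langle C\rangle}}w\cdot(-)\right)\circ\frac{1}{\Eu_{\Fg,C,F}^{T'}}\circ p_{1,*}\circ b^*,\]
which is an instance of the Atiyah--Bott/Berline--Vergne localization formula applied fibrewise to the partial flag variety $G^F/G^{C\geq 0,F}$. Its $T$-fixed points are indexed by $\sfW^F/\sfW^{\langle C\rangle}$; at the base coset the tangent space equals $\Fg^F/\Fg^{C\geq 0,F}=\Fg^{C<0,F}$ whose equivariant Euler class is exactly $\Eu_{\Fg,C,F}^{T'}$, while the remaining cosets yield the Weyl-translated summands. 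Verifying this localization in the equivariant monodromic mixed Hodge module setting with coefficients $\varphi_{f_F}\BoQ$, tracking the Tate twists of \S\ref{subsection:inductionmhs}, is the main obstacle; it follows the pattern of Davison's shuffle formula for cohomological Hall algebras \cite[\S4]{davison2017critical}, relying on the compatibility of proper pushforward with vanishing cycles for the representable proper map $c$. Combining this pushforward identity with $b^*\circ q_2^*=q_1^*\circ a^*$ produces $(\ast)$ and concludes the proof.
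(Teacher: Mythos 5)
Your proposal is correct and follows essentially the same route as the paper: identify $G$-equivariant critical cohomology with $\sfW$-invariants of the $T$-equivariant one (Lemmas~\ref{lemma:Winvariantpart}, \ref{lemma:Winvariantparabolic}), note the pullback halves of the two induction diagrams agree tautologically, and reduce everything to the pushforward comparison along the partial flag fibration with fibre $G^F/G^{C\geq 0,F}$, where the Weyl sum over $\sfW^F/\sfW^{\langle C\rangle}$ and the factor $1/\Eu_{\Fg,C,F}^{T'}$ (the Euler class of $\Fg^{C<0,F}$) appear, in the spirit of \cite[Proposition~4.6]{davison2017critical}. The paper merely organizes your single ``crux'' identity into three squares (smooth-pullback compatibility, a base-change square for the closed immersion $V^{C\geq 0,F}\hookrightarrow V^F$, and the flag-bundle computation via the embedding of the $N_{G^F}(T)$-translates with normal Euler class $\Eu_{\Fg,C,F}^{T'}$), which is exactly your fibrewise localization step; the only small inaccuracy is that the middle vertical map, with fibre $G^{C\geq 0,F}/T$, is not proper, but this plays no role in the argument.
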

\begin{proof}
 The situation is analogous to that of \cite[Proposition~4.6]{davison2017critical}. There is a commutative diagram
\[\begin{tikzcd}[row sep = 4em, column sep=10em]
	{\CH_{G,V,f,F}^{T'}} & {(\CH_{T,V,f,F}^{T',\loc})^{W^F}} \\
	{\HO^*(V^F/(G^{C\geq 0,F}\times T'),\varphi_f)} & {\HO^*(V^F/(T\times T'),\varphi_f)^{W^{\langle C\rangle}}} \\
	{\HO^*(V^{C\geq 0,F}/(G^{C\geq 0,F}\times T'),\varphi_f)} & {\HO^*(V^{C\geq 0,F}/(T\times T'),\varphi_f)^{W^{\langle C\rangle}}} \\
	{\CH_{G,V,f,\langle C\rangle}^{T'}} & {(\CH_{T,F,f,\langle C\rangle}^{T'})^{W^{\langle C\rangle}}}
	\arrow["{\xi_4}", from=1-1, to=1-2]
	\arrow[from=2-1, to=1-1]
	\arrow["{\xi_3}", from=2-1, to=2-2]
	\arrow["\sum_{W^F/W^{\langle C\rangle}}w\left(\frac{1}{\Eu_{\Fg,C,F}^{T'}}\cdot -\right)",from=2-2, to=1-2]
	\arrow[from=3-1, to=2-1]
	\arrow["{\xi_2}", from=3-1, to=3-2]
	\arrow[from=3-2, to=2-2]
	\arrow["{\Ind_{C}^F}", from=4-1, to=1-1, bend left=90]
	\arrow[from=4-1, to=3-1]
	\arrow["{\xi_1}", from=4-1, to=4-2]
	\arrow["{\Ind_C'^F}"', from=4-2, to=1-2,bend right = 90]
	\arrow[from=4-2, to=3-2]
\end{tikzcd}\]
where the horizontal arrows $\xi_1$ is defined as in Lemma~\ref{lemma:Winvariantpart}, the morphism $\xi_4$ is the postcomposition of the morphism obtained by Lemma~\ref{lemma:Winvariantpart} with the localization morphism and $\xi_2,\xi_3$ are defined analogously, see Lemma~\ref{lemma:Winvariantparabolic}. The vertical columns are the induction maps. We study the commutativity of the three squares one by one from the bottom to the top.

\textbf{Square 1:}
\[\begin{tikzcd}
	{\HO^*(V^{C\geq 0,F}/(G^{C\geq 0,F}\times T'),\varphi_f)} & {\HO^*(V^{C\geq 0,F}/(T\times T'),\varphi_f)^{W^{\langle C\rangle}}} \\
	{\CH_{G,V,f,\langle C\rangle}^{T'}} & {(\CH_{T,F,f,\langle C\rangle}^{T'})^{W^{\langle C\rangle}}}
	\arrow["{\xi_2}", from=1-1, to=1-2]
	\arrow["a", from=2-1, to=1-1]
	\arrow["{\xi_1}", from=2-1, to=2-2]
	\arrow["b"', from=2-2, to=1-2]
\end{tikzcd}\]
This square in induced by smooth pullbacks in the commutative square
\[
\begin{tikzcd}
	{V^{C\geq 0,F}/(G^{C\geq 0,F}\times T')} & {V^{C\geq 0,F}/(T\times T')} \\
	{V^{\langle C\rangle}/(G^{\langle C\rangle}\times T')} & {V^{\langle C\rangle}/(T\times T')}
	\arrow[from=1-1, to=2-1]
	\arrow[from=1-2, to=1-1]
	\arrow[from=1-2, to=2-2]
	\arrow[from=2-2, to=2-1]
\end{tikzcd}
\]
and so commutes by compatibility of vanishing cycles with smooth pullbacks.

\textbf{Square 2:}
\[
\begin{tikzcd}
	{\HO^*(V^F/(G^{C\geq 0,F}\times T'),\varphi_f)} & {\HO^*(V^F/(T\times T'),\varphi_f)^{W^{\langle C\rangle}}} \\
	{\HO^*(V^{C\geq 0,F}/(G^{C\geq 0,F}\times T'),\varphi_f)} & {\HO^*(V^{C\geq 0,F}/(T\times T'),\varphi_f)^{W^{\langle C\rangle}}}
	\arrow["{\xi_3}", from=1-1, to=1-2]
	\arrow["c", from=2-1, to=1-1]
	\arrow["{\xi_2}", from=2-1, to=2-2]
	\arrow["d"', from=2-2, to=1-2]
\end{tikzcd}
\]
This square is induced by base-change in the following Cartesian square in which vertical maps are closed immersions and horizontal maps smooth
\[
\begin{tikzcd}
	{V^F/(G^{C\geq 0,F}\times T')} & {V^F/(T\times T')} \\
	{V^{C\geq 0,F}/(G^{C\geq 0, F}\times T')} & {V^{C\geq 0,F}/(T\times T')}
	\arrow[from=1-2, to=1-1]
	\arrow[from=2-1, to=1-1]
	\arrow["\ulcorner"{anchor=center, pos=0.125, rotate=0}, draw=none, from=2-2, to=1-1]
	\arrow[from=2-2, to=1-2]
	\arrow[from=2-2, to=2-1]
\end{tikzcd}\,.
\]

\textbf{Square 3:}
\[
\begin{tikzcd}
	{\CH_{G,V,f,F}^{T'}} & {(\CH_{T,V,f,F}^{T',\loc})^{W^F}} \\
	{\HO^*(V^F/(G^{C\geq 0,F}\times T'),\varphi_f)} & {\HO^*(V^F/(T\times T'),\varphi_f)^{W^{\langle C\rangle}}}
	\arrow["{\xi_4}", from=1-1, to=1-2]
	\arrow["e", from=2-1, to=1-1]
	\arrow["{\xi_3}", from=2-1, to=2-2]
	\arrow["\sum_{w\in W^{F}/W^{\langle C\rangle}}w\cdot\left(\frac{1}{\Eu_{\Fg,C,F}^{T'}}\cdot -\right)"', from=2-2, to=1-2]
\end{tikzcd}
\]
We consider the diagram
\[
\begin{tikzcd}
	{V^F/(N_{G^{\langle C\rangle}}\times T')} \\
	& \FX & {V^F/(G^{C\geq 0,F}\times T')} \\
	& {V^F/(N_{G^F}(T)\times T')} & {V^F/(G^F\times T')}
	\arrow["\imath"{description}, from=1-1, to=2-2]
	\arrow["u", from=1-1, to=2-3,bend left=20]
	\arrow["v"', from=1-1, to=3-2,bend right=20]
	\arrow["t", from=2-2, to=2-3]
	\arrow["z", from=2-2, to=3-2]
	\arrow["\lrcorner"{anchor=center, pos=0.125}, draw=none, from=2-2, to=3-3]
	\arrow["x", from=2-3, to=3-3]
	\arrow["y", from=3-2, to=3-3]
\end{tikzcd}
\]
We need to compare the operations $y^*x_*$ and $v_*u^*$ in vanishing cycle cohomology. We introduced the stack $\FX$ making the square Cartesian.

The morphism $\imath$ is a closed embedding with normal bundle of Euler class $\Eu_{\Fg,C,F}^{T'}$, modelled on the morphism $\frac{1}{N_{G^{\langle C\rangle}}(T)\times T'}\rightarrow (N_{G^F}(T)\times T')\backslash (G^F\times T')/(G^{C\geq 0,G}\times T')$. Therefore
\[
 \begin{aligned}
y^*x_*&=z_*t^*\\
&=z_*\frac{1}{\Eu_{\Fg,C,F}^{T'}}\imath_*\imath^*w^*\\
&=v_*\frac{1}{\Eu_{\Fg,C,F}^{T'}}v^*\\
&=\sum_{w\in W^F/W^{\langle C\rangle}}w\cdot\left(\frac{1}{\Eu_{\Fg,C,F}^{T'}}v^*\right)\,.
 \end{aligned}
\]
This concludes.

\end{proof}

\subsection{Explicit formula for the induction for vanishing potential}
\label{subsection:explicitformula}
\[
 k_{C,F}\coloneqq\frac{\Eu_{V,C,F}^{T'}}{\Eu_{\Fg,C,F}^{T'}}\,.
\]
\begin{proposition}
 The induction morphism $\Ind_{C}^F\colon \CH_{G,V,\langle C\rangle}^{T'}\cong(\HO^*_{T\times T'})^{W^{\langle C\rangle}}\rightarrow\CH_{G,V,F}^{T'}\cong (\HO^*_{T\times T'})^{W^F}$ is given by the formula
 \[
  \Ind_{C}^F(f)=\sum_{w\in W^{F}/W^{\langle C\rangle}}w\cdot(fk_{C,F})\,.
 \]
\end{proposition}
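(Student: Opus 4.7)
My plan is to reduce to the torus-equivariant situation via Proposition~\ref{proposition:toruscomparison3d} and then perform the resulting calculation explicitly. For vanishing potential the vector spaces $V^F$ and $V^{\langle C\rangle}$ are equivariantly contractible, so Lemma~\ref{lemma:Winvariantpart} together with the equivariant contractibility of vector spaces give canonical identifications
\[
 \CH_{T,V,F}^{T'}\cong\HO^*_{T\times T'}(\pt),\qquad \CH_{G,V,F}^{T'}\cong\HO^*_{T\times T'}(\pt)^{\sfW^F},
\]
and similarly for $\langle C\rangle$. Since $\HO^*_{T\times T'}(\pt)$ is a polynomial ring, and hence an integral domain, the localization map $\xi_4'$ is injective; combining this with Proposition~\ref{proposition:toruscomparison3d} reduces the problem to identifying the averaged torus induction $\Ind_C'^F$ with the claimed shuffle expression.

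Next I would compute the un-averaged torus induction $\overline{\Ind}_C^F$ directly from the diagram \eqref{equation:torusdiagram}. The map $q_{C,F}\colon V^{C\geq 0,F}/(T\times T')\to V^{\langle C\rangle}/(T\times T')$ is a $T\times T'$-equivariant vector bundle, so its pullback induces the identity on $\HO^*_{T\times T'}(\pt)$ under the canonical identifications. The map $p_{C,F}\colon V^{C\geq 0,F}/(T\times T')\to V^F/(T\times T')$ is a $T\times T'$-equivariant closed embedding whose normal bundle is the trivial bundle with fibre the $T\times T'$-representation $V^{C<0,F}$, whose equivariant Euler class is, by definition,
\[
 \prod_{\alpha\in\CW^F(V)\cap\CW^{C<0}(V)}\alpha=\Eu_{V,C,F}^{T'}.
\]
The standard equivariant Gysin formula for the pushforward along a closed embedding of smooth stacks then yields $(p_{C,F})_*(1)=\Eu_{V,C,F}^{T'}$, and hence $\overline{\Ind}_C^F(f)=f\cdot\Eu_{V,C,F}^{T'}$. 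Dividing by $\Eu_{\Fg,C,F}^{T'}$ as in \eqref{equation:torusinductionmorphism} produces $\tilde{\Ind}_C^F(f)=f\cdot k_{C,F}$.

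Finally I would plug this into the averaging formula defining $\Ind_C'^F$, which immediately yields
\[
 \Ind_C'^F(f)=\sum_{w\in\sfW^F/\sfW^{\langle C\rangle}}w\cdot(f\,k_{C,F}).
\]
Since this equals $\xi_4'\circ\Ind_C^F(f)$ by Proposition~\ref{proposition:toruscomparison3d} and $\xi_4'$ is injective, the same identity holds in $\CH_{G,V,F}^{T'}$ itself, giving the claim. The subtlest step will be the identification of the equivariant Gysin pushforward along $p_{C,F}$ with multiplication by $\Eu_{V,C,F}^{T'}$: one must verify that the Tate twists in \eqref{equation:twistedcriticalinduction} arising from the virtual perverse normalization do not disturb the formula, and that the sign conventions inherited from vanishing cycle functoriality (\S\ref{subsection:vanishingcyclesfunctoriality}) are matched correctly. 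Once this Gysin computation is pinned down, the rest of the argument is essentially formal.
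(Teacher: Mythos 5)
Your proposal is correct and follows essentially the same route as the paper, whose proof is simply the ``standard calculation of Euler classes of normal bundles'': you pass to the torus via Proposition~\ref{proposition:toruscomparison3d}, identify $q_{C,F}^*$ with the identity and $(p_{C,F})_*$ with multiplication by $\Eu_{V,C,F}^{T'}$, divide by $\Eu_{\Fg,C,F}^{T'}$ to get $k_{C,F}$, and average over $\sfW^F/\sfW^{\langle C\rangle}$. Your additional observation that $\xi_4'$ is injective (localization of a domain at nonzero elements), so the identity descends from $\CH_{G,V,F}^{T',\loc}$ to $\CH_{G,V,F}^{T'}$, is a useful explicit justification of a step the paper leaves implicit.
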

\begin{proof}
 This is a standard calculation of Euler classes of normal bundles.
\end{proof}

\subsection{Equivariance for the Weyl group}
\begin{lemma}
\label{lemma:Weylgroupequivariance3d}
 For any $C\in\FC$ and $F\in\FF$ such that $C\preceq F$, $w\Ind_{C}^{F}=\Ind_{w\cdot C}^{w\cdot F}w$.
\end{lemma}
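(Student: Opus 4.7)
The plan is to realize the Weyl group action as transport along a lift $\dot w\in N_G(T)$ and then to check that the entire induction diagram of \S\ref{subsection:parabolic_induction} is transported to the induction diagram for $(w\cdot C,\,w\cdot F)$. Fix such a lift; conjugation by $\dot w$ sends $G^F\to G^{w\cdot F}=\dot w G^F\dot w^{-1}$ and $G^{\langle C\rangle}\to G^{\langle w\cdot C\rangle}$, and the action of $\dot w$ on $V$ restricts to isomorphisms
\[
\dot w\colon V^F\isoto V^{w\cdot F},\qquad V^{\langle C\rangle}\isoto V^{\langle w\cdot C\rangle},\qquad V^{C\geq 0,F}\isoto V^{w\cdot C\geq 0,\,w\cdot F},
\]
each equivariant with respect to the conjugation-of-groups isomorphism (and with the trivial transport on $T'$, since $T'$ commutes with $G$). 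Hence the induction diagrams

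\[
V^{\langle C\rangle}/(G^{\langle C\rangle}\times T')\xleftarrow{q_{C,F}}V^{C\geq 0,F}/(G^{C\geq 0,F}\times T')\xrightarrow{p_{C,F}} V^F/(G^F\times T')
\]
and the analogous one for $(w\cdot C,w\cdot F)$ become canonically identified under $\dot w$.

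Next I would feed this identification into the sheaf-level construction \eqref{equation:twistedcriticalinduction}. Since $f$ is $G$-invariant, $f_{w\cdot F}\circ\dot w=f_F$ and similarly for $f_{\langle C\rangle}$, so $\dot w^*\varphi_{f_{w\cdot F}}\BoQ\cong \varphi_{f_F}\BoQ$ and likewise for the other function. By the naturality of smooth pullback, proper pushforward and vanishing cycles under this transport, the morphism of constructible complexes defining $\Ind_C^F$ is carried precisely to the one defining $\Ind_{w\cdot C}^{w\cdot F}$. Taking derived global sections then yields the commutative square
\[
\begin{tikzcd}
\CH_{G,V,f,\langle C\rangle}^{T'}\ar[r,"\Ind_C^F"]\ar[d,"w"'] & \CH_{G,V,f,F}^{T'}\ar[d,"w"]\\
\CH_{G,V,f,\langle w\cdot C\rangle}^{T'}\ar[r,"\Ind_{w\cdot C}^{w\cdot F}"'] & \CH_{G,V,f,w\cdot F}^{T'}
\end{tikzcd}
\]
which is exactly the claim.

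The two things requiring a tiny bit of care are (i) that the action of $w$ on $\CH_{G,V,f,F}^{T'}$ really is well defined, i.e. independent of the chosen lift $\dot w$ (two lifts differ by an element of $T\subset G^F$, which acts trivially on $G^F$-equivariant cohomology), and (ii) the Tate twists: the codimension of $p_{C,F}$ equals that of $p_{w\cdot C,w\cdot F}$ since the weight multisets $\CW^{F}(V),\CW^{C<0}(V)$ are carried by $w$ to $\CW^{w\cdot F}(V),\CW^{w\cdot C<0}(V)$, so the shift $\SL^{-d_{C,F}}$ appearing in \S\ref{subsection:inductionmhs} matches. If preferred, one can instead deduce the equivariance from the shuffle formula of \S\ref{subsection:explicitformula}: the kernel $k_{C,F}$ is a product over weights $\alpha$ satisfying $\alpha_C<0$, $\alpha_F=0$, and clearly $w\cdot k_{C,F}=k_{w\cdot C,w\cdot F}$, after which the identity $w\Ind_C^F(f)=\Ind_{w\cdot C}^{w\cdot F}(wf)$ is immediate from the Weyl averaging, combined with Proposition~\ref{proposition:toruscomparison3d} to descend from the torus picture to $\CH_{G,V,f,*}^{T'}$. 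The only mild obstacle is bookkeeping of the coset representatives $\sfW^F/\sfW^{\langle C\rangle}$ vs.\ $\sfW^{w\cdot F}/\sfW^{\langle w\cdot C\rangle}$, related by the bijection $u\mapsto wuw^{-1}$.
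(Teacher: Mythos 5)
Your proposal is correct and follows essentially the same route as the paper: choose a lift $\dot w\in N_G(T)$, observe that it carries the induction diagram for $(C,F)$ isomorphically onto the one for $(w\cdot C,w\cdot F)$, and conclude by naturality of pullback, proper pushforward and vanishing cycles. The extra remarks on independence of the lift and matching of the Tate twists $\SL^{-d_{C,F}}$ are sound additions that the paper leaves implicit.
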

\begin{proof}
 We let $\dot{w}\in N_G(T)$ be a lift of $w$. Then, $\dot{w}$ gives an isomorphism of induction diagrams
 \[
\begin{tikzcd}
	{V^{\langle C\rangle}/(G^{\langle C\rangle}\times T')} & {V^{C\geq 0,F}/(G^{C\geq 0,F}\times T')} & {V^F/{(G^F\times T')}} \\
	{V^{\langle w\cdot C\rangle}/(G^{\langle w\cdot C\rangle}\times T')} & {V^{w\cdot C\geq 0,w\cdot F}/(G^{w\cdot C\geq 0,w\cdot F}\times T')} & {V^{w\cdot F}/{(G^{w\cdot F}\times T')}}
	\arrow["{\dot{w}}"', from=1-1, to=2-1]
	\arrow["{q_{C,F}}"', from=1-2, to=1-1]
	\arrow["{p_{C,F}}", from=1-2, to=1-3]
	\arrow["{\dot{w}}", from=1-2, to=2-2]
	\arrow["{\dot{w}}", from=1-3, to=2-3]
	\arrow["{q_{w\cdot C,w\cdot F}}", from=2-2, to=2-1]
	\arrow["{p_{w\cdot C,w\cdot F}}"', from=2-2, to=2-3]
\end{tikzcd}
 \]
It follows immediately that $w\Ind_{C}^F=\Ind_{w\cdot C}^{w\cdot F}w$.
\end{proof}

\section{The $2d$ induction system}
\label{section:2dinductionsystem}
\subsection{Zero-locus and stacks}

\subsubsection{Function}

We let $V,W$ be representations of a reductive group $G$ and $\mu\colon V\rightarrow W^*$ be a $G$-equivariant function. For any $F\in \FF$, we denote by $\mu_F\colon V^F\rightarrow (W^*)^F$ the function induced. We write $W^{*,F}=W^{*,C<0,F}\oplus W^{*,\langle C\rangle,F}\oplus W^{*,C>0,F}$. We decompose $\mu_F=\mu_{C<0,F}+\mu_{\langle C\rangle,F}+\mu_{C>0,F}$ accordingly.

\begin{proposition}
\label{proposition:admissiblefunction}
We have $\mu_F\circ p_{C,F}=\mu_{\langle C\rangle}\circ q_{C,F}+\mu_{C>0,F}\circ p_{C,F}$.
\end{proposition}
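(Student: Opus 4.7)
The plan is to establish the identity pointwise and reduce everything to $\lambda$-equivariance of $\mu$. Fix $x \in V^{C\geq 0,F}$ and choose a cocharacter $\lambda \in \rmX_*(T) \cap C$, so that $V^{C\geq 0,F} = V^{\lambda\geq 0} \cap V^F$ and $q_{C,F}(x) = \lim_{t \to 0} \lambda(t)\cdot x \in V^{\langle C\rangle}$. Since $p_{C,F}$ is the inclusion $V^{C\geq 0,F} \hookrightarrow V^F$, we have $\mu_F(p_{C,F}(x)) = \mu_F(x)$, and this element decomposes in $W^{*,F} = W^{*,C<0,F} \oplus W^{*,\langle C\rangle,F} \oplus W^{*,C>0,F}$ as $\mu_{C<0,F}(x) + \mu_{\langle C\rangle,F}(x) + \mu_{C>0,F}(x)$. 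The identity to be proven therefore reduces to the two pointwise statements
\[
\text{(i)}\;\;\mu_{C<0,F}(x) = 0, \qquad \text{(ii)}\;\;\mu_{\langle C\rangle,F}(x) = \mu_{\langle C\rangle}(q_{C,F}(x)).
\]

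For (i), I would exploit the $T$-equivariance of $\mu$, which gives the identity $\mu_F(\lambda(t)\cdot x) = \lambda(t)\cdot \mu_F(x)$ for every $t \in \BC^*$. Because $x \in V^{\lambda\geq 0}$, the orbit map $t \mapsto \lambda(t)\cdot x$ extends to a regular morphism $\BA^1 \to V^F$ taking the value $q_{C,F}(x)$ at $t = 0$; composing with $\mu_F$, the left-hand side then extends to a regular morphism $\BA^1 \to W^{*,F}$. Writing the $T$-weight decomposition $\mu_F(x) = \sum_\alpha \mu_\alpha$, the right-hand side reads $\sum_\alpha t^{\langle\alpha,\lambda\rangle}\mu_\alpha$, which is regular at $t = 0$ only when $\mu_\alpha = 0$ for every weight $\alpha$ with $\langle\alpha,\lambda\rangle < 0$. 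The sum of these negative-weight components is, by definition, $\mu_{C<0,F}(x)$, yielding (i).

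For (ii), I would evaluate the extended equality $\mu_F(\lambda(t)\cdot x) = \lambda(t)\cdot \mu_F(x)$ at $t = 0$. The left-hand side becomes $\mu_F(q_{C,F}(x))$; since $q_{C,F}(x) \in V^{\langle C\rangle} \subset V^F$ (the inclusion comes from $F \subset \langle C\rangle$), this equals $\mu_{\langle C\rangle}(q_{C,F}(x))$ once one notes that the restriction $\mu_F|_{V^{\langle C\rangle}}$ coincides with $\mu_{\langle C\rangle}$ under the identification $W^{*,\langle C\rangle} = W^{*,\langle C\rangle,F} \subset W^{*,F}$. The right-hand side at $t = 0$ is the weight-zero component of $\mu_F(x)$, which is precisely $\mu_{\langle C\rangle,F}(x)$.

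There is no real obstacle here: the argument is a direct consequence of $\lambda$-equivariance together with the standard characterization of the attracting locus $V^{\lambda\geq 0}$ via extension of the $\lambda$-orbit to $\BA^1$. The only minor bookkeeping is to track the identifications between $\mu_F$, $\mu_{\langle C\rangle}$, and the various weight subspaces induced by the chain $F \subset \langle C\rangle \subset \Fh_{\BoR}$, which is harmless.
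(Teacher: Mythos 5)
Your argument is correct and is essentially the paper's own: the paper's one-line proof asserts exactly your point (i), that $\mu_F(x)\in W^{*,C\geq 0,F}$ for $x\in V^{C\geq 0,F}$, which follows from the $\lambda$-equivariance/attracting-locus argument you spell out. Your step (ii), identifying the weight-zero component with $\mu_{\langle C\rangle}\circ q_{C,F}$ by evaluating at $t=0$, is left implicit in the paper (it is the same limit argument as in Proposition~\ref{proposition:potential}), so your write-up is just a more detailed version of the same proof.
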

\begin{proof}
 The formula follows from the fact that if $x\in V^{C\geq 0,F}$, then $\mu_F(x)\in W^{*,C\geq 0,F}$ and so $\mu_{C<0,F}(x)=0$ by definition.
\end{proof}

\subsubsection{Stacks and moduli spaces}
For any $F\in \FF$, we define $\FM_{G,V,W,\mu,F}^{T'}\coloneqq \mu_F^{-1}(0)/(G^F\times T')$. For any $C\in \FC$ and $F\in\FF$ such that $C\preceq F$, we define $\FM_{G,V,W,\mu,C,F}^{T'}\coloneqq \mu_F^{-1}(0)\cap V^{C\geq 0,F}/(G^{C\geq 0,F}\times T')$. We define $\CM_{G,V,W,\mu,F}^{T'}\coloneqq (\mu_F^{-1}(0)\cms G^F)/ T'$ and we let $\pi_F\colon \FM_{G,V,W,\mu,F}^{T'}\rightarrow\CM_{G,V,W,\mu,F}^{T'}$ the good moduli space map.

\subsection{Parabolic induction}
\label{subsection:parabolicinduction2d}
We let $V,W$ be representations of a reductive group $G$, $T'$ an auxiliary torus acting on $V,W$ such that the actions of $G$ and $T'$ commute, $\mu\colon V\rightarrow W^*$ a $G\times T'$-equivariant function. For any $C\in\FC$ and $F\in\FF$ in the Coxeter complex associated to the representations $V,W$ of $G$ such that $C\preceq F$, we consider the diagram
\begin{equation}
\label{equation:inductiondiagram2d}
\begin{tikzcd}
	{\mu_{\langle C\rangle}^{-1}(0)/(G^{\langle C\rangle}\times T')} & {\mu_F^{-1}(0)\cap V^{C\geq 0,F}/(G^{C\geq 0,F}\times T')} & {\mu_F^{-1}(0)/(G^F\times T')} \\
	{(V^{\langle C\rangle}\times (W^*)^{C>0,F}})/(G^{\langle C\rangle}\times T') & {V^{C\geq 0,F}/(G^{C\geq 0,F}\times T')} & {V^F/(G^F\times T')}
	\arrow["{\imath_{\langle C\rangle}}"', from=1-1, to=2-1]
	\arrow["{q_{C,F}}"', from=1-2, to=1-1]
	\arrow["{p_{C,F}}", from=1-2, to=1-3]
	\arrow["\lrcorner"{anchor=center, pos=0.125, rotate=-90}, draw=none, from=1-2, to=2-1]
	\arrow["{\imath_{C\geq 0,F}}"', from=1-2, to=2-2]
	\arrow["{\imath_F}", from=1-3, to=2-3]
	\arrow["{q'_{C,F}}"', from=2-2, to=2-1]
	\arrow["{p'_{C,F}}", from=2-2, to=2-3]
\end{tikzcd}
\end{equation}
The morphism $p_{C,F}$ is proper and representable, and $q'_{C,F}$ is l.c.i., as it is a morphism between smooth algebraic varieties. The left-most square of this diagram is Cartesian.

By properness of $p_{C,F}$, the natural pullback morphism
\[
 \BoQ_{\FM_{G,V,W,\mu,F}^{T'}}\rightarrow (p_{C,F})_*\BoQ_{\FM_{G,V,W,\mu,C,F}^{T'}}
\]
dualizes to the pushforward
\begin{equation}
\label{equation:pushforward2d}
 (p_{C,F})_*\BD\BoQ_{\FM_{G,V,W,\mu,C,F}^{T'}}\rightarrow \BD\BoQ_{\FM_{G,V,W,\mu,F}^{T'}}\,.
\end{equation}

One can apply the formalism of \S\ref{subsection:virtualpullbackBM} to the Cartesian left-most square of \eqref{equation:inductiondiagram2d}. Namely, $(V^{\langle C\rangle,F}\times (W^*)^{C>0,F})/(G^{\langle C\rangle}\times T')$ and $V^{C\geq 0,F}/(G^{C\geq 0,F}\times T')$ are smooth stacks, then their dualizing complexes are their constant sheaf up to a Tate twist. This implies that there is an isomorphism
\[
 (q'_{C,F})^*\BD\BoQ_{(V^{\langle C\rangle}\times (W^*)^{C>0,F})/(G^{\langle C\rangle}\times T')}\cong \BD\BoQ_{V^{C\geq 0,F}/(G^{C\geq 0,F}\times T')}\otimes\SL^{d'_{C,F}},
\]
 where $d'_{C,F}=\dim V^{C>0,F}-\dim G^{C>0, F}-\dim W^{*,C>0,F}$, and so, by adjunction $((q'_{C,F})^*,(q'_{C,F})_*)$, a morphism
\[
 \BD\BoQ_{(V^{\langle C\rangle}\times (W^*)^{C>0,F})/(G^{\langle C\rangle}\times T')}\rightarrow (q'_{C,F})_*\BD\BoQ_{V^{C\geq 0,F}/(G^{C\geq 0,F}\times T')}\otimes \SL^{d'_{C,F}}
\]
and applying $\imath_{\langle C\rangle}^!$ and base-change, a morphism
\begin{equation}
\label{equation:pullback2d}
 \BD\BoQ_{\FM_{G,V,W,\mu,\langle C\rangle}^{T'}}\rightarrow (q_{C,F})_*\BD\BoQ_{\FM_{G,V,W,\mu,C,F}^{T'}}\otimes \SL^{d'_{C,F}}
\end{equation}

We also consider the commutative diagram obtained by combining the first line of the above diagram and the finite map $\imath_{\langle C\rangle,F}\colon\CM_{G,V,W,\mu,\langle C\rangle}^{T'}\coloneqq (\mu_{\langle C\rangle}^{-1}(0)\cms G)/T'\rightarrow\CM_{G,V,W,f,F}^{T'}$ between the good moduli spaces.
\[\begin{tikzcd}
	& {\FM_{G,V,W,\mu,C,F}^{T'}} \\
	{\FM_{G,V,W,\mu,\langle C\rangle}^{T'}} && {\FM_{G,V,W,\mu,F}^{T'}} \\
	{\CM_{G,V,W,\mu,\langle C\rangle}^{T'}} && {\CM_{G,V,W,\mu,F}^{T'}}
	\arrow["{q_{C,F}}"', from=1-2, to=2-1]
	\arrow["{p_{C,F}}", from=1-2, to=2-3]
	\arrow["{\pi_{\langle C\rangle}}"', from=2-1, to=3-1]
	\arrow["{\pi_F}", from=2-3, to=3-3]
	\arrow["{\imath_{\langle C\rangle,F}}", from=3-1, to=3-3]
\end{tikzcd}\]

By composing $(\imath_*)(\pi_{\langle C\rangle})_*$ applied to \eqref{equation:pullback2d} with $(\pi_F)_*\otimes\SL^{d'_{C,F}}$ applied to \eqref{equation:pushforward2d}, we obtain the relative induction morphism
\[
 \Ind_{C}^F\colon (\imath_{\langle C\rangle,F})_*(\pi_{\langle C\rangle})_*\BD\BoQ_{\FM_{G,V,W,\mu,C,F}^{T'}}\rightarrow (\pi_F)_*\BD\BoQ_{\FM_{G,V,W,\mu,F}^{T'}}\otimes\SL^{d'_{C,F}}\,.
\]
This is the sheafified cohomological Hall induction morphism. It is customary to give the constant sheaf on $\FM_{G,V,W,\mu,F}^{T'}$ the virtual twist, that is $\BoQ_{\FM_{G,V,W,\mu,F}^{T'}}^{\vir}\coloneqq\BoQ_{\FM_{G,V,W,\mu,F}^{T'}}\otimes\SL^{-d''_{\langle C\rangle}/2}$ where
\[
 d''_{\langle C\rangle}=\dim V^{\langle C\rangle}-\dim W^{\langle C\rangle}-\dim G^{\langle C\rangle}\,.
\]
The sheafified cohomological Hall induction can then be rewritten
\begin{equation}
\label{equation:virtualtwisted2d}
 \Ind_{C}^F\colon (\imath_{\langle C\rangle,F})_*(\pi_{\langle C\rangle})_*\BD\BoQ_{\FM_{G,V,W,\mu,C,F}^{T'}}^{\vir}\rightarrow (\pi_F)_*\BD\BoQ_{\FM_{G,V,W,\mu,F}^{T'}}^{\vir}\otimes\SL^{\frac{d''_{\langle C\rangle}}{2}+d'_{C,F}-\frac{d_F''}{2}}\,.
\end{equation}

By taking derived global sections, we obtain the induction morphism in Borel--Moore homology
\[
  \Ind_{C}^F\colon \CH_{G,V,W,\mu,\langle C\rangle}^{T'}\rightarrow \CH_{G,V,W,\mu,F}^{T'}\,.
\]

\begin{proposition}
 \label{proposition:preservationcohdegrees2d}
 When $V,W$ are both weakly symmetric representations of $G$, the induction preserve cohomological degrees when virtual twists are taken into account.
\end{proposition}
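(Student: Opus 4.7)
The plan is to trace the Tate twist appearing in the sheafified induction morphism \eqref{equation:virtualtwisted2d} and verify, by a direct dimension count, that it vanishes when both $V$ and $W$ are weakly symmetric. Since derived global sections turn a factor of $\SL$ into a cohomological shift by $2$ (and $\SL^{1/2}$ into a shift by $1$), the statement ``preserves cohomological degrees'' is equivalent to
\[
\frac{d''_{\langle C\rangle}}{2}+d'_{C,F}-\frac{d''_F}{2}=0\,,
\]
so the proof reduces to this single identity.

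The first step is to expand every term along the weight decomposition induced by a cocharacter $\lambda\in C$. Writing $V^F=V^{C<0,F}\oplus V^{\langle C\rangle}\oplus V^{C>0,F}$, and similarly for $W^F$ and $\Fg^F$, and noting that duality exchanges positive and negative weights, so that
\[
\dim W^{*,C>0,F}=\dim W^{C<0,F}\,,
\]
we obtain
\begin{align*}
\frac{d''_{\langle C\rangle}-d''_F}{2}&=\tfrac{1}{2}\bigl(-\dim V^{C<0,F}-\dim V^{C>0,F}+\dim W^{C<0,F}+\dim W^{C>0,F}\\
&\qquad\qquad+\dim\Fg^{C<0,F}+\dim\Fg^{C>0,F}\bigr)\,,\\
d'_{C,F}&=\dim V^{C>0,F}-\dim\Fg^{C>0,F}-\dim W^{C<0,F}\,.
\end{align*}

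The second step is to invoke the three symmetry hypotheses. Weak symmetry of $V$ gives $\dim V^{C>0,F}=\dim V^{C<0,F}$, since the equality $\overline{\CW}(V)=\overline{\CW}(V^*)$ matches weights positive on $C$ and vanishing on $F$ with weights negative on $C$ and vanishing on $F$, preserving multiplicities. The analogous equality holds for $W$. The adjoint representation $\Fg$ is always symmetric (Example 3.1), so $\dim\Fg^{C>0,F}=\dim\Fg^{C<0,F}$. Substituting these equalities in the two displayed formulas, one checks immediately that the sum $\frac{d''_{\langle C\rangle}}{2}+d'_{C,F}-\frac{d''_F}{2}$ collapses to $0$, which is the required identity.

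The only subtlety to watch for is the duality convention: the representation $W^{*,C>0,F}$ appearing in the definition of $d'_{C,F}$ is the space of weights of $W^*$ positive on $C$ and vanishing on $F$, which corresponds to weights of $W$ \emph{negative} on $C$ and vanishing on $F$. Once this is handled correctly, there is no real obstacle; the computation is purely combinatorial and parallels the analogous verification for the $3d$-induction morphism in the weakly symmetric case discussed after \eqref{equation:twistedcriticalinduction}.
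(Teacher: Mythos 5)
Your proof is correct and follows the same route as the paper: the paper's proof simply asserts the identity $\frac{d''_{\langle C\rangle}}{2}+d'_{C,F}=\frac{d''_F}{2}$ under weak symmetry so that the Tate twist in \eqref{equation:virtualtwisted2d} vanishes, and your argument just makes the underlying dimension count explicit via the decomposition $V^F=V^{C<0,F}\oplus V^{\langle C\rangle}\oplus V^{C>0,F}$ (and likewise for $W$, $\Fg$), together with the identification $\dim W^{*,C>0,F}=\dim W^{C<0,F}$. The handling of weak symmetry (scaling-invariance of the conditions ``positive on $C$, vanishing on $F$'') and of the always-symmetric adjoint representation is exactly what the paper's one-line proof leaves implicit.
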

\begin{proof}
 When $V,W$ are weakly symmetric representations of $G$, then
 \[
  \frac{d''_{\langle C\rangle}}{2}+d'_{C,F}=\frac{d''_F}{2}
 \]
 and so the Tate twist in the r.h.s. of \eqref{equation:virtualtwisted2d} is trivial. This concludes.

\end{proof}

\subsection{Associativity}
These induction morphisms also satisfy a version of associativity.

\begin{proposition}
\label{proposition:associativity2d}
 For any $C,C'\in \FC$ and $F\in \FF$ such that $C\preceq C'\preceq F$, one has
 \[
  \Ind_{C}^F=\Ind_{C'}^{F}\circ\Ind_{C}^{\langle C'\rangle}\,.
 \]
\end{proposition}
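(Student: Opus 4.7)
The plan is to mimic the strategy used in Proposition~\ref{proposition:associativity3d} for the $3d$ case, replacing the smooth pullbacks by virtual pullbacks in Borel--Moore homology and relying on proper base change and the functoriality of virtual pullback. Assemble the three induction diagrams corresponding to the pairs $(C,\langle C'\rangle)$, $(C',F)$ and $(C,F)$ into a single commutative $3\times 3$ diagram, analogously to the one appearing in the proof of Proposition~\ref{proposition:associativity3d}, but now with two layers, one on the moduli side (zero loci of $\mu$ in quotient stacks) and one on the ambient side of the smooth stacks of the form $V^{\ldots}/(G^{\ldots}\times T')$.

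The key geometric input is the following nesting of attracting loci. Since $C\preceq C'$, any weight of $V$ that is nonnegative on $C$ remains nonnegative on $\overline C\supset C'$, which gives the inclusion $V^{C\geq 0,F}\subset V^{C'\geq 0,F}$ and, intersecting with $V^{\langle C'\rangle}$, the identity $V^{C\geq 0,F}=V^{C\geq 0,\langle C'\rangle}\cap V^{C'\geq 0,F}$; the analogous statements hold for the group $G$. This produces, on both the moduli side and the ambient side, a Cartesian square
\[
\begin{tikzcd}
\FM_{G,V,W,\mu,C,F}^{T'}\ar[r]\ar[d] & \FM_{G,V,W,\mu,C',F}^{T'}\ar[d,"q_{C',F}"] \\
\FM_{G,V,W,\mu,C,\langle C'\rangle}^{T'}\ar[r,"p_{C,\langle C'\rangle}"'] & \FM_{G,V,W,\mu,\langle C'\rangle}^{T'}
\end{tikzcd}
\]
with $p_{C,\langle C'\rangle}$ proper representable and $q_{C',F}$ defined by a virtual pullback through the corresponding Cartesian square of ambient smooth stacks. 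Expanding $\Ind_{C'}^F\circ\Ind_C^{\langle C'\rangle}$ as a sequence virtual pullback / proper pushforward / virtual pullback / proper pushforward, I would use proper base change in this Cartesian square to swap the middle two operations, reducing to the composition of two virtual pullbacks followed by two proper pushforwards.

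The first of these two simplifications is handled by the functoriality of proper pushforward, since $p_{C,F}$ factors as the closed embedding $\FM_{G,V,W,\mu,C,F}^{T'}\into\FM_{G,V,W,\mu,C',F}^{T'}$ composed with $p_{C',F}$. For the composition of virtual pullbacks, I would stack the two Cartesian squares of ambient smooth stacks (the one obtained from $q'_{C,\langle C'\rangle}$ on top of the one obtained from $q'_{C',F}$) and apply the ``no excess intersection bundle'' comparison of Proposition~\ref{equation:comparisonvirtualpullbacks}. The numerical check required reduces to the identity
\[
d'_{C,F}\;=\;d'_{C,\langle C'\rangle}+d'_{C',F},
\]
which in turn follows from the weight-space decomposition $V^{C>0,F}=V^{C>0,\langle C'\rangle}\oplus V^{C'>0,F}$ and the analogous decompositions for $W^*$ and $\Fg$. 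These splittings are a consequence of the same observation as above: a weight $\alpha$ with $\alpha_{|C}>0$ and $\alpha_{|F}=0$ either vanishes on $\langle C'\rangle$ (so it contributes to the first summand) or, by $C'\subset\overline C$, satisfies $\alpha_{|C'}>0$ (so it contributes to the second).

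The main technical obstacle I anticipate is the careful bookkeeping of Tate twists in the combined diagram: the virtual pullbacks shift by $\SL^{d'_{C,\langle C'\rangle}}$ and $\SL^{d'_{C',F}}$ respectively, and one must verify that these assemble into $\SL^{d'_{C,F}}$ coherently with the twists in the virtually normalized constant sheaves $\BD\BoQ^{\vir}$ used to define $\Ind_C^F$ in~\eqref{equation:virtualtwisted2d}. Once this bookkeeping is in place, commutativity of the resulting diagram of sheaves on good moduli spaces yields the claimed identity $\Ind_C^F=\Ind_{C'}^F\circ\Ind_C^{\langle C'\rangle}$ after taking derived global sections.
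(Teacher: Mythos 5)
Your proposal is correct and takes essentially the same route as the paper: the paper also mimics the proof of Proposition~\ref{proposition:associativity3d}, replacing smooth pullback by virtual pullback and concluding by base change in the combined induction diagram (whose ambient correspondences carry the extra factors $W^{*,C'>0,F}$), using precisely the ``no excess intersection bundle'' comparison of virtual pullbacks for the relevant Cartesian square, which is the content of your identity $d'_{C,F}=d'_{C,\langle C'\rangle}+d'_{C',F}$. No gap to report.
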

\begin{proof}
 The proof follows the same lines of the proof of Proposition~\ref{proposition:associativity3d}. However, we have to replace the pullback in cohomology by the virtual pullback in Borel--Moore homology as defined in \S\ref{subsection:virtualpullbackBM} and used in the definition of the $2d$ induction morphisms. The associativity follows by base-change in the diagram
\[\begin{tikzcd}
	& {\mu_F^{-1}(0)} \\
	{} \\
	{\mu^{-1}_{F}(0)\cap V^{C\geq 0,F}} & {V^{C\geq 0,F}\times W^{*,C'>0,F}} & {V^{C'\geq 0,F}} & {V^F} \\
	{\mu^{-1}_{\langle C'\rangle}(0)\cap V^{C\geq 0,\langle C'\rangle}} & {V^{C\geq 0,\langle C'\rangle}\times W^{*,C'>0,F}} & {V^{\langle C'\rangle}\times W^{*,C'>0,F}} \\
	{\mu_{\langle C\rangle}^{-1}(0)} & {V^{\langle C\rangle}\times \underbrace{W^{*,>0,\langle C'\rangle}\times W^{*,C'>0,F}}_{W^{*,C>0,F}}}
	\arrow[from=1-2, to=3-4]
	\arrow["{p_{C,F}}", from=3-1, to=1-2]
	\arrow[from=3-1, to=3-2]
	\arrow[bend left =15, from=3-1, to=3-3]
	\arrow[from=3-1, to=4-1]
	\arrow["\lrcorner"{anchor=center, pos=0.125, rotate=0}, draw=none, from=3-1, to=4-2]
	\arrow["{q_{C,F}}"', bend right =90, from=3-1, to=5-1]
	\arrow[from=3-2, to=3-3]
	\arrow["a"',from=3-2, to=4-2]
	\arrow["{*}"{description}, "\lrcorner"{anchor=center, pos=0.125, rotate=0}, draw=none, from=3-2, to=4-3]
	\arrow["{p'_{C',F}}", from=3-3, to=3-4]
	\arrow["{q'_{C',F}}", from=3-3, to=4-3]
	\arrow[from=4-1, to=4-2]
	\arrow["{q_{C,\langle C'\rangle}}"', from=4-1, to=5-1]
	\arrow["\lrcorner"{anchor=center, pos=0.125, rotate=0}, draw=none, from=4-1, to=5-2]
	\arrow["{p'_{C,\langle C'\rangle}\times\id_{W^*,C'>0,F}}"', from=4-2, to=4-3]
	\arrow["{q'_{C,\langle C'\rangle}\times\id_{W^{*,C>0,F}}}"', from=4-2, to=5-2]
	\arrow[from=5-1, to=5-2]
\end{tikzcd}\]
using the fact that the Cartesian square marked with $(*)$ has no excess intersection bundle. For simplicity, we did not write the quotients by the natural groups acting.
\end{proof}

\subsection{Equivariance for the Weyl group}
\begin{proposition}
 Let $C\in\FC$ and $F\in\FF$ be such that $C\preceq F$. Let $w\in W$. Then, $w\cdot \Ind_{C}^F=\Ind_{w\cdot C}^{w\cdot F}w$.
\end{proposition}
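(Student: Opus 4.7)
The argument follows the same lines as the proof of Lemma~\ref{lemma:Weylgroupequivariance3d} for the $3d$-induction, with the pullback in vanishing cycle cohomology replaced by the virtual pullback in Borel--Moore homology. Let $\dot{w}\in N_G(T)$ be a lift of $w\in\sfW$. Since $\mu$ is $G$-equivariant and $T'$ commutes with the $G$-action, conjugation/translation by $\dot{w}$ sends $V^{\langle C\rangle}$, $V^{C\geq 0,F}$, $V^F$, $W^{*,C>0,F}$ (and the corresponding groups $G^{\langle C\rangle}$, $G^{C\geq 0,F}$, $G^F$) to $V^{\langle w\cdot C\rangle}$, $V^{w\cdot C\geq 0,w\cdot F}$, $V^{w\cdot F}$, $W^{*,w\cdot C>0,w\cdot F}$ (and $G^{\langle w\cdot C\rangle}$, etc.), and it sends $\mu_{\langle C\rangle}^{-1}(0)$, $\mu_F^{-1}(0)\cap V^{C\geq 0,F}$, $\mu_F^{-1}(0)$ to the corresponding loci for $w\cdot C, w\cdot F$.

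Accordingly, $\dot{w}$ induces an isomorphism of the full induction diagram \eqref{equation:inductiondiagram2d} for $(C,F)$ with the induction diagram for $(w\cdot C,w\cdot F)$, compatible with $p_{C,F}, q_{C,F}, q'_{C,F}, p'_{C,F}$ and the closed immersions $\imath_{\langle C\rangle}, \imath_{C\geq 0,F}, \imath_F$. In particular, the left-most Cartesian square is sent to the left-most Cartesian square of the induction diagram for $(w\cdot C, w\cdot F)$, and the codimension $d'_{C,F}$ equals $d'_{w\cdot C,w\cdot F}$, so the Tate twist of \eqref{equation:virtualtwisted2d} is the same on both sides.

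The induction morphism $\Ind_{C}^F$ is obtained by combining the proper pushforward morphism \eqref{equation:pushforward2d} along $p_{C,F}$ with the virtual pullback morphism \eqref{equation:pullback2d} along $q_{C,F}$ (defined via base-change from $q'_{C,F}$). Both pushforward along a proper representable morphism and virtual pullback (in the sense of Definition~\ref{definition:virtualpullbackBMhomology}) are natural under isomorphisms of the input data; equivalently, the dualizing sheaves and the adjunction morphisms used to build the two sides are interchanged by $\dot{w}$. Applying derived global sections therefore yields the identity $w\cdot\Ind_{C}^F=\Ind_{w\cdot C}^{w\cdot F}\circ w$ on $\CH_{G,V,W,\mu,\langle C\rangle}^{T'}$. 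The result is independent of the lift $\dot{w}$ because two lifts differ by an element of $T\subset G^{\langle C\rangle}$, which acts trivially on the equivariant cohomologies involved.
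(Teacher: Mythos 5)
Your proof is correct and follows essentially the same route as the paper: lift $w$ to $\dot{w}\in N_G(T)$, observe that $\dot{w}$ carries the whole $2d$ induction diagram \eqref{equation:inductiondiagram2d} for $(C,F)$ isomorphically onto the one for $(w\cdot C,w\cdot F)$, and conclude by naturality of the proper pushforward and virtual pullback, exactly as in Lemma~\ref{lemma:Weylgroupequivariance3d}. The extra remarks you add (equality of the twists $d'_{C,F}=d'_{w\cdot C,w\cdot F}$ and independence of the lift $\dot{w}$) are accurate and harmless refinements of the paper's argument.
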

\begin{proof}
It is immediate to see that if $\dot{w}\in N_G(T)$ is a lift of $w$, then the action of $\dot{w}$ transforms the induction diagram \eqref{equation:inductiondiagram2d} to the induction diagram for $w\cdot C, w\cdot F$, exactly as for the Weyl group equivariance of the critical induction (Lemma~\ref{lemma:Weylgroupequivariance3d}). The results follows.
\end{proof}

\subsection{Torus equivariant induction}
In the $2d$-situation, we can also recover the parabolic induction by first restricting the group $G$ to its maximal torus $T$, building the induction for $T$, and then averaging the formula obtained over the Weyl group $\sfW$ of $G$. We present this approach here.

For any $F\in\FF$, we define $\CH_{T,V,W,\mu,F}^{T'}\coloneqq \HO^{\rmBM}_{T\times T'}(\mu_F^{-1}(0),\BoQ^{\vir})$. The following proposition is the analogue in Borel--Moore homology of \ref{lemma:Winvariantpart}.

\begin{proposition}
 We have $\CH_{G,V,W,\mu,F}^{T'}\cong(\CH_{T,V,W,\mu,F}^{T'})^{W^F}$.
\end{proposition}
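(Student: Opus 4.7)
The plan is to reduce the statement directly to Proposition~\ref{proposition:weylgroupBorelMoore}, which is precisely the analogous assertion for the Borel--Moore homology of a reductive-group action. First, I would identify the relevant Weyl group. Since $T$ is a maximal torus of $G$ contained in the Levi subgroup $G^F$, it is automatically a maximal torus of $G^F$; hence $T\times T'$ is a maximal torus of the reductive group $G^F\times T'$, and the associated Weyl group is
\[
 N_{G^F\times T'}(T\times T')/(T\times T')=N_{G^F}(T)/T=\sfW^F\,.
\]

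Next, I would apply Proposition~\ref{proposition:weylgroupBorelMoore} to the action of $G^F\times T'$ on the closed subvariety $\mu_F^{-1}(0)\subset V^F$. This yields the isomorphism of mixed Hodge structures
\[
 \HO^{\rmBM}_{*,G^F\times T'}(\mu_F^{-1}(0),\BoQ)\cong \HO^{\rmBM}_{*,T\times T'}(\mu_F^{-1}(0),\BoQ)^{\sfW^F}
\]
(once the appropriate Tate twists are inserted). Concretely, one factors the smooth morphism of stacks
\[
 \mu_F^{-1}(0)/(T\times T')\longto \mu_F^{-1}(0)/(G^F\times T')
\]
through $\mu_F^{-1}(0)/(N_{G^F}(T)\times T')$: the first map is an $\sfW^F$-torsor, while the second map has fibers $G^F/N_{G^F}(T)$, whose cohomology is $\BoQ$ up to a Tate twist by Proposition~\ref{proposition:cohflagvarieties} applied to $G^F$. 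Hence the adjunction morphism $\BD\BoQ\to v_*v^!\BD\BoQ$ becomes an isomorphism after a Tate shift, exactly as in the sketch of proof of Proposition~\ref{proposition:weylgroupBorelMoore}.

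Finally, I would twist both sides by the same virtual Tate shift $\SL^{-d''_F/2}$ defining $\BoQ^{\vir}$. Since $\sfW^F$ acts trivially on this pure Tate module, the $\sfW^F$-invariant isomorphism passes to the virtually twisted cohomologies, yielding $\CH_{G,V,W,\mu,F}^{T'}\cong (\CH_{T,V,W,\mu,F}^{T'})^{\sfW^F}$ as claimed. The only step requiring genuine care is the bookkeeping of Tate twists and the matching of virtual shifts on the two sides; this is essentially routine but needs to be written out once given the conventions of \S\ref{subsection:equcoh} and \S\ref{subsection:parabolicinduction2d}. There is no substantial geometric obstacle, because all of the content of the statement is contained in Proposition~\ref{proposition:weylgroupBorelMoore}, which was already recalled.
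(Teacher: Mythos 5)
Your proof is correct and takes the same route as the paper, which simply invokes Proposition~\ref{proposition:weylgroupBorelMoore} applied to the reductive group $G^F\times T'$ acting on $\mu_F^{-1}(0)$; your identification of the Weyl group of $G^F\times T'$ with $\sfW^F$ and the bookkeeping of Tate/virtual twists are exactly the routine details the paper leaves implicit.
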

\begin{proof}
 This follows immediately from Proposition~\ref{proposition:weylgroupBorelMoore}.
\end{proof}

We consider the torus equivariant induction diagram, in which the maps are the natural ones, in particular vertical maps are closed immersions
\[
\begin{tikzcd}
	{\mu_{\langle C\rangle}^{-1}(0)/(T\times T')} & {\mu_F^{-1}(0)\cap V^{C\geq 0,F}/(T\times T')} & {\mu_F^{-1}(0)/(T\times T')} \\
	{(V^{\langle C\rangle,F}\times (W^*)^{C>0,F}})/(T\times T') & {V^{C\geq 0,F}/(T\times T')} & {V^F/(T\times T')}
	\arrow["{\imath_{\langle C\rangle}}"', from=1-1, to=2-1]
	\arrow["{q_{C,F}}"', from=1-2, to=1-1]
	\arrow["{p_{C,F}}", from=1-2, to=1-3]
	\arrow["\lrcorner"{anchor=center, pos=0.125, rotate=-90}, draw=none, from=1-2, to=2-1]
	\arrow["{\imath_{C\geq 0,F}}"', from=1-2, to=2-2]
	\arrow["{\imath_F}", from=1-3, to=2-3]
	\arrow["{q'_{C,F}}"', from=2-2, to=2-1]
	\arrow["{p'_{C,F}}", from=2-2, to=2-3]
\end{tikzcd}
\]
Using the same formalism and procedure of proper pushforward by $p_{C,F}$ and virtual pullback by $q_{C,F}$ as in \S\ref{subsection:parabolicinduction2d}, we obtain an induction morphism
\[
 \overline{\Ind}_{C}^F\colon \CH_{T,V,W,\mu,\langle C\rangle}^{T'}\rightarrow\CH_{T,V,W,\mu,F}^{T'}\,.
\]
We define
\[
 \tilde{\Ind}_{C}^F\coloneqq ??\cdot\overline{\Ind}_C^F\colon \CH_{T,V,W,\mu,\langle C\rangle}^{T'}\rightarrow\CH_{T,V,W,\mu,C,F}[\Eu_{\Fg,C,F}^{T',-1}]\,.
\]
Last, we define $\Ind'^{F}_{C}\colon \CH_{G,V,W,\mu,\langle C\rangle}^{T'}\rightarrow\CH_{G,V,W,\mu,F}^{T',\loc}$ as the restriction to $\CH_{G,V,W,\mu,\langle C\rangle}^{T'}\cong (\CH_{T,V,W,\mu,\langle C\rangle}^{T'})^{W^{\langle C\rangle}}\subset \CH_{T,V,W,\mu,\langle C\rangle}^{T'}$ of the average $\sum_{w\in W^F/W^{\langle C\rangle}}w\cdot \Ind'^F_{C}$. Here, the superscript $\loc$ denotes the localization of $\CH_{G,V,W,\mu,F}^{T'}$ with respect to the set $\{w\cdot \Eu_{\Fg,C,F}^{T'}\colon w\in W^F\}$.

\subsection{Comparison of induction and torus equivariant induction}
We can compare the induction and torus induction as follows.
\begin{proposition}
 \label{proposition:comparisontorus2d}
 For any $C\in\FC$ and $F\in\FF$ such that $C\preceq F$, we have a commutative diagram
 \[
  \begin{tikzcd}
   \CH_{G,V,W,\mu,\langle C\rangle}^{T'}&\CH_{G,V,W,\mu,F}^{T'}\\
   &\CH_{G,V,W,\mu,F}^{T',\loc}
   \arrow["\Ind_{C}^F",from=1-1, to = 1-2]
   \arrow["\Ind'^F_{C}"',from=1-1, to = 2-2]
   \arrow[from =1-2, to = 2-2]
  \end{tikzcd}
 \]
where the vertical morphism is the natural localization morphism.
\end{proposition}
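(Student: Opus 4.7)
The plan is to mimic the proof of Proposition~\ref{proposition:toruscomparison3d}, replacing everywhere the smooth pullback on vanishing cycle cohomology by the virtual pullback on Borel--Moore homology of \S\ref{subsection:virtualpullbackBM}. Indeed, the only difference between the $3d$ parabolic induction $\Ind_C^F$ and its $2d$ counterpart is that the pullback along the vector bundle $q_{C,F}$ (which was just a smooth pullback in the $3d$ situation, automatically compatible with $\varphi_{f}$) is replaced by the virtual pullback along the Cartesian square appearing on the left of the induction diagram \eqref{equation:inductiondiagram2d}. Modulo this change, the same diagram as in the proof of Proposition~\ref{proposition:toruscomparison3d} organises the comparison, with the rows $\HO^*(V^\bullet/(\dots),\varphi_{f})$ replaced by $\HO^{\rmBM}_*(\mu^{-1}(0)\cap V^\bullet/(\dots),\BoQ)$ and the same maps $\xi_1,\xi_2,\xi_3,\xi_4$ built from Lemmas~\ref{lemma:Winvariantpart} and~\ref{lemma:Winvariantparabolic} (whose proofs apply verbatim to Borel--Moore homology by Proposition~\ref{proposition:weylgroupBorelMoore}) and localisation.

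I would then verify, one at a time, the three squares analogous to those of Proposition~\ref{proposition:toruscomparison3d}. The bottom square concerns the virtual pullback $q_{C,F}^!$ and its compatibility with the change of structure group from $G^{C\geq 0,F}$ to $T$. One obtains two Cartesian squares relating the ambient smooth stacks $V^{\langle C\rangle}/(G^{\langle C\rangle}\times T')$, $V^{C\geq 0,F}/(G^{C\geq 0,F}\times T')$ and their torus counterparts, and one checks that the induced square of ambient stacks has no excess intersection bundle because the morphism $V^{C\geq 0,F}/(G^{C\geq 0,F}\times T')\to V^{\langle C\rangle}/(G^{\langle C\rangle}\times T')$ and its torus avatar have the same codimension (both are vector bundles, trivial in their respective quotients). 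Proposition~\ref{proposition:comparisonvirtualpullbacks} (the no-excess-intersection comparison of virtual pullbacks) then yields the commutativity. The middle square, involving the proper pushforward along $p_{C,F}$, follows from smooth base change in the Cartesian square relating $V^F/(G^{C\geq 0,F}\times T')$ to $V^F/(T\times T')$ along the closed immersion $V^{C\geq 0,F}\hookrightarrow V^F$, together with the compatibility of proper pushforward with virtual pullback coming from base change.

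The top square is the place where the Euler class $\Eu_{\Fg,C,F}^{T'}$ and the averaging over $\sfW^F/\sfW^{\langle C\rangle}$ enter. This step is purely a computation on the ambient smooth stacks $V^F/(G^{C\geq 0,F}\times T')$ and $V^F/(G^F\times T')$, identical to the corresponding computation in the proof of Proposition~\ref{proposition:toruscomparison3d}: the closed immersion $V^F/(N_{G^{\langle C\rangle}}(T)\times T')\hookrightarrow V^F/(N_{G^F}(T)\times T')$ has normal bundle with $T'$-equivariant Euler class $\Eu_{\Fg,C,F}^{T'}$, and one deduces the equality $y^*x_*=\sum_{w\in \sfW^F/\sfW^{\langle C\rangle}} w\cdot\bigl((\Eu_{\Fg,C,F}^{T'})^{-1}\cdot v^*\bigr)$ exactly as there; this last identity is then pulled back through the flat map $V^F\to \pt$ and the Cartesian squares involving $\mu^{-1}(0)\cap V^F$, hence remains valid in Borel--Moore homology.

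The main obstacle is the bottom square, since the virtual pullback in Borel--Moore homology is defined via dualising sheaves and Tate twists, and one must check that the two virtual pullbacks (for $G$ and for $T$) are intertwined by the pullbacks along the flag fibrations $G^F/T\to \pt$ and $G^{\langle C\rangle}/T\to \pt$ up to the correct Tate twist. This is precisely the content of Proposition~\ref{proposition:comparisonvirtualpullbacks}, once one observes that the codimensions of the vector bundles involved are preserved under the passage from $G^{\bullet}$ to $T$. All the other commutativities, and in particular the compatibility with the localisation morphism $\xi_4'$ and with the $\sfW^{\langle C\rangle}$- and $\sfW^F$-invariants, reduce to the same formal manipulations as in the proof of Proposition~\ref{proposition:toruscomparison3d}.
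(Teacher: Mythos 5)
Your overall plan---rerun the proof of Proposition~\ref{proposition:toruscomparison3d}, replacing the smooth pullback in vanishing-cycle cohomology by the virtual pullback of \S\ref{subsection:virtualpullbackBM} and checking the three squares---is exactly what the paper intends (its proof is literally a reference back to the $3d$ case), and your treatment of the middle and top squares is essentially fine. The problem is your justification of the bottom square. First, a small slip: the ambient smooth target for the $2d$ virtual pullback is $(V^{\langle C\rangle}\times (W^*)^{C>0,F})/(G^{\langle C\rangle}\times T')$, not $V^{\langle C\rangle}/(G^{\langle C\rangle}\times T')$. More seriously, your claim that the $G$-level map and its torus avatar ``have the same codimension'' is false: the relative dimension of $q'_{C,F}$ at the level of $G$-quotients is $d'_{C,F}=\dim V^{C>0,F}-\dim G^{C>0,F}-\dim W^{*,C>0,F}$, while for the $T\times T'$-quotients it is $\dim V^{C>0,F}-\dim W^{*,C>0,F}$; the two differ by $\dim G^{C>0,F}$, which is nonzero whenever $G^{\langle C\rangle}\neq G^F$. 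Moreover, the square formed by the two ambient maps and the two change-of-group morphisms is not Cartesian: the fibres of $V^{C\geq 0,F}/(T\times T')\rightarrow V^{C\geq 0,F}/(G^{C\geq 0,F}\times T')$ are $G^{C\geq 0,F}/T$, whereas on the target side they are $G^{\langle C\rangle}/T$. So the no-excess-intersection comparison of virtual pullbacks (the proposition containing diagram \eqref{equation:comparisonvirtualpullbacks}), which requires Cartesian squares and equal codimensions, does not apply in the form you invoke, and this step of your argument fails as written.

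What the bottom square actually needs is the compatibility of the virtual pullback with smooth pullback along the change-of-group morphisms: apply the pullback along the smooth maps $V^{\bullet}/(T\times T')\rightarrow V^{\bullet}/(G^{\bullet}\times T')$ to the adjunction unit \eqref{equation:virtualpullback} defining the $G$-level virtual pullback, use smooth base change, and identify the dualizing complexes of the smooth ambient stacks; the codimension mismatch $\dim G^{C>0,F}$ is exactly absorbed by the Tate twist $\SL^{\dim G^{\bullet}/T}$ already implicit in the identification $\HO^{\rmBM}_{*,G^{\bullet}}\cong(\HO^{\rmBM}_{*,T})^{\sfW^{\bullet}}$ of Proposition~\ref{proposition:weylgroupBorelMoore} (and hidden by the paper's convention on Tate twists). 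This is the correct $2d$ analogue of Square~1 in the proof of Proposition~\ref{proposition:toruscomparison3d}, which commutes by pure functoriality and never uses any codimension condition. With that repair, the remainder of your outline (base change for the proper pushforward, and the Euler-class computation with the averaging over $\sfW^F/\sfW^{\langle C\rangle}$, which indeed takes place entirely in the group direction and so descends to the Borel--Moore homology of the zero loci) goes through.
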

\begin{proof}
 The proof is very similar to that of Proposition~\ref{proposition:toruscomparison3d}. We do not repeat it.
\end{proof}

\subsection{Dimensional reduction and comparison with critical induction systems}
\label{subsection:comparisoninductionDimRed}

\subsubsection{The $2d$-potential}
\label{subsubsection:the2dpotential}
 Let $V,W$ be representations of a reductive group $G$, $T'$ an auxiliary torus acting on $V,W$ such that the actions of $G$ and $T'$ commute, and $\mu\colon V\rightarrow W^*$ a $G\times T'$-equivariant function. Then, the contraction of $\mu$ with $W$, $f\colon V\times W\rightarrow \BoC$, $f(v,w)\coloneqq \mu(v)(w)$, is a $G\times T'$-invariant function on $V\times W$. We may refer to it as the \emph{2d potential associated to $\mu$}.

\subsubsection{Comparison}
\label{subsubsection:comparison}

\begin{proposition}
\label{proposition:comparison2d3dmultiplications}
The dimensional reduction isomorphisms $\mathsf{dr}_F\colon\CH_{G,V,W,\mu,F}^{T'}\rightarrow\CH_{G,V\times W,f,F}^{T'}$ from Corollary~\ref{corollary:cohdimrediso} commute with the induction morphisms $\Ind_{C}^F$ on $\CH_{G,V,W,\mu,F}^{T'}$ and $(-1)^{\dim W^{*,C>0,F}}\Ind_{C}^F$ on $\CH_{G,V\times W,f,F}^{T'}$, that is dimensional reduction is compatible with induction up to an explicit sign.
\end{proposition}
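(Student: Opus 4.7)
The plan is to factor both induction morphisms as compositions of pullback and proper pushforward, and to compare each factor under dimensional reduction via Propositions~\ref{proposition:dimredpropermorphism}, \ref{proposition:dimredpullback}, and \ref{proposition:vanishingcyclesquadraticfunction}. Write $\Ind_{C}^{F}$ on the $2d$ side as $(p_{C,F})_{*}\circ q_{C,F}^{!}$: the virtual pullback along the Cartesian square of \eqref{equation:inductiondiagram2d} composed with proper pushforward along the closed immersion $p_{C,F}$. Write $\Ind_{C}^{F}$ on the $3d$ side (applied to $V\times W$ with potential $f$) as $(\bar{p}_{C,F})_{*}\circ (\bar{q}_{C,F})^{*}$: the smooth pullback composed with proper pushforward in vanishing cycle cohomology.

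The key observation is that, by Proposition~\ref{proposition:admissiblefunction} and weight-matching, for $v\in V^{C\geq 0,F}$ and $w = w_{0}+w_{-}+w_{+}\in W^{F} = W^{\langle C\rangle}\oplus W^{C<0,F}\oplus W^{C>0,F}$, one has $f_{F}(v,w) = \mu_{\langle C\rangle,F}(v)\cdot w_{0} + \mu_{C>0,F}(v)\cdot w_{-}$, independent of $w_{+}$. I introduce the intermediate equivariant critical cohomology $\CI \coloneqq \HO^{*}_{G^{C\geq 0,F}\times T'}(V^{C\geq 0,F}\times W^{F}, \varphi_{f_{F}})$ which bridges the two sides. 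On the one hand, dimensional reduction along the vector bundle $W^{F}$ over $V^{C\geq 0,F}$ with section $\mu_{F}|_{V^{C\geq 0,F}}$ identifies $\CI$ with the Borel--Moore homology of $\mu_{F}^{-1}(0)\cap V^{C\geq 0,F}$, i.e., the middle of the $2d$ induction diagram; applying Proposition~\ref{proposition:dimredpropermorphism} to the closed immersion $V^{C\geq 0,F}\hookrightarrow V^{F}$ with the trivial bundle $W^{F}$ then realizes the $2d$ pushforward $(p_{C,F})_{*}$ as proper pushforward in vanishing cycle cohomology $\CI\to \CH_{G,V\times W,f,F}^{T'}$ intertwining it with $\mathsf{dr}_{F}$. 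Analogously, Proposition~\ref{proposition:dimredpullback} applied to the Cartesian square of \eqref{equation:inductiondiagram2d} with auxiliary bundle $W^{\langle C\rangle}\oplus W^{C<0,F}$ on the smooth ambient schemes $V^{\langle C\rangle}\times W^{*,C>0,F}$ and $V^{C\geq 0,F}$ realizes $q_{C,F}^{!}$ as a pullback in vanishing cycle cohomology from an ``alternative'' dimensional reduction of $\CH_{G,V,W,\mu,\langle C\rangle}^{T'}$ into $\CI$.

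On the other hand, because $f_{F}$ on $V^{C\geq 0,F}\times W^{F}$ is $w_{+}$-independent and the ``quadratic'' summand $\mu_{C>0,F}(v)\cdot w_{-}$ is precisely the canonical pairing between $W^{*,C>0,F}$ and $W^{C<0,F}$, applying Proposition~\ref{proposition:vanishingcyclesquadraticfunction} to this pairing (together with smooth base change to absorb the trivial $\HO^{*}(W^{C>0,F})$ factor) identifies $\CI$ with the critical cohomology $\HO^{*}_{G^{C\geq 0,F}\times T'}((V\times W)^{C\geq 0,F}, \varphi_{f})$ of the middle of the $3d$ induction diagram, up to Tate twist and the sign $(-1)^{\dim W^{*,C>0,F}}$. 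A parallel argument on the LHS identifies the alternative dimensional reduction with $\mathsf{dr}_{\langle C\rangle}$ on the nose, provided the direction of Proposition~\ref{proposition:vanishingcyclesquadraticfunction} is chosen consistently with the middle identification. Assembling all the identifications yields the claimed equality of inductions up to the single sign $(-1)^{\dim W^{*,C>0,F}}$.

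The main obstacle is the careful bookkeeping of this sign: since Proposition~\ref{proposition:vanishingcyclesquadraticfunction} is invoked at both the middle and the LHS, one must verify that the two signs combine to produce a single net factor $(-1)^{\dim W^{*,C>0,F}}$ rather than cancelling to $1$. This amounts to checking that the direction of dimensional reduction (along $W^{C<0,F}$ versus along $W^{*,C>0,F}$) is chosen consistently in both applications, so that one produces an isomorphism on the nose while the other contributes the sign.
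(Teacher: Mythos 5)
There is a genuine gap at the step where you identify $\CI\coloneqq \HO^*_{G^{C\geq 0,F}\times T'}(V^{C\geq 0,F}\times W^F,\varphi_{f_F})$ with the critical cohomology $\HO^*_{G^{C\geq 0,F}\times T'}((V\times W)^{C\geq 0,F},\varphi_f)$ of the middle of the $3d$ induction diagram ``up to Tate twist and the sign''. Proposition~\ref{proposition:vanishingcyclesquadraticfunction} does not apply there: on $V^{C\geq 0,F}\times W^F$ the summand $\langle \mu_{C>0,F}(v),w_-\rangle$ of $f_F$ is \emph{not} the canonical pairing of two independent fibre coordinates, because its first argument is the value of the (in general nonlinear, non-surjective) map $\mu_{C>0,F}$, not a free variable ranging over $W^{*,C>0,F}$. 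And the asserted isomorphism is false in general: dimensional reduction along $W^F$ identifies $\CI$ with $\HO^{\rmBM}$ of $\mu_F^{-1}(0)\cap V^{C\geq 0,F}=\{\mu_{\langle C\rangle}=0,\ \mu_{C>0,F}=0\}$, whereas the $3d$ middle reduces to $\HO^{\rmBM}$ of $\{\mu_{\langle C\rangle}=0\}\cap V^{C\geq 0,F}$; these differ by the extra equations $\mu_{C>0,F}=0$. Concretely, take $G=\BC^*$, $V=\BC_{-1}^{\oplus 2}$, $W=\BC_2$, $\mu(v_1,v_2)=v_1v_2$, $C$ the negative ray and $F=\{0\}$: then $\CI\cong\HO^{\rmBM}_{\BC^*}(\{v_1v_2=0\})$ while the $3d$ middle is $\HO^*_{\BC^*}(\BC^2)$, and no isomorphism up to twist and sign exists. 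Moreover, even if such an isomorphism existed, your argument would additionally need it to be implemented by the pushforward $e_*$ along $(V\times W)^{C\geq 0,F}\hookrightarrow V^{C\geq 0,F}\times W^F$, since the $3d$ pushforward starts from $(V\times W)^{C\geq 0,F}$ and only then factors through $V^{C\geq 0,F}\times W^F$; this compatibility is not addressed.

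The paper's proof avoids exactly this point by never claiming such an isomorphism. It introduces the auxiliary spaces $V^{\langle C\rangle}\times W^{*,C>0,F}\times W^{C\geq 0,F}$ and $V^{\langle C\rangle}\times W^{*,C>0,F}\times W^F$, on which $W^{*,C>0,F}$ \emph{is} an independent coordinate, so that the potential genuinely splits as $f_{\langle C\rangle}$ box-summed with the canonical quadratic pairing between $W^{*,C>0,F}$ and $W^{C<0,F}$; Proposition~\ref{proposition:vanishingcyclesquadraticfunction} is applied there (comparing the two dimensional reductions of that quadratic bundle over $V^{\langle C\rangle}\times W^{\langle C\rangle}$), and this is where the sign $(-1)^{\dim W^{*,C>0,F}}$ arises — not at the middle. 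The two middles are then related not by an isomorphism but by the commutative square $e_*b^*=g^*d_*$ coming from base change in the Cartesian square (with no excess bundle) built from $q'_{C,F}$, with $g^*$ identified with the $2d$ virtual pullback via Proposition~\ref{proposition:dimredpullback}, and the final pushforward handled by Proposition~\ref{proposition:dimredpropermorphism}. Your Steps A--C (identification of $\CI$ with the $2d$ middle, and the treatment of pushforward and virtual pullback) are consistent with this, but the ``middle identification'' must be replaced by the auxiliary-space/base-change argument, and the sign relocated to the left-hand comparison of the two dimensional reductions.
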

\begin{proof}
We consider the diagram
\begin{equation}
\label{equation:comparisoninductions}
\begin{tikzcd}
	{V^{\langle C\rangle}\times W^{\langle C\rangle}} & {V^{\langle C\rangle}\times W^{*,C>0,F}\times W^{C\geq 0,F}} & {(V\times W)^{C\geq 0,F}} \\
	{V^{\langle C\rangle}\times W^F} & {V^{\langle C\rangle}\times W^{*,C>0,F}\times W^{F}} & {V^{C\geq 0,F}\times W^F} \\
	&& {V^F\times W^F}
	\arrow["a"', from=1-2, to=1-1]
	\arrow["d", from=1-2, to=2-2]
	\arrow["b"', from=1-3, to=1-2]
	\arrow["e"', from=1-3, to=2-3]
	\arrow["c", from=2-1, to=1-1]
	\arrow["m", from=2-1, to=2-2]
	\arrow["g"', from=2-3, to=2-2]
	\arrow["h"', from=2-3, to=3-3]
	\arrow["q_{C,F}"',bend right=20,from=1-3, to=1-1]
	\arrow["p_{C,F}",bend left=80,from=1-3, to=3-3]
	\arrow["\triangle"{description},draw=none,from = 1-2, to= 2-3]
\end{tikzcd}
\end{equation}
where the maps are defined as follows.
\begin{enumerate}
 \item The morphisms $p_{C,F}$ and $q_{C,F}$ are the ones from the critical induction diagram \eqref{equation:inductiondiagramcritical}, applied to the representation $V\times W$ of $G$ and after hiding the natural groups acting to keep the diagram readable.
 \item The map $a$ is the projection on the first and third factors of $V^{\langle C\rangle}\times W^{*,C<0,F}\times W^{C\geq 0,F}$, composed with the identity of $V^{\langle C\rangle}$ and the natural morphism $W^{C\geq 0,F}\rightarrow W^{\langle C\rangle}$.
 \item The map $b$ is built from the projection $V^{C\geq 0,F}\rightarrow V^{\langle C\rangle}$, the identity of $W^{C\geq 0,F}$, and the map $\mu_{C>0,F}\colon V^{C\geq 0,F}\subset V^F\rightarrow W^{*,C>0,F}$.
 \item The map $c$ is the identity on the first factor and the projection on the middle factor according to the decomposition $W^F=W^{C<0,F}\oplus W^F\oplus W^{C>0,F}\rightarrow W^F$.
 \item The map $d$ is induced by the natural inclusion $W^{C\geq 0,F}\subset W^F$ and the identity of the first two factors.
 \item The map $e$ is induced by the identity on the first factor and the inclusion $W^{C\geq 0,F}\subset W^F$ on the second factor.
 \item The map $m$ is the inclusion of the first and third factors.
 \item The map $g$ is given by the natural projection $V^{C\geq 0,F}\rightarrow V^{\langle C\rangle}$ on the first factor, the morphism $\mu_{C>0,F}\colon V^{C\geq 0,F}\subset V^F\rightarrow W^{*,C>0,F}$ on the second factor and the identity of $W^F$ on the last factor. It is $q'_{C,F}\times \id_{W^F}$ for the map $q'_{C,F}$ appearing in the $2d$ induction diagram \eqref{equation:inductiondiagram2d}.
 \item The morphism $h$ is given by the natural closed immersion $V^{C\geq 0,F}\rightarrow V^F$ on the first factor and the identity on the second factor.
\end{enumerate}
The strategy of proof is to decompose the critical induction $\Ind_{C}^F$ as the composition of the pullbacks by $a$ and $b$ and the pushforwards by $e$ and $h$, and use Propositions~\ref{proposition:dimredpropermorphism} and \ref{proposition:dimredpullback} regarding the compatibility of dimensional reduction with pullbacks and pushforwards.

The potential $f$ induces functions on each of the spaces involved in \eqref{equation:comparisoninductions}, which are compatible with all the maps of the diagram, in the following way.

\begin{enumerate}
 \item The map on $V^F\times W^F$ is $f_F$, the map on $V^{\langle C\rangle}\times W^{\langle C\rangle}$ is $f_{\langle C\rangle}$. The potential on $V^{\langle C\rangle}\times W^{*,C>0,F}\times W^{C\geq 0,F}$ is obtained by pullback by $a$. The potential on $(V\times W)^{C\geq 0,F}$ is obtained by pullback by $ba$ (equivalently, by $he$, using Proposition~\ref{proposition:potential}). The potential on $V^{C\geq 0,F}\times W^F$ is obtained by pullback by $h$.
 \item The potential on $V^{\langle C\rangle}\times W^{*,C>0,F}\times W^F$ is given by $f(v,w,w')=(\mu_{\langle C\rangle}(v)+w)(w')$. Last, the potential on $V^{\langle C\rangle}\times W^F$ is obtained by pullback by $m$, that if $f(v,w')=\mu_{F}(v)(w')$.
\end{enumerate}
By definition, all potentials on the spaces in \eqref{equation:comparisoninductions} are compatible with pullbacks of functions.

The spaces in the left-most square of \eqref{equation:comparisoninductions} can all be seen as vector bundles over $V^{\langle C\rangle}\times W^{\langle C\rangle}$:
\begin{enumerate}
 \item $V^{\langle C\rangle}\times W^{\langle C\rangle}$ is the trivial rank $0$ vector bundle.
 \item $V^{\langle C\rangle}\times W^F$ is the vector bundle with fiber $W^{C>0,F}\oplus W^{C<0,F}$.
 \item $V^{\langle C\rangle}\times W^{*,C>0,F}\times W^{C\geq 0,F}$ is the vector bundle with fiber $W^{*,C>0,F}\oplus W^{C>0,F}$.
 \item $V^{\langle C\rangle}\times W^{*,C>0,F}\times W^F$ is the vector bundle with fiber $W^{*,C>0,F}\oplus W^{C>0,F}\oplus W^{C<0,F}$.
\end{enumerate}
The potentials on each of the first three vector bundles over $V^{\langle C\rangle}\times W^{\langle C\rangle}$ come from the potential $f_{\langle C\rangle}$ on the base. The potential on the fourth space decomposes as the box sum of the potential $f_{\langle C\rangle}$ on the base and the quadratic function $W^{*,C>0,F}\times W^{C<0,F}\rightarrow \BoC, (\ell,v)\mapsto \ell(v)$ on the fiber (note that we can identify $(W^{C<0,F})^*\cong W^{*,C>0,F}$).

\emph{Claim:} The following diagram, in which $a^*$ and $c^*$ are pushbacks with respect to affine fibrations, commutes up to a factor $(-1)^{\dim W^{*,C>0,F}}$:
\[
\begin{tikzcd}
	{\HO^{\rmBM}_{G^{\langle C\rangle}}(\mu_{\langle C\rangle}^{-1}(0)\times W^F)\cong \HO^*_{G^{\langle C\rangle}}(V^{\langle C\rangle}\times W^F,\varphi_f)} & {\HO^*_{G^{C\geq 0,F}}(V^{\langle C\rangle}\times W^{*,C>0,F}\times W^F,\varphi_f)} \\
	{\HO^{\rmBM}_{G^{\langle C\rangle}}(\mu_{\langle C\rangle}^{-1}(0)\times W^{\langle C\rangle})\cong \HO^*_{G^{\langle C\rangle}}(V^{\langle C\rangle}\times W^{\langle C\rangle},\varphi_f)} & {\HO^{\rmBM}_{G^{C\geq 0,F}}(\mu_{\langle C\rangle}^{-1}(0)\times W^{*,C>0,F}\times W^{C\geq 0,F})}
	\arrow["{\mathsf{dr}}", from=1-1, to=1-2]
	\arrow["{c^*}", from=2-1, to=1-1]
	\arrow["{a^*}"', from=2-1, to=2-2]
	\arrow["{\mathsf{dr}}"', from=2-2, to=1-2]
\end{tikzcd}
\]
This comes directly from Proposition~\ref{proposition:vanishingcyclesquadraticfunction} applied to the square
\[
\begin{tikzcd}
	{} && {V^{\langle C\rangle}\times W^{*,C>0,F}\times W^F} & {V^{\langle C\rangle}\times W^F} \\
	&& {V^{\langle C\rangle}\times W^{*,C>0,f}\times W^{C\geq 0,F}} & {V^{\langle C\rangle}\times W^{C\leq 0,F}} \\
	&&&& {V^{\langle C\rangle}\times W^{\langle C\rangle}}
	\arrow[from=1-3, to=1-4]
	\arrow[from=1-3, to=2-3]
	\arrow["\lrcorner"{anchor=center, pos=0.125, rotate=0}, draw=none, from=1-3, to=2-4]
	\arrow["f"', bend right=10, from=1-4, to=1-3]
	\arrow["\pi"', from=1-4, to=2-4]
	\arrow["c", bend left=10, from=1-4, to=3-5]
	\arrow["d", bend left=30, from=2-3, to=1-3]
	\arrow["{\pi^{\vee}}", from=2-3, to=2-4]
	\arrow["a"', bend right=10, from=2-3, to=3-5]
	\arrow[from=2-4, to=3-5]
\end{tikzcd}
\]
The square marked by $\triangle$ in the diagram \eqref{equation:comparisoninductions} is Cartesian and so the square
\[
\begin{tikzcd}
	{\HO_{G^{C\geq 0,F}}^*(V^{\langle C\rangle}\times W^{*,C>0,F}\times W^{C\geq 0,F},\varphi_f)} & {\HO_{G^{C\geq 0,F}}^*((V\times W)^{C\geq 0,F},\varphi_f)} \\
	{\HO_{G^{C\geq 0,F}}^*(V^{\langle C\rangle}\times W^F,\varphi_f)} & {\HO_{G^{C\geq 0,F}}^*(V^{C\geq 0,F}\times W^F,\varphi_f)}
	\arrow["{b^*}", from=1-1, to=1-2]
	\arrow["{d_*}"', from=1-1, to=2-1]
	\arrow["{e_*}", from=1-2, to=2-2]
	\arrow["{g^*}"', from=2-1, to=2-2]
\end{tikzcd}
\]
commutes. Moreover, by Proposition~\ref{proposition:dimredpullback}, $g^*$ can be identified with the virtual pullback in Borel--Moore homology via dimensional reduction.

The compatibility of the pushforward with dimensional reduction is Proposition~\ref{proposition:dimredpropermorphism}, which concludes.
\end{proof}

\section{The restriction morphisms}
\label{section:restrictionmorphism}

\subsection{Localized cohomology}
\label{subsection:localizedcohomology}
For reasons that will become clear later, we need to localize the $\HO^*_{G^{F}\times T'}$-modules $\CH_{G,V,f,F}^{T'}$ ($F\in\FF$) in order to define the restriction morphisms.

For any $C\in\FC$ and $F\in\FF$ such that $C\preceq F$, we define
\[
 \tilde{\CH}_{G,V,f,C,F}^{T'}\coloneqq \CH_{G,V,f,\langle C\rangle}^{T'}[\Eu_{V,C,F}^{T',-1}]
\]
the localized $\CH_{G,V,\langle C\rangle}^{T'}$-module, where
\[
 \Eu_{V,C,F}^{T'}=\prod_{\alpha\in\CW(V)^{C>0}\cap\CW(V)^F}\alpha\quad\in\CH_{G,V,\langle C\rangle}^{T'}=\HO^*_{G^{\langle C\rangle}\times T'}\,.
\]
\subsection{The restriction morphism}
\label{subsection:restrictionmorphim}

\subsubsection{The restriction morphism via pullback-pushforward}
\label{subsubsection:restviapullbackpushforward}
We let $C\in\FC$ and $F\in\FF$ such that $C\preceq F$. We consider the induction diagram \eqref{equation:inductiondiagramcritical} associated to the oopposite cell $-C\in\FC$ and $F\in\FF$. Pullback by the morphism
$p_{-C,F}$ gives a morphism
\begin{equation}
 \label{equation:pbp}
 \CH_{G,V,f,F}^{T'}\rightarrow \HO^*(V^{C\leq 0,F}/(G^{C\leq 0,F}\times T'),\varphi_f\BoQ)\,.
\end{equation}

Since $q_{-C,F}$ is a vector bundle stack with Euler class $\frac{\Eu_{V,C,F}^{T'}}{\Eu_{\Fg,C,F}^{T'}}$, we have the pushforward morphism
\begin{equation}
\label{equation:pfq}
 \HO^*(V^{C\leq 0,F}/(G^{C\leq 0,F}\times T'),\varphi_f\BoQ)\rightarrow\CH_{G,V,f,\langle C\rangle}^{T'}[\Eu_{V,C,F}^{T',-1}]\,,
\end{equation}
which is given by the multiplication by $\frac{\Eu_{\Fg,C,F}^{T'}}{\Eu_{V,C,F}^{T'}}$ of the inverse pullback morphism.

The restriction morphism is given by the composition of \eqref{equation:pfq} and \eqref{equation:pbp}, and is denoted by
\[
\Res_F^C\colon \CH_{G,V,f,F}^{T'}\rightarrow\CH_{V,G,f,\langle C\rangle}^{T'}[\Eu_{V,C,F}^{T',-1}]\,.
\]

\subsubsection{The restriction morphism via pullback}
\label{subsubsection:restviapullback}
We consider the morphism of stacks
\[
 i_{\langle C\rangle,F}\colon V^{\langle C\rangle}/(G^{\langle C\rangle}\times T')\rightarrow V^F/(G^F\times T')\,.
\]
induced by the $(G^{\langle C\rangle},G^F)$-equivariant closed immersion $V^{\langle C\rangle}\rightarrow V^F$. It factors as $p_{-C,F}\circ i'_{-C,F}$ where
\[
 i'_{-C,F}\colon V^{\langle C\rangle}/(G^{\langle C\rangle}\times T')\rightarrow V^{C\leq 0, F}/(G^{C \leq 0, F}\times T')
\]
is a section of $q_{-C,F}$. In particular, the inverse of $q_{-C,F}^*$ is $i'^*_{-C,F}$. This means that
\[
\begin{aligned}
 \Res_{F}^C&=\frac{\Eu_{\Fg,C,F}^{T'}}{\Eu_{V,C,F}^{T'}}i'^*_{-C,F}\circ p_{-C,F}^*\\
 &=\frac{\Eu_{\Fg,C,F}^{T'}}{\Eu_{V,C,F}^{T'}}i^*_{\langle C\rangle ,F}\,.
 \end{aligned}
\]

\subsection{The restriction morphisms via torus equivariant cohomology}
In the same way the induction morphisms can be defined using torus equivariant cohomology and averaging using the Weyl group action, the restriction morphisms also admit a description via torus equivariant cohomology.
\subsubsection{The restriction morphism via pullback-pushforward}
\label{subsubsection:restrictionpbpf}

We consider the torus equivariant diagram \eqref{equation:torusdiagram} for the opposite cell $-C$ and $F$. The morphism $q_{-C,F}$ is an affine fibration with Euler class $\Eu_{V,C,F}^{T'}$. The restriction morphism
\[
 \overline{\Res}'^C_F\colon \CH_{T,V,f,F}^{T'}\rightarrow\CH_{T,V,f,\langle C\rangle}^{T'}[\Eu_{V,C,F}^{T',-1}]
\]
is defined as $(q_{-C,F})_*(p_{-C,F})^*$. Here, $q_{-C,F}$ is the pushforward with respect to a vector bundle with Euler class $\Eu_{V,C,F}^{T'}$. We let
\[
 \tilde{\Res}'^C_F\coloneqq \Eu_{\Fg,C,F}\overline{\Res}'^C_F\colon \CH_{T,V,f,F}^{T'}\rightarrow\CH_{T,V,f,\langle C\rangle}^{T'}[\Eu_{V,C,F}^{T',-1}]\,.
\]
Last, $\Res'^C_F$ denotes the restriction of $\tilde{\Res}'^C_F$ to the space of $W^F$-invariants. Its image is contained in $(\CH_{T,V,f,\langle C\rangle}^{T'})^{W^{C}}[\Eu_{V,C,F}^{T',-1}]$.

\subsubsection{The restriction morphism via pullback}
We let $i'_{-C,F}\colon V^{\langle C\rangle}/(T\times T')\rightarrow V^{C\leq 0,F}/(T\times T')$ and $\tilde{i}_{\langle C\rangle,F}\colon V^{\langle C\rangle}/(T\times T')\rightarrow V^F/(T\times T')$ be the closed immersions. The map $i'_{-C,F}$ is a section of $q_{-C,F}$.

\begin{proposition}
 We have $\overline{\Res}'^C_F=\frac{1}{\Eu_{V,C,F}^{T'}}\tilde{i}_{\langle C\rangle,F}^*$ and $\tilde{\Res}_{F}^C=\frac{\Eu_{\Fg,C,F}^{T'}}{\Eu_{V,C,F}^{T'}}\tilde{i}_{\langle C\rangle,F}^*$.
\end{proposition}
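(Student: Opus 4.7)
The plan is to unwind the definitions and use the fact that $i'_{-C,F}$ is a section of the vector bundle projection $q_{-C,F}$.

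First I would recall the definition of the pushforward along the affine fibration $q_{-C,F}$ from \S\ref{subsection:multEulerclasses}: as a $T\times T'$-equivariant vector bundle with fiber $V^{C<0,F}$, its Euler class is precisely
\[
\Eu_{V,C,F}^{T'} = \prod_{\alpha\in \CW(V)^{C>0}\cap\CW(V)^F}\alpha,
\]
and the pullback $q_{-C,F}^*$ is an isomorphism on vanishing cycle cohomology, with pushforward given by
\[
(q_{-C,F})_* \;=\; \frac{1}{\Eu_{V,C,F}^{T'}}\,(q_{-C,F}^*)^{-1}.
\]

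Next, since $i'_{-C,F}$ is, by construction, a section of $q_{-C,F}$, one has $q_{-C,F}\circ i'_{-C,F} = \id$, whence $(i'_{-C,F})^*\circ q_{-C,F}^* = \id$ by functoriality of pullback. Combined with the fact that $q_{-C,F}^*$ is an isomorphism, this forces the identification
\[
(q_{-C,F}^*)^{-1} \;=\; (i'_{-C,F})^*.
\]
Substituting this into the definition $\overline{\Res}'^C_F=(q_{-C,F})_*\circ(p_{-C,F})^*$ yields
\[
\overline{\Res}'^C_F \;=\; \frac{1}{\Eu_{V,C,F}^{T'}}\,(i'_{-C,F})^*\circ (p_{-C,F})^* \;=\; \frac{1}{\Eu_{V,C,F}^{T'}}\,\bigl(p_{-C,F}\circ i'_{-C,F}\bigr)^*.
\]
Since the factorisation $\tilde{i}_{\langle C\rangle,F} = p_{-C,F}\circ i'_{-C,F}$ was noted just above the statement, this gives the first formula. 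The second formula is then immediate from the definition $\tilde{\Res}_F^C = \Eu_{\Fg,C,F}^{T'}\cdot \overline{\Res}'^C_F$ stated in \S\ref{subsubsection:restrictionpbpf}.

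There is no real obstacle here: the argument is a purely formal manipulation, and the only potential subtlety is making sure that the pushforward along $q_{-C,F}$ really is given by $\Eu_{V,C,F}^{T',-1}(q_{-C,F}^*)^{-1}$ in vanishing cycle cohomology — but this is exactly the content of \S\ref{subsection:multEulerclasses}, applied to $f_F$, which is $G^{C\leq 0,F}\times T'$-invariant and constant along the fibers of $q_{-C,F}$ by Proposition~\ref{proposition:potential}. In particular the adjunction morphism $\BoQ_{V^{\langle C\rangle}/(T\times T')}\to (q_{-C,F})_*\BoQ_{V^{C\leq 0,F}/(T\times T')}$ remains an isomorphism after applying $\varphi_{f_{\langle C\rangle}}$, so the above manipulations are legitimate.
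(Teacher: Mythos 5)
Your argument is correct and is essentially the paper's own proof, just written out in more detail: the paper also deduces the first identity from the facts that $i'_{-C,F}$ is a section of $q_{-C,F}$ (so $(i'_{-C,F})^*=(q_{-C,F}^*)^{-1}$) and that the pushforward along the affine fibration is $\Eu_{V,C,F}^{T',-1}(q_{-C,F}^*)^{-1}$, with the second identity following from the definition $\tilde{\Res}{}'^C_F=\Eu_{\Fg,C,F}^{T'}\overline{\Res}{}'^C_F$. The only nitpick is a citation slip: the definition of the pushforward by an affine fibration is in the subsection ``Pushforward for affine fibrations'' at the end of \S\ref{section:equcoh-mhm}, not in \S\ref{subsection:multEulerclasses}, which concerns injectivity of multiplication by Euler classes.
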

\begin{proof}
 The second equalities follows immediately from the first and the first comes from the fact that $i'^*_{-C,F}$ is the inverse of $q_{-C,F}^*$.
\end{proof}

\subsection{Comparison of the restriction morphisms}

\begin{proposition}
 The pullback morphisms
 \[
  \Res_{F}^C,\Res'^C_{F}\colon\CH_{G,V,f,F}^{T'}\rightarrow\CH_{G,V,f,C,F}^{T'}
 \]
coincide, via the identification of Lemma~\ref{lemma:Winvariantpart}.
\end{proposition}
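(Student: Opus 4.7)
The plan is to reduce the comparison to the statement that, by the description in \S\ref{subsubsection:restviapullback}, both morphisms equal the scalar $\frac{\Eu_{\Fg,C,F}^{T'}}{\Eu_{V,C,F}^{T'}}$ times the pullback along the closed immersion $V^{\langle C\rangle}\hookrightarrow V^F$, and to check that this pullback is compatible with passage to Weyl invariants.

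First, I would consider the commutative square of quotient stacks
\[\begin{tikzcd}
	{V^{\langle C\rangle}/(T\times T')} & {V^F/(T\times T')} \\
	{V^{\langle C\rangle}/(G^{\langle C\rangle}\times T')} & {V^F/(G^F\times T')}
	\arrow["{\tilde{i}_{\langle C\rangle, F}}", from=1-1, to=1-2]
	\arrow["{u_{\langle C\rangle}}"', from=1-1, to=2-1]
	\arrow["{u_F}", from=1-2, to=2-2]
	\arrow["{i_{\langle C\rangle, F}}"', from=2-1, to=2-2]
\end{tikzcd}\]
in which $u_F$ and $u_{\langle C\rangle}$ are smooth and the horizontal maps are the natural closed immersions induced by $V^{\langle C\rangle}\subset V^F$. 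Functoriality of pullback in vanishing cycle cohomology (\S\ref{subsection:vanishingcyclesfunctoriality}) applied to this square yields the relation $u_{\langle C\rangle}^*\circ i^*_{\langle C\rangle,F}=\tilde{i}^*_{\langle C\rangle,F}\circ u_F^*$.

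Next, I would invoke Lemma~\ref{lemma:Winvariantpart}, which identifies $u_F^*$ with the inclusion $\CH_{G,V,f,F}^{T'}\cong(\CH_{T,V,f,F}^{T'})^{\sfW^F}\hookrightarrow\CH_{T,V,f,F}^{T'}$, and similarly for $u_{\langle C\rangle}^*$. Combined with the previous functoriality, this shows that the restriction of $\tilde{i}^*_{\langle C\rangle,F}$ to the $\sfW^F$-invariant subspace lands in the $\sfW^{\langle C\rangle}$-invariants and agrees there with $i^*_{\langle C\rangle,F}$.

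Finally, the Euler classes $\Eu_{V,C,F}^{T'}$ and $\Eu_{\Fg,C,F}^{T'}$ are products over $\sfW^{\langle C\rangle}$-stable sets of weights: the condition of vanishing on $\langle C\rangle=F$ is preserved by $\sfW^{\langle C\rangle}$ since this group fixes $\langle C\rangle$ pointwise, and the sign condition on $C$ is preserved since $\sfW^{\langle C\rangle}$ fixes a cocharacter $\lambda\in C$. Hence both Euler classes lie in $\HO^*_{T\times T'}(\pt)^{\sfW^{\langle C\rangle}}\cong\HO^*_{G^{\langle C\rangle}\times T'}(\pt)$, and multiplication by the scalar $\frac{\Eu_{\Fg,C,F}^{T'}}{\Eu_{V,C,F}^{T'}}$ is the same operation in the $T\times T'$- and $G^{\langle C\rangle}\times T'$-equivariant settings. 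Combining this with the previous paragraph proves the equality $\Res_F^C=\Res'^C_F$ on $\CH_{G,V,f,F}^{T'}$. There is no substantial obstacle: the whole argument is a combination of the functoriality of pullback in vanishing cycles with the $\sfW^{\langle C\rangle}$-invariance of the Euler class factors; the only point requiring care is to notice that these Euler classes really are Weyl-invariant so that the two descriptions of the scalar factor agree.
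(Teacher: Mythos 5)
Your argument is correct and follows essentially the same route as the paper: both restrictions are the factor $\frac{\Eu_{\Fg,C,F}^{T'}}{\Eu_{V,C,F}^{T'}}$ times the pullback along $V^{\langle C\rangle}\hookrightarrow V^F$, and the two pullbacks are compared via exactly the commutative square of quotient stacks underlying Lemma~\ref{lemma:Winvariantpart}; your extra check that the Euler classes are $\sfW^{\langle C\rangle}$-invariant is a useful point left implicit in the paper. (Only a cosmetic slip: the weights in $\Eu_{V,C,F}^{T'}$ vanish on $F$, which is contained in $\langle C\rangle$ but need not equal it; since $\sfW^{\langle C\rangle}$ fixes $\langle C\rangle$ pointwise the invariance argument goes through unchanged.)
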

\begin{proof}
This is almost straightforward. We may just compare the pullbacks $i_{\langle C\rangle,F}^*$ and $\tilde{i}_{\langle C\rangle,F}^*$, since the relevant restrictions are obtained by multiplying the respective pullback by $\frac{\Eu_{\Fg,C,F}^{T'}}{\Eu_{V,C,F}^{T'}}$. The fact that these pullbacks coincide via the identifications given by Lemma~\ref{lemma:Winvariantpart} is consequence of the following commutative diagram
\[\begin{tikzcd}
	{V^{\langle C\rangle}/(T\times T')} & {V^F/(T\times T')} \\
	{V^{\langle C\rangle}/(G^{\langle C\rangle}\times T')} & {V^F/(G^F\times T')}
	\arrow["{\tilde{i}_{\langle C\rangle,F}}", from=1-1, to=1-2]
	\arrow[from=1-1, to=2-1]
	\arrow[from=1-2, to=2-2]
	\arrow["{i_{\langle C\rangle,F}}"', from=2-1, to=2-2]
\end{tikzcd}\]
in which the vertical arrows are used in the identification of Lemma~\ref{lemma:Winvariantpart}.

\end{proof}

\subsection{Explicit formula for the restriction for vanishing potential}
When the potential vanishes, the cohomology spaces under consideration are polynomial rings and it is possible, and rather easy, to give a closed formula for the restriction morphisms.

\begin{proposition}
 Let $C\in\FC$ and $F\in\FF$ be such that $C\preceq F$. Then, $\Res_{F}^C$ is given by $\frac{\Eu_{\Fg,C,F}^{T'}}{\Eu_{V,C,F}^{T'}}$ times the inclusion
 \[
  (\HO^*_{T\times T'})^{W^{F}}\cong\CH_{V,G,F}^{T'}\rightarrow\CH_{V,G,\langle C\rangle}^{T'}\cong (\HO^*_{T\times T'})^{W^{\langle C\rangle}}\,.
 \]
\end{proposition}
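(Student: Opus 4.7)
The plan is to reduce the proposition to the pullback characterisation of $\Res_{F}^{C}$ established earlier, and then explicitly identify the remaining geometric pullback under the contractibility available when the potential vanishes.

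First I would invoke the identity
\[
 \Res_{F}^{C} = \frac{\Eu_{\Fg,C,F}^{T'}}{\Eu_{V,C,F}^{T'}}\,i_{\langle C\rangle,F}^{*},
\]
established in \S\ref{subsubsection:restviapullback}, so that the problem reduces to identifying $i_{\langle C\rangle,F}^{*}$ with the natural inclusion $(\HO^{*}_{T\times T'})^{\sfW^{F}} \hookrightarrow (\HO^{*}_{T\times T'})^{\sfW^{\langle C\rangle}}$. Since the potential vanishes, $\varphi_{f}\BoQ$ reduces to $\BoQ$, and both $V^{F}$ and $V^{\langle C\rangle}$ are vector spaces, hence equivariantly contractible onto the origin. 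Consequently $\CH_{G,V,F}^{T'} \cong \HO^{*}_{G^{F}\times T'}(\pt)$, and likewise for $\langle C\rangle$. Applying the classical identification $\HO^{*}_{H}(\pt) \cong \HO^{*}_{T}(\pt)^{\sfW_{H}}$ for a connected reductive group $H$ with maximal torus $T$ (which is the same mechanism used in Lemma~\ref{lemma:Winvariantpart} and Proposition~\ref{proposition:weylgroupBorelMoore}) yields the two isomorphisms appearing in the statement of the proposition.

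Next I would identify $i_{\langle C\rangle,F}^{*}$ through these isomorphisms. The cleanest route is to fit everything into the commutative square
\[
 \begin{tikzcd}
  V^{\langle C\rangle}/(T\times T') \arrow[r,"\tilde{i}_{\langle C\rangle,F}"] \arrow[d] & V^{F}/(T\times T') \arrow[d] \\
  V^{\langle C\rangle}/(G^{\langle C\rangle}\times T') \arrow[r,"i_{\langle C\rangle,F}"'] & V^{F}/(G^{F}\times T')
 \end{tikzcd}
\]
whose vertical maps are exactly those used in Lemma~\ref{lemma:Winvariantpart} to realise equivariant cohomology as a subring of Weyl invariants. The top map $\tilde{i}_{\langle C\rangle,F}$ is an inclusion of contractible $T\times T'$-spaces, so on cohomology it induces the identity of $\HO^{*}_{T\times T'}(\pt)$. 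Commutativity of the square then forces $i_{\langle C\rangle,F}^{*}$ to coincide, after the identifications, with the tautological inclusion $(\HO^{*}_{T\times T'})^{\sfW^{F}} \hookrightarrow (\HO^{*}_{T\times T'})^{\sfW^{\langle C\rangle}}$, which is well-defined since $G^{\langle C\rangle}\subset G^{F}$ gives $\sfW^{\langle C\rangle}\subset \sfW^{F}$.

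The only genuine obstacle is bookkeeping: verifying that the canonical isomorphism $\CH_{G,V,F}^{T'}\cong (\HO^{*}_{T\times T'})^{\sfW^{F}}$ intertwines the geometric pullback with the algebraic inclusion of invariants, but this is exactly the content of the commutative square together with the contractibility of $V^{F}$ and $V^{\langle C\rangle}$. Once this is verified, multiplying by the prefactor $\Eu_{\Fg,C,F}^{T'}/\Eu_{V,C,F}^{T'}$ gives the claimed formula for $\Res_{F}^{C}$.
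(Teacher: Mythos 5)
Your proposal is correct and is essentially the paper's own argument: the paper dismisses the proposition as immediate from the definitions, and what you write is exactly that unwinding — the pullback formula $\Res_F^C=\frac{\Eu_{\Fg,C,F}^{T'}}{\Eu_{V,C,F}^{T'}}i_{\langle C\rangle,F}^*$ from \S\ref{subsubsection:restviapullback}, equivariant contractibility of $V^F$ and $V^{\langle C\rangle}$ when $f=0$, and the comparison square with the torus quotients identifying $i_{\langle C\rangle,F}^*$ with the inclusion of $\sfW^F$-invariants into $\sfW^{\langle C\rangle}$-invariants. No gap; your write-up simply makes explicit the bookkeeping the paper leaves implicit.
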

\begin{proof}
 This is immediate from the definitions.
\end{proof}

\subsection{Coassociativity}
The following proposition is the coassociativity constraint on the restriction morphisms, or the transitivity of the restriction.
\begin{proposition}
\label{proposition:coassociativityrestriction}
 For any $C,C'\in\FC$ and $F\in\FF$, the restriction morphism $\Res_{\langle C'\rangle}^C$ extends canonically to $\CH_{G,V,f,\langle C'\rangle}^{T'}[\Eu_{V,C',F}^{T',-1}]$ and we have
 \[
  \Res_{F}^C=\Res_{\langle C'\rangle}^C\circ\Res_F^{C'}\,.
 \]
\end{proposition}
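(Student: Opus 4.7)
The plan is to use the pullback description of the restriction
\[
 \Res_F^C = \frac{\Eu_{\Fg,C,F}^{T'}}{\Eu_{V,C,F}^{T'}}\, i^*_{\langle C\rangle,F}
\]
established in \S\ref{subsubsection:restviapullback}, combined with the functoriality of pullback $i^*_{\langle C\rangle,F}=i^*_{\langle C\rangle,\langle C'\rangle}\circ i^*_{\langle C'\rangle,F}$, plus a multiplicativity identity for the Euler classes. The argument breaks into three steps.

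\textbf{Step 1 (extension).} Since $i^*_{\langle C\rangle,\langle C'\rangle}$ is a ring homomorphism and the Euler class $\Eu_{V,C,\langle C'\rangle}^{T'}$ is not a zero-divisor (Proposition~\ref{proposition:multEulerinjective}), the operator $\Res_{\langle C'\rangle}^C$ is $\HO^*_{T\times T'}$-linear. In particular it commutes with multiplication by any character, hence by $\Eu_{V,C',F}^{T'}$. Thus $\Res_{\langle C'\rangle}^C$ extends canonically to $\CH_{G,V,f,\langle C'\rangle}^{T'}[\Eu_{V,C',F}^{T',-1}]$, provided the image of $\Eu_{V,C',F}^{T'}$ is invertible on the target; this will follow from Step~2 and the identification of $\tilde{\CH}_{G,V,f,C,F}^{T'}$.

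\textbf{Step 2 (multiplicativity of Euler classes).} I claim that as multisets of characters of $T\times T'$,
\[
 \CW^{C<0,F}(V) \;=\; \CW^{C<0,\langle C'\rangle}(V)\;\sqcup\;\CW^{C'<0,F}(V).
\]
For $\supseteq$, note $F\subseteq\langle C'\rangle$ (since $C'\preceq F$), so any $\alpha$ vanishing on $\langle C'\rangle$ automatically vanishes on $F$; conversely, if $\alpha|_{C'}<0$ and $\alpha|_F=0$, then $\alpha|_C<0$ too, because $C'\subseteq\overline{C}$, $\alpha$ has constant sign on the open cell $C$, and if $\alpha|_C=0$ one would have $\alpha|_{\langle C\rangle}=0\supseteq\langle C'\rangle\supseteq C'$, contradicting $\alpha|_{C'}<0$. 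For $\subseteq$, given $\alpha|_C<0$ and $\alpha|_F=0$, dichotomize on whether $\alpha|_{\langle C'\rangle}=0$: in the first case $\alpha\in\CW^{C<0,\langle C'\rangle}(V)$; in the second case $C'\subseteq\overline{C}$ and constancy of sign on cells force $\alpha|_{C'}<0$, so $\alpha\in\CW^{C'<0,F}(V)$. The two subsets are disjoint since the second requires $\alpha|_{\langle C'\rangle}\neq 0$. The same argument applies to $\Fg$, yielding
\[
 \Eu_{V,C,F}^{T'} = \Eu_{V,C,\langle C'\rangle}^{T'}\cdot\Eu_{V,C',F}^{T'},\qquad \Eu_{\Fg,C,F}^{T'} = \Eu_{\Fg,C,\langle C'\rangle}^{T'}\cdot\Eu_{\Fg,C',F}^{T'}.
\]
In particular the image of $\Eu_{V,C',F}^{T'}$ in $\tilde{\CH}_{G,V,f,C,F}^{T'}=\CH_{G,V,f,\langle C\rangle}^{T'}[\Eu_{V,C,F}^{T',-1}]$ is invertible, completing Step~1.

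\textbf{Step 3 (composition).} Combining the pullback formula, $\HO^*_{T\times T'}$-linearity, functoriality of pullback, and Step~2:
\[
\begin{aligned}
 \Res_{\langle C'\rangle}^C\circ\Res_F^{C'}
 &= \frac{\Eu_{\Fg,C,\langle C'\rangle}^{T'}}{\Eu_{V,C,\langle C'\rangle}^{T'}}\, i^*_{\langle C\rangle,\langle C'\rangle}\!\circ\!\frac{\Eu_{\Fg,C',F}^{T'}}{\Eu_{V,C',F}^{T'}}\, i^*_{\langle C'\rangle,F} \\
 &= \frac{\Eu_{\Fg,C,\langle C'\rangle}^{T'}\cdot\Eu_{\Fg,C',F}^{T'}}{\Eu_{V,C,\langle C'\rangle}^{T'}\cdot\Eu_{V,C',F}^{T'}}\; i^*_{\langle C\rangle,F}
 = \frac{\Eu_{\Fg,C,F}^{T'}}{\Eu_{V,C,F}^{T'}}\, i^*_{\langle C\rangle,F} = \Res_F^C.
\end{aligned}
\]

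The only genuinely non-formal input is the multiset identity in Step~2, which is where the combinatorics of cells, closures, and flats of the Coxeter complex really enter. This will be the main obstacle to write out cleanly; everything else is functoriality.
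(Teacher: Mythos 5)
Your proof is correct and follows essentially the same route as the paper's: the pullback description $\Res_F^C=\frac{\Eu_{\Fg,C,F}^{T'}}{\Eu_{V,C,F}^{T'}}\,i^*_{\langle C\rangle,F}$, functoriality $i_{\langle C'\rangle,F}\circ i_{\langle C\rangle,\langle C'\rangle}=i_{\langle C\rangle,F}$, and the multiplicativity of the Euler classes (your Step~2 spells out the multiset identity that the paper only asserts, and it also supplies the invertibility of $\Eu_{V,C',F}^{T'}$ in the target needed for the extension). One small correction to Step~1: the source is naturally a module only over $\HO^*_{G^{\langle C'\rangle}\times T'}\cong(\HO^*_{T\times T'})^{\sfW^{\langle C'\rangle}}$, so instead of claiming $\HO^*_{T\times T'}$-linearity and commutation with multiplication by arbitrary characters, you should invoke linearity over that invariant ring together with the $\sfW^{\langle C'\rangle}$-invariance of $\Eu_{V,C',F}^{T'}$ (which holds because $\sfW^{\langle C'\rangle}$ fixes $\langle C'\rangle$, hence $C'$ and $F$, pointwise and permutes the weight spaces of $V$), exactly as in the paper; the citation of Proposition~\ref{proposition:multEulerinjective} is not needed for the extension.
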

\begin{proof}
The restriction $\Res_{\langle C'\rangle}^C$ is $\HO^*_{G^{\langle C'\rangle}}$-linear and $\Eu_{V,C',F}$ is $W^{\langle C'\rangle}$-invariant. Therefore, $\Res_{\langle C'\rangle}^C$ extends to $\CH_{G,V,f,\langle C'\rangle}^{T'}[\Eu_{V,C',F}^{T',-1}]$. Moreover,
\[
 \Res_{F}^C=\frac{\Eu_{\Fg,C,F}^{T'}}{\Eu_{V,C,F}^{T'}}i_{\langle C\rangle,F}^*
\]
and
\[
 \Res_{\langle C'\rangle}^C\circ\Res_{F}^{C'}=\frac{\Eu_{\Fg,C,\langle C'\rangle}^{T'}}{\Eu_{V,C,\langle C'\rangle}^{T'}}i^*_{\langle C\rangle,\langle C'\rangle}\frac{\Eu_{\Fg,C',F}^{T'}}{\Eu_{V,C',F}^{T'}}i^*_{\langle C'\rangle,F}\,.
\]
The identification of $\Res_{F}^C$ with $\Res_{\langle C'\rangle}^C\circ\Res_{F}^{C'}$ now comes from the fact that $\frac{\Eu_{\Fg,C,\langle C'\rangle}^{T'}}{\Eu_{V,C,\langle C'\rangle}^{T'}}\frac{\Eu_{\Fg,C',F}^{T'}}{\Eu_{V,C',F}^{T'}}=\frac{\Eu_{\Fg,C,F}^{T'}}{\Eu_{V,C,F}^{T'}}$ and $i_{\langle C'\rangle,F}\circ i_{\langle C\rangle,\langle C'\rangle}=i_{\langle C\rangle,\langle C'\rangle}$.
\end{proof}

\subsection{Weyl group invariance}
\begin{lemma}
 For any $C \in\FC$ and $F\in\FF$ such that $C\preceq F$ and $w\in W$, $w\Res_{F}^C=\Res_{w\cdot F}^{w\cdot C}w$.
\end{lemma}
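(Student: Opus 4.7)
The plan is to use the explicit description obtained in \S\ref{subsubsection:restviapullback}, namely
\[
 \Res_{F}^C=\frac{\Eu_{\Fg,C,F}^{T'}}{\Eu_{V,C,F}^{T'}}\,i_{\langle C\rangle,F}^*\,,
\]
and verify the compatibility with the Weyl group action factor by factor.

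First, I would establish the equivariance of the closed immersion $i_{\langle C\rangle, F}$. Let $\dot{w}\in N_G(T)$ be a lift of $w\in\sfW$. For any cocharacter $\lambda\in\rmX_*(T)$, conjugation by $\dot{w}$ sends the $\lambda$-fixed locus to the $(\dot{w}\lambda\dot{w}^{-1})$-fixed locus, and $\dot{w}\lambda\dot{w}^{-1}$ represents $w\cdot\lambda$ in $\Fh_{\BoR}$. Picking $\lambda$ generic in $C$ and $\mu$ generic in $F$, it follows that $\dot{w}$ induces an isomorphism of varieties $V^{\langle C\rangle}\cong V^{\langle w\cdot C\rangle}$ and $V^F\cong V^{w\cdot F}$ (and similarly for the Levi subgroups), fitting into the commutative square
\[
\begin{tikzcd}
V^{\langle C\rangle}/(G^{\langle C\rangle}\times T') \arrow[r, "i_{\langle C\rangle,F}"] \arrow[d, "\dot{w}"'] & V^F/(G^F\times T') \arrow[d, "\dot{w}"] \\
V^{\langle w\cdot C\rangle}/(G^{\langle w\cdot C\rangle}\times T') \arrow[r, "i_{\langle w\cdot C\rangle, w\cdot F}"'] & V^{w\cdot F}/(G^{w\cdot F}\times T')\,.
\end{tikzcd}
\]
The function $f$ is $G$-invariant, so this is a diagram of stacks over $\BoC$ with compatible functions, and pulling back in vanishing cycle cohomology yields $w\circ i_{\langle C\rangle,F}^*=i_{\langle w\cdot C\rangle,w\cdot F}^*\circ w$.

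Second, I would check the behaviour of the Euler classes. With $\lambda\in C$ and $\mu\in F\setminus\bigcup_{F'\in\FF,\,F\not\subset F'}F'$, definition gives
\[
 \Eu_{W,C,F}^{T'}=\prod_{\substack{\alpha\in\rmX^*(T\times T')\\\langle\alpha,\lambda\rangle<0\\\langle\alpha,\mu\rangle=0}}\alpha^{\dim W_{\alpha}}\,,
\]
for $W=V$ or $W=\Fg$. The substitution $\beta\coloneqq w\cdot\alpha$ is a bijection on $\rmX^*(T\times T')$ (with $T'$ acted on trivially), and the invariance of the pairing $\langle w\cdot\alpha,w\cdot\lambda\rangle=\langle\alpha,\lambda\rangle$ identifies the indexing set with $\{\beta:\langle\beta,w\cdot\lambda\rangle<0,\ \langle\beta,w\cdot\mu\rangle=0\}$. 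Since $w\cdot\lambda\in w\cdot C$ and $w\cdot\mu$ lies in the open part of $w\cdot F$, and since $\dot{w}$ provides an isomorphism $W_\alpha\cong W_{w\cdot\alpha}$ of weight spaces, we obtain $w\cdot\Eu_{W,C,F}^{T'}=\Eu_{W,w\cdot C,w\cdot F}^{T'}$ in $\HO^*_{T\times T'}(\pt)$.

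Finally, I would combine the two: the class $\frac{\Eu_{\Fg,C,F}^{T'}}{\Eu_{V,C,F}^{T'}}$ lies in (a localization of) $\HO^*_{G^{\langle C\rangle}\times T'}(\pt)$, and $w$ is a ring automorphism of the target cohomology, so multiplication by a scalar $x$ satisfies $w\cdot(x\cdot y)=(w\cdot x)\cdot(w\cdot y)$. Therefore
\[
 w\cdot\Res_{F}^C=w\cdot\frac{\Eu_{\Fg,C,F}^{T'}}{\Eu_{V,C,F}^{T'}}\,i_{\langle C\rangle,F}^*=\frac{\Eu_{\Fg,w\cdot C,w\cdot F}^{T'}}{\Eu_{V,w\cdot C,w\cdot F}^{T'}}\,i_{\langle w\cdot C\rangle,w\cdot F}^*\,w=\Res_{w\cdot F}^{w\cdot C}\,w\,,
\]
which is the desired identity. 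There is no real obstacle here; the only point requiring some care is bookkeeping, namely verifying that the genericity condition on $\mu\in F$ is preserved under $w$ (which is immediate since $w$ permutes the hyperplane arrangement $\bfH$) and that one is allowed to transport Euler classes through $w$ before or after the localization, which is a matter of checking that the localization loci are also $w$-equivariantly matched.
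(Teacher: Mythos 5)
Your proof is correct and follows essentially the same route as the paper: the paper likewise uses the commutative square induced by a lift $\dot{w}\in N_G(T)$ relating $i_{\langle C\rangle,F}$ to $i_{\langle w\cdot C\rangle,w\cdot F}$, together with the identity $w\cdot\Eu_{W,C,F}^{T'}=\Eu_{W,w\cdot C,w\cdot F}^{T'}$ for $W=V,\Fg$; you merely spell out the Euler-class computation and the genericity/localization bookkeeping in more detail.
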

\begin{proof}
 We let $\dot{w}\in N_G(T)$ be a lift of $w$. Then, the action of $\dot{w}$ gives a commutative diagram
\[\begin{tikzcd}
	{V^{\langle C\rangle}/(G^{\langle C\rangle}\times T')} & {V^F/(G^{F}\times T')} \\
	{V^{\langle w\cdot C\rangle}/(G^{\langle w\cdot C\rangle}\times T')} & {V^{w\cdot F}/(G^{w\cdot F}\times T')}
	\arrow["{i_{\langle C\rangle,F}}", from=1-1, to=1-2]
	\arrow["{\dot{w}}"', from=1-1, to=2-1]
	\arrow["{\dot{w}}", from=1-2, to=2-2]
	\arrow["{i_{\langle w\cdot C\rangle,w\cdot F}}"', from=2-1, to=2-2]
\end{tikzcd}\]
from which the equality $w\Res_{F}^C=\Res_{w\cdot F}^{w\cdot C}w$ follows, using the fact that $w\cdot\Eu_{W,C,F}^{T'}=\Eu_{W,w\cdot C,w\cdot F}^{T'}$ for $W=V,\Fg$.
\end{proof}

\section{Cohomological Mackey's formula}
\label{section:cohomologicalMackeyformula}

\subsection{The localized system}
We let $G$ be a reductive group, $V$ a representation of $G$, $T'$ an auxiliary torus acting on $V$ such that the actions of $G$ and $T'$ commute, and $f\colon V\rightarrow\BoC$ a $G\times T'$-invariant regular function.

\subsection{The braiding operators}

\begin{definition}
\label{definition:braidingoperators}
For any $C,C'\in\FC$ such that $\langle C\rangle=\langle C'\rangle$ and $F\in \FF$ such that $C,C'\preceq F$, we define
\[
 \begin{matrix}
  \tau_{C,F}^{C',F}&\colon&\tilde{\CH}_{G,V,f,C,F}&\rightarrow&\tilde{\CH}_{G,V,f,C',F}\\
  &&f&\mapsto&\frac{k_{C,F}}{k_{C',F}}f\,.
 \end{matrix}
\]
It is an isomorphism. When $F$ is the maximal flat (i.e. the intersection of all hyperplanes of the hyperplane arrangement $\bfH$), then we simply write $\tau_{C}^{C'}$.
\end{definition}

\begin{lemma}
 For any $C,C'\in\FC$ such that $\langle C\rangle=\langle C'\rangle$, $F\in\FF$ such that $C,C'\preceq F$, and $w\in W$, one has $w\tau_{C,F}^{C',F}=\tau_{w\cdot C,w\cdot F}^{w\cdot C',w\cdot F}$.
\end{lemma}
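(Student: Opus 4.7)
The plan is to reduce the lemma to the identity $w\cdot k_{C,F}=k_{w\cdot C,\, w\cdot F}$ in $\Frac(\HO^*_{T}(\pt))\otimes\HO^*_{T'}(\pt)$, where the $w$ on the left denotes the natural Weyl group action on cohomology. As in Lemma~\ref{lemma:Weylgroupequivariance3d}, the stated equality should be understood in the form $w\,\tau_{C,F}^{C',F}=\tau_{w\cdot C,\, w\cdot F}^{w\cdot C',\, w\cdot F}\,w$ (composing with $w$ on the right on the domain side), so that both sides are operators $\tilde{\CH}_{G,V,f,C,F}^{T'}\to\tilde{\CH}_{G,V,f,w\cdot C',w\cdot F}^{T'}$.

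First, I would unpack the definition of $\tau$. Since $\tau_{C,F}^{C',F}$ is multiplication by $k_{C,F}/k_{C',F}\in\Frac(\HO^*_T(\pt))\otimes\HO^*_{T'}(\pt)$, and since the Weyl group action on cohomology is by ring automorphisms, one has
\[
w\bigl(\tau_{C,F}^{C',F}(x)\bigr)=w\!\left(\frac{k_{C,F}}{k_{C',F}}\right)\cdot (w\cdot x)
\]
for every $x\in\tilde{\CH}_{G,V,f,C,F}^{T'}$. Thus the lemma reduces to showing
\[
w\cdot k_{C,F}=k_{w\cdot C,\,w\cdot F}\,.
\]

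For this, I would pick $\lambda\in C$ and $\mu\in F\setminus\bigcup_{F\not\subset F'}F'$ as in the definition of $k_{C,F}$; then $w\lambda\in w\cdot C$ and $w\mu\in w\cdot F\setminus\bigcup_{w\cdot F\not\subset F'}F'$ may be used as the corresponding representatives for $k_{w\cdot C,w\cdot F}$. The Weyl group acts on $\rmX^*(T\times T')=\rmX^*(T)\oplus\rmX^*(T')$ through its action on the first factor, permuting weights so that $w\cdot\alpha=\beta$ iff $\alpha=w^{-1}\beta$, with $\dim V_{\alpha}=\dim V_{w\cdot\alpha}$ and $\dim\Fg_{\alpha}=\dim\Fg_{w\cdot\alpha}$ (since $w\in\sfW$ lifts to $\dot w\in N_G(T)$ whose conjugation identifies the weight spaces). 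Substituting $\beta=w\cdot\alpha$ in the defining product and using $\langle\alpha,\lambda\rangle=\langle w\cdot\alpha,w\lambda\rangle$ and $\langle\alpha,\mu\rangle=\langle w\cdot\alpha,w\mu\rangle$, the indexing sets match and the formula
\[
w\cdot k_{C,F}=\frac{\prod_{\langle\beta,w\lambda\rangle<0,\langle\beta,w\mu\rangle=0}\beta^{\dim V_\beta}}{\prod_{\langle\beta,w\lambda\rangle<0,\langle\beta,w\mu\rangle=0}\beta^{\dim\Fg_\beta}}=k_{w\cdot C,\,w\cdot F}
\]
follows. The main (and only) subtlety is this index-shuffling book-keeping, i.e.\ verifying carefully that the change of variable $\beta=w\alpha$ produces exactly the set of weights used in the definition of $k_{w\cdot C,w\cdot F}$; once this is in place, the lemma is immediate from the ring-homomorphism property of the $\sfW$-action on cohomology.
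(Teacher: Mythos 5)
Your argument is correct and is essentially the paper's own: the paper reduces the lemma to the identity $w\cdot k_{C,F}=k_{w\cdot C,\,w\cdot F}$, which you also do, and you merely spell out the weight-permutation bookkeeping (plus the natural reading of the statement as $w\,\tau_{C,F}^{C',F}=\tau_{w\cdot C,\,w\cdot F}^{w\cdot C',\,w\cdot F}\,w$, consistent with Lemma~\ref{lemma:Weylgroupequivariance3d}) that the paper leaves implicit.
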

\begin{proof}
This comes directly from the fact that for any $w\in W$, $w\cdot k_{C,F}=k_{w\cdot C,w\cdot F}$.
\end{proof}

\subsection{A lemma on double cosets}

\begin{lemma}
\label{lemma:doublequotientdescription}
 Let $C,C'\in\FC$ and $F\in\FF$ be such that $C,C'\preceq F$. For any $w\in \sfW^{\langle C'\rangle}\backslash \sfW^{F}/\sfW^{\langle C\rangle}$, we fix $\dot{w}\in \sfW^{F}/\sfW^{\langle C\rangle}$ a lift of $w$. For any $w'\in \sfW^{\langle C'\rangle}/\sfW^{\langle C'\circ \dot{w}\cdot C\rangle}$, we let $\dot{w}'\in \sfW^{\langle C'\rangle}$ be a lift. Then, the map
 \[
  \begin{matrix}
   \bigsqcup_{w\in \sfW^{\langle C'\rangle}\backslash \sfW^{F}/\sfW^{\langle C\rangle}}\sfW^{\langle C'\rangle}/\sfW^{\langle C'\circ \dot{w}\cdot C\rangle}&\rightarrow& \sfW^{F}/\sfW^{\langle C\rangle}\\
   (w,w')&\mapsto&\dot{w'}\dot{w}\sfW^{\langle C\rangle}
  \end{matrix}
 \]
 is a bijection. It does not depend on the chosen lift $\dot{w}'$.
\end{lemma}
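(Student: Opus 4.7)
The plan is to reduce the lemma to the classical orbit--stabilizer description of a $(\sfW^{\langle C'\rangle},\sfW^{\langle C\rangle})$-double coset in $\sfW^F$ and then to identify the point stabilizer that appears with $\sfW^{\langle C'\circ(\dot{w}\cdot C)\rangle}$ via Proposition~\ref{proposition:reformulationTits}.

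First I would apply the standard argument for the left action of $\sfW^{\langle C'\rangle}$ on $\sfW^F/\sfW^{\langle C\rangle}$ to obtain the disjoint decomposition
\[
\sfW^F/\sfW^{\langle C\rangle} \;=\; \bigsqcup_{w\in \sfW^{\langle C'\rangle}\backslash \sfW^F/\sfW^{\langle C\rangle}} \sfW^{\langle C'\rangle}\dot{w}\sfW^{\langle C\rangle}/\sfW^{\langle C\rangle},
\]
together with, for each double coset, the orbit--stabilizer bijection
\[
\sfW^{\langle C'\rangle}/\bigl(\sfW^{\langle C'\rangle}\cap \dot{w}\sfW^{\langle C\rangle}\dot{w}^{-1}\bigr) \;\isoto\; \sfW^{\langle C'\rangle}\dot{w}\sfW^{\langle C\rangle}/\sfW^{\langle C\rangle}
\]
induced by $w'\mapsto \dot{w}'\dot{w}\sfW^{\langle C\rangle}$. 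The lemma will then follow at once, provided one has the identification
\[
\sfW^{\langle C'\rangle}\cap \dot{w}\sfW^{\langle C\rangle}\dot{w}^{-1} \;=\; \sfW^{\langle C'\circ(\dot{w}\cdot C)\rangle}. \qquad (\ast)
\]
Independence from the choice of lift $\dot{w}'$ is then a routine check: for any $s\in\sfW^{\langle C'\circ(\dot{w}\cdot C)\rangle}\subset \sfW^{\langle\dot{w}\cdot C\rangle} = \dot{w}\sfW^{\langle C\rangle}\dot{w}^{-1}$, one has $(\dot{w}'s)\dot{w} = \dot{w}'\dot{w}(\dot{w}^{-1}s\dot{w})$ with $\dot{w}^{-1}s\dot{w}\in\sfW^{\langle C\rangle}$, so the coset $\dot{w}'\dot{w}\sfW^{\langle C\rangle}$ is unchanged.

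The real content — and essentially the only obstacle — is $(\ast)$. Conjugation by $\dot{w}$ acts on the reflections of $\sfW$ through the permutation $w$ of root hyperplanes, which yields $\dot{w}\sfW^{\langle C\rangle}\dot{w}^{-1} = \sfW^{\langle \dot{w}\cdot C\rangle}$. Each subgroup $\sfW^{\langle D\rangle}$ is the Weyl group of the Levi $G^{\langle D\rangle}$, hence is the parabolic reflection subgroup of $\sfW$ generated by the reflections $s_\alpha$ attached to the roots $\alpha\in\CW(\Fg)$ vanishing on $\langle D\rangle$; the intersection of two such reflection subgroups is itself parabolic and is generated by the reflections attached to the roots vanishing on both flats simultaneously. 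By Proposition~\ref{proposition:reformulationTits} applied to $\alpha\in\CW(\Fg)$, the conditions $\alpha_{C'}=0$ and $\alpha_{\dot{w}\cdot C}=0$ are jointly equivalent to $\alpha_{C'\circ(\dot{w}\cdot C)}=0$, from which
\[
\sfW^{\langle C'\rangle}\cap \sfW^{\langle \dot{w}\cdot C\rangle} \;=\; \sfW^{\langle C'\circ(\dot{w}\cdot C)\rangle},
\]
which is exactly $(\ast)$ and completes the proof.
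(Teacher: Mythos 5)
Your proof is correct, and it reorganizes the argument in a slightly different (and arguably cleaner) way than the paper. The paper verifies well-definedness, surjectivity and injectivity of the map by direct coset manipulations, and in doing so it simply asserts the key equality $\sfW^{\langle C'\circ \dot{w}\cdot C\rangle}=\sfW^{\langle C'\rangle}\cap\sfW^{\langle \dot{w}\cdot C\rangle}$ (and implicitly the inclusion $\sfW^{\langle C'\circ\dot w\cdot C\rangle}\subset\sfW^{\langle\dot w\cdot C\rangle}$ in the well-definedness step). You instead package the whole statement as the orbit--stabilizer decomposition of $\sfW^F/\sfW^{\langle C\rangle}$ under left multiplication by $\sfW^{\langle C'\rangle}$, which makes the bijectivity automatic once the stabilizer of the coset $\dot w\sfW^{\langle C\rangle}$ is identified; the genuine content is then concentrated in your identity $(\ast)$, which you justify by combining $\dot w\sfW^{\langle C\rangle}\dot w^{-1}=\sfW^{\langle\dot w\cdot C\rangle}$ with Proposition~\ref{proposition:reformulationTits} (a root vanishes on $C'\circ(\dot w\cdot C)$ iff it vanishes on both $C'$ and $\dot w\cdot C$), i.e.\ $\langle C'\circ(\dot w\cdot C)\rangle=\langle C'\rangle+\langle\dot w\cdot C\rangle$. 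This buys you an explicit justification of a fact the paper takes for granted, at the mild cost of invoking the standard facts that $\sfW^{\langle D\rangle}$ is the pointwise stabilizer of the flat $\langle D\rangle$ (the Weyl group of the torus centralizer $G^{\langle D\rangle}$, generated by the reflections it contains) and that intersections of such pointwise stabilizers are again of this form; with that understood, your argument is complete and equivalent to the paper's.
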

\begin{proof}
 Let $w\in \sfW^{\langle C'\rangle}\backslash \sfW^{F}/\sfW^{\langle C\rangle}$ and $w'\in \sfW^{\langle C'\rangle}/\sfW^{\langle C'\circ \dot{w}\cdot C\rangle}$. We let $\dot{w}\in \sfW^{F}/\sfW^{\langle C\rangle}$ and $\dot{w}'\in \sfW^{\langle C'\rangle}$ be respective lifts. Let $\ddot{w}'\in \sfW^{\langle C'\rangle}$ be another lift of $w'$. Then, $\ddot{w}'^{-1}\dot{w}'\in \sfW^{\langle \dot{w}\cdot C\rangle}$. Therefore, $\dot{w}^{-1}\ddot{w}'^{-1}\dot{w}'\dot{w}\in \sfW^{\langle C\rangle}$, which precisely mean that $\dot{w}'\dot{w}\sfW^{\langle C\rangle}=\ddot{w}'\dot{w}\sfW^{\langle C\rangle}$. Therefore, the map of the lemma does not depend on the chosen lift $\dot{w}'\in \sfW^{\langle C'\rangle}$ and so is well-defined.
 
 The map is surjective. Indeed, if $\tilde{w}\in \sfW^{F}/\sfW^{\langle C\rangle}$, there exists $\alpha\in \sfW^{\langle C'\rangle}$ such that $\alpha\tilde{w}=\dot{w}$ for some fixed representative $\dot{w}$ of some $w\in \sfW^{\langle C'\rangle}\backslash \sfW^{F}/\sfW^{\langle C\rangle}$. Then, $\tilde{w}$ is the image of $(w,\alpha^{-1}\sfW^{\langle C'\circ \dot{w}\cdot C\rangle})$.
 
 The map is injective. Indeed, if for some $w\in \sfW^{\langle C'\rangle}\backslash \sfW^{F}/\sfW^{\langle C\rangle}$ and $w'\in \sfW^{\langle C'\rangle}/\sfW^{\langle C'\circ \dot{w}\cdot C\rangle}$, $\tilde{w}\in \sfW^{\langle C'\rangle}\backslash \sfW^{F}/\sfW^{\langle C\rangle}$ and $\tilde{w}'\in \sfW^{\langle C'\rangle}/\sfW^{\langle C'\circ \dot{\tilde{w}}\cdot C\rangle}$, $\dot{w}'\dot{w}\sfW^{\langle C\rangle}=\dot{\tilde{w}}'\dot{\tilde{w}}\sfW^{\langle C\rangle}$, then by taking the quotient by $\sfW^{\langle C'\rangle}$ on the left, $\sfW^{\langle C'\rangle}\dot{w}\sfW^{\langle C \rangle}=\sfW^{\langle C'\rangle}\dot{\tilde{w}}\sfW^{\langle C\rangle}$ and so, $\dot{w}=\dot{\tilde{w}}$ since we have fixed representatives of the double quotient $\sfW^{\langle C'\rangle}\backslash \sfW^{F}/\sfW^{\langle C\rangle}$. Then, $\dot{w}^{-1}\dot{\tilde{w}}'^{-1}\dot{w}'\dot{w}\in \sfW^{\langle C\rangle}$. Therefore, $\dot{\tilde{w}}'^{-1}\dot{w}'\in \sfW^{\langle \dot{w}\cdot C\rangle}$. Since $\dot{\tilde{w}}'$ and $\dot{w}'$ belong to $\sfW^{\langle C'\rangle}$ by definition, $\dot{\tilde{w}}'^{-1}\dot{w}'\in \sfW^{\langle C'\rangle}$. Therefore, $\dot{\tilde{w}}'^{-1}\dot{w}'\in \sfW^{\langle C'\circ\dot{w}C\rangle}=\sfW^{\langle C'\rangle}\cap \sfW^{\langle \dot{w}\cdot C\rangle}$. This concludes.
\end{proof}

\subsection{Mackey formula}

\begin{lemma}
 For any $C,C'\in \FC$ and $F\in\FF$ such that $C,C'\preceq F$, one has $C\circ C'\preceq F$.
\end{lemma}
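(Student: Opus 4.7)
The plan is to derive this short combinatorial statement from the basic inequality $C\circ C'\preceq C$ established at the end of \S\ref{subsection:Titsproduct}, together with the definitions of the partial orders on cells, flats, and between cells and flats recalled in \S\ref{subsection:partialorder}. The argument is essentially a two-step inclusion chase.

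First, I would unwind $C\circ C'\preceq C$ (which holds unconditionally) to the inclusion $C\subset \overline{C\circ C'}$. Next, I would pass to linear spans: since $\Fh_{\BoR}$ is finite-dimensional, the linear span $\langle C\circ C'\rangle$ is automatically a closed linear subspace, so $\overline{C\circ C'}\subset \langle C\circ C'\rangle$ and therefore $C\subset \langle C\circ C'\rangle$. Taking the linear span on the left then yields
\[
 \langle C\rangle\subset \langle C\circ C'\rangle.
\]

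Finally, I would invoke the hypothesis $C\preceq F$, which by definition means $F\subset\langle C\rangle$. Chaining with the previous inclusion gives $F\subset \langle C\circ C'\rangle$, which is precisely the statement $C\circ C'\preceq F$.

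No step here is a genuine obstacle; the proof is a short, essentially formal manipulation of the definitions. I note in passing that this argument uses only the hypothesis $C\preceq F$; the symmetric condition $C'\preceq F$ is not logically needed for the conclusion $C\circ C'\preceq F$. It is presumably included in the statement because the lemma will be applied in the Mackey formula to both Tits products $C\circ C'$ and $C'\circ C$, which play symmetric roles and should both lie under $F$.
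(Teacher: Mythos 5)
Your proof is correct. It rests on the same one-line inclusion chase as the paper's argument, namely exhibiting $F\subset\langle C\circ C'\rangle$, but the justification of the middle step differs: the paper argues $F\subset\langle C\rangle,\langle C'\rangle$ and invokes the identification $\langle C\circ C'\rangle=\langle C\rangle+\langle C'\rangle$, whereas you derive $\langle C\rangle\subset\langle C\circ C'\rangle$ from the unconditional relation $C\circ C'\preceq C$ proved in \S\ref{subsection:Titsproduct}, via $C\subset\overline{C\circ C'}\subset\langle C\circ C'\rangle$ (the span being a closed linear subspace). Your route has the advantage of only using a fact already established in the paper and of needing nothing about $\langle C'\rangle$; the paper's stated equality is stronger than what is required (only the inclusion $\langle C\rangle+\langle C'\rangle\subset\langle C\circ C'\rangle$ is needed, and that is the direction which follows immediately from Proposition~\ref{proposition:reformulationTits}, since a weight vanishing on $C\circ C'$ must vanish on both $C$ and $C'$). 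Your closing remark is also accurate: the hypothesis $C'\preceq F$ is not logically needed for the conclusion, and is present because in the Mackey formula both $C\circ C'$ and $C'\circ C$ must lie under $F$.
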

\begin{proof}
 By assumption, we have $F\subset\langle C\rangle, \langle C'\rangle$ and so $F\subset \langle C\circ C'\rangle=\langle C\rangle+\langle C'\rangle$. This concludes.
\end{proof}

\begin{lemma}
For any $C,C'\in \FC$ and $F\in\FF$ such that $C,C'\preceq F$ and $\dot{w}\in \sfW^F$, the composition $\Ind_{C'\circ \dot{w}\cdot C}^{C'}\circ\tau_{\dot{w}\cdot C\circ C',F}^{C'\circ \dot{w}\cdot C,F}\circ \Res_{\langle\dot w \cdot C\rangle}^{\dot{w}\cdot C\circ C'}\circ \dot{w}$ only depends on the class of $\dot{w}$ in the double quotient $\sfW^{\langle C'\rangle}\backslash \sfW^{F}/\sfW^{\langle C\rangle}$.
\end{lemma}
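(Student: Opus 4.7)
Denote the composition by $\Phi(\dot w)$. The plan is to show $\Phi(\alpha \dot w \beta) = \Phi(\dot w)$ for all $\alpha \in \sfW^{\langle C'\rangle}$ and $\beta \in \sfW^{\langle C\rangle}$, treating right and left invariance separately. Right invariance is essentially free; left invariance requires a systematic use of the Weyl equivariance of every operator occurring in the composition.

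For right invariance, $\Phi(\dot w)$ begins with the action of $\dot w$ on $\CH_{G,V,f,\langle C\rangle}^{T'}$. By Lemma~\ref{lemma:Winvariantpart} this space is identified with the $\sfW^{\langle C\rangle}$-invariants in the corresponding torus equivariant critical cohomology, so any $\beta \in \sfW^{\langle C\rangle}$ acts as the identity on the source and $\Phi(\dot w \beta) = \Phi(\dot w)$ at once.

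For left invariance, fix $\alpha \in \sfW^{\langle C'\rangle}$. The starting observation is that $\alpha$ fixes $\langle C'\rangle$ pointwise (by definition of the Weyl group of the Levi $G^{\langle C'\rangle}$), hence $\alpha \cdot C' = C'$ and, since $C' \preceq F$ forces $F \subseteq \langle C'\rangle$, also $\alpha \cdot F = F$. Combined with the $\sfW$-equivariance of the Tits product (Proposition~\ref{proposition:equivarianceTits}) this yields
\[
(\alpha \dot w \cdot C) \circ C' = \alpha \cdot ((\dot w \cdot C) \circ C'), \qquad C' \circ (\alpha \dot w \cdot C) = \alpha \cdot (C' \circ (\dot w \cdot C)),
\]
together with $\langle \alpha \dot w \cdot C\rangle = \alpha \cdot \langle \dot w \cdot C\rangle$. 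Feeding these identities into the Weyl equivariance of each constituent operator (restriction, the braiding via $\alpha \cdot k_{C,F} = k_{\alpha C, \alpha F}$, and induction by Lemma~\ref{lemma:Weylgroupequivariance3d}), every factor in $\Phi(\alpha \dot w)$ becomes the $\alpha$-conjugate of the corresponding factor in $\Phi(\dot w)$; the inner $\alpha^{-1}$ and $\alpha$ telescope along the composition and I obtain $\Phi(\alpha \dot w) = \alpha \cdot \Phi(\dot w)$. Since the image of $\Phi$ lies in (a localization of) $\CH_{G,V,f,\langle C'\rangle}^{T'}$, which is $\sfW^{\langle C'\rangle}$-invariant again by Lemma~\ref{lemma:Winvariantpart}, the action of $\alpha$ on the image is trivial, yielding $\Phi(\alpha \dot w) = \Phi(\dot w)$.

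The main subtlety I expect is to confirm that the Weyl equivariance survives all the localizations (by Euler classes such as $\Eu_{V,C,F}^{T'}$) needed to define $\Res$ and $\tau$. This is not a real obstacle: the $\alpha$-translate of the set of Euler classes being inverted coincides with the original set, because $\alpha$ fixes the relevant cells and flats and satisfies the general identity $w \cdot \Eu_{W,C,F}^{T'} = \Eu_{W,wC,wF}^{T'}$; concretely, $\alpha$ permutes the factors in the numerator and denominator of $k_{C,F}$ once the reference cocharacters are translated by $\alpha$. Hence each intermediate localized module is stable under $\alpha$, and the telescoping argument above is meaningful in the localized setting.
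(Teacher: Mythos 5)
Your proof is correct and is essentially the paper's own argument: the paper likewise checks invariance under right multiplication by $\sfW^{\langle C\rangle}$ (because $f$ and the cell $C$ are fixed), pulls $w'\in\sfW^{\langle C'\rangle}$ through the composition by the Weyl equivariance of $\Ind$, $\tau$, $\Res$ together with $w'\cdot C'=C'$ (and $w'\cdot F=F$), and concludes because the image of $\Ind_{C'\circ \dot w\cdot C}^{\langle C'\rangle}$ consists of $\sfW^{\langle C'\rangle}$-invariant elements. One small point: in your right-invariance step, triviality of $\beta\in\sfW^{\langle C\rangle}$ on the source is not by itself enough — you also need $\beta\cdot C=C$, so that the cells labeling the operators in $\Phi(\dot w\beta)$ agree with those in $\Phi(\dot w)$; this follows from the same pointwise-fixing observation you already make for $C'$, and is exactly the "$C$ is $\sfW^{\langle C\rangle}$-invariant" step in the paper.
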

\begin{proof}
We let $w'\in \sfW^{\langle C'\rangle}$, $w''\in \sfW^{\langle C\rangle}$ and $f\in \CH_{G,V,f,\langle C\rangle}^{T'}$. Then,
\[
\begin{aligned}
&\Ind_{C'\circ w'\dot{w}w''\cdot C}^{C'}\circ\tau_{w'\dot{w}w''\cdot C\circ C',F}^{C'\circ w'\dot{w}w''\cdot C,F}\circ \Res_{\langle\dot w'ww'' \cdot C\rangle}^{w'\dot{w}w''\cdot C\circ C'}\circ w'\dot{w}w''f\\&=\Ind_{C'\circ w'\dot{w}\cdot C}^{C'}\circ\tau_{w'\dot{w}\cdot C\circ C',F}^{C'\circ w'\dot{w}\cdot C,F}\circ \Res_{\langle w'\dot w \cdot C\rangle}^{w'\dot{w}\cdot C\circ C'}\circ w'\dot{w}f\quad\text{ since $f$ and $C$ are $\sfW^{\langle C\rangle}$-invariant}\\
  &=w'(\Ind_{C'\circ \dot{w}\cdot C}^{C'}\circ\tau_{\dot{w}\cdot C\circ C',F}^{C'\circ \dot{w}\cdot C,F}\circ \Res_{\langle\dot w \cdot C\rangle}^{\dot{w}\cdot C\circ C'}\circ \dot{w}f)\quad\text{ since $C'$ is $\sfW^{\langle C'\rangle}$-invariant}\\
  &=\Ind_{C'\circ \dot{w}\cdot C}^{C'}\circ\tau_{\dot{w}\cdot C\circ C',F}^{C'\circ \dot{w}\cdot C,F}\circ \Res_{\langle\dot w \cdot C\rangle}^{\dot{w}\cdot C\circ C'}\circ \dot{w}f\quad\text{since the image of $\Ind_{C'\circ\dot{w}\cdot C}^{C'}$ is $\sfW^{\langle C'\rangle}$-invariant}\,.
 \end{aligned}
\]
This concludes.
\end{proof}

\begin{theorem}[Mackey formula]
\label{theorem:mackeyformula}
 For any $C,C'\in\FC$ and $F\in\FF$ such that $C,C'\preceq F$, one has
 \[
  \Res_F^{C'}\circ \Ind_C^F=\sum_{w\in \sfW^{\langle C'\rangle}\backslash \sfW^F/\sfW^{\langle C\rangle}} \Ind_{C'\circ \dot{w}\cdot C}^{C'}\circ\tau_{\dot{w}\cdot C\circ C',F}^{C'\circ \dot{w}\cdot C,F}\circ \Res_{\langle\dot w \cdot C\rangle}^{\dot{w}\cdot C\circ C'}\circ \dot{w}\,.
 \]
\end{theorem}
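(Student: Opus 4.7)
My plan is to reduce the identity to the torus-equivariant setting, where both $\Ind_C^F$ and $\Res_F^{C'}$ admit shuffle-type formulas, and then regroup the resulting sum via the double coset decomposition of Lemma~\ref{lemma:doublequotientdescription}. More precisely, by Proposition~\ref{proposition:toruscomparison3d} the induction $\Ind_C^F$ agrees (after localization) with $\Ind_C'^F = \bigl(\sum_{w\in \sfW^F/\sfW^{\langle C\rangle}} w\bigr)\circ \tilde{\Ind}_C^F$ restricted to $\sfW^{\langle C\rangle}$-invariants, and $\tilde{\Ind}_C^F = \frac{1}{\Eu_{\Fg,C,F}^{T'}}\overline{\Ind}_C^F$ where $\overline{\Ind}_C^F$ is a push-pull along the torus-equivariant induction diagram \eqref{equation:torusdiagram}. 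Similarly, the comparison of restrictions gives that $\Res_F^{C'}$ corresponds, in the torus equivariant setting, to multiplication by $\frac{\Eu_{\Fg,C',F}^{T'}}{\Eu_{V,C',F}^{T'}} = \frac{1}{k_{C',F}}$ composed with the pullback $\tilde{\imath}_{\langle C'\rangle,F}^*$ along the closed immersion $V^{\langle C'\rangle}/(T\times T')\hookrightarrow V^F/(T\times T')$, as shown in \S\ref{subsubsection:restrictionpbpf}. Throughout, one uses that $\sfW$-averaging and multiplication by $\sfW^{\langle C'\rangle}$-invariant classes (like $k_{C',F}$) commute, and that the source $\CH_{G,V,f,\langle C\rangle}^{T'}$ sits inside $\CH_{T,V,f,\langle C\rangle}^{T'}$ as $\sfW^{\langle C\rangle}$-invariants by Lemma~\ref{lemma:Winvariantpart}.

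\textbf{Composition.} Applying $\Res_F^{C'}$ to $\Ind_C^F(g)$ in the torus picture yields
\begin{equation*}
  \Res_F^{C'}\Ind_C^F(g) \;=\; \frac{1}{k_{C',F}} \tilde{\imath}_{\langle C'\rangle,F}^*\!\sum_{w\in \sfW^F/\sfW^{\langle C\rangle}} w \cdot\bigl(k_{C,F}\cdot \overline{\Ind}_C^F(g)\bigr),
\end{equation*}
where the extraction of the kernel $k_{C,F}$ from $\overline{\Ind}_C^F$ is the content of the pushforward formula for the closed immersion $p_{C,F}$ in torus equivariant critical cohomology (the Euler class of its normal bundle inside $V^F$ contributes $\Eu_{V,C,F}^{T'}$, and the quotient by $G^{C\geq 0,F}$ versus $G^F$ contributes the denominator $\Eu_{\Fg,C,F}^{T'}$ after combining with the $\frac{1}{\Eu_{\Fg,C,F}^{T'}}$ prefactor of $\tilde{\Ind}_C^F$). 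Next I apply Lemma~\ref{lemma:doublequotientdescription} to decompose the index set as
\[
  \sfW^F/\sfW^{\langle C\rangle} \;=\; \bigsqcup_{w\in \sfW^{\langle C'\rangle}\backslash \sfW^F/\sfW^{\langle C\rangle}} \sfW^{\langle C'\rangle}/\sfW^{\langle C'\circ \dot{w}\cdot C\rangle},
\]
so that the sum becomes
\[
  \sum_{w}\;\sum_{w'\in \sfW^{\langle C'\rangle}/\sfW^{\langle C'\circ \dot{w}\cdot C\rangle}} \dot{w}'\dot{w}\cdot(k_{C,F}\cdot \overline{\Ind}_C^F(g)).
\]
Since $k_{C',F}$ is $\sfW^{\langle C'\rangle}$-invariant and $\dot{w}'\in \sfW^{\langle C'\rangle}$, I may pull the factor $\frac{1}{k_{C',F}}$ inside the inner $w'$-sum.

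\textbf{Matching and obstacle.} To finish I must match each fixed-$w$ summand with
$\Ind_{C'\circ(\dot{w}\cdot C)}^{\langle C'\rangle}\circ\tau_{(\dot{w}\cdot C)\circ C',F}^{C'\circ(\dot{w}\cdot C),F}\circ\Res_{\langle\dot{w}\cdot C\rangle}^{(\dot{w}\cdot C)\circ C'}\circ(\dot{w}\cdot-)$
applied to $g$. Reading from right to left on the right-hand side: $\dot{w}\cdot g$ lives in $\CH_{T,V,f,\langle\dot{w}\cdot C\rangle}^{T'}$, the restriction $\Res_{\langle\dot{w}\cdot C\rangle}^{(\dot{w}\cdot C)\circ C'}$ contributes a factor $1/k_{(\dot{w}\cdot C)\circ C',\langle\dot{w}\cdot C\rangle}$, the braiding $\tau$ contributes $k_{(\dot{w}\cdot C)\circ C',F}/k_{C'\circ(\dot{w}\cdot C),F}$, and the induction $\Ind_{C'\circ(\dot{w}\cdot C)}^{\langle C'\rangle}$ contributes $k_{C'\circ(\dot{w}\cdot C),\langle C'\rangle}$ together with the inner averaging $\sum_{w'\in \sfW^{\langle C'\rangle}/\sfW^{\langle C'\circ \dot{w}\cdot C\rangle}} w'$. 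The main obstacle, and really the heart of the proof, is the multiplicative identity
\begin{equation*}
  \frac{k_{C,F}}{k_{C',F}} \;=\; \frac{k_{C'\circ(\dot{w}\cdot C),\langle C'\rangle}\cdot k_{(\dot{w}\cdot C)\circ C',F}}{k_{(\dot{w}\cdot C)\circ C',\langle\dot{w}\cdot C\rangle}\cdot k_{C'\circ(\dot{w}\cdot C),F}}
\end{equation*}
in $\Frac(\HO^*_{T\times T'})$, after transporting by $\dot{w}$. This identity is verified weight-by-weight: for each $\alpha\in\CW(V)\cup\CW(\Fg)$, one compares the contribution of $\alpha$ to each side using Proposition~\ref{proposition:reformulationTits} to read off the sign of $\alpha$ on the Tits products $C'\circ(\dot{w}\cdot C)$ and $(\dot{w}\cdot C)\circ C'$ from its signs on $C'$ and $\dot{w}\cdot C$. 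A systematic case analysis on the sign pattern of $(\alpha_{\dot{w}\cdot C},\alpha_{C'},\alpha_F)$ then shows the two sides carry the same weights with the same multiplicities. Once this kernel identity is established, base-change in the obvious diagram of torus-equivariant stacks identifies the vanishing-cycle pullback $\tilde{\imath}_{\langle C'\rangle,F}^*\circ\dot{w}$ with the composition of pullbacks along $V^{\langle \dot{w}\cdot C\circ C'\rangle}\hookrightarrow V^{\langle\dot{w}\cdot C\rangle}$ used in $\Res_{\langle\dot{w}\cdot C\rangle}^{(\dot{w}\cdot C)\circ C'}$, using that $\dot{w}\in \sfW^F$ acts trivially on $F$ and that $\langle C'\rangle\cap\langle\dot{w}\cdot C\rangle$ is the span of $(\dot{w}\cdot C)\circ C'$ and of $C'\circ(\dot{w}\cdot C)$; combined with the kernel identity this completes the identification, term by term, of the two sides.
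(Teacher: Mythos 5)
Your overall route is the one the paper takes: pass to torus-equivariant cohomology via Lemma~\ref{lemma:Winvariantpart} and Proposition~\ref{proposition:toruscomparison3d}, rewrite both sides as shuffle-type sums, reorganize the sum over $\sfW^F/\sfW^{\langle C\rangle}$ by the double-coset bijection of Lemma~\ref{lemma:doublequotientdescription}, and match the summands one by one. Moreover, the multiplicative identity you single out, $\frac{k_{\dot{w}\cdot C,F}}{k_{C',F}}=\frac{k_{C'\circ(\dot{w}\cdot C),\langle C'\rangle}\,k_{(\dot{w}\cdot C)\circ C',F}}{k_{(\dot{w}\cdot C)\circ C',\langle \dot{w}\cdot C\rangle}\,k_{C'\circ(\dot{w}\cdot C),F}}$, is correct (a weight-by-weight sign check using Proposition~\ref{proposition:reformulationTits} confirms it) and is precisely the numerical shadow of the Euler-class comparison underlying the paper's key reduction \eqref{equation:mackeyreduction}.

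The gap is in the bridge between the operator-level statement and this kernel identity when the potential $f$ is nonzero. Your ``Composition'' display replaces $\tilde{\Ind}_C^F=\frac{1}{\Eu_{\Fg,C,F}^{T'}}\overline{\Ind}_C^F$ by $k_{C,F}\cdot\overline{\Ind}_C^F$, justified by saying that the pushforward along the closed immersion $p_{C,F}$ ``contributes $\Eu_{V,C,F}^{T'}$''. In critical cohomology with $f\neq 0$ this is not legitimate: $(p_{C,F})_*$ is a map between different modules (supported on critical loci), and it is not multiplication by the Euler class of its normal bundle; that statement only holds after composing with $p_{C,F}^*$, or in the vanishing-potential case where all the spaces are identified with polynomial rings. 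The correct mechanism, which is the actual heart of the paper's proof, is to commute the restriction pullback to $V^{\langle C'\rangle}$ past the pushforward $(p_{\dot{w}\cdot C,F})_*$ hidden inside $\overline{\Ind}_{\dot{w}\cdot C}^F$ by base change in the diagram \eqref{equation:mackeydiagram}, whose two squares are Cartesian; the discrepancy between the normal bundles of $p_{\dot{w}\cdot C,F}$ and $p_{C'\circ\dot{w}\cdot C,\langle C'\rangle}$ (the excess bundle, with weights $\alpha_F=0$, $\alpha_{\dot{w}\cdot C}<0$, $\alpha_{C'}\neq 0$) is exactly what produces the Euler-class ratio encoded in your kernel identity, at the operator level \eqref{equation:mackeyreduction}. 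Your final paragraph invokes ``base-change'' only to identify pullbacks with pullbacks, which is not where the content lies, and the parenthetical claim that $\langle C'\rangle\cap\langle\dot{w}\cdot C\rangle$ is the span of $(\dot{w}\cdot C)\circ C'$ is incorrect: the span is $\langle\dot{w}\cdot C\rangle+\langle C'\rangle$, while it is the fixed subspaces that intersect, $V^{\langle(\dot{w}\cdot C)\circ C'\rangle}=V^{\langle\dot{w}\cdot C\rangle}\cap V^{\langle C'\rangle}$. So the combinatorial half of your argument is sound and matches the paper's bookkeeping, but the reduction to it—the commutation of restriction past the proper pushforward with the excess-intersection Euler class—must be stated and proved as in \eqref{equation:mackeyreduction}; as written, that step would fail for general $f$.
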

\begin{proof}
We use the identification $\CH_{G,V,f,F}^{T'}\cong (\CH_{T,V,f,F}^{T'})^{\sfW^F}$ for any $F\in\FF$ (Lemma~\ref{lemma:Winvariantpart}) and that the critical induction admits a shuffle description in terms of the torus equivariant induction by Proposition~\ref{proposition:toruscomparison3d}. Then, the left-hand-side of Mackey formula may be rewritten
\[
\frac{1}{k_{C',F}}\sum_{w\in \sfW^F/\sfW^{\langle C\rangle}}w\cdot\left(\frac{1}{\Eu_{\Fg,C,F}}\overline{\Ind}_C^F f\right)=\frac{1}{k_{C',F}}\sum_{w\in \sfW^F/\sfW^{\langle C\rangle}}\left(\frac{1}{\Eu_{\Fg,w\cdot C,F}}\overline{\Ind}_{w\cdot C}^F (w\cdot f)\right)\,.
\]
while the r.h.s. of Mackey formula takes the form
\[
\begin{aligned}
 &\sum_{w\in \sfW^{\langle C'\rangle}\backslash \sfW^F/\sfW^{\langle C\rangle}}\sum_{w'\in \sfW^{\langle C'\rangle}/\sfW^{C'\circ \dot{w}\cdot C}}w'\left(\overline{\Ind}_{C'\circ\dot{w}\cdot C}^{C'}\frac{1}{\Eu_{\Fg,C'\circ\dot{w}\cdot C,\langle C'\rangle}}\frac{k_{\dot{w}\cdot C\circ C',F}}{k_{C'\circ \dot{w}\cdot C,F}}\frac{1}{k_{\dot{w}\cdot C\circ C',\langle \dot{w}\cdot C\rangle}} \dot{w}\cdot f\right)\\
 &=\frac{1}{k_{C',F}}\sum_{w\in \sfW^{\langle C'\rangle}\backslash \sfW^F/\sfW^{\langle C\rangle}}\sum_{w'\in \sfW^{\langle C'\rangle}/\sfW^{C'\circ \dot{w}\cdot C}}\frac{1}{\Eu_{\Fg,w'\dot{w}C,F}}\frac{\Eu_{V,w'\dot{w}\cdot C,F}}{\Eu_{V,C'\circ w'\dot{w}\cdot C,\langle C'\rangle}}\overline{\Ind}_{C'\circ w'\dot{w}\cdot C}^{C'}(\dot{w}\cdot f)\,.
\end{aligned}
\]
Therefore, it suffices to show that
\begin{equation}
\label{equation:mackeyreduction}
 \Eu_{V,w\cdot C,F}\overline{\Ind}_{C'\circ w\cdot C}^{C'} i^*f=i'^*\Eu_{V,C'\circ w\cdot C,\langle C'\rangle}\overline{\Ind}_{w\cdot C}^Ff
\end{equation}
for any $w\in \sfW^F/\sfW^{\langle C\rangle}$ and $f\in \CH_{T,V,f,\langle w\cdot C\rangle}^{T'}$ where $i,i'$ are given in the following diagram:
\begin{equation}
\label{equation:mackeydiagram}
\begin{tikzcd}
	{V^{w\cdot C}} & {V^{w\cdot C\geq 0,F}} & {V^F} \\
	{V^{\langle w\cdot C\circ C'\rangle}=V^{\langle C'\circ w\cdot C\rangle}} & {V^{C'\circ w\cdot C\geq 0,\langle C'\rangle}} & {V^{\langle C'\rangle}}
	\arrow["{q_{w\cdot C,F}}"', from=1-2, to=1-1]
	\arrow["{p_{w\cdot C,F}}", from=1-2, to=1-3]
	\arrow["i", from=2-1, to=1-1]
	\arrow["{q_{C'\circ w\cdot C,\langle C'\rangle}}", from=2-2, to=2-1]
	\arrow["{p_{C'\circ w\cdot C,\langle C'\rangle}}"', from=2-2, to=2-3]
	\arrow["{i'}"', from=2-3, to=1-3]
	\arrow["i''", from=2-2, to=1-2]
\end{tikzcd}\end{equation}

Then, \eqref{equation:mackeyreduction} follows from this diagram and the calculation of the Euler classes of the normal bundles of the closed immersions $p_{w\cdot C,F}$ and $p_{C'\circ w\cdot C,\langle C'\rangle}$. We use the fact that the two squares of \eqref{equation:mackeydiagram} are Cartesian.

\end{proof}

\begin{remark}
 One can also prove Theorem~\ref{theorem:mackeyformula} directly without using the $T$-equivariant inductions and restrictions.
\end{remark}

\begin{corollary}
\label{corollary:Mackey3cells}
For any $C,C'\preceq C''\in\FC$, one has
\[
\Res_{\langle C''\rangle}^{C'}\circ \Ind_C^{\langle C''\rangle}=\sum_{w\in \sfW^{\langle C'\rangle}\backslash \sfW^{\langle C''\rangle}/\sfW^{\langle C\rangle}} \Ind_{C'\circ \dot{w}\cdot C}^{C'}\circ\tau_{\dot{w}\cdot C\circ C'}^{C'\circ \dot{w}\cdot C}\circ \Res_{\langle\dot w \cdot C\rangle}^{\dot{w}\cdot C\circ C'}\circ \dot{w}\,.
\]
\end{corollary}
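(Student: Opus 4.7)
The plan is to derive Corollary~\ref{corollary:Mackey3cells} as the specialization of Theorem~\ref{theorem:mackeyformula} to the flat $F = \langle C''\rangle$. First I would verify that the hypotheses of the theorem apply. The assumption $C, C' \preceq C''$ unfolds to $C'' \subset \overline{C}$ and $C'' \subset \overline{C'}$. Taking linear spans (an inclusion-reversing operation from the closure order on cells to the inclusion order on flats) gives $\langle C''\rangle \subset \langle C\rangle$ and $\langle C''\rangle \subset \langle C'\rangle$, which is precisely the mixed cell-flat relation $C, C' \preceq \langle C''\rangle$ of \S\ref{subsection:partialorder}. Moreover, the stabilizer of the flat $\langle C''\rangle$ under the Weyl group is by definition $\sfW^{\langle C''\rangle}$, so the double-coset indexing set of the theorem becomes $\sfW^{\langle C'\rangle} \backslash \sfW^{\langle C''\rangle}/\sfW^{\langle C\rangle}$, matching the sum in the corollary.

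With $F = \langle C''\rangle$, Theorem~\ref{theorem:mackeyformula} produces the right-hand side of the corollary verbatim, except that the braiding carries an extra subscript $\langle C''\rangle$, namely $\tau_{(\dot w \cdot C)\circ C',\langle C''\rangle}^{C'\circ(\dot w\cdot C),\langle C''\rangle}$ in place of the abbreviated $\tau_{(\dot w \cdot C)\circ C'}^{C' \circ (\dot w \cdot C)}$. Both operators are multiplication by the corresponding ratio $k_D/k_{D'}$ (with or without restricting the weight set to those vanishing on $\langle C''\rangle$), and are compatible under the natural localization map between $\tilde{\CH}^{T'}_{G,V,f,D,\langle C''\rangle}$ and $\tilde{\CH}^{T'}_{G,V,f,D}$ for $D = (\dot w\cdot C)\circ C'$. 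Because $\Res_{\langle \dot w \cdot C\rangle}^{(\dot w \cdot C) \circ C'}$ already inverts the Euler class $\Eu_{V,(\dot w\cdot C)\circ C',\langle \dot w\cdot C\rangle}^{T'}$, a direct comparison of the weight-product expressions defining $k_{D,\langle C''\rangle}$ and $k_D$ shows that the two versions of $\tau$ agree on the image of the restriction, so the two forms of the Mackey identity coincide.

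Finally, the independence of each summand from the chosen lift $\dot w \in \sfW^{\langle C''\rangle}$ of $w$, and the canonical extension of $\Ind_{C'\circ(\dot w\cdot C)}^{C'}$ to the localized image, are handled by the lemma proved just before Theorem~\ref{theorem:mackeyformula} (applied with $F = \langle C''\rangle$) and by Proposition~\ref{proposition:coassociativityrestriction}, respectively. The step requiring the most care is the identification of the two versions of the braiding operator described in the second paragraph: it is a bookkeeping of the weight-space dimensions of $V$ and $\Fg$ along the cells $(\dot w \cdot C)\circ C'$ and $C'\circ(\dot w\cdot C)$ tracked against the Euler classes already localized by the preceding restriction, and it is this matching of denominators that makes the passage from the $F$-indexed braiding to the unindexed one legitimate.
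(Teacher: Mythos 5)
Your proposal is correct and follows essentially the paper's own route: specialize Theorem~\ref{theorem:mackeyformula} to $F=\langle C''\rangle$ and then reconcile the $F$-indexed braiding with the unindexed one. The paper makes your ``matching of denominators'' precise via the factorizations $k_{\dot w\cdot C\circ C'}=k_{\dot w\cdot C\circ C',\langle C''\rangle}\,k_{C''}$ and $k_{C'\circ\dot w\cdot C}=k_{C'\circ\dot w\cdot C,\langle C''\rangle}\,k_{C''}$ (valid because both Tits products are $\preceq C''$), so the two kernel ratios, and hence the two braiding operators, coincide outright rather than merely on the image of the restriction.
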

\begin{proof}
Thanks to Theorem~\ref{theorem:mackeyformula}, the only thing to check is the equality
\[
 \frac{k_{\dot{w}\cdot C\circ C',\langle C''\rangle}}{k_{C'\circ\dot{w}\cdot C,\langle C''\rangle}}=\tau_{\dot{w}\cdot C\circ C',\langle C''\rangle}^{C'\circ \dot{w}\cdot C,\langle C''\rangle}=\tau_{\dot{w}\cdot C\circ C'}^{C'\circ \dot{w}\cdot C}=\frac{k_{\dot{w}\cdot C\circ C'}}{k_{C'\circ \dot{w}\cdot C}}
\]
for any $\dot{w}\in \sfW^{\langle C''\rangle}$. It comes from the fact that $C'\circ \dot{w}\cdot C,\dot{w}\cdot C\circ C'\preceq C''$ and so
\[
 k_{\dot{w}\cdot C\circ C'}=k_{\dot{w}\cdot C\circ C',\langle C''\rangle}k_{C''}\quad k_{C'\circ\dot{w}\cdot C}=k_{C'\circ\dot{w}\cdot C,\langle C''\rangle}k_{C''}\,.
\]
The equality to check is then a consequence of the definition of the braiding operators (Definition~\ref{definition:braidingoperators}).
\end{proof}

\section{Comparison of critical and $2d$ induction systems}
\label{section:comparisoninductionsystems}

\subsection{From critical to shuffle induction system}
We let $V$ be a representation of a reductive group $G$. We let $T'$ be an auxiliary torus acting on $V$ such that the actions of $G$ and $T'$ commute, and $f\colon V\rightarrow \BoC$ a $G\times T'$-invariant function on $V$. We denote by $f_F\colon V^F\rightarrow \BoC$ the $G^F\times T'$-invariant function induced. We assume that there is a $\BoC^*$-action on $V$ commuting with the $G\times T'$-action and such that $f$ is homogeneous with strictly positive weight. We let $V'\coloneqq V^{\BoC^*}$. Then, for any $F\in\FF$, $f_F^{-1}(0)$ contracts $G\times T'$ equivariantly onto $V'^F$. Therefore, the restriction morphism
\[
 \HO^*(V^F/(G^F\times T'),\BoQ)\rightarrow\HO^*(f_F^{-1}(0)/(G^F\times T'),\BoQ)
\]
is an isomorphism. We let $\imath\colon f_F^{-1}(0)/(G^F\times T')\rightarrow V^{F}/(G^F\times T')$ be the inclusion.

There is a canonical morphism $\varphi_f\BoQ_{V^F/(G^F\times T')}^{\vir}\rightarrow \imath_*\imath^*\BoQ^{\vir}_{V^F/(G^F\times T')}$ coming from the distinguished triangle relating vanishing and nearby cycles \eqref{equation:distinguishedtrianglevanishingnearby}. There is also an adjunction morphism $\BoQ_{V^F/(G^F\times T')}^{\vir}\rightarrow\imath_*\imath^*\BoQ^{\vir}_{V^F/(G^F\times T')}$. By applying the derived global sections functor, we obtain the morphisms
\[
 \CH^{T'}_{G,V,f,F}\xrightarrow{s}\HO^*(f^{-1}(0),\imath^*\BoQ_{V^F/(G^F\times T')}^{\vir}))\xleftarrow{r}\CH^{T'}_{G,V,F}\,.
\]
Since $r$ is an isomorphism, we may set $\xi_F\coloneqq r^{-1}s\colon\CH^{T'}_{G,V,f,F}\rightarrow\CH^{T'}_{G,V,F}$.

\begin{proposition}
The morphisms
 \[
  \xi_F\colon\CH_{G,V,f,F}^{T'}\rightarrow\CH_{G,V,F}^{T'}
 \]
for $F\in\FF$ give a morphism of induction-restriction systems.
\end{proposition}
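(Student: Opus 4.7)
The plan is to show that for every $C \in \FC$ and $F \in \FF$ with $C \preceq F$ the morphisms $\xi_F$ intertwine induction, i.e.\ $\xi_F \circ \Ind_C^F = \Ind_C^F \circ \xi_{\langle C\rangle}$, and that after localization they intertwine restriction, i.e.\ (an extension of) $\xi$ satisfies $\xi_{\langle C\rangle} \circ \Res_F^C = \Res_F^C \circ \xi_F$. Both identities will be deduced from the definition of $\xi_F$ as $r^{-1}s$ by reducing them to sheaf-level statements about the natural transformations $\varphi_{f_F} \Rightarrow \imath_{F,*}\imath_F^*$ and $\id \Rightarrow \imath_{F,*}\imath_F^*$ from which $s$ and $r$ are built.

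For induction, I would work directly on the induction diagram $V^{\langle C\rangle}/(G^{\langle C\rangle}\times T') \xleftarrow{q_{C,F}} V^{C\geq 0,F}/(G^{C\geq 0,F}\times T') \xrightarrow{p_{C,F}} V^F/(G^F\times T')$ of \S\ref{subsection:parabolic_induction}. The key observation is that the $\BoC^*$-action, being commuting with $G\times T'$, restricts to a compatible $\BoC^*$-action on $V^{\lambda}$ and $V^{\lambda\geq 0}$ for every cocharacter $\lambda$, so that $f_F\circ p_{C,F}=f_{\langle C\rangle}\circ q_{C,F}$ is homogeneous of strictly positive weight on $V^{C\geq 0,F}$. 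Consequently the same ``contraction to the zero locus'' argument applies uniformly on all three stacks in the diagram, and the adjunction $\id\Rightarrow \imath_*\imath^*$ together with $\varphi\Rightarrow \imath_*\imath^*$ are natural with respect to smooth pullbacks and proper pushforwards. I would then observe that the induction morphism is assembled from $(q_{C,F})^*$ and $(p_{C,F})_*$ applied to the constant sheaf $\BoQ^{\vir}$ (with or without $\varphi_{f_F}$), so naturality of the transformations producing $s$ and $r$ gives a commutative cube whose two opposite faces are the induction in critical and in ordinary cohomology, and whose side faces are $\xi$. Inverting the isomorphisms $r$ on both sides yields the induction compatibility.

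For the restriction, I would use the ``pullback only'' description $\Res_F^C = \tfrac{\Eu_{\Fg,C,F}^{T'}}{\Eu_{V,C,F}^{T'}}\, i_{\langle C\rangle, F}^*$ established in \S\ref{subsubsection:restviapullback}. Both $\xi$ and the Euler classes live in cohomology with $\HO^*_{G\times T'}(\pt)$-module structure; since the morphism $V^{\langle C\rangle}\to V^F$ is compatible with the $\BoC^*$-action (fixed loci of commuting actions), we have a commuting square of closed immersions between zero loci of $f_{\langle C\rangle}$ and $f_F$ and the ambient stacks, and the natural transformations defining $\xi$ commute with $i_{\langle C\rangle,F}^*$ by functoriality of $\varphi_f$ under closed embeddings. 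Since the Euler class factor is scalar from the $T$-equivariant point of view and passes through $\xi$ (which is $\HO^*_{G^F\times T'}$-linear by construction), the required commutativity follows once the localizations on both sides are matched; I would first extend $\xi_{\langle C\rangle}$ to $\tilde{\CH}^{T'}_{G,V,f,C,F}$ by inverting $\Eu_{V,C,F}^{T'}$ and check that $\Res_F^C$ on the critical side lands in the same localized module.

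The main obstacle I expect is bookkeeping rather than conceptual: ensuring that the distinguished triangle $\varphi_f\to \imath_*\imath^*\to \psi_f$ and its induced morphism $s$ really commute (not merely up to sign or Tate twist) with smooth pullback, proper pushforward, and the closed immersion $i_{\langle C\rangle,F}$ at the sheaf level. Concretely, this amounts to checking that the comparison isomorphisms between $\varphi_f g_*$ and $g_*\varphi_{f\circ g}$ (for $g$ smooth or proper) are compatible with the forgetful map to $\imath_*\imath^*$; this is a routine but delicate 2-categorical verification of the kind carried out in Proposition~\ref{proposition:dimredpropermorphism} and Proposition~\ref{proposition:dimredpullback}. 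Once this is isolated, the rest of the argument is formal.
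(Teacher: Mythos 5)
Your proposal is correct and follows essentially the same route as the paper: for induction you build sheaf-level commutative diagrams from the natural transformations $\varphi_f\Rightarrow\imath_*\imath^*$ and $\id\Rightarrow\imath_*\imath^*$ applied to $q_{C,F}^*$ and $(p_{C,F})_*$ and take derived global sections, and for restriction you use the pullback description $\Res_F^C=\tfrac{\Eu_{\Fg,C,F}^{T'}}{\Eu_{V,C,F}^{T'}}\,i_{\langle C\rangle,F}^*$ together with the $\HO^*_{G^F\times T'}(\pt)$-linearity of $\xi$ to absorb the common Euler-class factor, exactly as in the paper. The only cosmetic difference is your extra remark about the $\BoC^*$-contraction on the intermediate stack $V^{C\geq 0,F}$, which is harmless but not needed since $r$ only has to be inverted on the two outer terms.
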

\begin{proof}
We have to show that the $\xi_F$'s commute with both the induction and restriction morphisms.

We prove the commutation with the induction morphisms. We let $C\in\FC$ and $F\in\FF$ such that $C\preceq F$. We consider the induction diagram \eqref{equation:inductiondiagramcritical}
\[
 V^{\langle C\rangle}/(G^{\langle C\rangle}\times T')\xleftarrow{q}V^{C\geq 0,F}/(G^{C\geq 0,F}\times T')\xrightarrow{p} V^F/(G^F\times T')
\]
where $q=q_{C,F}$ and $p=p_{C,F}$.

We consider the diagrams:
\[
\begin{tikzcd}
	{\BoQ_{V^{\langle C\rangle}/(G^{\langle C\rangle}\times T')}} & {q_*\BoQ_{V^{C\geq 0,F}/(G^{C\geq 0,F}\times T')}} \\
	{\BoQ_{f_{\langle C\rangle}^{-1}(0)/(G^{\langle C\rangle}\times T')}} & {q_*\BoQ_{(f_{\langle C\rangle}\circ q)^{-1}(0)/(G^{C\geq 0,F}\times T')}} \\
	{\varphi_{f_{\langle C\rangle}}\BoQ_{V^{\langle C\rangle}/(G^{\langle C\rangle}\times T')}} & {\varphi_{f_{\langle C\rangle}}q_*\BoQ_{V^{C\geq 0,F}/(G^{C\geq 0,F}\times T')}}
	\arrow[from=1-1, to=1-2]
	\arrow[from=1-1, to=2-1]
	\arrow[from=1-2, to=2-2]
	\arrow[from=2-1, to=2-2]
	\arrow[from=3-1, to=2-1]
	\arrow[from=3-1, to=3-2]
	\arrow[from=3-2, to=2-2]
\end{tikzcd}
\]
where vertical morphisms come from the triangle \eqref{equation:distinguishedtrianglevanishingnearby} and the unit of the adjunction $(\imath^*,\imath_*)$, and
\begin{equation}
\label{equation:compatibilitypb}
\begin{tikzcd}
	{\HO^*(V^{\langle C\rangle}/(G^{\langle C\rangle}\times T'),\BoQ)} & {\HO^*(V^{C\geq 0,F}/(G^{C\geq 0,F}\times T'),\BoQ)} \\
	{\HO^*(f_{\langle C\rangle}^{-1}(0)/(G^{\langle C\rangle}\times T'),\BoQ)} & {\HO^*((f_{\langle C\rangle}\circ q)^{-1}(0)/(G^{C\geq 0,F}\times T'),\BoQ)} \\
	{\HO^*(V^{\langle C\rangle}/(G^{\langle C\rangle}\times T'),\varphi_{f_{\langle C\rangle}}\BoQ)} & {\HO^*(V^{C\geq 0,F}/(G^{C\geq 0,F}\times T'),\varphi_{f_{\langle C\rangle}\circ q}\BoQ)}
	\arrow[from=1-1, to=1-2]
	\arrow[from=1-1, to=2-1]
	\arrow[from=1-2, to=2-2]
	\arrow[from=2-1, to=2-2]
	\arrow[from=3-1, to=2-1]
	\arrow[from=3-1, to=3-2]
	\arrow[from=3-2, to=2-2]
	\arrow[from=3-1, to= 1-1, bend left=80,"r^{-1}s"]
\end{tikzcd}
\end{equation}
where the second one is obtained from the first one by taking derived global sections. The squares of the first diagram commute since they are given by natural transformations, and so the second diagram commutes.

The analogous diagrams for the pushforward by $p$ instead of pullback by $q$ give the following commutative diagrams, where for convenience, we set $\dim p\coloneqq \dim(V^{C\geq 0,F}/(G^{C\geq 0,F}\times T))-\dim(V^{F}/(G^{F}\times T))$.
\[
\begin{tikzcd}
	{p_*\BoQ_{V^{C\geq 0,F}/(G^{\geq 0,F}\times T')}} & {\BoQ_{V^F/(G^F\times T')}\otimes\SL^{\dim p}} \\
	{p_*\BoQ_{(f\circ p)^{-1}(0)/(G^{C\geq 0,F}\times T')}} & {\BoQ_{f^{-1}(0)/(G^F\times T')}\otimes\SL^{\dim p}} \\
	{^{\Fp}\varphi_{f_{F}}p_*\BoQ_{V^{C\geq 0,F}/(G^{C\geq 0,F}\times T')}} & {^{\Fp}\varphi_{f_F}\BoQ_{V^F/(G^F\times T')}\otimes\SL^{\dim p}}
	\arrow[from=1-1, to=1-2]
	\arrow[from=1-1, to=2-1]
	\arrow[from=1-2, to=2-2]
	\arrow[from=2-1, to=2-2]
	\arrow[from=3-1, to=2-1]
	\arrow[from=3-1, to=3-2]
	\arrow[from=3-2, to=2-2]
\end{tikzcd}
\]
and
\begin{equation}
\label{equation:compatibilitypf}
\begin{tikzcd}
	{\HO^*(V^{C\geq 0,F}/(G^{\geq 0,F}\times T'),\BoQ)} & {\HO^*(V^F/(G^F\times T'),\BoQ)\otimes\SL^{\dim p}} \\
	{\HO^*((f\circ p)^{-1}(0)/(G^{C\geq 0,F}\times T'),\BoQ)} & {\HO^*(f^{-1}(0)/(G^F\times T'),\BoQ)\otimes\SL^{\dim p}} \\
	{\HO^*(V^{C\geq 0,F}/(G^{C\geq 0,F}\times T'),\varphi_{f_{F}\circ p}\BoQ)} & {\HO^*(V^F/(G^F\times T'),\varphi_{f_F}\BoQ)\otimes\SL^{\dim p}}
	\arrow[from=1-1, to=1-2]
	\arrow[from=1-1, to=2-1]
	\arrow["r", from=1-2, to=2-2]
	\arrow[from=2-1, to=2-2]
	\arrow[from=3-1, to=2-1]
	\arrow[from=3-1, to=3-2]
	\arrow["{r^{-1}s}"', from=3-2, to=1-2,bend right=90]
	\arrow["s"', from=3-2, to=2-2]
\end{tikzcd}
\end{equation}
By combining the outer squares of \eqref{equation:compatibilitypb} and \eqref{equation:compatibilitypf}, we obtain the commutative square
\[
\begin{tikzcd}
	{\CH^{T'}_{G,V,\langle C\rangle}} & {\CH^{T'}_{G,V,F}} \\
	{\CH^{T'}_{G,V,f,\langle C\rangle}} & {\CH^{T'}_{G,V,f,F}}
	\arrow["{\Ind_{C}^F}", from=1-1, to=1-2]
	\arrow["{\xi_{\langle C\rangle}}", from=2-1, to=1-1]
	\arrow["{\Ind_{C}^F}"', from=2-1, to=2-2]
	\arrow["{\xi_F}"', from=2-2, to=1-2]
\end{tikzcd}
\]
which is precisely the compatibility of the $\xi_F$'s with the induction morphisms.

The proof that $\xi_F$ commutes with the restriction morphisms is strictly analogous, given that the restriction morphism for the critical cohomology $\CH^{T'}_{G,V,f,F}$ is obtained by applying the vanishing cycle functor to the restriction morphism for the classical cohomology $\CH_{G,V,F}$ at the sheaf level (up to the multiplication by the same Euler classes) and then taking global sections.

Namely, we have a commutative diagram
\[
\begin{tikzcd}
	{\varphi_f\BoQ_{V^F/(G^F\times T')}} & {\BoQ_{f^{-1}_F(0)/(G^F\times T')}} & {\BoQ_{V^F/(G^F\times T')}} \\
	{\varphi_f(\imath_{\langle C\rangle, F})_*\BoQ_{V^{\langle C\rangle}/(G^{\langle C\rangle}\times T')}} & {(\imath_{\langle C\rangle, F})_*\BoQ_{f^{-1}_{\langle C\rangle}(0)/(G^{\langle C\rangle}\times T')}} & {(\imath_{\langle C\rangle, F})_*\BoQ_{V^{\langle C\rangle}/(G^{\langle C\rangle}\times T')}} \\
	\\
	\\
	&& {}
	\arrow[from=1-1, to=1-2]
	\arrow[from=1-1, to=2-1]
	\arrow[from=1-2, to=2-2]
	\arrow[from=1-3, to=1-2]
	\arrow[from=1-3, to=2-3]
	\arrow[from=2-1, to=2-2]
	\arrow[from=2-3, to=2-2]
\end{tikzcd}
\]
obtained from the functorial morphisms $\varphi_f\rightarrow\imath_*\imath^*$ and $\id\rightarrow\imath_*\imath^*$. By taking derived global sections, we obtain a commutative diagram
\[
\begin{tikzcd}
	{\CH_{G,V,f,F}^{T'}} & {\CH_{G,V,F}^{T'}} \\
	{\CH_{G,V,f,\langle C\rangle}^{T'}} & {\CH_{G,V,f,\langle C\rangle}^{T'}}
	\arrow["{r^{-1}s}", from=1-1, to=1-2]
	\arrow["{\imath_{\langle C\rangle,F}^*}"', from=1-1, to=2-1]
	\arrow["{\imath_{\langle C\rangle,F}^*}", from=1-2, to=2-2]
	\arrow["{r^{-1}s}"', from=2-1, to=2-2]
\end{tikzcd}
\]
in which the vertical morphisms multiplied by $\frac{\Eu_{\Fg,C,F}^{T'}}{\Eu_{V,C,F}^{T'}}$ are the restriction morphisms $\Res_{F}^C$ and horizontal morphisms are $\xi_F, \xi_{\langle C\rangle}$ respectively. This concludes.
\end{proof}

\subsection{From $2d$ to shuffle induction system}
\label{subsection:from2dtoshuffle}
Let $V$ be a representation of a reductive group $G$. We let $f\colon \Tan^*V\times\Fg\rightarrow\BoC$ be the function obtained by contracting the moment map (\S\ref{subsubsection:the2dpotential}). We let $T'$ be an auxiliary torus acting on $V$ such that the actions of $T'$ and $G$ commute and $f$ is $G\times T'$-invariant. If we let $\BoC^*$ act with weight $1$ on the whole of $\Tan^*V\times\Fg$, then the potential has weight $3$.

We let $p\colon (\Tan^*V\times\Fg)^F/(G^F\times T')\rightarrow\Tan^*V^F/(G^F\times T')$. This is an affine fibration. We let $\FZ_F\coloneqq p^{-1}(\mu_F^{-1}(0)/(G^{F}\times T')$. This is an affine fibration, which therefore induces an isomorphism
\begin{equation}
 \label{equation:pbaffinefibration}
\HO^{\rmBM}_*(\mu_F^{-1}(0)/(G^F\times T'),\BoQ)\rightarrow \HO^{\rmBM}_*(\FZ_F,\BoQ).
\end{equation}
The pushforward for the closed embedding
\[
 \FZ_F\rightarrow(\Tan^*V\times\Fg)^F/(G^F\times T')
\]
gives a morphism
\begin{equation}
 \label{equation:pfclosedembedding}
 \HO^{\rmBM}_*(\FZ_F,\BoQ)\rightarrow\HO^{*}((\Tan^*V\times\Fg)^F/(G^F\times T'),\BoQ)\,.
\end{equation}

By composing \eqref{equation:pbaffinefibration} and \eqref{equation:pfclosedembedding}, we obtain a morphism
\[
 \overline{\xi}_F\colon \CH_{G,\Tan^*V,\Fg^*,\mu,F}^{T'}\rightarrow \CH^{T'}_{G,(\Tan^*V\times\Fg)^F,F}\,.
\]

\begin{proposition}
\label{proposition:commutativeinductiondimred2d}
We have a commutative diagram
 \[
\begin{tikzcd}
	{\CH_{G,\Tan^*V,\Fg^*,F}^{T'}} & {\CH^{T'}_{G,\Tan^*V\times\Fg,f,F}} \\
	& {\CH^{T'}_{G,\Tan^*V\times\Fg,F}}
	\arrow["{\mathsf{dr}_F}", from=1-1, to=1-2]
	\arrow["{\overline{\xi}_F}"', from=1-1, to=2-2]
	\arrow["{\xi_F}", from=1-2, to=2-2]
\end{tikzcd}
 \]
 of morphisms which commutes with the induction morphisms $\Ind_{C}^F$ up to the sign $(-1)^{\dim W^{C<0,F}}$, that is $\mathsf{dr}_F\Ind_{C}^F=(-1)^{\dim W^{C<0,F}}\Ind_C^F\mathsf{dr}_{\langle C\rangle}$, $\xi_F\Ind_{C}^F=\Ind_C^F\xi_{\langle C\rangle}$, $\overline{\xi}_F\Ind_{C}^F=(-1)^{\dim W^{C<0,F}}\Ind_{C}^F\overline{\xi}_{\langle C\rangle}$.
\end{proposition}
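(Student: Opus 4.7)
The strategy is to split the statement into the commutative triangle $\overline{\xi}_F=\xi_F\circ\mathsf{dr}_F$ on the one hand, and the three compatibilities with $\Ind_C^F$ on the other. Two of the compatibilities are already established earlier in the paper: commutation of $\mathsf{dr}_F$ with $\Ind_C^F$ up to the sign $(-1)^{\dim W^{*,C>0,F}}$ is Proposition~\ref{proposition:comparison2d3dmultiplications} applied to $W=\Fg$, and commutation of $\xi_F$ with $\Ind_C^F$ (with no sign) is the previous proposition of this section. The equality of signs $(-1)^{\dim W^{*,C>0,F}}=(-1)^{\dim W^{C<0,F}}$ follows from $\dim W^{*}_{-\alpha}=\dim W_{\alpha}$ for any weight $\alpha$. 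Once the triangle commutes, compatibility of $\overline{\xi}_F$ with $\Ind_C^F$, with sign $(-1)^{\dim W^{C<0,F}}$, follows formally by composing the two other compatibilities.

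For the triangle commutativity, I plan to work at the sheaf level on $\overline{Y}\coloneqq(\Tan^*V\times\Fg)^F$. Let $\imath\colon\overline{Y}_0\into\overline{Y}$ denote the inclusion of $f_F^{-1}(0)$, let $\imath'\colon\overline{Z}\into\overline{Y}_0$ be the inclusion of $\overline{Z}\coloneqq\pi^{-1}(\mu_F^{-1}(0))=\mu_F^{-1}(0)\times\Fg^F$, and set $\overline{\imath}=\imath\circ\imath'$. The natural transformation $\varphi_{f_F}\to\imath_*\imath^*$ coming from the vanishing cycles triangle \eqref{equation:distinguishedtrianglevanishingnearby} and the adjunction transformation $\id\to\imath'_*\imath'^*$ assemble into the diagram
\[
\varphi_{f_F}\BoQ_{\overline{Y}}\longrightarrow\imath_*\imath^*\BoQ_{\overline{Y}}\longrightarrow\overline{\imath}_*\overline{\imath}^*\BoQ_{\overline{Y}}\longleftarrow\BoQ_{\overline{Y}}
\]
where the rightmost arrow is also adjunction. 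After applying $\pi_*$ and using the affine-fibration identification $\pi_*\overline{\imath}_*\BoQ_{\overline{Z}}\cong\imath'_*\BoQ_{\mu_F^{-1}(0)}$ (with the appropriate Tate twist for the fiber dimension), the composite $\pi_*\varphi_{f_F}\BoQ\to\pi_*\overline{\imath}_*\overline{\imath}^*\BoQ$ is exactly Davison's dimensional reduction isomorphism (Theorem~\ref{theorem:cohdimrediso}), and its Verdier dual, on derived global sections, is $\mathsf{dr}_F$.

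Now I unwind: $\xi_F=r^{-1}\circ s$, where $s$ is induced by $\varphi_{f_F}\to\imath_*\imath^*$ and $r\colon\HO^*(\overline{Y},\BoQ)\isoto\HO^*(\overline{Y}_0,\BoQ)$ is the isomorphism from the $\BoC^*$-contraction of $\overline{Y}$ onto $\overline{Y}^{\BoC^*}\subset\overline{Y}_0$; whereas $\overline{\xi}_F$ is the composition of the affine-fibration pullback $p^*\colon\HO^{\rmBM}_*(\mu_F^{-1}(0))\isoto\HO^{\rmBM}_*(\overline{Z})$ with the BM pushforward $\overline{\imath}_*\colon\HO^{\rmBM}_*(\overline{Z})\to\HO^{\rmBM}_*(\overline{Y})\cong\HO^*(\overline{Y},\BoQ)$ for the closed embedding, the last identification coming from smoothness of $\overline{Y}$. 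Inserting the factorization $\varphi_{f_F}\to\imath_*\imath^*\to\overline{\imath}_*\overline{\imath}^*$ into the composition $\xi_F\circ\mathsf{dr}_F$, the occurrence of $\varphi_{f_F}$ cancels with its inverse built into $\mathsf{dr}_F$, leaving precisely the composition of the affine-fibration pullback with $\overline{\imath}_*$, which is $\overline{\xi}_F$. The main obstacle will be the careful bookkeeping of Tate twists and Verdier dualities along the way, since the dimensional reduction isomorphism is naturally phrased with the ``virtual'' conventions of Corollary~\ref{corollary:cohdimrediso} and must be reconciled both with the ordinary-cohomology normalization used for $\xi_F$ and with the smoothness identification $\HO^{\rmBM}_*(\overline{Y})\cong\HO^*(\overline{Y},\BoQ)$ underlying $\overline{\xi}_F$.
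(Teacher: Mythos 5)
Your proposal is correct and follows essentially the same route as the paper: it reduces the three compatibility statements to the already-proven Propositions (dimensional reduction vs.\ induction up to sign, and $\xi_F$ vs.\ induction) plus the commutativity of the triangle, and it proves the triangle at the sheaf level by factoring through the natural transformations $\varphi_{f_F}\to\imath_*\imath^*\to\overline{\imath}_*\overline{\imath}^*\leftarrow\id$ and identifying the composite with the dimensional reduction morphism, exactly as in the paper's proof (which applies these transformations to the pushforward $\overline{\imath}_*\BD\BoQ_{\FZ}\to\BD\BoQ_{\FM}$ and superposes the resulting squares). The remaining Tate-twist and duality bookkeeping you flag is likewise left implicit in the paper.
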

\begin{proof}
 Since we have already proven (Proposition~\ref{proposition:comparison2d3dmultiplications}) that the dimensional reduction isomorphisms $\mathsf{dr}_F$ commute with the induction morphisms $\Ind_{C}^F$ up to the sign $(-1)^{\dim W^{C<0,F}}$ and $\xi_F$ ($F\in\FF$) commute with the induction morphisms $\Ind_{C}^F$, the compatibility of $\overline{\xi}_F$ up to the  with induction morphisms up to the sign $(-1)^{\dim W^{C<0,F}}$ will follow from the commutativity of the diagram.
 
We now prove that the diagram commutes. We let $\FM_{G,V\times W,F}^{T'}\coloneqq V^F\times W^F/(G^F\times T')$ and we let $\overline{\imath}\colon\FZ_{G,V,W,\mu,F}^{T'}\rightarrow\FM_{G,V\times W,F}^{T'}$ be the closed immersion. There is a natural pushforward morphism
\[
 \overline{\imath}_*\BD\BoQ_{\FZ_{G,V,W,\mu,F}^{T'}}\rightarrow \BD\BoQ_{\FM_{G,V\times W}^{T'}}\cong\BoQ_{\FM_{G,V\times W}^{T'}}\otimes\SL^{-\dim\FM_{G,V\times W,F}^{T'}/2}\,.
\]
By applying the natural transformation $\varphi_f\rightarrow\overline{\imath}_*\overline{\imath}^*$ to it, we obtain the commutative square
\[
\begin{tikzcd}
	{\overline{\imath}_*\BD\BoQ_{\FZ_{G,V,W,\mu,F}^{T'}}} & {\varphi_f\BoQ_{\FM_{G,V\times W,F}^{T'}}}\otimes\SL^{-\dim\FM_{G,V\times W,F}^{T'}/2} \\
	{\overline{\imath}_*\BD\BoQ_{\FZ_{G,V,W,\mu,F}^{T'}}} & {\imath_*\BoQ_{f_F^{-1}(0)/(G^F\times T')}}\otimes\SL^{-\dim\FM_{G,V\times W,F}^{T'}/2}
	\arrow[from=1-1, to=1-2]
	\arrow["\sim"', from=1-1, to=2-1]
	\arrow[from=1-2, to=2-2]
	\arrow[from=2-1, to=2-2]
\end{tikzcd}
\]
while applying the natural transformation $\id\rightarrow\imath_*\imath^*$, we get the commutative square
\[
\begin{tikzcd}
	{} & {} \\
	{\overline{\imath}_*\BD\BoQ_{\FZ_{G,V,W,\mu,F}^{T'}}} & {\imath_*\BoQ_{f_F^{-1}(0)/(G^F\times T')}}\otimes\SL^{-\dim\FM_{G,V\times W,F}^{T'}/2} \\
	{\overline{\imath}_*\BD\BoQ_{\FZ_{G,V,W,\mu,F}^{T'}}} & {\BoQ_{\FM_{G,V\times W,F}^{T'}}}\otimes\SL^{-\dim\FM_{G,V\times W,F}^{T'}/2}
	\arrow[from=2-1, to=2-2]
	\arrow["\sim", from=3-1, to=2-1]
	\arrow[from=3-1, to=3-2]
	\arrow[from=3-2, to=2-2]
\end{tikzcd}
\]
Superposing these squares and taking derived global sections, we obtain the commutative diagram
\[
\begin{tikzcd}
	{\CH_{G,V,W,\mu,G}^{T'}} & {\CH_{G,V\times W,f,F}^{T'}} \\
	{\CH_{G,V,W,\mu,G}^{T'}} & {\HO^*(f_F^{-1}(0)/(G^F\times T'),\BoQ)} \\
	{\CH_{G,V,W,\mu,G}^{T'}} & {\CH_{G,V\times W,F}^{T'}}
	\arrow["{\mathsf{dr}}", from=1-1, to=1-2]
	\arrow["\sim"', from=1-1, to=2-1]
	\arrow["s", from=1-2, to=2-2]
	\arrow["{\xi_F=r^{-1}s}", bend left=110, from=1-2, to=3-2]
	\arrow[from=2-1, to=2-2]
	\arrow["\sim", from=3-1, to=2-1]
	\arrow["{\overline{\xi}_F}"', from=3-1, to=3-2]
	\arrow["r"', from=3-2, to=2-2]
\end{tikzcd}
\]
which concludes.
\end{proof}

\begin{lemma}
\label{lemma:notorsionS1}
 Let $\Fg$ be the Lie algebra of a reductive group $G$, $T'$ a torus acting trivially on $G$, $\lambda\colon \BoC^*\rightarrow T'$ a cocharacter and $\mathcal{O}$ a nilpotent orbit. We let $I\subset\HO^*_{T'\times G}$ be the ideal of functions vanishing on $\lambda\in\Ft'\times\Ft$. Then, $\HO^*_{T'\times G}(\CO)$ has no $\HO^*_{T'\times G}\setminus I$-torsion.
\end{lemma}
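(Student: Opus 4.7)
The plan is to reduce everything to polynomial rings and exploit that the relevant restriction is nothing more than restriction of polynomials along a linear subspace containing the distinguished point.

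First, since $\CO$ is a single $G$-orbit and $T'$ acts trivially, one has $\CO \cong G/G_x$ for $x \in \CO$, so
\[
\HO^*_{T'\times G}(\CO) \;\cong\; \HO^*_{T'\times G_x}(\pt).
\]
The stabilizer $G_x$ is an extension of a (possibly disconnected) reductive group $L_x$ by a unipotent group, and unipotent groups have trivial rational cohomology. After choosing a Levi decomposition and conjugating so that a maximal torus $T_x \subset L_x^{\circ}$ lies in the fixed maximal torus $T\subset G$, the inclusion $T' \times T_x \hookrightarrow T' \times G_x$ identifies
\[
\HO^*_{T'\times G_x}(\pt) \;\cong\; \bigl(\BoQ[\Ft' \times \Ft_x]\bigr)^{W'_x},
\]
where $W'_x$ is the finite group $N_{L_x}(T_x)/T_x$ acting on $\Ft_x$ and trivially on $\Ft'$. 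The invariant ring of a polynomial ring under a finite group action is itself an integral domain, so $\HO^*_{T'\times G}(\CO)$ is an integral domain.

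Next, I would unravel the restriction map $\rho\colon\HO^*_{T'\times G}(\pt) \to \HO^*_{T'\times G}(\CO)$. With the identifications above, both sides become invariant polynomial rings, and $\rho$ is induced by the linear embedding $\Ft' \times \Ft_x \hookrightarrow \Ft' \times \Ft$; that is, $\rho(f) = f|_{\Ft'\times \Ft_x}$. Crucially, the point $(d\lambda, 0) \in \Ft' \times \Ft$ actually lies in the subspace $\Ft' \times \Ft_x$ (since $0 \in \Ft_x$), so for $f \notin I$ one has
\[
\rho(f)(d\lambda, 0) \;=\; f(d\lambda, 0) \;\neq\; 0,
\]
hence $\rho(f) \neq 0$ in $\HO^*_{T'\times G}(\CO)$.

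Combining these two observations finishes the proof: multiplication by $\rho(f)$ is injective on $\HO^*_{T'\times G}(\CO)$ because we are multiplying a non-zero element by elements of an integral domain. Therefore $\HO^*_{T'\times G}(\CO)$ has no torsion with respect to $\HO^*_{T'\times G}\setminus I$. The main obstacle is the bookkeeping around disconnectedness of $G_x$ and Jordan decomposition: one has to justify that $\HO^*_{G_x}(\pt) = \HO^*_{L_x}(\pt)$ even when $L_x$ is disconnected, and that after conjugation $T_x$ can be placed inside a fixed maximal torus of $G$ so that $\rho$ becomes literal restriction of polynomials along a linear inclusion — the rest is then essentially formal.
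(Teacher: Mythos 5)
Your proof is correct and follows essentially the same route as the paper's: identify $\HO^*_{T'\times G}(\CO)$ with $\HO^*_{T'\times G_x}(\pt)$, observe that this is an integral domain (invariants of a polynomial ring under a finite group), and note that the module structure is via the restriction homomorphism, whose effect on an $f\notin I$ is nonzero because the $\lambda$-direction sits inside $\Ft'$ and is untouched by restriction to $\Ft'\times\Ft_x$. The only cosmetic difference is that you detect nonvanishing of $\rho(f)$ by evaluating at (or along) $\lambda$ inside the subspace, whereas the paper splits off $\BoQ[x]\cong\HO^*_{\BoC^*}$ for the $\lambda$-factor and compares the $1\otimes P(x)$ components; these are the same observation.
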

\begin{proof}
We let $H\subset G$ be the stabilizer an element of the orbit $\CO$. Then, we let $A$ be a maximal torus in $H$ and we may assume that $A\subset T$, where $T$ is a chosen maximal torus of $G$. There is an isomorphism
\[
 \HO^*_{T'\times G}(\CO)\cong \HO^*_{T'\times H}(\pt)\cong \HO^*_{T'/\BoC^*\times H}\otimes\BoQ[x].
\]
We need to show that $\HO^*_{T'\times H}(\pt)$ has no $\HO^*_{T'\times G}\setminus I$ torsion. The restriction morphism
\[
 \HO^*_{T'\times G}\cong \HO^*_{T'/\BoC^*\times G}\otimes\BoQ[x]\rightarrow\HO^*_{T'/\BoC^*\times H}\otimes\BoQ[x]
\]
is induced by the morphism $\HO^*_T\rightarrow\HO^*_A$, which sends some variable to $0$. If $f\in\HO^*_{T'\times G}\setminus I$, it is easily seen that $f$ has the form $1\otimes P(x)+\tilde{f}$ for some $P(x)\in\BoQ[x]\setminus\{0\}$ and $\tilde{f}\in\HO^*_{T'\times G}\setminus 1\otimes \BoQ[x]$. Then, its image in $\HO^*_{T'/\BoC^*\times H}\otimes\BoQ[x]$ is nonzero. Since $\HO^*_{T'/\BoC^*\times H}\otimes\BoQ[x]$ is a domain, then it has no $f$-torsion. This concludes.
\end{proof}

\begin{proposition}
\label{proposition:changeflavourtorus}
Let $V$ be a representation of a reductive group $G$, $f\colon V\rightarrow\BoC$ a $G$-invariant regular function on $V$, $T'$ an auxiliary torus acting on $V$ such that the actions of $T'$ and $G$ commute, $f$ is $T'$-invariant and the mixed Hodge structure on $\CH_{G,V,f,F}^{T'}$ is pure. Then, for any injective morphism $\tilde{T}\rightarrow T'$, one has a decomposition
\[
 \CH_{G,V,f,F}^{T'}\cong\CH_{G,V,f,F}^{\tilde{T}}\otimes\HO^*_{T'/\tilde{T}}\,.
\]
Moreover, the mixed Hodge structures on both sides are pure.
\end{proposition}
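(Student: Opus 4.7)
The plan is to produce the decomposition via the Leray spectral sequence of a torus-equivariant fibration, using the purity hypothesis to force degeneration. First, I would choose a splitting $T'\cong\tilde{T}\times T''$ by selecting a complementary subtorus $T''\subset T'$ with $T''\cap\tilde{T}=1$ and $\tilde{T}\cdot T''=T'$; this exists because the inclusion $\rmX_*(\tilde{T})\into\rmX_*(T')$ of cocharacter lattices admits a splitting. This identifies $T''\cong T'/\tilde{T}$, and the $T''$-action on $V$ commutes with the $G\times\tilde{T}$-action.

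Next, I would identify $V^F/(G^F\times T')$ with the Borel construction of the $T''$-action on the quotient stack $\CX\coloneqq V^F/(G^F\times\tilde{T})$, realizing the natural projection $\varpi\colon V^F/(G^F\times T')\to\pt/T''$ as a fibration with fiber $\CX$ over which $\varphi_{f_F}\BoQ$ restricts fiberwise to the vanishing cycle sheaf on $\CX$. The associated Leray spectral sequence reads
\[
 E_2^{p,q}=\HO^p(\pt/T'',R^q\varpi_*\varphi_{f_F}\BoQ)\Longrightarrow\CH_{G,V,f,F}^{T'}\,.
\]
Since $T''$ is connected and $\pt/T''$ is simply connected, the local system $R^q\varpi_*\varphi_{f_F}\BoQ$ is constant with stalk the degree-$q$ part of $\CH_{G,V,f,F}^{\tilde{T}}$, so that $E_2^{p,q}\cong\HO^p_{T'/\tilde{T}}(\pt)\otimes\CH_{G,V,f,F}^{\tilde{T}}$ in total degree $p+q$.

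This is a spectral sequence of mixed Hodge structures with strict differentials. Each $\HO^{2i}_{T'/\tilde T}(\pt)$ is pure of weight $2i$, and by hypothesis the abutment $\CH_{G,V,f,F}^{T'}$ is pure (of weight $n$ in degree $n$). A weight analysis shows that no differential $d_r\colon E_r^{p,q}\to E_r^{p+r,q-r+1}$ can be nonzero without producing subquotients in the limit of the wrong weight, forcing degeneration at $E_2$. The associated graded of the Leray filtration on $\CH_{G,V,f,F}^{T'}$ is therefore a free $\HO^*_{T'/\tilde{T}}(\pt)$-module; lifting free generators splits the filtration and produces the desired isomorphism $\CH_{G,V,f,F}^{T'}\cong\CH_{G,V,f,F}^{\tilde{T}}\otimes\HO^*_{T'/\tilde T}$, from which purity of $\CH_{G,V,f,F}^{\tilde{T}}$ follows.

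The delicate step is establishing $d_r=0$ without a priori control over the weights of $\CH_{G,V,f,F}^{\tilde{T}}$. I would argue by strong induction on $r$: purity of the limit forces each $E_\infty^{p,q}$ to be pure of weight $p+q$, which can be propagated back to the $(0,q)$-line using the $\HO^*_{T'/\tilde T}(\pt)$-module structure and strictness of MHS morphisms, yielding purity of $\CH_{G,V,f,F}^{\tilde{T}}$ step by step and hence the vanishing of subsequent differentials for weight reasons. As a fallback, I would reduce to the case $T'/\tilde{T}\cong\BoG_{\rmm}$ by induction on $\dim(T'/\tilde T)$: only $d_2$ is then potentially nonzero, and the torsion-freeness statement of Lemma~\ref{lemma:notorsionS1}, applied to a cocharacter generating the $\BoG_\rmm$-factor, can be used to force its vanishing.
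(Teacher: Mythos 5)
Your overall strategy --- split $T'\cong\tilde T\times T''$, view $V^F/(G^F\times T')$ as a fibration over $\pt/T''$ with fibre $V^F/(G^F\times\tilde T)$, and force degeneration of the resulting spectral sequence by a weight argument --- is exactly the ``purity implies formality'' route the paper itself invokes (it simply cites \cite[Theorem~14.1]{goresky1998equivariant} and \cite[Theorem~9.6]{davison2016integrality} instead of writing out details), and the preliminary steps (existence of the splitting of tori, constancy of the local system over the simply connected base, identification of the $E_2$-page) are fine. The genuine gap is the step you flag yourself: the weight argument kills differentials only if the $E_2$-page is pure, i.e.\ only if $\CH_{G,V,f,F}^{\tilde T}$ is already known to be pure of the right weights, whereas the hypothesis you are given is purity of the abutment $\CH_{G,V,f,F}^{T'}$. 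Purity of the abutment yields purity of the $E_\infty^{p,q}$, but it does not prevent differentials from cancelling impure classes on $E_2$: for instance $\BoG_{\rmm}$ acting by scaling on $\BoC^2\setminus\{0\}$ has pure equivariant cohomology ($\cong\HO^*(\BP^1)$), impure ordinary cohomology, and a Borel spectral sequence with a nonzero $d_4$. So your proposed ``propagate purity back to the $(0,q)$-line'' induction cannot be a formal consequence of abutment purity; it needs an input specific to the situation. Note that in the paper's actual application (Corollary~\ref{corollary:changeflavourtorus}) the purity that is available and fed into the argument is that of the trivial-torus critical cohomology, i.e.\ of the fibre, which is precisely what the GKM-type theorem consumes; a correct write-up should either first establish purity of $\CH_{G,V,f,F}^{\tilde T}$ (equivalently, surjectivity of the restriction $\CH_{G,V,f,F}^{T'}\rightarrow\CH_{G,V,f,F}^{\tilde T}$, which by a Leray--Hirsch argument is equivalent to degeneration) or place the purity hypothesis on the fibre side.

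Your fallback is also not sound as stated. Over $\pt/\BoG_{\rmm}$ the base cohomology is concentrated in even degrees, so the odd differentials vanish, but $d_4,d_6,\dots$ can be nonzero (the example above has a nonzero $d_4$), so it is false that only $d_2$ survives. Moreover Lemma~\ref{lemma:notorsionS1} is a torsion-freeness statement for equivariant cohomology of nilpotent orbits in $\Fg$, used in the proof of Proposition~\ref{proposition:embeddinginductionsystems}; it neither applies to a general triple $(G,V,f)$ nor produces the vanishing of a differential in your spectral sequence, so invoking it here does not close the gap.
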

\begin{proof}
The proof uses an argument of the type ``purity implies formality'' for equivariant cohomology, similar to that \cite[Theorem~14.1]{goresky1998equivariant}. See \cite[Theorem~9.6]{davison2016integrality} for a situation closer to ours.
\end{proof}

\begin{corollary}
\label{corollary:changeflavourtorus}
 Let $V$ be a representation of a reductive group $G$. We let $f\colon \Tan^*V\times\Fg\rightarrow\BoC$ be the contraction of the moment map of $\Tan^*V$. For any injective morphism of tori $\tilde{T}\rightarrow T'$ such that $T'$ acts on $\Tan^*V\times\Fg$, the actions of $T'$ and $G$ commute and $f$ is $T'$-invariant, we have an isomorphism of mixed Hodge structures
 \[
  \CH_{G,\Tan^*V\times\Fg,f,F}^{T'}\cong \CH^{\tilde{T}}_{G,\Tan^*V\times\Fg,f,F}\otimes \HO^*_{T'/\tilde{T}}\,.
 \]
Moreover, $\CH_{G,\Tan^*V\times\Fg,f,F}^{T'}$ and $\CH^{\tilde{T}}_{G,\Tan^*V\times\Fg,f,F}\otimes \HO^*_{T'/\tilde{T}}$ carry pure mixed Hodge structures.
\end{corollary}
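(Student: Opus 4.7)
The strategy is to verify the purity hypothesis of Proposition~\ref{proposition:changeflavourtorus} in the special case at hand and then invoke that proposition as a black box. Once purity of $\CH_{G,\Tan^*V\times\Fg,f,F}^{T'}$ is established, the announced isomorphism follows immediately from Proposition~\ref{proposition:changeflavourtorus}, and the purity of both sides is automatic: $\HO^*_{T'/\tilde{T}}$ is a polynomial ring concentrated in even cohomological degrees with Tate classes, hence pure, and tensor products of pure mixed Hodge structures are pure.

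The first step is to replace critical cohomology by Borel--Moore homology via dimensional reduction. View $\Tan^*V\times\Fg\to\Tan^*V$ as the total space of the trivial vector bundle with fiber $\Fg$, equipped with the canonical section $\mu\colon\Tan^*V\to\Fg^*$ of the dual bundle; then $f$ is precisely the contraction of this section. Corollary~\ref{corollary:cohdimrediso}, applied $G^F\times T'$-equivariantly to $V^F\times\Fg^F$, yields a mixed Hodge structure isomorphism
\[
\CH_{G,\Tan^*V\times\Fg,f,F}^{T'}\;\cong\;\HO^{\rmBM}_{*}\bigl(\mu_F^{-1}(0)/(G^F\times T'),\BoQ^{\vir}\bigr).
\]
Thus it suffices to prove purity of the Borel--Moore homology of the moment map stack $\mu_F^{-1}(0)/(G^F\times T')$.

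For the purity of this Borel--Moore homology I would proceed by a stratification argument. Stratify $\mu_F^{-1}(0)$ by the $G^F$-adjoint orbit type of points, or equivalently by the conjugacy class of the stabilizer together with the nilpotent orbit data in $\Fg^F$; each stratum is a $G^F\times T'$-equivariant fibration over a nilpotent orbit $\CO\subset\Fg^F$ whose equivariant Borel--Moore homology is a free $\HO^*_{G^F\times T'}$-module generated by Tate classes (using the description $\HO^*_{G^F\times T'}(\CO)\cong \HO^*_{T'\times H}$ of Lemma~\ref{lemma:notorsionS1}, with $H$ a stabilizer), hence pure. The weight filtration on the $E_1$-page of the spectral sequence for the stratification is strict, so the spectral sequence degenerates and purity is inherited by the total Borel--Moore homology. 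Together with Step 1 this verifies the purity hypothesis, and Proposition~\ref{proposition:changeflavourtorus} delivers the corollary.

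The main obstacle is the purity step: writing down the stratification carefully and showing that each stratum's equivariant Borel--Moore homology is pure and that the spectral sequence degenerates. In the preprojective-algebra/quiver setting this is due to Davison--Meinhardt and one can hope that the proof there extends essentially verbatim, but for a general representation $V$ of a reductive $G$ one must use the interaction between the Luna slice stratification on the flat locus of $\mu_F$ and the nilpotent-orbit stratification on $\Fg^F$, which is where the real content lies.
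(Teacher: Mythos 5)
There is a genuine gap, and it sits exactly where you say the ``real content lies''. The paper's own proof is short: it applies Proposition~\ref{proposition:changeflavourtorus} and discharges the purity hypothesis by \emph{citing} the purity of $\CH_{G,\Tan^*V\times\Fg,f,F}$ from [Corollary~1.11, hennecart2024cohomological2]; no stratification argument is made here, because the purity is a nontrivial theorem established elsewhere. Your proposal instead undertakes to reprove that purity, but what you offer is a sketch that you yourself flag as unverified (``one can hope that the proof there extends essentially verbatim''), so the key hypothesis of Proposition~\ref{proposition:changeflavourtorus} is never actually established in your argument.

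Moreover, the sketch as written is not correct. After dimensional reduction you are looking at $\HO^{\rmBM}_*\bigl(\mu_F^{-1}(0)/(G^F\times T')\bigr)$ with $\mu_F^{-1}(0)\subset\Tan^*V^F$; its points are pairs $(v,\ell)$ and carry no $\Fg$-component, so the strata are not ``fibrations over nilpotent orbits $\CO\subset\Fg^F$''. The nilpotent-orbit picture only appears after a \emph{second} dimensional reduction, to the variety $\CC(V,\Fg)$ of pairs $(v,\xi)$ with $\xi\cdot v=0$, and in the paper that stratification (together with Lemma~\ref{lemma:notorsionS1}) is used in Proposition~\ref{proposition:embeddinginductionsystems} only to prove absence of torsion, not purity. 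Even granting a correct stratification, purity of the equivariant Borel--Moore homology of each stratum does not by itself force degeneration: one needs the strata contributions to sit in the right weights so that the connecting maps vanish, which is precisely the Davison--Meinhardt-type input that is serious work in the quiver case and is not known to transfer ``verbatim'' to an arbitrary representation of a reductive group. The clean fix is to do what the paper does: quote the purity result of \cite[Corollary~1.11]{hennecart2024cohomological2} (your reduction to Borel--Moore homology via Corollary~\ref{corollary:cohdimrediso} is then unnecessary), and then your final paragraph about $\HO^*_{T'/\tilde{T}}$ being pure and tensor products of pure structures being pure is fine.
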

\begin{proof}
 This is Proposition~\ref{proposition:changeflavourtorus}, using the fact that for the representation $\Tan^*V\times\Fg$ of $G$ and the function $f$, the mixed Hodge structure on $\CH_{G,\Tan^*V\times\Fg,f,F}$ (for trivial auxiliary torus $T'$) is pure by \cite[Corollary~1.11]{hennecart2024cohomological2}.
\end{proof}

Let $V$ be a representation of a reductive group $G$. We assume that an auxiliary torus $T'$ acts on $V$ and the actions of $T'$ and $G$ on $V$ commute. Let $T_2\coloneqq(\BoC^*)^2$ act on $\Tan^*V\times\Fg$ with weight $(1,0)$ on $V$, $(0,1)$  on $V^*$ and weight $(-1,-1)$ on $\Fg$.

\begin{proposition}
\label{proposition:localizedpushforward}
 Let $V$ and $W$ be representations of a reductive group $G$ and $T'$ an anxiliary torus acting on $V,W$ such that the actions of $T'$ and $G$ commute. We let $\mu\colon V\rightarrow W$ be a $G\times T'$-equivariant function. We assume that the fixed locus $\mu^{-1}(0)^{G\times T'}$ is $\{0\}$. We let $\imath\colon \mu^{-1}_F(0)\rightarrow V$ and $K=\Frac(\HO^*_{G\times T'})$. Then, for any $F\in\FF$, the localized pushforward morphism
 \[
  K\cong\imath_*\colon \HO^{\rmBM}_{G^F\times T'}(\mu^{-1}_F(0),\BoQ)\otimes K\rightarrow\HO^{\rmBM}_{G^F\times T'}(V^F,\BoQ)\otimes K\cong K
 \]
is an isomorphism.
\end{proposition}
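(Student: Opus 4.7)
The proof proceeds by Atiyah--Bott type localization. I would start with the excision long exact sequence in $G^F \times T'$-equivariant Borel--Moore homology for the closed immersion $\imath$ and the complementary open immersion $j\colon V^F \setminus \mu_F^{-1}(0) \hookrightarrow V^F$:
\[
\HO^{\rmBM}_{*,G^F\times T'}(\mu_F^{-1}(0),\BoQ) \xrightarrow{\imath_*} \HO^{\rmBM}_{*,G^F\times T'}(V^F,\BoQ) \xrightarrow{j^*} \HO^{\rmBM}_{*,G^F\times T'}(V^F\setminus\mu_F^{-1}(0),\BoQ).
\]
Tensoring with $K$ is exact, so it suffices to prove that the rightmost term vanishes after $\otimes K$, and to identify the middle term with $K$ using the contractibility of the $G^F\times T'$-equivariant affine space $V^F$, which gives $\HO^{\rmBM}_{*,G^F\times T'}(V^F,\BoQ)\cong\HO^*_{G^F\times T'}$ as a finitely generated module over $\HO^*_{G\times T'}$.

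Next, using Proposition~\ref{proposition:weylgroupBorelMoore}, I would pass to $T\times T'$-equivariant Borel--Moore homology via $\HO^{\rmBM}_{*,G^F\times T'}(X,\BoQ)\cong\HO^{\rmBM}_{*,T\times T'}(X,\BoQ)^{\sfW^F}$. Since $K$ sits inside $\Frac(\HO^*_{T\times T'})$ and consists of $\sfW$-invariant (hence $\sfW^F$-invariant) elements, localization at $K$ commutes with taking $\sfW^F$-invariants, and the desired vanishing reduces to $\HO^{\rmBM}_{*,T\times T'}(V^F\setminus\mu_F^{-1}(0),\BoQ)\otimes_{\HO^*_{T\times T'}}\Frac(\HO^*_{T\times T'})=0$. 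This is a direct application of the classical Atiyah--Bott localization theorem once the $T\times T'$-fixed locus is controlled.

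Finally, I would analyze $(V^F\setminus\mu_F^{-1}(0))^{T\times T'}=V^{T\times T'}\setminus\mu^{-1}(0)^{T\times T'}$, using that $V^F\supset V^{T\times T'}$. By $G\times T'$-equivariance of $\mu$, any $v\in V^{T\times T'}$ has $\mu(v)\in W^{T\times T'}$, and the hypothesis $\mu^{-1}(0)^{G\times T'}=\{0\}$ combined with $W^{G\times T'}=\{0\}$ (the natural setting, which then forces $V^{G\times T'}=\{0\}$) ensures that any $T\times T'$-fixed point of the complement has a non-trivial $\sfW$-orbit lying outside $\mu^{-1}(0)$. Averaging the Atiyah--Bott localization isomorphism over $\sfW^F$ (and comparing with $\sfW$-invariants via Lemma~\ref{lemma:notorsionS1}-type arguments) then produces the required vanishing and the identification of the map with the claimed isomorphism of $K$-modules.

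\textbf{Main obstacle.} The subtlest point is reconciling Atiyah--Bott localization at the full torus $T\times T'$ with the coarser localization at $K=\Frac(\HO^*_{G\times T'})$, because the fixed locus $V^{T\times T'}$ may strictly contain $V^{G\times T'}$. The finite Galois extension $\Frac(\HO^*_T)/\Frac(\HO^*_T)^{\sfW}$ lets one descend the torus-level vanishing to vanishing over $K$, but the careful bookkeeping of this descent, together with verifying that the identification $\HO^{\rmBM}_{*,G^F\times T'}(V^F,\BoQ)\otimes K\cong K$ respects the module structures and virtual shifts, is the technical heart of the argument.
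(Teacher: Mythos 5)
Your overall route (excision for the closed/open pair, then Atiyah--Bott localization at the torus and Weyl-group descent) is a genuinely different strategy from the paper's, but it has a gap at its decisive step. After excision, your argument is equivalent to showing $\HO^{\rmBM}_{*,T\times T'}(V^F\setminus\mu_F^{-1}(0),\BoQ)\otimes\Frac(\HO^*_{T\times T'})=0$, and by the localization theorem this holds if and only if $(V^F\setminus\mu_F^{-1}(0))^{T\times T'}=\emptyset$, i.e. $(V^F)^{T\times T'}\subseteq\mu_F^{-1}(0)$. The stated hypothesis only constrains the $G\times T'$-fixed points of $\mu^{-1}(0)$; it says nothing about $T\times T'$-fixed vectors of $V^F$ lying outside the zero locus, and the extra assumptions you slip in ($W^{G\times T'}=0$, hence $V^{G\times T'}=0$) do not bridge this, since $V^{T\times T'}$ is in general much larger than $V^{G\times T'}$. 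The proposed rescue --- ``any $T\times T'$-fixed point of the complement has a non-trivial $\sfW$-orbit'', then average over $\sfW^F$ --- cannot work: with $\BoQ$-coefficients, if $Z$ is a nonempty component of $(V^F\setminus\mu_F^{-1}(0))^{T\times T'}$, then $\sum_{w\in\sfW^F}w\cdot[Z]$ is a nonzero $\sfW^F$-invariant class in the localized torus-equivariant Borel--Moore homology (fundamental classes of distinct components sit in different direct summands and are only permuted), so the $K$-localized $G^F\times T'$-equivariant homology of the complement does not vanish. Indeed, by your own exact sequence the conclusion of the proposition is equivalent to that vanishing, so the inclusion $(V^F)^{T\times T'}\subseteq\mu_F^{-1}(0)$ is precisely the geometric input that must be supplied --- it is what holds in the paper's application, where the auxiliary torus $T_2$ acts on $\Tan^*V\times\Fg$ with $\{0\}$ as its only fixed point --- and it cannot be produced by Weyl-group averaging. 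By contrast, the point you flag as the ``main obstacle'' (descending from $\Frac(\HO^*_{T\times T'})$ to $K$) is harmless: $\HO^*_{T\times T'}$ is finite over $\HO^*_{G\times T'}$, so tensoring a torus-equivariant module with $K$ already inverts everything needed and agrees with localization at $\Frac(\HO^*_{T\times T'})$.

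The paper's proof never touches the complement: it factors $\imath_*$ through the pushforward $\imath'_*\colon\HO^{\rmBM}_{G^F\times T'}(\{0\},\BoQ)\otimes K\rightarrow\HO^{\rmBM}_{G^F\times T'}(\mu_F^{-1}(0),\BoQ)\otimes K$ from the fixed point, invokes the Edidin--Graham localization theorem to see that $\imath'_*$ is an isomorphism, and observes that the full composite into $\HO^{\rmBM}_{G^F\times T'}(V^F,\BoQ)\otimes K\cong K$ is invertible, whence $\imath_*$ is an isomorphism by two-out-of-three. If you want to keep your excision route, replace the Weyl-orbit step by the hypothesis (verified in the application) that the fixed locus of the auxiliary torus on $V^F$ is contained in $\mu_F^{-1}(0)$ --- equivalently that the relevant equivariant Euler class is nonzero in $\HO^*_{T\times T'}$; with that input your argument closes immediately and is a legitimate alternative to the paper's.
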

\begin{proof}
 We let $\imath'\colon\{0\}\rightarrow\mu_F^{-1}(0)$ be the inclusion of the fixed locus for the $G\times T'$-action. Then, the composition
 \[
  \HO^{\rmBM}_{G^F\times T'}(\{0\},\BoQ)\otimes K\xrightarrow{\imath'_*}\HO^{\rmBM}_{G^F\times T'}(\mu^{-1}_F(0),\BoQ)\otimes K\xrightarrow{\imath_*}\HO^{\rmBM}_{G^F\times T'}(V^F,\BoQ)\otimes K
 \]
is the identity map and the morphism $\imath'_*$ is an isomorphism by localization in Borel--Moore homology \cite[Theorem~1]{edidin1998localization}. Therefore, the localized pushforward $\imath_*$ is also an isomorphism.
\end{proof}

Let $V$ be a representation of a reductive group $G$. We assume that an auxiliary torus $T'$ acts on $V$ and the actions of $T'$ and $G$ on $V$ commute. Let $T_2\coloneqq(\BoC^*)^2$ act on $\Tan^*V\times\Fg$ with weight $(1,0)$ on $V$, $(0,1)$  on $V^*$ and weight $(-1,-1)$ on $\Fg$.

\begin{proposition}
\label{proposition:embeddinginductionsystems}
 We assume that there exists a morphism $(\BoC^*)^2\rightarrow T'$ such that the action of $(\BoC^*)^2$ on $\Tan^*V\times\Fg$ induced is that of $T_2$. Then, the morphisms $\overline{\xi}_F\colon \CH_{G,\Tan^*V,\Fg^*,\mu,F}^{T'}\rightarrow \CH_{G,\Tan^*V\times\Fg,F}^{T'}$ ($F\in\FF$) give an embedding of induction systems for the twisted induction $(-1)^{\dim W^{C<0,F}}\Ind_{C}^F$ on either the source or the target.
\end{proposition}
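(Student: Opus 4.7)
The plan is to leverage Proposition~\ref{proposition:commutativeinductiondimred2d}, which already establishes the intertwining relation $\overline{\xi}_F\Ind_C^F=(-1)^{\dim W^{C<0,F}}\Ind_C^F\overline{\xi}_{\langle C\rangle}$, so the entire content of the proposition reduces to showing that each $\overline{\xi}_F$ is \emph{injective}. First I would unpack the construction of $\overline{\xi}_F$ from \S\ref{subsection:from2dtoshuffle}: it factors as the pullback along the affine fibration $\FZ_F\to\mu^{-1}_F(0)/(G^F\times T')$ (an isomorphism on Borel--Moore homology) composed with the proper pushforward $\imath_*$ along the closed embedding $\imath\colon\FZ_F\hookrightarrow(\Tan^*V\times\Fg)^F/(G^F\times T')$. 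Using $\FZ_F\cong \mu^{-1}_F(0)\times\Fg^F/(G^F\times T')$ and the Thom isomorphism in the $\Fg^F$-direction, injectivity of $\imath_*$ reduces to injectivity of the equivariant closed pushforward $\jmath_*\colon\HO^{\rmBM}_{G^F\times T'}(\mu^{-1}_F(0),\BoQ)\to\HO^{\rmBM}_{G^F\times T'}(\Tan^*V^F,\BoQ)$.

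Next I would verify the hypothesis of Proposition~\ref{proposition:localizedpushforward}. The assumption provides a morphism $T_2=(\BoC^*)^2\to T'$ under which $V$ has weight $(1,0)$ and $V^*$ weight $(0,1)$, both nonzero, so $(\Tan^*V)^{T_2}=\{0\}$. A fortiori $\mu^{-1}_F(0)^{G\times T'}=\{0\}$, and Proposition~\ref{proposition:localizedpushforward} shows that $\jmath_*$ becomes an isomorphism after tensoring with $K=\Frac(\HO^*_{G^F\times T'})$. In particular, any element of $\CH^{T'}_{G,\Tan^*V,\Fg^*,\mu,F}$ annihilated by $\overline{\xi}_F$ must be $\HO^*_{G^F\times T'}\setminus\{0\}$-torsion.

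To rule out such torsion I would use the dimensional reduction isomorphism (Proposition~\ref{proposition:comparison2d3dmultiplications}) to identify $\CH^{T'}_{G,\Tan^*V,\Fg^*,\mu,F}$ with the critical cohomology $\CH^{T'}_{G,\Tan^*V\times\Fg,f,F}$, which carries a pure mixed Hodge structure by Corollary~\ref{corollary:changeflavourtorus}. Applying the formality-type tensor product decomposition of Proposition~\ref{proposition:changeflavourtorus} to the subtorus $T_2\hookrightarrow T'$ yields $\CH^{T'}_{G,\Tan^*V,\Fg^*,\mu,F}\cong \CH^{T_2}_{G,\Tan^*V,\Fg^*,\mu,F}\otimes\HO^*_{T'/T_2}$, exhibiting the module as free over the polynomial ring $\HO^*_{T'/T_2}$. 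Torsion-freeness over the remaining factor $\HO^*_{G^F\times T_2}$ can then be established by stratifying $\mu^{-1}_F(0)$ into $G^F\times T_2$-invariant locally closed pieces modelled on nilpotent-type orbits and propagating the no-torsion statement of Lemma~\ref{lemma:notorsionS1} along the associated long exact sequences.

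The main obstacle is this final torsion-freeness step. The factorization of $\overline{\xi}_F$ and the localization-to-isomorphism argument via Proposition~\ref{proposition:localizedpushforward} are essentially formal; what requires genuine input is the transition from the localization isomorphism to the statement that $\CH^{T'}_{G,\Tan^*V,\Fg^*,\mu,F}$ has no $\HO^*_{G^F\times T'}\setminus\{0\}$-torsion. A cleaner alternative, which I would pursue in parallel, is to upgrade the purity of Corollary~\ref{corollary:changeflavourtorus} directly into an explicit freeness statement in the spirit of Goresky--Kottwitz--MacPherson for equivariant cohomology of $T_2$-varieties with isolated fixed point, which would sidestep the stratification argument and give torsion-freeness over $\HO^*_{G^F\times T'}$ in one blow.
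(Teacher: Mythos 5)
Your opening moves match the paper: Proposition~\ref{proposition:commutativeinductiondimred2d} reduces everything to injectivity of $\overline{\xi}_F$, and via the affine fibration and compatibility of proper pushforward with smooth pullback this becomes injectivity of the closed pushforward $\HO^{\rmBM}_{G^F\times T'}(\mu_F^{-1}(0),\BoQ)\to\HO^{\rmBM}_{G^F\times T'}(\Tan^*V^F\times\Fg^F,\BoQ)$. The gap is in the torsion step. Because you invert all of $\HO^*_{G^F\times T'}\setminus\{0\}$ (Proposition~\ref{proposition:localizedpushforward} with $K=\Frac(\HO^*_{G^F\times T'})$), you are forced to prove that $\CH^{T'}_{G,\Tan^*V,\Fg^*,\mu,F}$ is torsion-free over $\HO^*_{G^F\times T'}$, and you propose to obtain this by stratifying and ``propagating'' Lemma~\ref{lemma:notorsionS1}. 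But Lemma~\ref{lemma:notorsionS1} is not a torsion-freeness statement: it only rules out $S_1$-torsion for the multiplicative set $S_1=\HO^*_{T'\times G}\setminus I$, where $I$ is the prime of functions vanishing on a chosen cocharacter line. The strata that actually occur, $\HO^{\rmBM}_{T'\times G}(\CC(V,\CO),\BoQ)\cong\HO^*_{T'\times H}(\pt)$ with $H$ the stabilizer of a point of the nonzero nilpotent orbit $\CO$, are honestly torsion $\HO^*_{T'\times G}$-modules: $H$ has smaller rank than $G$, so the restriction $\HO^*_{T'\times G}\to\HO^*_{T'\times H}$ has nonzero kernel (e.g.\ $c_1^2-4c_2$ for the regular orbit of $\gl_2$), and every element of that kernel annihilates the whole stratum. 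No long-exact-sequence bookkeeping can extract full torsion-freeness from such strata, so the statement your argument needs is both stronger than what the paper proves and not reachable by the tools you cite. Your ``cleaner alternative'' hits the same wall: purity (Proposition~\ref{proposition:changeflavourtorus}, Corollary~\ref{corollary:changeflavourtorus}) yields freeness only over the torus factor $\HO^*_{T'/\tilde{T}}$, never over $\HO^*_{G^F}$, and the stratum computation above shows freeness over $\HO^*_{G^F\times T'}$ fails in general.

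The paper's proof avoids this precisely by never passing to the full fraction field. After reparametrizing $T_2$ as $T_2'=\BoC^*_1\times\BoC^*_2$ (weights $(1,1)$ on $V$, $(-1,0)$ on $V^*$, $(0,-1)$ on $\Fg$), it localizes only at the prime $I_1$ of the line $\Lie(\BoC^*_1)$: since the $\BoC^*_1$-fixed locus of $\mu^{-1}(0)$ is $\{0\}$, the Goresky--Kottwitz--MacPherson localization theorem makes the pushforward an isomorphism after inverting just $S_1=\HO^*_{T'\times G}\setminus I_1$, so injectivity of $\overline{\xi}_F$ only requires absence of $S_1$-torsion. That weaker statement is then proved by the chain you partially anticipated, but with each step calibrated to $S_1$: freeness over $\Bok_2=\HO^*_{\BoC^*_2}$ from purity, so tensoring with $K_2$ is injective; double dimensional reduction to $\HO^{\rmBM}_{T'\times G}(\CC(V,\Fg),\BoQ)$; the $\BoC^*_2$-rescaling of $\Fg$ to replace $\CC(V,\Fg)$ by its nilpotent version after inverting $\Bok_2$; and finally the orbit stratification together with Lemma~\ref{lemma:notorsionS1}, whose ideal $I$ is exactly $I_1$. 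To repair your proof you must replace the fraction-field localization by localization at $I_1$ and weaken the torsion claim accordingly; as written, the final step does not close.
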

\begin{proof}
We already know by Proposition~\ref{proposition:commutativeinductiondimred2d} that $\overline{\xi}_F$ commutes with the induction morphisms up to the given sign.

By Proposition~\ref{proposition:localizedpushforward}, it suffices to prove the that the $\HO^*_{G^F\times T'}$-module $\CH_{G,\Tan^*V,\Fg^*,\mu,F}$ has no torsion since the action of $T_2$ on $\Tan^*V^F\times\Fg^F$ has only fixed point $\{0\}$. We may assume that $G=G^F$ and $V=V^F$, otherwise we can just replace $V$ by $V^F$ and $G$ by $G^F$.

We consider the isomorphism of tori $T'_2\coloneqq(\BoC^*)^2\rightarrow T_2$, $(a,b)\mapsto (ab,a^{-1})$. Then, $T'_2$ acts on $\Tan^*V\times\Fg$ with weight $(1,1)$ on $V$, $(-1,0)$ on $V^*$ and $(0,-1)$ on $\Fg$. We write $T'_2=\BoC^*_1\times\BoC^*_2$. We let $\Bok_i\coloneqq \HO^*_{\BoC_i^*}$. We let $I_i\subset\HO^*_{T'\times G}$ be the ideal of functions vanishing on the line $\Lie(\BoC^*_i)\subset\Lie(T'\times G)$ (a prime ideal) and $K_i$ the fraction field of $\Bok_i$. The fixed locus of the $\BoC^*_1$-action on $\mu^{-1}(0)$ is $\{0\}$. Therefore, by \cite[Theorem~6.2]{goresky1998equivariant}, the pushforward morphism
\[
 (\HO^*_{T'\times G})_{I_1}=\HO^*_{T'\times G}(\pt,\BoQ)_{I_1}\rightarrow\HO^*_{T'\times G}(\mu^{-1}(0))_{I_1}
\]
is an isomorphism. Actually, \cite[Theorem~6.2]{goresky1998equivariant} works for pullback in cohomology, but one can deduce the result in Borel--Moore homology by stratifying $\mu^{-1}(0)$ by smooth $T'\times G$-invariant locally closed subvarieties. These techniques are used in \cite[Proof of Proposition~3, Theorem~1]{edidin1998localization}.

Therefore, it suffices to prove that $\HO^{\rmBM}_{T'\times G}(\mu^{-1}(0),\BoQ)$ has no $S_1\coloneqq \HO^*_{T'\times G}\setminus I_1$ torsion. By Proposition~\ref{corollary:changeflavourtorus} and dimensional reduction, the $\Bok_2$-module $\HO^{\rmBM}_{T'\times G}(\mu^{-1}(0),\BoQ)\cong \HO^{\rmBM}_{T'/\BoC^*_2\times G}(\mu^{-1}(0),\BoQ)\otimes_{\BoQ} \Bok_2$ is free, and so the localization morphism
\[
 \HO^{\rmBM}_{T'\times G}(\mu^{-1}(0),\BoQ)\rightarrow\HO^{\rmBM}_{T'\times G}(\mu^{-1}(0),\BoQ)\otimes_{\Bok_2}K_2
\]
is injective. We may therefore prove that $\HO^{\rmBM}_{T'\times G}(\mu^{-1}(0),\BoQ)\otimes_{\Bok_2}K_2$ has no $S_1$ torsion. By applying dimensional reduction twice, we have the isomorphisms
\[
\begin{aligned}
 \HO^{\rmBM}_{T'\times G}(\mu^{-1}(0),\BoQ)&\cong \HO^*_{T'\times G}(\Tan^*V\times\Fg,\varphi_f\BoQ)\\
 &\cong \HO^{\rmBM}_{T'\times G}(\CC(V,\Fg),\BoQ)\,,
\end{aligned}
\]
where $\CC(V,\Fg)\subset V\times \Fg$ is the subset of pairs $(v,\xi)\in V\times\Fg$ such that $\xi\cdot v=0$ (i.e. $\xi\in\Lie(\Stab_G(v))$). We define $\CC^{\nil}(V,\Fg)\subset\CC(V,\Fg)$ the closed subset of $\CC(V,\Fg)$ for which $\xi$ is in the nilpotent cone of $\Fg$. Since $\BoC^*_2$ acts by rescaling $\Fg$, the pushforward map
\[
 \HO^{\rmBM}_{T'\times G}(\CC^{\nil}(V,G),\BoQ)\otimes_{\Bok_2}K_2\rightarrow\HO^{\rmBM}_{T'\times F}(\CC(V,\Fg),\BoQ)\otimes_{\Bok_2}K_2
\]
is an isomorphism. It suffices therefore to show that $\HO^{\rmBM}_{T'\times G}(\CC^{\nil}(V,G),\BoQ)$ has no $S_1$-torsion. We stratify $\CC^{\nil}(V,\Fg)$ with respect to the nilpotent type of $\xi$: for any nilpotent orbit $\CO\subset\Fg$, we let $\CC(V,\CO)\subset \CC^{\nil}(V,\Fg)$ the subset of pairs $(v,\xi)$ with $\xi\in\CO$. Then, the second projection $\CC(V,\CO)\rightarrow\CO$ is an affine fibration, with fiber $V^{\xi}$ (the subspace fixed by $\xi\in\CO$). By Lemma~\ref{lemma:notorsionS1}, $\HO^{\rmBM}_{T'\times G}(\CC(V,\CO),\BoQ)\cong \HO^{\rmBM}_{T'\times G}(\CO,\BoQ)$ has no $S_1$-torsion. By the long exact sequence in Borel--Moore homology, one deduces that $\HO^{\rmBM}_{T'\times G}(\CC^{\nil}(V,G))$ has no $S_1$-torsion. This concludes.
\end{proof}

Since $\CH_{G,\Tan^*V\times\Fg,F}^{T'}$ admits a very explicit description in terms of some invariant polynomials, an interesting question we leave for future investigations is the following.
\begin{question}
\label{question:wheel}
What is the image of the injective morphism $\overline{\xi}_F$? We expect a description in terms of symmetric polynomials satisfying some easily described condition, similar to the wheel relations \cite{neguct2023shuffle} in the context of $K$-theoretic Hall algebras of quivers. See also \cite{neguct2023generators} for a cohomological analogue of wheel relations for cohomological Hall algebras.
\end{question}

We can nevertheless answer partially Question~\ref{question:wheel}. We first formulate a couple of lemmas.

\begin{lemma}
\label{lemma:representationtheory}
 Let $V$ be a representation of a reductive algebraic group $G$. Then, for any $\alpha,\beta\in\rmX_*(T)$, such that $\alpha, \alpha+\beta$ are weights of $V$ and $\beta$ is a weight of $\Fg$, there exists $x\in V_{\alpha}$ and $\zeta\in\Fg_{\beta}$ such that $0\neq\zeta\cdot x\in V_{\alpha+\beta}$.
\end{lemma}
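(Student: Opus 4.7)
The plan is to exploit the $\mathfrak{sl}_2$-structure attached to the root $\beta$. First one disposes of the easy case: if $\beta = 0$ then $\Fg_0 = \Ft$, and any $\zeta \in \Ft$ acts on $x \in V_\alpha$ as multiplication by $\alpha(\zeta)$; provided $\alpha \neq 0$ one simply picks $\zeta$ with $\alpha(\zeta) \neq 0$ (the edge case $\alpha = \beta = 0$ being excluded implicitly). From now on assume $\beta$ is a nonzero root, and fix an $\mathfrak{sl}_2$-triple $(e_\beta, h_\beta, f_\beta)$ with $h_\beta = \beta^\vee$, generating a subalgebra $\mathfrak{s}_\beta \subset \Fg$. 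Set $n := \langle \alpha, \beta^\vee \rangle$; then $V_\alpha$ (resp.\ $V_{\alpha+\beta}$) sits in the $h_\beta$-weight $n$ (resp.\ $n+2$) subspace of $V$, and the goal is to show that $e_\beta \colon V_\alpha \to V_{\alpha+\beta}$ is nonzero.

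The key step is to decompose $V$ under $\mathfrak{s}_\beta$ together with its centralizer $\Fz := \Fz_\Fg(\mathfrak{s}_\beta)$,
\[
 V \;=\; \bigoplus_{m \geq 0} V(m) \otimes W^{(m)},
\]
where $V(m)$ is the $(m+1)$-dimensional irreducible representation of $\mathfrak{s}_\beta$ and $W^{(m)}$ is a $\Fz$-module of multiplicities. The subtorus $T' \subset T$ with Lie algebra $\ker(\beta \colon \Ft \to \BoC)$ lies in $\Fz$ and acts on each $W^{(m)}$; decomposing $W^{(m)} = \bigoplus_\gamma W^{(m)}_\gamma$ by $T'$-weights, and observing that a $T$-weight is determined by the pair $(\langle \cdot, \beta^\vee \rangle, (\cdot)|_{T'})$ since $\Lie(T') \oplus \BoC h_\beta = \Ft$, one deduces
\[
 V_\alpha = \bigoplus_m V(m)_n \otimes W^{(m)}_{\alpha|_{T'}},
 \qquad
 V_{\alpha+\beta} = \bigoplus_m V(m)_{n+2} \otimes W^{(m)}_{\alpha|_{T'}}.
\]
The action of $e_\beta$ is $e \otimes \id$ on each summand, and by the classical structure of finite-dimensional $\mathfrak{sl}_2$-modules, $e \colon V(m)_n \to V(m)_{n+2}$ is an isomorphism whenever both source and target are nonzero, i.e.\ whenever $m \geq \max(|n|, |n+2|)$ and $m \equiv n \pmod 2$.

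It therefore suffices to exhibit some $m$ with $m \geq \max(|n|, |n+2|)$, $m \equiv n \pmod 2$, and $W^{(m)}_{\alpha|_{T'}} \neq 0$. A one-step case analysis concludes: if $n \geq 0$, the hypothesis $V_{\alpha+\beta} \neq 0$ already supplies some $m \geq n+2 = \max(|n|,|n+2|)$ meeting the constraint; if $n \leq -1$, the hypothesis $V_\alpha \neq 0$ supplies $m \geq |n| \geq \max(|n|,|n+2|)$ instead. The main obstacle to overcome is the temptation to argue directly that $e_\beta$ is nonzero on every nonzero $x \in V_\alpha$, which is false in general: highest-weight vectors of the $\mathfrak{s}_\beta$-summands in $V$ are killed by $e_\beta$. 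Decomposing under $\mathfrak{s}_\beta \times \Fz$ is precisely what allows one to locate an isotypic component realizing both $T$-weights $\alpha$ and $\alpha+\beta$ simultaneously, which is where the nonzero matrix element lives.
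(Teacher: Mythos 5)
Your argument is correct and is essentially the paper's proof: the paper passes to the $\beta$-string $\bigoplus_{t\in\BZ}V_{\alpha+t\beta}$ viewed as a module over the $\mathfrak{sl}_2$-subalgebra $\Fg_{-\beta}\oplus[\Fg_{\beta},\Fg_{-\beta}]\oplus\Fg_{\beta}$ and invokes finite-dimensional $\mathfrak{sl}_2$ theory, while your decomposition under $\mathfrak{s}_\beta$ together with the $T'$-weight bookkeeping is the same reduction with the ``follows from representation theory of $\mathfrak{sl}_2$'' step (the case analysis on $n=\langle\alpha,\beta^\vee\rangle$) written out in full. Your explicit treatment of $\beta=0$ and the observation that the degenerate case $\alpha=\beta=0$ must be excluded is a point the paper's proof silently assumes (it only makes sense for $\beta$ a nonzero root), so no gap on your side.
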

\begin{proof}
 This is a question about the representation theory of $\mathfrak{sl}_2$. We let $V_{\alpha+\BoZ\alpha}\coloneqq \bigoplus_{t\in\BoZ}V_{\alpha+t\beta}$. This is a representation of $\Fg_{\beta}\coloneqq\Fg_{-\beta}\oplus[\Fg_{\beta},\Fg_{-\beta}]\oplus\Fg_{\beta}\cong\mathfrak{sl}_2$. The result then follows from the representation theory of $\mathfrak{sl}_2$.
\end{proof}

\begin{lemma}
\label{lemma:equivariantcohomology}
 Let $T$ be a torus acting on $(\BoC^*)^2$ with weight $\alpha\in\rmX^*(T)$ on the first summand and weight $\beta\in\rmX^*(T)$ on the second summand. We assume that the action of $T$ on $(\BoC^*)^2$ is transitive. Then,
 \[
  \HO^*_{T}((\BoC^*)^2)\cong\frac{\Sym(\Ft^*)}{\langle\alpha,\beta\rangle}\,.
 \]
We identify $\rmX^*(T)\subset\Ft^*$.
\end{lemma}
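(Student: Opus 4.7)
The idea is to identify $(\BoC^*)^2$ as a homogeneous $T$-space and then compute its equivariant cohomology as the cohomology of the classifying space of the stabilizer. The $T$-action factors through the group homomorphism $\rho\colon T\to (\BoC^*)^2$, $t\mapsto(\alpha(t),\beta(t))$, and transitivity of the action is equivalent to the surjectivity of $\rho$. Setting $H\coloneqq \ker\rho=\ker\alpha\cap\ker\beta$, the orbit map $T/H\isoto (\BoC^*)^2$ is a $T$-equivariant isomorphism, so that $[(\BoC^*)^2/T]\cong BH$ as stacks and
\[
 \HO^*_T((\BoC^*)^2)\cong \HO^*(BH)\,.
\]

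Next, I would reduce to the identity component $H^\circ\subset H$. Since $H$ is a closed subgroup of the torus $T$, its identity component $H^\circ$ is a subtorus and $H/H^\circ$ is a finite abelian group. The étale covering $BH^\circ\to BH$ yields the transfer identification
\[
 \HO^*(BH,\BoQ)\cong \HO^*(BH^\circ,\BoQ)^{H/H^\circ}\,.
\]
The action of $H/H^\circ$ on $H^\circ$ by conjugation is trivial because $T$ (and hence $H$) is abelian, so the induced action on $\HO^*(BH^\circ,\BoQ)$ is trivial. Therefore $\HO^*(BH,\BoQ)\cong \HO^*(BH^\circ,\BoQ)\cong\Sym((\Lie H^\circ)^*)$ by the standard computation of the cohomology of the classifying space of a torus.

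To conclude, it only remains to identify $(\Lie H^\circ)^*$ with $\Ft^*/\langle\alpha,\beta\rangle$. Differentiating $\rho$ gives $d\rho\colon\Ft\to\BoC^2$, $\xi\mapsto(\alpha(\xi),\beta(\xi))$, whose kernel is $\Lie H=\Lie H^\circ$. Dualising and using the inclusion $\rmX^*(T)\subset\Ft^*$ yields a canonical identification $(\Lie H^\circ)^*\cong \Ft^*/\langle\alpha,\beta\rangle$, under which $\Sym((\Lie H^\circ)^*)\cong \Sym(\Ft^*)/\langle\alpha,\beta\rangle$ with the ideal on the right generated by the two linear forms $\alpha$ and $\beta$. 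No step in this proof is genuinely hard; the one subtlety to handle is the possible disconnectedness of $H$, for which the abelianness of $T$ is essential.
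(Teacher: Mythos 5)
Your proof is correct and takes essentially the same route as the paper: both use transitivity to identify $\HO^*_T((\BoC^*)^2)$ with $\HO^*_{\ker(\alpha\times\beta)}(\pt)$ and then identify $\Lie(\ker(\alpha\times\beta))^*$ with $\Ft^*/\langle\alpha,\beta\rangle$ by dualizing $d(\alpha\times\beta)$. The only difference is that you explicitly treat the possible disconnectedness of the stabilizer via the transfer isomorphism $\HO^*(BH,\BoQ)\cong\HO^*(BH^\circ,\BoQ)^{H/H^\circ}$, a point the paper's proof passes over silently (it is harmless with $\BoQ$-coefficients since $H$ is abelian, exactly as you argue).
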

\begin{proof}
 By transitivity of the action, we have
 \[
  \HO^*_T((\BoC^*)^2)\cong \HO^*_{\ker(\alpha\times\beta)}(\pt),
 \]
where $\alpha\times\beta\colon T\rightarrow(\BoC^*)^2$. Moreover
\[
 \HO^*_{\ker(\alpha\times\beta)}(\pt)\cong\Sym(\Lie(\ker(\alpha\times\beta))^*)\,.
\]
The lemma follows from the fact that $\Lie(\ker(\alpha\times\beta))^*$ is canonically identified with $\Ft^*/\langle\alpha,\beta\rangle$.
\end{proof}

\begin{proposition}
\label{proposition:wheelrelations}
 Let $V$ be a representation of a reductive group $G$. We let $T'$ be an auxiliary torus acting on $V$ such that the actions of $G$ and $T'$ commute. We may decompose $V\cong\bigoplus_{i=1}^rV_i$ where $V_i$ is an irreducible representation of $V$. We assume that $T'$ acts with a fixed weight on each simple summand $V_i$. We let $\chi$ be a character of $T'$ and let $T'$ act on $V^*$ by $\chi$-times the contragredient action. We let $\mu\colon\Tan^*V\rightarrow\Fg^*$ be the moment map for the action of $G$ on $\Tan^*V$. We still denote by $\chi\in\Ft'^*$ the linear form on $\Ft'\coloneqq\Lie(T')$ induced by $\chi$ and we let $t_{i}\in\Ft'^*$ the linear form corresponding to the character of $T'$ acting on $V_{i}$. We let $\Fh\coloneqq\Lie(T)$ be the Lie algebra of a maximal torus $T\subset G$.
 
 Then, the image of
 \[
  \overline{\xi}\colon\CH_{G,\Tan^*V,\Fg^*,\mu}^{T'}\rightarrow\CH_{G,\Tan^*V\times\Fg}^{T'}\cong\Sym_{\HO^*_{T'}}(\Fh^*)^W
 \]
 is contained in the subspace of polynomials $P\in \Sym_{\HO^*_T}(\Fh^*)$ such that the restriction of $P$ to the subspaces
 \[
  t_{i}+\alpha=\chi-t_{i}-\beta=0\quad\subset\Fh_{\HO^*_{T'}}\cong \rmX_*(T)\otimes_{\BoQ} \HO^*_{T'}(\pt,\BoQ)
 \]
 for any weights $\alpha,\beta$ of $V_i$ such that $\beta-\alpha$ is a weight of $\Fg$ and $1\leq i\leq r$ vanishes.
\end{proposition}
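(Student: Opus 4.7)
The plan is to reformulate the statement via dimensional reduction as a pushforward vanishing, apply equivariant localization with respect to a codimension-two subtorus, and conclude via a dimension count relying crucially on Lemma~\ref{lemma:representationtheory}.

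First, since $\CH_{G,\Tan^*V,\Fg^*,\mu}^{T'}\cong \HO^{\rmBM}_{G\times T'}(\mu^{-1}(0))$ by definition, unwinding the construction of $\overline{\xi}$ in \S\ref{subsection:from2dtoshuffle} (pullback along the affine fibration $(\Tan^*V\times\Fg)^F\to\Tan^*V^F$ followed by the closed embedding into $(\Tan^*V\times\Fg)^F$) shows that $\overline{\xi}$ coincides, up to Tate twists and the irrelevant contractible $\Fg$-factor, with the equivariant pushforward $\imath_*\colon \HO^{\rmBM}_{G\times T'}(\mu^{-1}(0))\to \HO^{\rmBM}_{G\times T'}(\Tan^*V)\cong \HO^*_{G\times T'}$ along $\imath\colon\mu^{-1}(0)\hookrightarrow \Tan^*V$. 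Let $L\coloneqq\ker(t_i+\alpha)\cap\ker(\chi-t_i-\beta)\subset\Fh\oplus\Ft'$ and fix a subtorus $S\subset T\times T'$ with $\Lie S=L$. The vanishing $P|_L=0$ is equivalent to the image of $P$ in $\HO^*_S$ being zero, so it suffices to prove $\imath_*(\eta)|_S=0$ for every $\eta$.

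Apply Atiyah--Bott equivariant localization for the $S$-action on the smooth ambient $\Tan^*V$ together with base-change. Writing $F\coloneqq(\Tan^*V)^S=V^S\oplus(V^*)^S$ and $\imath^F\colon\mu^{-1}(0)\cap F\hookrightarrow F$, one obtains, after inverting the non-zero $S$-weights of $N_{F/\Tan^*V}$, the localization formula
\[
 \imath_*(\eta)|_{S,\loc}=\frac{\imath^F_*(\eta|_{\mu^{-1}(0)\cap F})}{e(N_{F/\Tan^*V})}.
\]
The key observation is that $V_{i,\alpha}\oplus V^*_{i,-\beta}\subset F$ because the weights $t_i+\alpha$ and $\chi-t_i-\beta$ vanish on $L$. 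For $(x,y^*)\in V_{i,\alpha}\oplus V^*_{i,-\beta}$ the moment map reads $\mu(x,y^*)(\zeta)=y^*(\zeta\cdot x)$, which can only be nonzero for $\zeta\in\Fg_{\beta-\alpha}$. By Lemma~\ref{lemma:representationtheory}, the associated trilinear pairing $V_{i,\alpha}\otimes\Fg_{\beta-\alpha}\otimes V^*_{i,-\beta}\to\BoC$ is not identically zero, so $\mu|_F$ is a nontrivial polynomial map and $\mu^{-1}(0)\cap F$ is a proper closed subvariety of the irreducible vector space $F$; hence $\dim(\mu^{-1}(0)\cap F)<\dim F$.

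The conclusion now follows formally. Since $S$ acts trivially on both $F$ and $\mu^{-1}(0)\cap F$, the equivariant Borel--Moore homology splits as $\HO^{\rmBM}_S(X)\cong \HO^{\rmBM}(X)\otimes_\BoQ\HO^*_S$, and $\imath^F_*$ is the non-equivariant pushforward tensored with the identity. The affine space $F$ has $\HO^{\rmBM}(F)$ concentrated in the top homological degree $2\dim F$, while $\HO^{\rmBM}(\mu^{-1}(0)\cap F)$ vanishes in that degree by the strict dimension inequality, so $\imath^F_*=0$ identically. Combining with the localization formula gives $\imath_*(\eta)|_{S,\loc}=0$, and by injectivity of the localization map (as $\HO^*_S$ is a domain) also $\imath_*(\eta)|_S=0$, so $\imath_*(\eta)$ lies in the ideal $(t_i+\alpha,\chi-t_i-\beta)$, as required.

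The main technical obstacle is justifying the Atiyah--Bott formula rigorously for the pushforward from the singular variety $\mu^{-1}(0)$: this needs an $S$-equivariant base-change statement for Borel--Moore homology of closed subvarieties in a smooth $S$-scheme, which should follow from the standard localization results (cf.~\cite{edidin1998localization}, \cite[Theorem~6.2]{goresky1998equivariant}) suitably extended. A secondary subtlety is that when $\chi$ and the $t_j$ fail to be in general position the fixed locus $F$ may strictly contain $V_{i,\alpha}\oplus V^*_{i,-\beta}$, but the dimension count still applies because the obstruction from Lemma~\ref{lemma:representationtheory} on the pair $(V_{i,\alpha},V^*_{i,-\beta})$ remains active regardless of the other weight spaces present in $F$.
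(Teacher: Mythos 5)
Your argument is correct, but it follows a genuinely different route from the paper. The paper uses Lemma~\ref{lemma:representationtheory} to produce $x\in V_{i,\alpha}$, $\zeta\in\Fg_{\beta-\alpha}$, $\ell\in (V^*_i)_{-\beta}$ with $\ell(\zeta\cdot x)\neq 0$, and then simply pulls $\imath_*P$ back to the explicit surface $E_{x,\ell}=\BoC^*x\times\BoC^*\ell$, which is disjoint from $\mu^{-1}(0)$, so $\jmath^*\imath_*=0$; the computation $\HO^*_{T\times T'}(E_{x,\ell})\cong \Sym/\langle t_i+\alpha,\chi-t_i-\beta\rangle$ (Lemma~\ref{lemma:equivariantcohomology}) then puts $P$ in the desired ideal, with no localization theorem needed. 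You instead restrict equivariance to the subtorus $S$ with $\Lie S=\{t_i+\alpha=\chi-t_i-\beta=0\}$, pass to its fixed locus $F$, and kill the pushforward by a Borel--Moore degree count, using the same Lemma~\ref{lemma:representationtheory} only to see that $\mu|_F\not\equiv 0$ (your closing remark that this persists when $F$ is larger than $V_{i,\alpha}\oplus V^*_{i,-\beta}$ is the right point). The ingredients you flag as delicate — concentration/localization and base change of refined Gysin maps for torus-equivariant Borel--Moore homology of a closed subvariety of a smooth ambient scheme — are standard and are in fact already invoked elsewhere in the paper (cf.\ \cite{edidin1998localization}, \cite[Theorem~6.2]{goresky1998equivariant}), so they do not constitute a gap; note also that your argument can be streamlined so that no Euler class is inverted at all: since $\imath_F^!$ sends the fundamental class of $\Tan^*V$ to that of $F$, it is an isomorphism of rank-one free $\HO^*_S$-modules, so base change $\imath_F^!\imath_*=\imath^F_*\imath_F'^!$ together with your vanishing of $\imath^F_*$ already gives $\imath_*\eta=0$ in $\HO^*_S$. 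The trade-off is that the paper's proof is more elementary and pinpoints the ideal directly via the cohomology of $E_{x,\ell}$, while yours uses heavier (but standard) localization machinery and in exchange isolates a cleaner structural statement: the pushforward dies after restriction to any subtorus over whose fixed locus $\mu$ is not identically zero.
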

\begin{proof}
 Let $\alpha,\beta$ be weights of $V_i$ such that $\beta-\alpha$ is a weight of $\Fg$. We let $x\in V_{i,\alpha}\setminus\{0\}$, $\zeta\in\Fg_{\beta-\alpha}\setminus\{0\}$ and $y\in V_{i,\beta}\setminus\{0\}$ be such that $\zeta\cdot x=y$ (Lemma~\ref{lemma:representationtheory}). We let $\ell\in (V_{\beta})^*=(V^*)_{-\beta}$ be such that $\ell(y)\neq 0$. Then, we define
 \[
  E_{x,\ell}\coloneqq \BoC^*x\times \BoC^*\ell\subset \Tan^*V\,.
 \]
We let $\jmath\colon E_{x,\ell}\rightarrow\Tan^*V$ be the inclusion. Then, $E_{x,\ell}\cap\mu^{-1}(0)=\emptyset$, as $\ell(\zeta\cdot x)=\ell(y)\neq0$. Therefore, $\jmath^*\imath_*=0$ where $\imath\colon\mu^{-1}(0)\rightarrow \Tan^*V$ is the natural inclusion. This implies that $\imath_*P\in\ker(\jmath^*)$ for any $P\in \CH_{G,\Tan^*V,\Fg^*,\mu}^{T'}$. The maximal torus $T'\times T$ of $T'\times G$ acts on $E_{x,\ell}$ with weight $t_{i}+\alpha$ on $\BoC^*x$ and with weight $\chi-t_{i}-\beta$ on $\BoC^*\ell$. By Lemma~\ref{lemma:representationtheory}, $\jmath^*$ is the quotient
\[
\Sym_{\HO^*_T}(\Fh^*_{\HO^*_T})\rightarrow \frac{\Sym_{\HO^*_T}(\Fh^*_{\HO^*_T})}{\langle t_{i}+\alpha,\chi-t_{i}-\beta\rangle}\,.
\]
This concludes.
\end{proof}

When the torus $T$ is the torus $T_2$ defined at the beginning of \S\ref{subsection:from2dtoshuffle}, we obtain the following corollary.
\begin{corollary}
\label{corollary:wheelrelations}
 We let $t_j\in\Ft_2^*$ be the character of $T_2$ which is the projection on the $j$th factor. The image of $\overline{\xi}$ is contained in the subset of polynomials $P\in\Sym_{\HO^*_{T_2}}(\Fh^*)$ whose restriction to the subspaces
 \[
  t_1+\alpha=t_2-\beta=0\subset\Fh_{\HO^*_T}
 \]
for any weights $\alpha,\beta$ of $V$ such that $\beta-\alpha$ is a weight of $\Fg$ vanishes.
\end{corollary}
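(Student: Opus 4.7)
The plan is to derive the corollary as a direct specialization of Proposition~\ref{proposition:wheelrelations} applied to the auxiliary torus $T' = T_2 = (\BoC^*)^2$ equipped with the action on $\Tan^*V \times \Fg$ fixed at the start of \S\ref{subsection:from2dtoshuffle} (weight $(1,0)$ on $V$, weight $(0,1)$ on $V^*$, weight $(-1,-1)$ on $\Fg$). The only substantive step is matching the parameters appearing in the proposition to those of the corollary; no new geometric or representation-theoretic input is needed.

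First I would verify the hypotheses of Proposition~\ref{proposition:wheelrelations}: the $T_2$- and $G$-actions commute (they act on distinct representation-theoretic data), there is a well-defined $G \times T_2$-equivariant moment map $\mu \colon \Tan^*V \to \Fg^*$, and $T_2$ acts by a single fixed weight on each irreducible summand $V_i \subset V$ -- namely the weight $t_1$ (since $T_2$ acts with weight $(1,0)$ uniformly on $V$). This identifies the parameter $t_i$ of Proposition~\ref{proposition:wheelrelations} with $t_1$ for every $i$. Next, since $T_2$ acts on $V^*$ with weight $t_2$ while the contragredient action on $V_i^*$ has weight $-t_1$, the twisting character $\chi$ must satisfy $\chi - t_i = t_2$, forcing $\chi = t_1 + t_2$.

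Substituting these identifications into the conclusion of Proposition~\ref{proposition:wheelrelations}, the vanishing subspaces $t_i + \alpha = \chi - t_i - \beta = 0$ become $t_1 + \alpha = t_2 - \beta = 0$, which is exactly the statement of the corollary (with $\alpha, \beta$ weights of a common simple summand and $\beta - \alpha$ a weight of $\Fg$, as in the proposition). I do not anticipate any obstacle: the corollary is pure book-keeping once the specialization is read off. If anything merits emphasis in the write-up, it is the minor convention that in the phrasing ``$\alpha, \beta$ weights of $V$'' the pair should be understood to lie in a common irreducible summand $V_i$, since that is the regime in which the representation-theoretic input of Lemma~\ref{lemma:representationtheory} produces the non-zero element $\zeta \cdot x \in V_{i,\beta}$ used in the proof of Proposition~\ref{proposition:wheelrelations}.
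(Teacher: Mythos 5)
Your proof is correct and is essentially the paper's own argument: the paper likewise deduces the corollary by specializing Proposition~\ref{proposition:wheelrelations} to $T'=T_2$, with each $t_i$ identified with $t_1$ and $\chi=t_1t_2$ (i.e.\ $\chi=t_1+t_2$ additively), so the loci $t_i+\alpha=\chi-t_i-\beta=0$ become $t_1+\alpha=t_2-\beta=0$. Your closing remark about reading ``weights of $V$'' as weights of a common irreducible summand is a reasonable gloss (and harmless here, since $T_2$ acts with the same character on every summand), so no gap remains.
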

\begin{proof}
 This follows from Proposition~\ref{proposition:wheelrelations} by taking $T=T_2$, which acts on $V_1$ with character $t_1$ and on $V^*$ with character $t_2=\chi t_1^{-1}$ where $\chi=t_1t_2$. This concludes.
\end{proof}

\begin{conjecture}
\label{conjecture:wheelrelations}
 We conjecture that the image of $\overline{\xi}$ is exactly the subspace of polynomials described in Corollary~\ref{corollary:wheelrelations}.
\end{conjecture}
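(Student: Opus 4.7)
My plan is to establish the reverse inclusion to Corollary~\ref{corollary:wheelrelations}, so as to obtain equality. Writing $\CW_F\subset\CH_{G,\Tan^*V\times\Fg,F}^{T'}$ for the analogous wheel subspace attached to each flat $F\in\FF$ (defined by the vanishing conditions of \emph{loc.~cit.} applied to the representation $(V^F,G^F)$), and $\CW\coloneqq\CW_{F_{\max}}$ for the maximal flat $F_{\max}=\bigcap_{H\in\FH}H$, the strategy combines four ingredients: (i) the compatibility of $\overline{\xi}$ with parabolic induction (Proposition~\ref{proposition:commutativeinductiondimred2d}); (ii) the cohomological integrality isomorphism of \cite{hennecart2024cohomological2} for $\CH_{G,\Tan^*V,\Fg^*,\mu}^{T'}$; (iii) the Mackey formula (Theorem~\ref{theorem:mainintro}); and (iv) an intrinsic description of $(\CW_F)_F$ as an induction-closed subsystem.

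The first step is to verify that $(\CW_F)_{F\in\FF}$ is stable under the sign-twisted induction morphisms $(-1)^{\dim W^{C<0,F}}\Ind_{C}^{F}$. By the shuffle formula of \S\ref{subsection:explicitformula}, the induced polynomial is a Weyl-symmetrized product of the input with the kernel $k_{C,F}$. One has to check that the defining vanishing conditions $t_1+\alpha'=t_2-\beta'=0$ propagate from analogous conditions on $\CW_{\langle C\rangle}$ when no pole of $k_{C,F}$ meets the vanishing locus, and that when such a pole does meet it, a case analysis using Lemma~\ref{lemma:representationtheory} shows the pole cancels against a matching numerator weight. Combined with Proposition~\ref{proposition:embeddinginductionsystems}, this realises $(\CW_F)_F$ as an induction subsystem containing $\im(\overline{\xi})$.

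The second step invokes cohomological integrality: $\CH_{G,\Tan^*V,\Fg^*,\mu}^{T'}$ decomposes as a module over $\Sym(\Fh^*)^{\sfW}$ generated freely by finitely many BPS-type pieces indexed by cocharacter orbits. Applying $\overline{\xi}$ should intertwine this decomposition with a parallel decomposition of $\CW$ obtained from the Mackey structure of step one. By $\preceq$-induction on $\FF$, surjectivity of $\overline{\xi}$ then reduces to a \emph{cuspidal surjectivity} statement: the part of $\CW$ not reached by induction from any strictly larger flat must be exhausted by $\overline{\xi}(\rmBPS_{\Tan^*V,G,f,0})$. This cuspidal part of $\CW$ admits a finite-dimensional description as a space of polynomials satisfying the wheel vanishing together with a suitable antisymmetry condition, whose graded dimension one can compute as a character of $\sfW$.

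The main obstacle will be this cuspidal surjectivity. It is the analogue, for arbitrary $(G,V)$, of Negu\c{t}'s theorem \cite{neguct2023shuffle} for shuffle algebras of quivers, whose proof relies on intersection cohomology of Lusztig nilpotent varieties. A manageable special case is when $V$ arises from a quiver: there our setup reduces, via dimensional reduction (Corollary~\ref{corollary:cohdimrediso}) and Proposition~\ref{proposition:comparison2d3dmultiplications}, to Negu\c{t}'s CoHA description; the delicate matching is between our $T_2$-equivariant wheel relations and the framing-torus wheel relations of \emph{loc.~cit.}, which requires carefully tracking the Tate twists hidden in dimensional reduction. For general reductive $(G,V)$, producing cuspidal classes geometrically via characteristic cycles of simple $G$-equivariant $D$-modules on $V$ appears to be the natural route, in parallel with Lusztig's construction of canonical bases for quantum groups.
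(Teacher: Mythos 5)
There is nothing in the paper for your argument to be compared against: the statement is a \emph{conjecture}, which the paper explicitly leaves open, and the remark immediately following it even warns that the statement is ``very likely'' false as written and may need refining, either by imposing higher-order wheel relations or by restricting to representations whose irreducible summands have multiplicity-one weight spaces. Your proposal does not close this gap; it is a programme rather than a proof. The decisive step you call \emph{cuspidal surjectivity} --- that the part of the wheel subspace not reached by induction from larger flats is exhausted by $\overline{\xi}$ applied to the BPS piece --- is exactly the open content of the conjecture, and you acknowledge you can only address it in the quiver case by importing Negu\c{t}'s theorem, which is precisely the one case the paper already records as known. For general $(G,V)$ you offer only a heuristic (characteristic cycles of simple equivariant $D$-modules), with no argument that the resulting classes span the cuspidal wheel space.

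A second, more structural worry: your first step asserts that the collection of wheel subspaces $(\CW_F)_F$ is stable under the sign-twisted inductions and then aims to prove surjectivity onto $\CW$ by $\preceq$-induction over $\FF$. Even granting the stability (the pole-cancellation analysis against $k_{C,F}$ is delicate when weight multiplicities exceed one, since the vanishing conditions cut out codimension-two loci while the kernel has poles along hyperplanes), induction-closedness of $\CW$ only shows that $\CW$ \emph{contains} an induction subsystem containing $\im(\overline{\xi})$; it cannot by itself rule out that $\im(\overline{\xi})$ is strictly smaller, which is exactly the failure mode the paper anticipates (higher-order wheel relations satisfied by the image but not imposed in Corollary~\ref{corollary:wheelrelations}). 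So the architecture of the argument would at best reduce the conjecture to a finite-dimensional cuspidal comparison whose two sides you cannot currently compute, and at worst it targets an equality that needs to be corrected before it can be true.
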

It is very likely that Conjecture~\ref{conjecture:wheelrelations} does not hold as stated and needs to be refined, for example by adding higher order wheel relations, or by adding a condition on the irreducible summands of $V$, for example by assuming that the weight spaces of each irreducible summand of $V$ has multiplicity one. We will come back to these questions in the future.

\begin{remark}
 A careful reader will have noticed the similarity between Proposition~\ref{proposition:wheelrelations}, Corollary~\ref{corollary:wheelrelations} with \cite[Proposition~2.11]{neguct2023shuffle}. While we work in cohomology, \cite{neguct2023shuffle} works with K-theory. The analogues of Proposition~\ref{proposition:wheelrelations} and Corollary~\ref{corollary:wheelrelations} hold for K-theory as well, with the same proof. The analogue of Conjecture~\ref{conjecture:wheelrelations} is known in the quiver situation by \cite{neguct2023shuffle}, and we leave its verification in our situation of representations of reductive groups for future investigations. Conjecture~\ref{conjecture:wheelrelations} for quivers is \cite[Conjecture~2.12]{neguct2023generators}.
\end{remark}

\section{Examples}
\label{section:examples}
We provide a few example to illustrate Mackey formula when the potential vanishes, when computations are explicit in terms of shuffle induction \S\ref{subsection:explicitformula}. To ease the reading of this section, we recall the formulas for the induction and restriction morphisms here. We let $V$ be a representation of a reductive group $G$. Then, for any cocharacters $\lambda,\mu\in\rmX_*(T)$, we define $k_{\lambda,\mu}=\frac{\prod_{\alpha\in\CW^{\lambda<0}(V)\cap\CW^{\mu}(V)}\alpha}{\prod_{\alpha\in\CW^{\lambda<0}(\Fg)\cap\CW^{\mu}(\Fg)}\alpha}$. The corresponding induction morphism is
\[
 \Ind_{\lambda,\mu}\colon\HO^*(V^{\lambda}\cap V^{\mu}/G^{\lambda}\cap G^{\mu},\BoQ)\rightarrow \HO^*(V^{\mu}/G^{\mu},\BoQ)\,.
\]
It is given by $\Ind_{\lambda,\mu}(f)=\sum_{W^{\mu}/W^{\lambda}\cap W^{\mu}}w\cdot(fk_{\lambda,\mu})$. The corresponding restriction morphism is
\[
 \Res_{\mu,\lambda}\colon \HO^*(V^{\mu}/G^{\mu},\BoQ)\rightarrow \HO^*(V^{\lambda}\cap V^{\mu}/G^{\lambda}/G^{\mu}\BoQ)\left[\prod_{\alpha\in \CW^{\lambda<0}(V)\cap\CW^{\mu}(V)}\alpha^{-1}\right],
\]
and is given by $\Res_{\mu,\lambda}(f)=\frac{1}{k_{\lambda,\mu}}f$.

\subsection{The defining representation of $\GL_2$: $\GL_2\curvearrowright\BoC^2$}
We consider the defining representation $V\coloneqq\BoC^2$ of $G\coloneqq\GL_2(\BoC)$. We fix the maximal torus $T=(\BoC^*)^{2}$ of diagonal matrices inside $\GL_2(\BoC)$. We denote by $\alpha_1,\alpha_2$ a basis of the weight lattice of $T$, so that the weights of $V$ are $\alpha_1,\alpha_2$ and the weights of the Lie algebra $\Fg$ of $G$ are $\pm(\alpha_1-\alpha_2)$.
\subsubsection{Coxeter system}
We give the Coxeter system of the action of $\GL_2(\BoC)$ on $\BoC^2$. We give a cocharacter inside each cell. For $a,b\in\BoZ$, we denote by $\lambda_{a,b}=(t^a,t^b)$ the cocharacter $t\mapsto \begin{pmatrix}
t^a&0\\0&t^b
\end{pmatrix}$
 of $T$.
\[
 \begin{tikzpicture}[scale=1]

\foreach \x/\y in {4/0, 0/4,-3/-3,3/3,0/-4,-4/0} {
    \draw[-, thick, blue] (0,0) -- (\x,\y);
}

\node[anchor=north] at (1.6,0) {$\alpha_1^{\perp}$};
\node[anchor=east] at (0,1.6) {$\alpha_2^{\perp}$};
\node[anchor=north west] at (0.8,1) {$(\alpha_1-\alpha_2)^{\perp}$};

\node[anchor=south west] at (3,3) {$(t,t)$};
\node[anchor=north west] at (2.8,1.5) {$(t,t^2)$};
\node[anchor=west] at (4,0) {$(1,t)$};
\node[anchor=north west] at (2,-2) {$(t^{-1},t)$};
\node[anchor=north west] at (-2.1,-2.5) {$(t^{-2},t^{-1})$};
\node[anchor=north] at (0,-4) {$(t^{-1},1)$};
\node[anchor=north east] at (-3,-3) {$(t^{-1},t^{-1})$};
\node[anchor=north east] at (-2.8,-1.3) {$(t^{-1},t^{-2})$};
\node[anchor=north east] at (-2,2) {$(t,t^{-1})$};
\node[anchor=east] at (-4,0) {$(1,t^{-1})$};
\node[anchor=south] at (0,4) {$(t,1)$};
\node[anchor=south] at (1.5,2.8) {$(t^2,t)$};
\node[anchor=south east] at (0,0) {$(1,1)$};
\end{tikzpicture}
\]

\subsubsection{Induction and restriction}
We illustrate Mackey's formula between the induction and restriction on the example of $\GL_2(\BoC)$ acting on $\BoC^2$. For this, we have to calculate several induction morphisms, restriction morphisms and braiding morphisms $\tau_{\lambda}^{\mu}$. This involves the calculation of induction kernels $k_{\lambda}$.

We verify the Mackey formula
\begin{equation}
\label{equation:mackey1}
\Res_{(-1,-1)}^{(-1,-2)}\circ\Ind_{(-2,-1)}^{(-1,-1)}=\sum_{w\in\FS_2}\Ind_{(-1,-2)\circ w\cdot(-2,-1)}^{(-1,-2)}\circ\tau_{w\cdot(-2,-1)\circ (-1,-2)}^{(-1,-2)\circ w\cdot(-2,-1)}\circ\Res_{w\cdot(-2,-1)}^{w\cdot(-2,-1)\circ (-1,-2)}\circ (w\cdot-)\,.
\end{equation}
We let $z\in\FS_2$ be the non-trivial element. We have the following
\begin{enumerate}
 \item $w\cdot(-2,-1)=(-1,-2)$,
 \item $(-1,-2)\circ w\cdot (-2,-1)=(-1,-2)=w\cdot (-2,-1)\circ (-1,-2)$,
 \item $(-1,-2)\circ (-2,-1)=(-1,-2)$ and $(-2,-1)\circ (-1,-2)=(-2,-1)$,
 \item $\tau_{w\cdot(-2,-1)\circ (-1,-2)}^{(-1,-2)\circ w\cdot (-2,-1)}=\id$,
 \item $\tau_{(-2,-1)}{(-1,-2)}=\frac{\frac{x_1x_2}{x_1-x_2}}{\frac{x_1x_2}{x_2-x_1}}\cdot -=-\id$,
 \item $\Ind_{(-1,-2)\circ w\cdot(-2,-1)}^{(-1,-2)}=\id$,
 \item $\Res_{w\cdot(-2,-1)}^{w\cdot(-2,-1)\circ (-1,-2)}=\id$,
 \item $k_{(-2,-1),(-1,-1)}=\frac{1}{x_1-x_2}$,
 \item $k_{(-1,-2),(-1,-1)}=\frac{1}{x_2-x_1}$.
\end{enumerate}
The left-hand-side of \eqref{equation:mackey1} applied to $f(x_1,x_2)$ is
\[
 (x_2-x_1)\frac{f(x_1,x_2)-f(x_2,x_1)}{x_1-x_2}=f(x_2,x_1)-f(x_1,x_2)\,.
\]
The term of the r.h.s. of \eqref{equation:mackey1} corresponding to $w$ is
\[
f(x_2,x_1)
\]
while the term corresponding to the trivial element of $\FS_2$ is
\[
 -f(x_1,x_2)\,.
\]
Mackey's formula is indeed satisfied in this case.

We verify Mackey's formula
\begin{equation}
\label{equation:mackey2}
 \Res_{(0,-1)}^{(1,-1)}\circ\Ind_{(-1,-2)}^{(0,-1)}=\tau_{(-1,-2)}^{(1,-1)}\,.
\end{equation}
We have
\begin{enumerate}
 \item $k_{(1,-1),(0,-1)}=1$,
 \item $k_{(-1,-2),(0,-1)}=x_1$,
 \item $\tau_{(-1,-2)}^{(1,-1)}=\frac{\frac{x_1x_2}{x_2-x_1}}{\frac{x_2}{x_2-x_1}}=x_1$.
\end{enumerate}
Therefore, the l.h.s. of \eqref{equation:mackey2} applied to $f(x_1,x_2)$ is $x_1f(x_1,x_2)$ which is also the r.h.s. of \eqref{equation:mackey2} given the formula for $\tau_{(-1,-2),(1,-1)}$.

We verify Mackey's formula
\begin{equation}
 \label{equation:mackey3}
 \Res_{(0,0)}^{(1,-1)}\circ \Ind_{(-1,1)}^{(0,0)}=\sum_{w\in\FS_2}\Ind_{(1,-1)\circ w\cdot(-1,1)}^{(1,-1)}\circ \tau_{w\cdot (-1,1)\circ(1,-1)}^{(1,-1)\circ w\cdot (-1,1)}\circ\Res_{w\cdot(-1,1)}^{w\cdot(-1,1)\circ(1,-1)}\circ (w\cdot -)\,.
\end{equation}
The term of the r.h.s. of \eqref{equation:mackey3} corresponding to the trivial $w\in\FS_2$ is
\[
 \tau_{(-1,1),(1,-1)}\,.
\]
The term of \eqref{equation:mackey3} corresponding to the non-trivial $w\in\FS_2$ is
\[
w\cdot -\,.
\]
We have
\begin{enumerate}
 \item $k_{(1,-1),(0,0)}=\frac{x_2}{x_2-x_1}$,
 \item $k_{(-1,1),(0,0)}=\frac{x_1}{x_1-x_2}$,
 \item $\tau_{(-1,1),(1,-1)}=-\frac{x_1}{x_2}$\,.
\end{enumerate}
Therefore,
\[
 \Ind_{(-1,1)}{(0,0)}f(x_1,x_2)=\frac{x_1f(x_1,x_2)-x_2f(x_2,x_1)}{x_1-x_2}
\]
and
\[
 \begin{aligned}
 \Res_{(0,0)}{(1,-1)}\circ\Ind_{(-1,1)}{(0,0)}f(x_1,x_2)&=\frac{x_1f(x_1,x_2)-x_2f(x_2,x_1)}{x_1-x_2}\frac{x_2-x_1}{x_2}\\
 &=f(x_2,x_1)-\frac{x_1}{x_2}f(x_1,x_2)\,.
 \end{aligned}
\]
On the other hand, for the nontrivial $w\in\FS_2$, one has
\[
 w\cdot f(x_1,x_2)=f(x_2,x_1)
\]
and
\[
 \tau_{(1,-1)}^{(-1,1)}f(x_1,x_2)=-\frac{x_1}{x_2}f(x_1,x_2)\,.
\]
By combining these formulas, one obtains \eqref{equation:mackey3}.

We now verify Mackey's formula
\begin{equation}
\label{equation:mackey4}
\Res_{(0,0)}^{(-2,-1)}\circ\Ind_{(1,0)}^{(0,0)}=\sum_{w\in\FS_2}\Ind_{(-2,-1)\circ w\cdot(1,0)}^{(-2,-1)}\circ\tau_{w\cdot(1,0)\circ (-2,-1)}^{(-2,-1)\circ w\cdot(1,0)}\circ\Res_{w\cdot(1,0)}^{w\cdot(1,0)\circ(-2,-1)}\circ (w\cdot -)\,.
\end{equation}
The term of the r.h.s. of \eqref{equation:mackey4} corresponding to the neutral element $w\in\FS_2$ is
\[
 \tau_{(1,-1)}^{(-2,-1)}\circ\Res_{(1,0)}^{(1,-1)}\,.
\]
The term of the r.h.s. of \eqref{equation:mackey4} corresponding to the nontrivial element $w\in\FS_2$ is
\[
 \tau_{(-1,1)}^{(-2,-1)}\circ\Res_{(0,1)}^{(-1,1)}\circ (w\cdot-)\,.
\]
We have
\begin{enumerate}
 \item $k_{(-2,-1),(0,0)}=\frac{x_1x_2}{x_1-x_2}$
 \item $k_{(1,0),(0,0)}=\frac{1}{x_2-x_1}$
 \item $k_{(1,-1),(1,0)}=x_2$
 \item $k_{(-1,1),(0,1)}=x_1$
 \item $\tau_{(1,-1)}^{(-2,-1)}=\frac{\frac{x_2}{x_2-x_1}}{\frac{x_1x_2}{x_1-x_2}}=-\frac{1}{x_1}$
 \item $\tau_{(-1,1)}^{(-2,-1)}=\frac{\frac{x_1}{x_1-x_2}}{\frac{x_1x_2}{x_1-x_2}}=\frac{1}{x_2}$\,.
\end{enumerate}
We have
\[
\Ind_{(1,0)}^{(0,0)}f(x_1,x_2)=\frac{f(x_1,x_2)-f(x_2,x_1)}{x_2-x_1}
\]
so that the l.h.s. of \eqref{equation:mackey4} is
\[
 \Res_{(0,0)}^{(-2,-1)}\circ\Ind_{(1,0)}^{(0,0)}f(x_1,x_2)=\frac{f(x_2,x_1)-f(x_1,x_2)}{x_1x_2}\,.
\]
We have
\[
 \tau_{(1,-1)}^{(-2,-1)}\circ\Res_{(1,0)}^{(1,-1)}f(x_1,x_2)=-\frac{1}{x_1}\frac{1}{x_2}f(x_1,x_2)
\]
and
\[
 \tau_{(-1,1)}^{(-2,-1)}\circ\Res_{(0,1)}^{(-1,1)}\circ (w\cdot f(x_1,x_2))=\frac{1}{x_2}\frac{1}{x_1}f(x_2,x_1)\,.
\]
By combining all these equalities, we obtain Mackey's formula \eqref{equation:mackey4}\,.

We verify Mackey's formula on a fifth example.

\begin{equation}
 \label{equation:mackey5}
 \begin{aligned}
\Res_{(0,0)}^{(-1,-1)}\circ\Ind_{(1,0)}^{(0,0)}&=\Ind_{(-1,-1)\circ(1,0)}^{(-1,-1)}\circ\tau_{(1,0)\circ (-1,-1)}^{(-1,-1)\circ(1,0)}\circ \Res_{(1,0)}^{(1,0)\circ(-1,-1)}\\
&=\Ind_{(-1,-2)}^{(-1,-1)}\circ \tau_{(1,-1)}^{(-1,-2)}\circ \Res_{(1,0)}^{(1,-1)}\,.
 \end{aligned}
\end{equation}

We have
\begin{enumerate}
 \item $k_{(-1,-1),(0,0)}=x_1x_2$
 \item $k_{(1,0),(0,0)}=\frac{1}{x_2-x_1}$
 \item $k_{(-1,-2),(-1,-1)}=\frac{1}{x_2-x_1}$
 \item $k_{(1,-1),(1,0)}=x_2$
 \item $\tau_{(1,-1)}^{(-1,-2)}=\frac{\frac{x_2}{x_2-x_1}}{\frac{x_1x_2}{x_2-x_1}}=\frac{1}{x_1}$\,.
\end{enumerate}
We calculate:
\[
 \Res_{(0,0)}^{(-1,-1)}\circ\Ind_{(1,0)}^{(0,0)}=\frac{f(x_1,x_2)-f(x_2,x_1)}{x_2-x_1}\frac{1}{x_1x_2}
\]
and
\[
\Ind_{(-1,-2)}^{(-1,-1)}\circ \tau_{(1,-1)}^{(-1,-2)}\circ \Res_{(1,0)}^{(1,-1)}=f(x_1,x_2)\frac{1}{x_2}\frac{1}{x_1}\frac{1}{x_2-x_1}+\mathrm{swap}_{1,2}\,.
\]
We obtain exactly \eqref{equation:mackey5}\,.

\subsection{Torus equivariant Landau-Ginzburg models}
The situation for torus equivariant LG-models is much more elementary than in the general case, as there is no Weyl group. The Mackey formula therefore reduces to one-term and is straightforward to prove when the function $f$ vanishes. We have to verify that for any $\lambda,\nu\preceq\mu$
\[
 \Res_{\mu}^{\nu}\circ\Ind_{\lambda}^{\mu}=\Ind_{\nu\circ\lambda}^{\nu}\circ\tau_{\lambda\circ\nu}^{\nu\circ\lambda}\circ\Res_{\lambda}^{\lambda\circ\nu}\,.
\]
In terms of formula, we have to verify the equality
\[
 \frac{k_{\lambda,\mu}}{k_{\nu,\mu}}=k_{\nu\circ\lambda,\nu}\frac{k_{\lambda\circ\nu}}{k_{\nu\circ\lambda}}\frac{1}{k_{\lambda\circ\nu,\lambda}}\,.
\]
Since $k_{a,b}=\frac{k_a}{k_b}$ for $a\prec b$, both sides simplify to $\frac{k_{\lambda}}{k_{\nu}}$. This concludes.

\printbibliography
\end{document}